\titlespacing{\paragraph}{%
  0pt}{
0.25\baselineskip}{
1em}
\titlespacing{\section}{%
0pt}{
0.2cm}{
0em}
\titlespacing{\subsection}{%
0pt}{
0.2cm}{
0em}
\titlespacing{\subsubsection}{%
0pt}{
0.1cm}{
0em}
\DeclareMathAlphabet{\mathbbe}{U}{bbold}{m}{n}
\tikzset{string/.style={decorate, decoration={snake, segment length=3pt, amplitude=1pt}}}
\renewcommand{\@biblabel}[1]{[#1]\hfill}
\DeclareMathAlphabet{\pazocal}{OMS}{zplm}{m}{n}
\newtheoremstyle{thm}                                                           
{0.15cm}                                         
{0.15cm}                                         
{\itshape}      
{}                                      
{\bfseries}                             
{}                                      
{0.2cm}                                         
{\thmname{#1}~\thmnumber{#2}\thmnote{ (#3)}}%
\xpatchcmd{\proof}{\topsep6\p@\@plus6\p@\relax}{}{}{}
\newtheoremstyle{rmk}                                                           
{0.15cm}                                         
{0.15cm}                                         
{}      
{}                                      
{\bfseries}                             
{}                                      
{0.2cm}                                         
{}                                      
\theoremstyle{thm}
\newtheorem{theorem}[equation]{Theorem}
\newtheorem{corollary}[equation]{Corollary}
\newtheorem{lemma}[equation]{Lemma}
\newtheorem{proposition}[equation]{Proposition}
\newtheorem{definition}[equation]{Definition}
\theoremstyle{rmk}
\newtheorem{example}[equation]{Example}
\newtheorem{remark}[equation]{Remark}
\numberwithin{equation}{section}
\newlength{\@thlabel@width}%
\newcommand{\thmenumhspace}{\settowidth{\@thlabel@width}{(1)}\sbox{\@labels}{\unhbox\@labels\hspace{\dimexpr-\leftmargin+\labelsep+\@thlabel@width-\itemindent}}}
\newcommand{\pr}{\mathrm{pr}}
\newcommand{\diag}{\operatorname{diag}}
\newcommand{\MCG}{\mathrm{MCG}}
\newcommand{\pt}{\mathrm{pt}}
\newcommand{\rmB}{{\mathrm{B}}}
\newcommand{\rmU}{{\mathrm{U}}}
\newcommand{\rmh}{\mathrm{h}}
\newcommand{\rmD}{\mathrm{D}}
\newcommand{\ev}{\mathrm{ev}}
\newcommand{\Ev}{\mathrm{Ev}}
\newcommand{\dd}{\mathrm{d}}
\newcommand{\Plot}{\mathrm{Plot}}
\newcommand{\rmS}{\mathrm{S}}
\newcommand{\Hom}{\mathrm{Hom}}
\newcommand{\Diff}{\mathrm{Diff}}
\newcommand{\opp}{\mathrm{op}}
\newcommand{\Coh}{{\mathrm{Coh}}}
\newcommand{\holim}{\mathrm{holim}}
\newcommand{\hocolim}{{\mathrm{hocolim}}}
\newcommand{\hoRan}{{\mathrm{hoRan}}}
\newcommand{\colim}{\operatorname{colim}}
\newcommand{\triv}{{\mathrm{triv}}}
\newcommand{\Ho}{\mathrm{Ho}}
\newcommand{\rmL}{{\mathrm{L}}}
\newcommand{\XL}{{\mathrm{XL}}}
\renewcommand{\lim}{\operatorname{lim}}
\newcommand{\scI}{\mathscr{I}}
\newcommand{\scH}{\mathscr{H}}
\newcommand{\scT}{\mathscr{T}}
\newcommand{\scJ}{\mathscr{J}}
\newcommand{\scC}{\mathscr{C}}
\newcommand{\scD}{\mathscr{D}}
\newcommand{\scZ}{\mathscr{Z}}
\newcommand{\scY}{{\mathscr{Y}}}
\newcommand{\scF}{\mathscr{F}}
\newcommand{\scM}{{\mathscr{M}}}
\newcommand{\scN}{{\mathscr{N}}}
\newcommand{\scV}{{\mathscr{V}}}
\newcommand{\CG}{\mathcal{G}}
\newcommand{\CU}{{\mathcal{U}}}
\newcommand{\CA}{\mathcal{A}}
\newcommand{\sfL}{{\mathsf{L}}}
\newcommand{\sfT}{\mathsf{T}}
\newcommand{\sfZ}{\mathsf{Z}}
\newcommand{\sfD}{\mathsf{D}}
\newcommand{\sfN}{\mathsf{N}}
\newcommand{\sfX}{{\mathsf{X}}}
\newcommand{\sfc}{\mathsf{c}}
\newcommand{\sfS}{\mathsf{S}}
\newcommand{\sfC}{\mathsf{C}}
\newcommand{\ori}{{\mathsf{or}}}
\newcommand{\sfB}{\mathsf{B}}
\newcommand{\Mat}{\mathsf{Mat}}
\newcommand{\GL}{{\mathsf{GL}}}
\newcommand{\sfBord}{\mathsf{Bord}}
\newcommand{\bbDelta}{{\mathbbe{\Delta}}}
\newcommand{\CN}{\mathbb{C}}
\newcommand{\NN}{\mathbb{N}}
\newcommand{\RN}{\mathbb{R}}
\newcommand{\One}{\mathds{1}}
\newcommand{\bbS}{\mathbb{S}}
\newcommand{\fru}{{\mathfrak{u}}}
\newcommand{\ul}[1]{\underline{#1}}
\newcommand{\cC}{{\check{C}}}
\newcommand{\Grb}{{\mathscr{G}\mathrm{rb}}}
\newcommand{\Set}{{\mathscr{S}\mathrm{et}}}
\newcommand{\sSet}{{\mathscr{S}\mathrm{et}_{\hspace{-.03cm}\bbDelta}}}
\newcommand{\Fun}{{\mathrm{Fun}}}
\newcommand{\VBun}{{\mathscr{V}\hspace{-0.02cm}\mathscr{B}\hspace{-0.02cm}\mathrm{un}}}
\newcommand{\Cat}{{\mathscr{C}\mathrm{at}}}
\newcommand{\Mfd}{{\mathscr{M}\mathrm{fd}}}
\newcommand{\Dfg}{{\mathscr{D}\mathrm{fg}}}
\newcommand{\CSS}{\mathscr{CSS}}
\newcommand{\Cart}{{\mathscr{C}\mathrm{art}}}
\newcommand{\Sh}{{\mathscr{S}\mathrm{h}}}
\newcommand{\sfSh}{{\mathsf{Sh}}}
\newcommand{\PSh}{{\mathscr{PS}\mathrm{h}}}
\newcommand{\sfPSh}{{\mathsf{PSh}}}
\newcommand{\Desc}{{\mathscr{D}\mathrm{esc}}}
\newcommand{\Bord}{\mathscr{B}\mathrm{ord}}
\newcommand{\wH}{{\widehat{\scH}}}
\newcommand{\wQ}{{\widehat{Q}}}
\newcommand{\wC}{{\widehat{\scC}}}
\newcommand{\wsfC}{{\widehat{\mathsf{C}}_\infty}}
\newcommand{\wCart}{{\widehat{\Cart}}}
\newcommand{\wtau}{{\widehat{\tau}}}
\newcommand{\wY}{{\widehat{\scY}}}
\newcommand{\Op}{{\mathscr{O}\mathrm{p}}}
\newcommand{\ctau}{{\check{\tau}}}
\newcommand{\weq}{\overset{\sim}{\longrightarrow}}
\newcommand{\arisom}{\overset{\cong}{\longrightarrow}}
\newcommand{\iu}{\mathrm{i}}
\newcommand{\tr}{\mathrm{tr}}
\newenvironment{myitemize}{\begin{itemize}[itemsep=-0.1cm, leftmargin=*, topsep=0cm]}{\end{itemize}}
\newenvironment{myenumerate}{\begin{enumerate}[itemsep=-0.15cm, leftmargin=*, topsep=0cm, label=(\arabic*)]}{\end{enumerate}}
\newcommand{\qen}{\hfill$\triangleleft$}
\newcommand{\qandq}{\quad \text{and} \quad}
\newcommand{\dslash}{{/\hspace{-0.1cm}/}}
\newlength{\Displayskip}
\begin{document}

\setlength{\abovedisplayskip}{\Displayskip}
\setlength{\belowdisplayskip}{\Displayskip}

\begin{flushright}
\small
\textsf{Hamburger Beiträge zur Mathematik Nr.\,822}\\
{\sf ZMP--HH/20-2}
\end{flushright}


\begin{center}
\LARGE{\textbf{Homotopy Sheaves on Generalised Spaces}}
\end{center}
\begin{center}
{\large Severin Bunk}

{\em Mathematical Institute, The University of Oxford\\
Radcliffe Observatory Quarter, Oxford, OX6 2GG, UK}\\
Email: {\tt  severin.bunk@maths.ox.ac.uk}

\end{center}

\begin{abstract}
\noindent
We study the homotopy right Kan extension of homotopy sheaves on a category to its free cocompletion, i.e.~to its category of presheaves. Any pretopology on the original category induces a canonical pretopology of generalised coverings on the free cocompletion. We show that with respect to these pretopologies the homotopy right Kan extension along the Yoneda embedding preserves homotopy sheaves valued in (sufficiently nice) simplicial model categories. Moreover, we show that this induces an equivalence between sheaves of spaces on the original category and colimit-preserving sheaves of spaces on its free cocompletion. We present three applications in geometry and topology: first, we prove that diffeological vector bundles descend along subductions of diffeological spaces. Second, we deduce that various flavours of bundle gerbes with connection satisfy $(\infty,2)$-categorical descent. Finally, we investigate smooth diffeomorphism actions in smooth bordism-type field theories on a manifold. We show how these smooth actions allow us to extract the values of a field theory on any object coherently from its values on generating objects of the bordism category.
\end{abstract}

\vspace{-0.2cm}
\paragraph*{Keywords}
Homotopy sheaves; $\infty$-sheaves; higher geometric structures; diffeological spaces; gerbes

\paragraph*{Conflict of interests statement}
There are no conflicts of interest with this paper.

\tableofcontents

\section{Introduction and main results}
\label{sec:introduction}

Local-to-global properties are ubiquitous in topology, geometry, and quantum field theory.
The prototypical example of a local-to-global, or \emph{descent}, property is the gluing of local sections of a sheaf:
given a manifold $M$ with an open covering $\{U_i\}_{i \in I}$, global sections of a sheaf $F$ on $M$ are in bijection with families $\{f_i \in F(U_i)\}_{i \in I}$ such that $f_{i|U_{ij}} = f_{j|U_{ij}}$ for all $i,j \in I$, with $U_{ij} \coloneqq U_i \cap U_j$.

Geometric structures on manifolds are, in general, not described by sheaves of sets; instead, one needs to pass to sheaves of higher categories.
For example, complex vector bundles form a sheaf on the site of manifolds and open coverings which is valued in categories rather than sets.
In the works of Schreiber~\cite{Schreiber:DCCT}, $n$-gerbes are described as sections of certain sheaves of $n$-groupoids, for any $n \in \NN$.
Geometric structures and their equivalences are thus described most generally in terms of sheaves of higher categories on a category of test manifolds $\scC$ which is endowed with a (pre)topology $\tau$.

To motivate the main questions of this paper, let us temporarily focus on sheaves of $\infty$-groupoids.
The $\infty$-category of these sheaves has a presentation as a left Bousfield localisation of the projective model category $\scH_{\infty,0} = \Fun(\scC^\opp, \sSet)$ of simplicial presheaves on $\scC$%
\footnote{In this introduction, we do not address sizes of sets; in the main text we do so using a nested triple of Grothendieck universes.}.
We denote its localisation at the $\tau$-coverings by $\scH_{\infty,0}^{loc}$.
Our prime example in this paper is the case $\scC = \Cart$, the category of \emph{cartesian spaces}. It consists of the manifolds diffeomorphic to $\RN^n$, for any $n \in \NN_0$, and all smooth maps between them.
It is an important (well-known) observation that sheaves of $\infty$-groupoids on $\Cart$ allow us to describe geometric structures not just on objects of $\Cart$, but on \emph{all} manifolds (and more)~\cite{Schreiber:DCCT}:
given a fibrant object $F \in \scH_{\infty,0}^{loc}$ and a manifold $M$, one defines the space of (derived) sections of $F$ on $M$ as the mapping space $\ul{\scH}_{\infty,0}(Q\ul{M},F)$.
Here, $\ul{M}$ is the presheaf on $\Cart$ consisting of smooth maps to $M$, the functor $Q$ is a cofibrant replacement%
\footnote{Throughout the main body of the text, we use Dugger's explicit construction of a cofibrant replacement functor for projective model categories of simplicial presheaves~\cite{Dugger:Universal_HoThys}.}
in $\scH_{\infty,0}$, and $\ul{\scH}_{\infty,0}(-,-)$ denotes the $\sSet$-enriched hom functor in $\scH_{\infty,0}$.
If $F$ is the simplicial presheaf on cartesian spaces which describes $G$-bundles or $n$-gerbes, for instance, then the Kan complex $\ul{\scH}_{\infty,0}(Q\ul{M},F)$ is the $\infty$-groupoid of $G$-bundles or $n$-gerbes on $M$, respectively.
In fact, we could use any presheaf $X$ on $\Cart$ in place of $\ul{M}$ and thus study geometries on any such $X$.
(Later we will even allow $X$ to be a presheaf of $\infty$-groupoids itself.)

We write \smash{$\wC = \Fun(\scC^\opp,\Set)$} for the category of (small) presheaves of sets on $\scC$.
Motivated by the above discussion, we pose the following two questions for sheaves on a generic site $(\scC,\tau)$:
\begin{myenumerate}
\item If $F$ satisfies descent with respect to the pretopology $\tau$, does the assignment $X \mapsto \ul{\scH}_{\infty,0}(QX,F)$ satisfy descent with respect to a related pretopology $\wtau$ on \smash{$\wC$}?

\item If so, can we understand which $\wtau$-sheaves on \smash{$\wC$} arise as extensions $\ul{\scH}_{\infty,0}(Q(-),F)$ of some ordinary sheaf $F$ on $(\scC,\tau)$?
\end{myenumerate}
We answer both of these questions in this paper, working over general Grothendieck sites $(\scC,\tau)$:
any pretopology $\tau$ on $\scC$ induces a pretopology $\wtau$ of generalised coverings on \smash{$\wC$}.
We prove that with respect to such a pair of pretopologies, the answer to the first question is always `yes'.
One way of saying this is that the process of extending any geometric structure from objects of $\scC$ to objects of \smash{$\wC$} does not destroy the descent property; we can still think of the extended structure as having geometric flavour and perform local-to-global constructions.

In concrete terms, the induced Grothendieck pretopology $\wtau$ on \smash{$\wC$} consists of the \emph{$\tau$-local epimorphisms}, also called \emph{generalised coverings}~\cite{DHI:Hypercovers_and_sPShs}, which are defined as follows.
Let \smash{$\scY \colon \scC \to \wC$} denote the Yoneda embedding of $\scC$.
A morphism $\pi \colon Y \to X$ in $\wC$ is a $\tau$-local epimorphism if, for every $c \in \scC$ and every map $\scY_c \to X$, there exists a covering $\{c_i \to c\}_{i \in I}$ in the site $(\scC,\tau)$ such that each composition $\scY_{c_i} \to \scY_c \to X$ factors through $\pi$; this is an abstract way of saying that the morphism $\pi$ has local sections.

Let $\scM$ be a left proper cellular or combinatorial simplicial model category.
We consider the projective model structures on $\Fun(\scC^\opp, \scM)$ and $\Fun(\wC{}^\opp, \scM)$, and their left Bousfield localisations $\Fun(\scC^\opp, \scM)^{loc}$ and $\Fun(\wC{}^\opp, \scM)^{loc}$ at the $\tau$-coverings and $\wtau$-coverings, respectively.
That is, the fibrant objects in these localised model categories are $\scM$-valued homotopy sheaves on $(\scC, \tau)$ and $(\wC, \wtau)$, respectively.
We show the following result (see Theorems~\ref{st:Re_M --| S_M is Quillen adjunction},~\ref{st:Re_scM --| S_scM descends to local MoStrs} and Proposition~\ref{st:S_M is hoRan} below):

\begin{theorem}
\label{st:intro Re_M --| S_M is QAd for local MoStrs}
Let $\scM$ be a left proper simplicial model category which is cellular or combinatorial and $\scC$ a category.
Let $\tau$ be a Grothendieck coverage on $\scC$, and $\wtau$ the induced Grothendieck coverage of $\tau$-local epimorphisms on $\wC$.
There is a Quillen adjunction
\begin{equation}
\label{eq:local Re_M -| S_M adjunction}
\begin{tikzcd}
	Re_\scM : \Fun(\wC^\opp, \scM)^{loc} \ar[r, shift left=0.15cm, "\perp"' yshift=0.05cm]
	& \Fun(\scC^\opp, \scM)^{loc} : S_\scM\,, \ar[l, shift left=0.15cm]
\end{tikzcd}
\end{equation}
whose right adjoint restricts on fibrant objects to the homotopy right Kan extension along the Yoneda embedding $\scY \colon \scC \hookrightarrow \wC$,
\begin{equation}
	S_{\scM|\scM_{fib}} \cong \hoRan_\scY\,.
\end{equation}

\end{theorem}

A prime application is the case of homotopy sheaves of higher categories:
denoting by $\CSS_n$ the model category of $n$-fold complete spaces~\cite{Rezk:HoTheory_of_HoTheories, Barwick:infty-n-Cat_as_closed_MoCat, BSP:Unicity}, let $\scH_{\infty,n}^{loc}$ be the left Bousfield localisation of the projective model structure on $\Fun(\scC^\opp, \CSS_n)$ at the \v{C}ech nerves of coverings in $(\scC,\tau)$.
Similarly, we let \smash{$\wH_{\infty,n}^{loc}$} denote the left Bousfield localisation of \smash{$\Fun(\wC^\opp, \CSS_n)$} at the \v{C}ech nerves of the $\tau$-local epimorphisms.
As a direct corollary to Theorem~\ref{st:intro Re_M --| S_M is QAd for local MoStrs} we obtain:

\begin{corollary}
\label{st:intro-Thm S^Q_(oo,n)}
Let $n \in \NN_0$, and let $\scC$ be a small category.
\begin{myenumerate}
\item For any fibrant $\scF \in \scH_{\infty,n}$, the presheaf $\ul{\scH}_{\infty,n} \big( Q(-), \scF \big)$ is equivalent to the homotopy right Kan extension $\hoRan_{\scY} \scF$ of $\scF$ along the Yoneda embedding \smash{$\scY \colon \scC \to \wC$}.
In particular, $\ul{\scH}_{\infty,n} \big( Q(-), \scF \big)$ presents the $\infty$-categorical right Kan extension of presheaves of $(\infty,n)$-categories along $\scY$.

\item If $(\scC,\tau)$ is a Grothendieck site, there is a Quillen adjunction
\begin{equation}
\begin{tikzcd}
	Re_{\infty,n} : \wH_{\infty,n}^{loc} \ar[r, shift left=0.15cm, "\perp"' yshift=0.04cm] & \scH_{\infty,n}^{loc} : \ul{\scH}_{\infty,n} \big( Q(-), - \big) \,. \ar[l, shift left=0.15cm]
\end{tikzcd}
\end{equation}
\end{myenumerate}
\end{corollary}

\vspace{-0.2cm}
This answers our first question affirmatively since $\ul{\scH}_{\infty,n} (Q(-), -)$ preserves fibrant objects.

In Section~\ref{sec:Dfg VBuns} we deduce the following strictification and descent result (Theorem~\ref{st:VBun_Cat = VBun_Dfg} and Theorem~\ref{st:Dfg VBuns descend along subductions}, respectively), thereby filling significant gaps in the literature on diffeological spaces:

\begin{theorem}
\label{st:intro Dfg VBuns}
Let $\VBun_\Dfg \colon \Dfg^\opp \to \Cat$ be the pseudo-functor which assigns to a diffeological space its category of diffeological vector bundles.
The following statements hold true:
\begin{myenumerate}
\item There is a strict functor $\iota^* \VBun_{str}$ (which we construct explicitly) and an objectwise equivalence of pseudo-functors $\Dfg^\opp \to \Cat_\rmL$,
\begin{equation}
	\CA \colon \iota^* \VBun_{str} \weq \VBun_\Dfg\,.
\end{equation}

\item Diffeological vector bundles satisfy descent along subductions of diffeological spaces.
\end{myenumerate}
\end{theorem}

Here we use Rezk's classifying diagram functor to deduce a 1-categorical analogue of Corollary~\ref{st:intro-Thm S^Q_(oo,n)} for $n=1$.
This readily provides the desired descent result; the main remaining work in proving Theorem~\ref{st:intro Dfg VBuns} goes into the strictification of $\VBun_\Dfg$.

Further, we deduce from Corollary~\ref{st:intro-Thm S^Q_(oo,n)} that various flavours of the 2-category of bundle gerbes with connection as introduced by Waldorf~\cite{Waldorf--More_morphisms} extend to sheaves of $(2,2)$-categories on \smash{$\wCart$} (Theorem~\ref{st:Grbs descend}); via a 2-categorical nerve construction, they further provide examples of homotopy sheaves of $(\infty,2)$-categories.
Our motivation for working model categorically up to this point stems from future applications to field theories:
these are functors out of $(\infty,n)$-categories of cobordisms, which exists as $n$-fold or $n$-uple (complete) Segal spaces~\cite{Lurie:Classification_of_TQFTs, CS:Bord_n, SP:Invertible_TFTs}.
This also feeds into another application:
in smooth field theories on a background manifold, one detects the full smooth structure of diffeomorphism groups rather than only their connected components (i.e.~mapping class groups).
We show how one can nevertheless coherently and smoothly reconstruct the values of a field theory on all objects from its values on only the generating objects of the bordism category (Theorems~\ref{st:ext via choice of diffeos} and~\ref{st:diagonal coherence zig-zag}).

In Section~\ref{sec:char of sheaves on wC} we answer the second question above.
To do so, we pass to quasi-categorical language:
let $\sfC$ be a quasi-category with a Grothendieck (pre)topology $\tau$.
Let $\wsfC$ denote the quasi-category of (small) presheaves of spaces on $\sfC$.
We write $\sfSh(\sfC,\tau)$ for the quasi-category of $\tau$-sheaves on $\sfC$ and $\sfSh_!(\wsfC,\wtau)$ for the quasi-category $\wtau$-sheaves on \smash{$\wsfC$} whose underlying functor \smash{$\wsfC \to \sfS^\opp$} preserves colimits.

\begin{theorem}
\label{st:intro image of h_*}
Let $(\sfC,\tau)$ be as above, and let $\scY_*$ denote the $\infty$-categorical right Kan extension of presheaves along the Yoneda embedding $\scY \colon \sfC^\opp \to \wsfC^\opp$.
The adjunction $\scY^* \dashv \scY_*$ restricts to an equivalence of quasi-categories
\begin{equation}
\begin{tikzcd}
	\sfSh_!(\wsfC,\wtau) \ar[r, shift left=0.1cm, "\scY^*"] & \sfSh(\sfC,\tau)\,. \ar[l, shift left=0.1cm, "\scY_*"]
\end{tikzcd}
\end{equation}
\end{theorem}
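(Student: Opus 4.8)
The plan is to deduce the equivalence from its presheaf-level analogue. Write $\PSh(-)=\Fun((-)^\opp,\scS)$, so that $\wC_\infty=\PSh(\scC)$ (for $\scC$ small, as arranged by the universe conventions). Since $\wC_\infty$ is the free cocompletion of $\scC$ under small colimits, restriction along $\scY$ identifies the colimit-preserving functors $\wC_\infty\to\scS^\opp$ with all functors $\scC\to\scS^\opp$, i.e.\ with $\PSh(\scC)$; this identification is implemented by $\scY^*$ with quasi-inverse $\scY_*$, because $\scY_* G(X)\simeq\Map_{\wC_\infty}(X,G)$ sends colimits in $X$ to limits, $\scY^*\scY_* G\simeq G$ by Yoneda, and $X\simeq\colim_{(\scY_c\to X)}\scY_c$ forces $F\simeq\scY_*\scY^*F$ whenever $F$ is colimit-preserving. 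Denoting by $\PSh_!(\wC_\infty)$ the colimit-preserving presheaves on $\wC_\infty$, the adjunction $\scY^*\dashv\scY_*$ thus restricts to an equivalence $\PSh_!(\wC_\infty)\simeq\PSh(\scC)$. It then suffices to prove: \emph{for $F\in\PSh_!(\wC_\infty)$, the presheaf $F$ is a $\wtau$-sheaf if and only if $\scY^*F$ is a $\tau$-sheaf.} Granting this, the equivalence carries $\Sh_!(\wC_\infty,\wtau)=\Sh(\wC_\infty,\wtau)\cap\PSh_!(\wC_\infty)$ onto $\Sh(\scC,\tau)$ by $\scY^*$, and $\scY_*$ maps back, since $\scY_* G$ lies in $\PSh_!(\wC_\infty)$ for every $G$ while $\scY^*\scY_* G\simeq G$.

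For the `only if' direction I would test $\wtau$-descent of $F$ on the generalised coverings $\coprod_{i\in I}\scY_{c_i}\to\scY_c$ attached to $\tau$-coverings $\{c_i\to c\}_{i\in I}$, which are $\tau$-local epimorphisms by construction (pull the covering back along any $d\to c$). As $\scY$ preserves the fibre products occurring in the \v{C}ech nerve, the $n$-th level of that nerve is $\coprod_{(i_0,\dots,i_n)}\scY_{c_{i_0}\times_c\cdots\times_c c_{i_n}}$, and $F$ carries this coproduct to a product in $\scS$ since $F\colon\wC_\infty\to\scS^\opp$ preserves colimits. Hence $[n]\mapsto F\big(\cC(\coprod_i\scY_{c_i}\to\scY_c)_n\big)$ is precisely the \v{C}ech diagram $[n]\mapsto\prod_{(i_0,\dots,i_n)}(\scY^*F)\big(c_{i_0}\times_c\cdots\times_c c_{i_n}\big)$ of $\scY^*F$, so $\wtau$-descent of $F$ implies $\tau$-\v{C}ech descent of $\scY^*F$; read backwards, the same identification shows that if $\scY^*F$ is a $\tau$-sheaf then $F$ has \v{C}ech descent along every such basic covering.

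For the `if' direction, assume $\scY^*F$ is a $\tau$-sheaf and let $\pi\colon Y\to X$ be an arbitrary $\tau$-local epimorphism in $\wC_\infty$; I must show $F(X)\weq\lim_\Delta F\big(\cC(\pi)_\bullet\big)$. First I would reduce to a representable target: $\wC_\infty\simeq\PSh(\scC)$ is an $\infty$-topos, so pullback preserves colimits, and writing $X\simeq\colim_{(\scY_c\to X)}\scY_c$ and base-changing $\pi$ gives $\cC(\pi)_\bullet\simeq\colim_{(\scY_c\to X)}\cC(\pi_c)_\bullet$ levelwise, with $\pi_c\colon Y\times_X\scY_c\to\scY_c$ again a $\tau$-local epimorphism; since $F$ turns these colimits into limits and $F(X)\simeq\lim_{(\scY_c\to X)}F(\scY_c)$, descent for $\pi$ follows from descent for all the $\pi_c$. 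So let $\rho\colon Z\to\scY_c$ be a $\tau$-local epimorphism with representable target: by the definition of $\tau$-local epimorphism there is a $\tau$-covering $\{c_i\to c\}$ together with a factorisation of the basic covering $\coprod_i\scY_{c_i}\to\scY_c$ through $\rho$, and it remains to pass from \v{C}ech descent along this refinement (which holds by the previous paragraph) to \v{C}ech descent along $\rho$ itself.

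This last comparison is the crux of the argument: it is the assertion that the pretopology $\wtau$ on $\wC_\infty$ is generated by the basic coverings $\coprod_i\scY_{c_i}\to\scY_c$, so that imposing $\wtau$-descent only along those loses nothing. I expect to settle it by the now-standard \v{C}ech-nerve and covering-sieve manipulations in the spirit of \cite{DHI:Hypercovers_and_sPShs}: identify $\colim_\Delta\cC(\rho)_\bullet$ with the subobject $R\hookrightarrow\scY_c$ generated by $\rho$, observe that $R$ is a $\tau$-covering sieve of $c$, and verify that the class of presheaves satisfying $F(\scY_c)\weq\lim_\Delta F(\cC(\rho)_\bullet)$ contains the basic covers and is stable under the refinement and composition operations that produce every covering sieve. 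A shorter, though less self-contained, alternative is to invoke an $\infty$-categorical comparison lemma for the dense subsite $\scC\hookrightarrow\wC_\infty$; this would in addition show that the colimit-preservation clause in the definition of $\Sh_!(\wC_\infty,\wtau)$ is automatic for $\wtau$-sheaves.
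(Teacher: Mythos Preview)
Your overall architecture---first establish the presheaf-level equivalence $\scP_!(\wC_\infty)\simeq\scP(\scC)$ via $\scY^*\dashv\scY_*$, then match up the sheaf conditions on both sides---is exactly the paper's, and your reduction of $\wtau$-descent to the case of a representable target (using universality of colimits in $\wC_\infty$ and colimit-preservation of $F$) is correct and pleasant. The difference lies in how the hard direction is finished. You arrive at a $\tau$-local epimorphism $\rho\colon Z\to\scY_c$ refined by a basic cover $\coprod_i\scY_{c_i}\to\scY_c$, and then have to transfer \v{C}ech descent from the refinement to $\rho$; you acknowledge this as ``the crux'' and leave it as a sketch. That step is genuine work (it is essentially the content of the DHI-style results you cite), and as written your proof is incomplete there.

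The paper avoids this refinement problem entirely by exploiting the sheafification functor $\rmL\colon\scP(\scC)\to\Sh(\scC,\tau)$. The key observation is that for any $A\in\wC_\infty$ one has $(\scY_*G)(A)\simeq\scP(\scC)(A,G)$; hence if $G$ is a $\tau$-sheaf, then $(\scY_*G)(A)\simeq\scP(\scC)(\rmL A,G)$. Now for a $\tau$-local epimorphism $f\colon Y\to X$ the very definition says $\rmL f$ is an effective epimorphism in the $\infty$-topos $\Sh(\scC,\tau)$, whence $\rmL|\cC f|\simeq|\cC\rmL f|\simeq\rmL X$ (using that $\rmL$ is a left exact left adjoint). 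This immediately gives $(\scY_*G)(X)\simeq(\scY_*G)(|\cC f|)\simeq\lim_\bbDelta(\scY_*G)(\cC f)$, i.e.\ $\wtau$-descent, with no refinement argument needed. Your approach would work too once the refinement step is filled in, but the paper's route via $\rmL$ is shorter and makes transparent why $\wtau$ is the right topology: it is exactly the class of maps inverted by $\rmL$ on covering sieves.

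Two minor points: you write the \v{C}ech nerve using fibre products $c_{i_0}\times_c\cdots\times_c c_{i_n}$ in $\scC$, but $\scC$ need not have pullbacks---these should be taken in $\wC_\infty$ (and then colimit-preservation of $F$ still does what you want). And your closing remark that colimit-preservation might be automatic for $\wtau$-sheaves is not established in the paper and should not be asserted without proof.
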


\vspace{-0.2cm}
Finally, to make contact with the first part of this paper, we prove a similar result where $\sfC = \sfN\scC$ is an ordinary category; this relates quasi-categorical $\tau$-sheaves of spaces on $\sfN\scC$ to $\wtau$-sheaves of spaces on the 1-category of set-valued presheaves $\wC = \Fun(\scC^\opp, \Set_\rmL)$ (Theorem~\ref{st:image of Y^0_*}).

\subsection*{Conventions}
\label{sec:conventions}

\paragraph*{Sizes and universes.}
Throughout, we choose and fix a nested triple of Grothendieck universes $\rmS \in \rmL \in \XL$%
\footnote{Note that, due to existing naming conventions in the literature, size $\mathrm{M}$ was not available.}%
, and we assume that $\rmS$ contains the natural numbers.
We write $\Set_\rmS$ and $\sSet_\rmS$ for the categories of $\rmS$-small sets and $\rmS$-small simplicial sets, respectively, and analogously for the universes $\rmL$ and $\XL$.
As is common in the literature, we refer to the elements in $\Set_\rmS$ simply as \textit{small} sets, those of $\Set_\rmL$ as \textit{large} sets and those of $\Set_\XL$ as \textit{extra large} sets.
All indexing sets will be assumed to be $\rmS$-small.

Let $\scC$ be a small category.
Consider the category $\wC \coloneqq \Fun(\scC^\opp, \Set_\rmS)$ of $\Set_\rmS$-valued presheaves on $\scC$.
Observe that $\wC$ is no longer small, since $\Set_\rmS$ is not small.
However, since $\Set_\rmS$ is a large category (its objects and morphisms each form sets in $\rmL$), it follows that \smash{$\wC$} is a large category.

The Yoneda embedding \smash{$\wY \colon \wC \to \Fun(\wC^\opp, \Set_\rmL)$} is fully faithful; observing that an object of $\Set_\rmS$ is also an object of $\Set_\rmL$, the standard proof applies.
Further, the Yoneda Lemma holds true for both $\scY$ and $\wY$ (again by the usual method of proof).
Finally, observe for any $c \in \scC$ and for any $X \in \wC$, by the Yoneda Lemma, there are canonical isomorphisms
\begin{equation}
\label{st:Yoneda embeddings together}
	\wY_X (\scY_c) = \wC(\scY_c, X) \cong X(c) \in \Set_\rmS \subset \Set_\rmL\,.
\end{equation}

\paragraph*{Categories and higher categories.}

In this article we use two different models for higher categories.
Section~\ref{sec:sites and local epis} and Appendix~\ref{app:Proof of VBun_Dfg Thm} do not make use of higher categories.
In Sections~\ref{sec:higher sheaves} and~\ref{sec:applications} we use $n$-fold complete Segal spaces (introduced in~\cite{Barwick:infty-n-Cat_as_closed_MoCat}) as a model for $(\infty,n)$-categories.
In particular, the term `$\infty$-groupoid' shall always refer to a Kan complex, and `$(\infty,n)$-category' shall always mean a $n$-fold complete Segal space.
In Section~\ref{sec:char of sheaves on wC}, and only there, all higher categories are modelled as quasi-categories in the sense of~\cite{BV:Ho-invar_alg_strs, Joyal:QCats_and_Kan_complexes, Lurie:HTT}.
In order to make clear the distinction between ordinary categories (possibly endowed with model structures), $n$-fold complete Segal spaces, and quasi-categories, we denote ordinary categories by script letters $\scC, \scD, \ldots$, $n$-fold complete Segal spaces (or presheaves thereof) by ordinary capitals $F, G, \ldots$, and quasi-categories by sans-serif capitals $\sfC, \sfD, \ldots$.
In particular, the quasi-categories of small spaces is denoted $\sfS_\rmS$, and analogously for the universes $\rmL$ and $\XL$.
The term `small space' (used on its own, as opposed to `Segal space', for instance) shall always mean an object in the quasi-category $\sfS_\rmS$ of small spaces, and analogously for the terms `large space' and `extra large space' and the universes $\rmL$ and $\XL$, respectively.

\paragraph*{Enriched categories.}

If $\scV$ is a monoidal category and $\scC$ is a $\scV$-enriched category (or $\scV$-category), we denote the $\scV$-enriched hom-objects of $\scC$ by $\ul{\scC}^\scV(-,-)$.
If $\scV = \sSet_\rmL$ is the category of simplicial sets, we also write $\ul{\scC}(-,-) \coloneqq \ul{\scC}^{\sSet_\rmL}(-,-)$.
If $\scV$ is a symmetric monoidal model category and $\scC$ is a $\scV$-enriched model category (in the sense of~\cite[Sec.~4]{Hovey:MoCats}), we will equivalently say that $\scC$ is a model $\scV$-category.

\paragraph*{Tractability.}
We recall from~\cite[Def.~1.21]{Barwick:Enriched_B-Loc} that a large model category $\scM$ is \textit{tractable} if there exists a regular $\rmL$-small cardinal $\lambda$ such that the category underlying $\scM$ is locally $\lambda$-presentable, and there exist $\rmL$-small sets $I, J$ of morphisms with the following properties:
the source and target of the morphisms in $I$ and $J$ are $\lambda$-presentable and cofibrant, the fibrations (resp.~trivial fibrations) in $\scM$ are precisely those morphisms satisfying the right lifting property with respect to $J$ (resp.~$I$).
In particular, $\scM$ is combinatorial.
For further details and background on the formalism of enriched Bousfield localisation, we refer the reader to~\cite{Barwick:Enriched_B-Loc}.

\paragraph*{Diagrams.}
For $\scJ$ a large category and $\scC$ a $\rmL$-tractable or $\rmL$-combinatorial model category (cf.~\cite{Barwick:infty-n-Cat_as_closed_MoCat,Barwick:Enriched_B-Loc}), the projective and the injective model structures on $\Fun(\scJ, \scC)$ exist and are again $\rmL$-tractable (resp.~$\rmL$-combinatorial); we denote them by $\Fun(\scJ, \scC)_{proj}$ and $\Fun(\scJ, \scC)_{inj}$, respectively.
If $\scJ$ is a Reedy category, we denote the Reedy model structure on $\Fun(\scJ, \scC)$ by $\Fun(\scJ, \scC)_{Reedy}$.

\paragraph*{Left Bousfield localisation.}
We recall that if $\scM$ is a large simplicial model category with a chosen collection of morphisms $A$, then
\begin{myitemize}
\item $z \in \scM$ is \emph{$A$-local} if it is fibrant in $\scM$ and for every morphism $f \colon x \to y$ in $A$ the induced morphism
\begin{equation}
	\ul{\scM}(Qy, z) \xrightarrow{(Qf)^*} \ul{\scM}(Qx,z)
\end{equation}
is a weak homotopy equivalence in $\sSet_\rmL$, and

\item $f \in \scM(x,y)$ is an \emph{$A$-local weak equivalence} if for every $A$-local object $z \in \scM$ the morphism
\begin{equation}
	\ul{\scM}(Qy, z) \xrightarrow{(Qf)^*} \ul{\scM}(Qx,z)
\end{equation}
is a weak equivalence in $\sSet_\rmL$.
\end{myitemize}
In that case, the left Bousfield localisation $L_A \scM$, provided it exists, is a large simplicial model category with a simplicial left Quillen functor $\scM \to L_A \scM$ which is universal among simplicial left Quillen functors out of $\scM$ that send $A$-local weak equivalences to weak equivalences (cf.~for instance~\cite[Def.~4.42, Def.~4.45]{Barwick:Enriched_B-Loc}).

\subsection*{Acknowledgements}

The author would like to thank Damien Calaque, Tobias Dyckerhoff, Geoffroy Horel, Corina Keller, Christoph Schweigert, Pelle Steffens and Konrad Waldorf for valuable discussions.
The author would further like to thank the anonymous referee for various suggestions that improved the paper noticeably---in particular leading to the much generalised version of Section~\ref{sec:HoSheaves vald in spl MoCats} below---and increased its readability.
The author was partially supported by RTG~1670, \emph{Mathematics Inspired by String Theory and Quantum Field Theory} and by the Deutsche Forschungsgemeinschaft (DFG, German Research Foundation) under the project number 468806966 and under Germany's Excellence Strategy---EXC 2121 ``Quantum Universe''---390833306.

\section{Grothendieck sites and local epimorphisms}
\label{sec:sites and local epis}

We recall the definition of a Grothendieck pretopology and a site.
Throughout this article, we assume that $\scC$ is a small category.

\begin{definition}
We write $\wC \coloneqq \Fun(\scC^\opp, \Set_\rmS)$ for the large category of small presheaves on $\scC$.
\end{definition}

Let $\scY \colon \scC \to \wC$, $c \mapsto \scY_c$, denote the Yoneda embedding of $\scC$.

\begin{definition}[{\cite[Def.~2.1.9]{Johnstone:Sketches_of_an_elephant_I}}]
\label{def:coverage and site}
Let $\scC$ be a small category.
\begin{myenumerate}
\item A \emph{coverage}, or \emph{Grothendieck pretopology}, on $\scC$ is given by assigning to every object $c \in \scC$ a small set $\tau(c)$ of families of morphisms $\{f_i \colon c_i \to c\}_{i \in I}$ (with $I \in \Set_\rmS$) satisfying the following properties:
$\{1_c\} \in \tau(c)$ for each $c \in \scC$, and for every family $\{f_i \colon c_i \to c\}_{i \in I} \in \tau(c)$ and any morphism $g \colon c' \to c$ in $\scC$ there exists a family $\{f'_j \colon c'_j \to c'\}_{j \in J} \in \tau(c')$ such that for every $j \in J$ we find some $i \in I$ and a commutative diagram
\begin{equation}
\begin{tikzcd}
	c'_j \ar[r] \ar[d, "f'_j"'] & c_i \ar[d, "f_i"]
	\\
	c' \ar[r, "g"'] & c
\end{tikzcd}
\end{equation}

\item The families in $\{f_i \colon c_i \to c\}_{i \in I} \in\tau(c)$ are called \emph{covering families for $c$}.

\item A \emph{(Grothendieck) site} is a category $\scC$ equipped with a coverage $\tau$.
\end{myenumerate}
\end{definition}

Later we will use the following technical condition:

\begin{definition}
\label{def:closed site}
We call a site $(\scC,\tau)$ \emph{closed} if it satisfies the following condition:
let $\{c_i \to c\}_{i \in I}$ be any covering family in $(\scC, \tau)$.
Further, for each $i \in I$, let $\{c_{i,j} \to c_i\}_{j \in J_i}$ be a covering family in $(\scC,\tau)$.
Then, there exists a covering family $\{d_k \to c\}_{k \in K}$ such that every morphism $d_k \to c$ factors through one of the composites $c_{i,j} \to c_i \to c$.
\end{definition}

\begin{example}
\label{eg:Cart as site}
Let $\Cart$ be the category of cartesian spaces, i.e.~of sub-manifolds of $\RN^\infty$ that are diffeomorphic to some $\RN^n$, with smooth maps between these manifolds as morphisms.
This category is small and has finite products.
A coverage on $\Cart$ is defined by calling a family $\{\iota_i \colon c_i \to c\}_{i \in I}$ a covering family if it satisfies
\begin{myenumerate}
\item all $\iota_i$ are open embeddings (in particular $\dim(c_i) = \dim(c)$ for all $i \in I$),

\item the $\iota_i$ cover $c$, i.e.~$c = \bigcup_{i \in I} \iota_i(c_i)$, and

\item every finite intersection $\iota_{i_0}(c_{i_0}) \cap \cdots \cap \iota_{i_m}(c_{i_m})$, with $i_0, \ldots, i_m \in I$, is a cartesian space.
\end{myenumerate}
Coverings $\{\iota_i \colon c_i \to c\}_{i \in I}$ with these properties are called \emph{differentiably good open coverings} of $c$.
They induce a coverage $\tau_{dgop}$ on $\Cart$, which turns $(\Cart,\tau_{dgop})$ into a site.
This site is even closed, because every open cover of $c \in \Cart$ admits a differentiably good refinement~\cite[Cor.~A.1]{FSS:Cech_diff_char_classes_via_L_infty}.
There is a canonical embedding $\Cart \hookrightarrow \Mfd$ of $\Cart$ into the category of all smooth manifolds.
Thereby, any manifold $M$ defines a presheaf $\ul{M}$ on $\Cart$, defined by $\ul{M}(c) = \Mfd(c,M)$.
\qen
\end{example}

\begin{example}
\label{eg:Op}
Let $\Op$ be the small category whose objects are open subsets of $\RN^n$ for any (varying) $n \in \NN_0$ and whose morphisms are all smooth maps between such open subsets.
The category $\Op$ has finite products and carries a coverage whose covering families are the open coverings.
The resulting site is denoted $(\Op,\tau_{op})$; observe that it is closed.
\qen
\end{example}

\begin{definition}
\label{def:tau-loc epi}
Let $(\scC,\tau)$ be a site.
A morphism $\pi \colon Y \to X$ in $\wC$ is a \emph{$\tau$-local epimorphism} if for every $c \in \scC$ and each morphism $\varphi \colon \scY_c \to X$ there exists a covering $\{f_j \colon c_j \to c\}_{j \in J} \in \tau(c)$ and morphisms $\{ \varphi_j \colon \scY_{c_j} \to Y \}_{j \in J}$ with $\pi \circ \varphi_j = \varphi \circ f_j$ for all $j \in J$.
\end{definition}

One directly checks the following:

\begin{lemma}
\label{st:subs and pullbacks}
For any site $(\scC,\tau)$, the class of $\tau$-local epimorphisms is stable under pullback.
In particular, the collection of $\tau$-local epimorphisms defines a coverage $\wtau$ on $\wC$.
\end{lemma}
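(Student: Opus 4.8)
The plan is to verify the two-part statement directly from Definition~\ref{def:tau-loc epi}. For the first claim, let $\pi \colon Y \to X$ be a $\tau$-local epimorphism and let $g \colon X' \to X$ be any morphism in $\wC$; form the pullback $Y' \coloneqq Y \times_X X'$ with its projections $\pi' \colon Y' \to X'$ and $\tilde g \colon Y' \to Y$. I want to show $\pi'$ is again a $\tau$-local epimorphism. So fix $c \in \scC$ and a morphism $\varphi' \colon \scY_c \to X'$. Composing with $g$ gives $g \circ \varphi' \colon \scY_c \to X$, and since $\pi$ is a $\tau$-local epimorphism there is a covering $\{f_j \colon c_j \to c\}_{j \in J} \in \tau(c)$ together with morphisms $\psi_j \colon \scY_{c_j} \to Y$ satisfying $\pi \circ \psi_j = g \circ \varphi' \circ f_j$ for all $j$. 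Now $\psi_j$ and $\varphi' \circ f_j \colon \scY_{c_j} \to X'$ agree after composing to $X$ (one via $\pi$, the other via $g$), so by the universal property of the pullback they induce a unique morphism $\varphi'_j \colon \scY_{c_j} \to Y'$ with $\tilde g \circ \varphi'_j = \psi_j$ and $\pi' \circ \varphi'_j = \varphi' \circ f_j$. The same covering $\{f_j \colon c_j \to c\}$ and the maps $\{\varphi'_j\}$ witness that $\pi'$ is a $\tau$-local epimorphism. Note that pullbacks in $\wC = \Fun(\scC^\opp,\Set_U)$ exist and are computed objectwise, so there is no set-theoretic subtlety here beyond what is already recorded in the Conventions.

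For the second claim I need to check the axioms of a coverage in Definition~\ref{def:coverage and site} for the assignment $\wtau$ sending each $X \in \wC$ to the set of families consisting of a single $\tau$-local epimorphism $\{\pi \colon Y \to X\}$. (One should say explicitly what $\wtau$ is; the natural choice, matching the later use of \v{C}ech nerves of single local epimorphisms, is exactly this: covering families in $\wtau$ are singletons $\{\pi \colon Y \to X\}$ with $\pi$ a $\tau$-local epimorphism.) First, $\{1_X\} \in \wtau(X)$: the identity is a $\tau$-local epimorphism, since for any $\varphi \colon \scY_c \to X$ the trivial covering $\{1_c\} \in \tau(c)$ together with $\varphi_{1_c} \coloneqq \varphi$ does the job (using $\{1_c\} \in \tau(c)$ from Definition~\ref{def:coverage and site}). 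Second, the pullback-stability axiom: given $\{\pi \colon Y \to X\} \in \wtau(X)$ and any $g \colon X' \to X$, I must produce a family in $\wtau(X')$ sitting in a commutative square over $g$. By the first part of the lemma the pullback projection $\pi' \colon Y\times_X X' \to X'$ is a $\tau$-local epimorphism, so $\{\pi'\} \in \wtau(X')$, and the pullback square itself provides the required commutative diagram (with $\tilde g \colon Y \times_X X' \to Y$ on top). This verifies both coverage axioms.

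I do not anticipate a genuine obstacle: both parts are straightforward diagram chases once the pullback in $\wC$ is invoked. The only point requiring a little care is conceptual rather than technical — pinning down precisely what the coverage $\wtau$ is (single-morphism families versus arbitrary families of local epimorphisms) so that the statement is unambiguous and consistent with its later use; with the singleton convention the verification is as above, and if one instead allowed arbitrary families the same argument applies termwise. A secondary point worth a remark is that $\wC$ is only $V$-small rather than $U$-small, so strictly speaking $\wtau$ is a coverage relative to the larger universe; this is compatible with the universe conventions fixed in the introduction and causes no difficulty.
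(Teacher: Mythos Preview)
Your proposal is correct and is exactly the direct verification the paper leaves to the reader (the paper gives no proof, merely prefacing the lemma with ``One directly checks the following''). Your remarks on the singleton convention for $\wtau$ and on the universe sizes are apposite and consistent with how the paper uses $\wtau$ later.
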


Note that in the Grothendieck site $(\wC, \wtau)$ every covering family consists of a single morphism.
We list some general properties of $\tau$-local epimorphisms:

\begin{lemma}
\label{st:tau-loc epi properties}
Let $(\scC,\tau)$ be a site.
\begin{myenumerate}
\item Consider morphisms $p \in \wC(Y,X)$, $q \in \wC(Z,Y)$.
If $p \circ q$ is a $\tau$-local epimorphism, then so is $p$.

\item For any covering family $\{c_i \to c\}_{i \in I}$, the induced morphism $\coprod_{i \in I} \scY_{c_i} \to \scY_c$ is a $\tau$-local epimorphism.

\item $\tau$-local epimorphisms are stable under colimits:
let $\scJ$ be a small category, let $D', D \colon \scJ \to \wC$ be diagrams in $\wC$, and let $\pi \colon D' \to D$ be a morphism of diagrams such that each component $\pi_j \colon D'_j \to D_j$ is a $\tau$-local epimorphism, for any $j \in \scJ$.
Then, the induced morphism $\colim\, \pi \colon \colim\, D' \to \colim\, D$ is a $\tau$-local epimorphism.

\item If $X(c) \neq \emptyset$ for each $c \in \scC$, then the projection $X \times Z \to Z$ is a $\tau$-local epimorphism, for any $Z \in \wC$.

\item The site $(\scC,\tau)$ is closed if and only if $\tau$-local epimorphisms are stable under composition, if and only if $(\wC, \wtau)$ is closed.
\end{myenumerate}
\end{lemma}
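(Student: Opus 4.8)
The plan is to verify the five parts in turn, unwinding Definition~\ref{def:tau-loc epi} each time and exploiting the unifying fact that $\wC$ is a presheaf category, so colimits are computed objectwise in $\Set$ and morphisms out of a representable are governed by the Yoneda lemma. Parts (1) and (4) are essentially immediate. For (1): given $\varphi \colon \scY_c \to X$, apply the hypothesis to $p \circ q$ to obtain a covering $\{f_j \colon c_j \to c\}_{j \in J} \in \tau(c)$ and lifts $\psi_j \colon \scY_{c_j} \to Z$ with $(p \circ q) \circ \psi_j = \varphi \circ f_j$; then $q \circ \psi_j$ exhibits $p$ as a $\tau$-local epimorphism. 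For (4): since $\{1_c\} \in \tau(c)$ for every $c$, it suffices to lift an arbitrary $\varphi \colon \scY_c \to Z$ along $\pr_Z \colon X \times Z \to Z$ over the trivial covering; such a lift is a pair $(\psi, \varphi)$ with $\psi \colon \scY_c \to X$, and the Yoneda lemma together with $X(c) \neq \emptyset$ supplies such a $\psi$.

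For (2) I would invoke the base-change clause of Definition~\ref{def:coverage and site}: a morphism $\scY_d \to \scY_c$ is a map $g \colon d \to c$ in $\scC$, and the covering $\{c_i \to c\}_{i \in I}$ together with $g$ produces a covering $\{d_k \to d\}_{k \in K} \in \tau(d)$ and, for each $k$, an index $i(k) \in I$ and a commuting square whose top edge $d_k \to c_{i(k)}$ yields the required lift $\scY_{d_k} \to \scY_{c_{i(k)}} \hookrightarrow \coprod_{i \in I} \scY_{c_i}$. For (3) the crucial observation is that a morphism $\scY_c \to \colim D$ corresponds under Yoneda to an element of $(\colim D)(c) = \colim_{j} D_j(c)$, a colimit in $\Set$, hence to an element in the image of some structure map $D_{j_0}(c) \to (\colim D)(c)$; equivalently, $\varphi$ factors as $\iota_{j_0} \circ \tilde\varphi$ for some $\tilde\varphi \colon \scY_c \to D_{j_0}$ and some $j_0 \in \scJ$. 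Applying that $\pi_{j_0}$ is a $\tau$-local epimorphism to $\tilde\varphi$ gives a covering of $c$ with lifts into $D'_{j_0}$; pushing these forward along $\iota'_{j_0} \colon D'_{j_0} \to \colim D'$ and using $(\colim \pi) \circ \iota'_{j_0} = \iota_{j_0} \circ \pi_{j_0}$ produces the required lifts of $\varphi$ along $\colim \pi$.

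Part (5) is where the real work lies, and I expect its bookkeeping to be the main obstacle. I would first reduce the two ``$(\wC,\wtau)$ closed'' clauses to stability under composition: since every $\wtau$-covering family is a singleton (the observation following Lemma~\ref{st:subs and pullbacks}), closedness of $(\wC,\wtau)$ unwinds to the statement that for $\tau$-local epimorphisms $Z \to Y$ and $W \to Z$ there exists a $\tau$-local epimorphism $V \to Y$ factoring through $W \to Z \to Y$; one implication is witnessed by $V = W$, and the converse follows from part (1). It then remains to show that $(\scC,\tau)$ is closed if and only if $\tau$-local epimorphisms are closed under composition.

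For the forward direction, given $\tau$-local epimorphisms $p \colon Y \to X$ and $q \colon Z \to Y$ and a test map $\varphi \colon \scY_c \to X$, one first lifts $\varphi$ through $p$ over a covering $\{c_i \to c\}_{i \in I}$, then lifts each $\psi_i \colon \scY_{c_i} \to Y$ through $q$ over a covering $\{c_{i,j} \to c_i\}_{j \in J_i}$, and finally uses closedness of $(\scC,\tau)$ to obtain a covering $\{d_k \to c\}_{k \in K}$ with each $d_k \to c$ factoring through some composite $c_{i,j} \to c_i \to c$; transporting the second-stage lifts along these factorisations gives lifts of $\varphi$ through $p \circ q$ over $\{d_k \to c\}_k$. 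For the converse, parts (2) and (3) (coproducts being colimits) show that $\coprod_{i,j} \scY_{c_{i,j}} \to \coprod_i \scY_{c_i}$ and $\coprod_i \scY_{c_i} \to \scY_c$ are $\tau$-local epimorphisms, so by composition-stability the composite $\coprod_{i,j} \scY_{c_{i,j}} \to \scY_c$ is one; evaluating Definition~\ref{def:tau-loc epi} on $\id_{\scY_c}$ then yields a covering $\{d_k \to c\}_k$ with lifts $\scY_{d_k} \to \coprod_{i,j} \scY_{c_{i,j}}$. The only genuinely non-formal input is that such a lift factors through a single summand $\scY_{c_{i(k),j(k)}}$, which is immediate from $\big(\coprod_{i,j}\scY_{c_{i,j}}\big)(d_k) = \coprod_{i,j} \scC(d_k, c_{i,j})$; full faithfulness of $\scY$ then shows that $d_k \to c$ factors through $c_{i(k),j(k)} \to c_{i(k)} \to c$, which is precisely closedness of $(\scC,\tau)$.
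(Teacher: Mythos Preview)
Your proposal is correct and follows essentially the same approach as the paper. The paper dispatches parts (1)--(3) by citation to \cite[Sec.~16.1]{KS:Cats_and_Sheaves} and treats (4) and the easy implications in (5) as ``immediate'' or ``readily follows'', whereas you spell out the Yoneda/objectwise-colimit mechanics explicitly; for the substantive direction of (5) (composition stability $\Rightarrow$ $(\scC,\tau)$ closed) both you and the paper argue via parts (2), (3) and the fact that $\wC(\scY_c,-)$ preserves coproducts, and for $(\wC,\wtau)$ closed $\Rightarrow$ composition stability both invoke part (1) in the same way.
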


\begin{proof}
Claims~(1) appears as property LE2 in~\cite[p.~390]{KS:Cats_and_Sheaves}.
Claim~(2) follows straightforwardly from the definition of a $\tau$-local epimorphism.
Claim~(3) can be found as~\cite[Prop.~16.1.12]{KS:Cats_and_Sheaves}.

Claim~(4) is immediate since by assumption the projection $X(c) \times Y(c) \to Y(c)$ is surjective for each $c \in \scC$; we can hence use the identity covering of $c$ to obtain the desired lift.

For~(5), it readily follows from the closedness of $(\scC,\tau)$ that $\tau$-local epimorphisms are stable under composition, and thus also that $(\wC,\wtau)$ is closed.
On the other hand, assume that $\tau$-local epimorphisms are stable under composition.
Consider an object $c \in \scC$, a covering family $\{f_i \colon c_i \to c\}_{i \in I}$, and for each $i \in I$ a covering family $\{c_{i,k} \to c_i\}_{k \in K_i}$.
By claims~(2) and~(3), we thus obtain $\tau$-local epimorphisms
\begin{equation}
	\coprod_{i \in I} \coprod_{k \in K_i} \scY_{c_{i,k}}
	\longrightarrow \coprod_{i \in I} \scY_{c_i}
	\longrightarrow \scY_c\,.
\end{equation}
By assumption, their composition is a $\tau$-local epimorphism again, and hence it follows (using that $\wC(\scY_c,-)$ preserves colimits) that $(\scC,\tau)$ is closed.
Finally, assume that $(\wC,\wtau)$ is closed.
Let $f \colon Z \to Y$ and $g \colon Y \to X$ be $\tau$-local epimorphisms, and let $p \coloneqq g \circ f$.
By assumption, there exists a $\tau$-local epimorphism $Z' \to X$ which factors through $g$; that is, there exists a morphism $q \colon Z' \to Z$ such that $p \circ q$ is a $\tau$-local epimorphism.
But then $p$ is a $\tau$-local epimorphism by claim~(1).
\end{proof}

Let $(\scC,\tau)$ be a site, and consider a covering $\CU = \{c_i \to c\}_{i \in I}$.
We can form its \emph{\v{C}ech nerve}, which is the simplicial object in $\wC$ whose level-$n$ object reads as
\begin{equation}
\label{eq:def Cech nerve}
	\cC\CU_n \coloneqq \coprod_{i_0, \ldots, i_n \in I} C_{i_0 \ldots i_n}\,,
	\quad \text{with} \quad
	C_{i_0 \ldots i_n} \coloneqq \scY_{c_{i_0}} \underset{\scY_c}{\times} \cdots \underset{\scY_c}{\times} \scY_{c_{i_n}}
	\quad \in \wC\,.
\end{equation}
Note that, in general, $C_{i_0 \ldots i_n} \in \wC$ is not representable as soon as $n \neq 0$.
The simplicial structure morphisms are given by projecting out or doubling the $i$-th factor, respectively.
Depending on the context we will view the \v{C}ech nerve $\cC\CU$ either as a simplicial object $\cC\CU_\bullet$ in $\wC$ or as an augmented simplicial object $\cC\CU_\bullet \to \scY_c$ in $\wC$.
For convenience, we recall the following definitions:

\begin{definition}
\label{def:sheaf}
Let $(\scC,\tau)$ be a site, and let $X \in \wC$ be a presheaf on $\scC$.
Then, $X$ is called a \emph{sheaf on $(\scC,\tau)$} if for every object $c \in \scC$ and for every covering family $\{f_i \colon c_i \to c\}_{i \in I} \in \tau(c)$ the diagram
\begin{equation}
\begin{tikzcd}
	X(c) = \wC(\scY_c, X) \ar[r, "X(f_i)"]
	& \displaystyle{\prod_{i_0 \in I}} \wC(C_{i_0}, X) \ar[r, shift left=0.1cm] \ar[r, shift left=-0.1cm]
	& \displaystyle{\prod_{i_0, i_1 \in I}} \wC\big( C_{i_0i_1}, X \big)
\end{tikzcd}
\end{equation}
is an equaliser diagram in $\Set_\rmL$.
The \emph{category $\Sh(\scC,\tau)$ of sheaves on $(\scC,\tau)$} is the full subcategory of $\wC$ on the sheaves.
\end{definition}

\section{Homotopy sheaves and descent along local epimorphisms}
\label{sec:higher sheaves}

Let $(\scC,\tau)$ be a small site, and let $\scY \colon \scC \to \wC$ denote its Yoneda embedding.
In this section we show that the homotopy right Kan extension along $\scY_*$ maps large higher $\tau$-sheaves on $\scC$ to large higher $\wtau$-sheaves on $\wC$.
Here, $\wtau$ is the pretopology of $\tau$-local epimorphisms on $\wC$ (see Definition~\ref{def:tau-loc epi} and Lemma~\ref{st:subs and pullbacks}).

\subsection{Sheaves of $\infty$-groupoids}
\label{sec:Sh_oo}

Let $\scC$ be a small category.
Let $\scH_{\infty,0} \coloneqq \Fun(\scC^\opp, \sSet_\rmL)$ denote the extra-large category of large simplicial presheaves on $\scC$ .
It is enriched, tensored and cotensored over $\sSet_\rmL$.
Composing the Yoneda embedding with the functor $\sfc_\bullet \colon \Set_\rmL \to \sSet_\rmL$, we obtain a fully faithful functor $\scY \colon \scC \to \scH_{\infty,0}$.
From now on, we view the category $\scH_{\infty,0}$ as endowed with the projective model structure.

\begin{definition}
A \emph{presheaf of $\infty$-groupoids on $\scC$} is a fibrant object in $\scH_{\infty,0}$.
\end{definition}

\begin{proposition}
\label{st:H_oo props}
$\scH_{\infty,0}$ is a left proper, tractable, model $\sSet_\rmL$-category.
If $\scC$ has finite products, then $\scH_{\infty,0}$ is additionally a symmetric monoidal model category.
\end{proposition}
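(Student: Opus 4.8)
The plan is to assemble the statement from standard facts about model structures on presheaf categories, since all the hard work has effectively been done in the literature; the point is to verify that the hypotheses apply in our setting.

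First I would address left properness. Since $\sSet$ with the Kan--Quillen model structure is left proper, and the projective model structure on a diagram category $\Fun(\scC^\opp,\scM)$ is left proper whenever $\scM$ is (pushouts and weak equivalences in the projective structure are computed objectwise, and cofibrations are in particular objectwise cofibrations), left properness of $\scH_\infty = (\sSet^{\scC^\opp})_{proj}$ follows immediately. For tractability, recall (cf.~the conventions paragraph, following~\cite{Barwick:infty-n-Cat_as_closed_MoCat, Barwick:Enriched_B-Loc}) that $\sSet$ is a tractable model category: it is cofibrantly generated with generating (trivial) cofibrations having cofibrant domains, and it is combinatorial. The projective model structure on $\Fun(\scC^\opp, \scM)$ for $\scC$ $U$-small and $\scM$ combinatorial (resp.~tractable) is again combinatorial (resp.~tractable), with generating (trivial) cofibrations obtained by tensoring the representable presheaves $\scY_c$ with the generating (trivial) cofibrations of $\scM$; the cofibrant domains are preserved because $\scY_c \otimes (\text{cofibrant})$ is projectively cofibrant. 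Hence $\scH_\infty$ is tractable.

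Next, that $\scH_\infty$ is a model $\sSet$-category: $\scH$ is tensored, cotensored and enriched over $\sSet$ via the pointwise tensoring inherited from $\sSet$, and one must check the pushout-product (SM7) axiom for the projective model structure. This is standard: it suffices to check it on generating (trivial) cofibrations, where it reduces, via the description of the generators as $\scY_c \otimes i$, to the pushout-product axiom in $\sSet$ itself, which holds. (Equivalently, invoke the general principle that if $\scM$ is a model $\scV$-category and $\scC$ is $U$-small, then $(\scM^{\scC^\opp})_{proj}$ is again a model $\scV$-category.)

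For the final clause, suppose $\scC$ has finite products. Then $\wC = \Set^{\scC^\opp}$ carries the cartesian monoidal structure, and since the Yoneda embedding preserves finite products, Day convolution for the cartesian structure on $\scC$ agrees with the objectwise cartesian product; likewise on $\scH = \sSet^{\scC^\opp}$ the objectwise cartesian product endows $\scH_\infty$ with a symmetric monoidal structure with unit the terminal presheaf. One must verify that this monoidal structure is compatible with the projective model structure, i.e.~the pushout-product axiom for $\times$ and the unit axiom. Again this reduces to generators of the form $\scY_c \otimes i$ and $\scY_{c'} \otimes j$, whose product is $\scY_{c \times c'} \otimes (i \times j)$ (using that $\sfc_\bullet \colon \Set \to \sSet$ and $\scY$ preserve products), so the pushout-product axiom in $\scH_\infty$ follows from the one in $\sSet$. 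The unit axiom is automatic since the monoidal unit is cofibrant (the terminal presheaf equals $\scY_{\pt}$ if $\scC$ has a terminal object; in general it is a coproduct of representables, which is still projectively cofibrant).

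\textbf{Main obstacle.} None of these steps is deep; the only real care needed is the bookkeeping that the ``reduce to generators'' arguments for SM7 and the monoidal pushout-product axiom are legitimate — i.e.~that it genuinely suffices to test these axioms on generating (trivial) cofibrations, and that the generators of $\scH_\infty$ really do have the claimed form $\scY_c \otimes i$ with $i$ a generator of $\sSet$. This is a standard lemma (e.g.~in Hirschhorn or Barwick), so I would cite it rather than reprove it. The subtle point worth a sentence is the compatibility of the cartesian monoidal structure with the enrichment and with the model structure simultaneously, but this too is routine once products are computed objectwise.
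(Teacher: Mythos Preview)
Your proposal is correct and follows essentially the same approach as the paper: left properness via objectwise reasoning, tractability and the $\sSet$-enrichment from Barwick's general results on projective model structures, and the symmetric monoidal structure (when $\scC$ has finite products) via the pushout-product axiom reduced to generators $\scY_c \otimes i$ using that the Yoneda embedding preserves products. The paper simply cites \cite[Thm.~2.14, Prop.~4.50, Prop.~4.52]{Barwick:Enriched_B-Loc} for these steps rather than unpacking them; note also that since finite products include the empty product, $\scC$ automatically has a terminal object, so your ``in general'' clause about the unit is unnecessary.
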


\begin{proof}
The existence and tractability of the projective model structure follow from~\cite[Thm.~2.14]{Barwick:Enriched_B-Loc} (since $\scC$ is small).
$\scH_{\infty,0}$ is left proper since its pushouts and weak equivalences are defined objectwise, its cofibrations are (in particular) objectwise cofibrations, and $\sSet_\rmL$ is left proper.
Its simplicial enrichment is a consequence of~\cite[Prop.~4.50]{Barwick:Enriched_B-Loc}.
If $\scC$ has finite products, then $\scH_{\infty,0}$ is symmetric monoidal by~\cite[Prop.~4.52]{Barwick:Enriched_B-Loc}, using that the Yoneda embedding preserves limits and that $\scH_{\infty,0}$ is a simplicial model category.
\end{proof}

Note that~\cite[Prop.~4.52]{Barwick:Enriched_B-Loc} gives a more general criterion for when a projective model structure inherits a symmetric monoidal structure; however, for us the main case of interest will be where $\scC$ admits finite products.

Projective model categories of simplicial presheaves have an explicit cofibrant replacement functor $Q$~\cite{Dugger:Universal_HoThys}.
In order to write it down, it is convenient to introduce the two-sided bar construction; here, we follow~\cite[Sec.~4.2]{Riehl:Cat_HoThy}:

\begin{definition}
\label{def:bar construction}
Let $\scM$ be a large cocomplete category tensored over $\sSet_\rmL$.
Consider a large category $\scI$ and functors $H \colon \scI \to \scM$ and $G \colon \scI^\opp \to \sSet_\rmL$.
\begin{myenumerate}
\item The \emph{two-sided simplicial bar construction of $(G,H)$} is the simplicial object
\begin{equation}
	B_\bullet(G,\scI,H) \in \Fun(\bbDelta^\opp, \scM)\,,
	\qquad
	B_n(G,\scI,H) = \coprod_{i_0, \ldots, i_n} H(c_0) \otimes \scI(i_0, i_1) \otimes \cdots \otimes \scI(i_{n-1}, i_n) \otimes G(i_0)\,.
\end{equation}

\item The \emph{two-sided bar construction of $(F,G)$} is the realisation of $B_\bullet(G,\scI,H)$, i.e.
\begin{equation}
	B(G,\scI,H) = \int^{[n] \in \bbDelta^\opp} \Delta^n \otimes B_n(G,\scI,H)
	\quad \in \scM\,.
\end{equation}
\end{myenumerate}
\end{definition}

For later reference, we record the following direct consequences of this definition:

\begin{lemma}
\label{st:bar construction properties}
In the setting of Definition~\ref{def:bar construction}, the following statements hold true:
\begin{myenumerate}
\item Bar constructions commute:
let $\scJ$ be a second large category, and let $G' \colon \scJ^\opp \to \sSet_\rmL$ and $D \colon \scI \times \scJ \to \scM$ be diagrams.
Then, there is a canonical natural isomorphism
\begin{equation}
	B \big( G', \scJ, B(G, \scI, D) \big)
	\cong B \big( G, \scI, B(G', \scJ, D) \big)\,.
\end{equation}

\item Bar constructions are pointwise:
if $\scM = \Fun(\scC^\opp, \sSet_\rmL)$ is the category of large simplicial presheaves on a large category $\scC$ and $c \in \scC$ is any object, then evaluation at $c$ commutes with bar constructions:
there is a natural isomorphism
\begin{equation}
	B(G,\scI,H)(c) \cong B \big( G, \scI, H(c) \big)\,.
\end{equation}

\item In the above setting, let additionally $\scI = \bbDelta^\opp$.
Then there is a natural morphism
\begin{equation}
	B(*, \bbDelta^\opp, H) \longrightarrow \diag(H)\,,
\end{equation}
where $\diag$ denotes the functor taking the diagonal of a bisimplicial set.
This morphism is a weak equivalence of simplicial sets.

\item If $\scM$ is a large simplicial model category with cofibrant replacement functor $Q^\scM$, the bar construction is a model for the homotopy colimit, i.e.
\begin{equation}
	\underset{j \in \scJ}{\hocolim}^\scM (Dj)
	\simeq B(*, \scJ, Q^\scM \circ D)\,.
\end{equation}
In particular, if $Dj$ is already cofibrant in $\scM$ for each $j \in \scJ$, then we may compute the homotopy colimit of $D$ as
\begin{equation}
	\underset{j \in \scJ}{\hocolim}^\scM (Dj)
	\simeq B(*, \scJ, D)\,.
\end{equation}
\end{myenumerate}
\end{lemma}

\begin{proof}
Claim~(1) follows readily from the fact that colimits commute.
Claim~(2) is a direct consequence of the fact that colimits in presheaf categories are computed pointwise.
Finally, Claim~(3) is an application of the Bousfield-Kan map (see~\cite[Sec.~18.7]{Hirschhorn:MoCats} for details), and Claim~(4) is~\cite[Cor.~5.1.3]{Riehl:Cat_HoThy}.
\end{proof}

\begin{remark}
Analogous statements hold true in the Grothendieck universes $\rmS$ and $\XL$ of small and extra-large sets, respectively.
\qen
\end{remark}

We now define the cofibrant replacement functor in $\scH_{\infty,0}$ which we will be using throughout this article; it was introduced in~\cite{Dugger:Universal_HoThys}.

\begin{definition}
\label{def:Def Q}
We let $Q \colon \scH_{\infty,0} \to \scH_{\infty,0}$ denote the functor whose action on $F \in \scH_{\infty,0}$ is given by
\begin{equation}
\label{eq:Def Q}
	(QF)_n = B(F,\scC,\scY)_n
	= \coprod_{c_0, \ldots, c_n \in \scC} \scY_{c_0} \times \scC(c_0, c_1) \times \cdots \times \scC(c_{n-1}, c_n) \times F_n(c_n)\,.
\end{equation}
\end{definition}

Observe that for $X$ a \textit{simplicially constant} simplicial presheaf on $\scC$, we have a canonical isomorphism
\begin{equation}
	QX = B(*, \scC_{/X}, \scY)\,.
\end{equation}

\begin{definition}
\label{def:oo-shs}
Let $(\scC,\tau)$ be a small Grothendieck site, and let $\ctau$ denote the class of morphisms in $\scH_{\infty,0}$ consisting of \v{C}ech nerves of coverings in $(\scC,\tau)$ (see~\eqref{eq:def Cech nerve}).
Since $\scC$ is small, this is a small set.
We let \smash{$\scH_{\infty,0}^{loc}$} denote the left Bousfield localisation \smash{$\scH_{\infty,0}^{loc} \coloneqq L_\ctau \scH_{\infty,0}$} of the large category $\scH_{\infty,0}$.
A \emph{sheaf of $\infty$-groupoids on $(\scC,\tau)$} is a fibrant object in \smash{$\scH_{\infty,0}^{loc}$}.
\end{definition}

The model structure on $\scH_{\infty,0}^{loc}$ is also called the \emph{local projective model structure} on $\Fun(\scC^\opp, \sSet_\rmL)$.
Explicitly, an object $F$ in $\Fun(\scC^\opp, \sSet_\rmL)$ is fibrant if
\begin{myenumerate}
\item $F(c)$ is a Kan complex for every $c \in \scC$, and

\item for every covering family $\CU = \{c_i \to c\}_{i \in I}$ in $(\scC,\tau)$, the morphism
\begin{align}
\label{eq:H_oo fib generic}
	\scF(c) \longrightarrow &\underset{n \in \bbDelta}{\holim}^{\sSet_\rmL} \Big( \cdots \ul{\scH}_{\infty,0} \big( Q (\cC \CU_n),\, F \big)\ \cdots \Big)
	\\
	&= \underset{n \in \bbDelta}{\holim}^{\sSet_\rmL} \Big( \cdots \prod_{i_0, \ldots, i_n \in I} \ul{\scH}_{\infty,0}( Q C_{i_0 \ldots i_n},\, F)\ \cdots \Big)
\end{align}
is a weak equivalence in $\sSet_\rmL$ (compare~\eqref{eq:def Cech nerve}).
Here we have used that $Q$ is a left adjoint.
\end{myenumerate}

\begin{proposition}
\label{st:H_oo^loc props}
$\scH_{\infty,0}^{loc}$ is a proper, tractable, $\sSet_\rmL$-model category.
If $\scH_{\infty,0}$ is symmetric monoidal as a model category, then so is $\scH_{\infty,0}^{loc}$.
\end{proposition}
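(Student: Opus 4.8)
The plan is to obtain every assertion from the theory of enriched left Bousfield localisations applied to $\scH_\infty$. By Proposition~\ref{st:H_oo props} the model category $\scH_\infty$ is left proper, tractable, and a model $\sSet$-category; it is moreover right proper, since its pushouts, pullbacks, weak equivalences and fibrations are all computed objectwise and $\sSet$ is right proper. One first notes that $\ctau$ is a $U$-small set: the objects of $\scC$ form a $U$-small set and each $\tau(c)$ is $U$-small by Definition~\ref{def:coverage and site}, so there are only $U$-many \v{C}ech nerves $\cC\CU_\bullet \to \scY_c$. The existence theorem for enriched left Bousfield localisations of~\cite{Barwick:Enriched_B-Loc} then applies and yields that $\scH_\infty^{loc} = L_\ctau\scH_\infty$ exists and is again a left proper, tractable model $\sSet$-category: left properness and tractability are inherited because the cofibrations are unchanged and one may choose a set of generating trivial cofibrations with cofibrant domains, and the $\sSet$-enrichment is part of the output of the enriched-localisation construction. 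This leaves right properness and the symmetric monoidal statement.

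Right properness is the substantive point; left properness of a left Bousfield localisation is automatic, but right properness is not, and it genuinely uses that $\tau$ is a Grothendieck pretopology rather than an arbitrary localising set. The plan is to argue that $L_\ctau\scH_\infty$ presents a \emph{left exact} reflective localisation of simplicial presheaves: its local objects are precisely the $\tau$-sheaves of spaces, and the derived sheafification preserves finite homotopy limits. This left exactness is the model-categorical shadow of the stability of $\tau$-local epimorphisms (equivalently, of covering sieves) under pullback recorded in Lemma~\ref{st:subs and pullbacks}, i.e.\ of the fact that the \v{C}ech localisation is a topological localisation in the sense of~\cite[\S6.2]{Lurie:HTT}. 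Granting this, one shows that a homotopy pullback of $\ctau$-local objects may be computed as a homotopy pullback of their fibrant (sheaf) replacements in $\scH_\infty$, where between fibrant objects a $\ctau$-local weak equivalence is an objectwise weak equivalence and a $\ctau$-local fibration is an objectwise Kan fibration; objectwise right properness of $\sSet$ then settles the fibrant case, and the general statement reduces to it in the standard way (cf.~\cite{Hirschhorn:MoCats}). Alternatively, one may simply cite properness of the local projective model structure on simplicial presheaves over a site. I expect this step to require the most care.

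Finally, suppose $\scH_\infty$ is symmetric monoidal as a model category. By Proposition~\ref{st:H_oo props} this is the case precisely when $\scC$ has finite products, the monoidal product being the (objectwise) cartesian product of presheaves and the unit the representable presheaf $\scY_t$ on the terminal object $t \in \scC$, which is projectively cofibrant. To transfer the monoidal structure to the localisation I would invoke the criterion of~\cite{Barwick:Enriched_B-Loc}: since the unit is already cofibrant, it suffices, by the standard reduction to generating cofibrations, to check that $f \times \scY_d$ is a $\ctau$-local weak equivalence for every $f \in \ctau$ and every $d \in \scC$. For $f = (\cC\CU_\bullet \to \scY_c)$ with $\CU = \{c_i \to c\}_{i \in I} \in \tau(c)$, the object $\cC\CU_\bullet \times \scY_d$ is canonically the \v{C}ech nerve of the family $\{c_i \times d \to c \times d\}_{i \in I}$, which is the pullback of the $\tau$-local epimorphism $\coprod_i \scY_{c_i} \to \scY_c$ along the projection $\scY_{c \times d} \to \scY_c$ and hence itself a $\tau$-local epimorphism by Lemma~\ref{st:subs and pullbacks}; since \v{C}ech nerves of $\tau$-local epimorphisms are $\ctau$-local weak equivalences, $f \times \scY_d$ is one as well. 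Compatibility of the monoidal and simplicial structures is again part of Barwick's framework. I expect this last part to be routine.
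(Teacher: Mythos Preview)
Your argument is essentially correct and considerably more detailed than the paper's own proof, which is a two-line citation of Barwick's theorems: tractability, properness and the $\sSet$-enrichment are obtained directly from \cite[Thm.~2.14, Thm.~4.56]{Barwick:Enriched_B-Loc}, and the symmetric monoidal claim from \cite[Thm.~4.58]{Barwick:Enriched_B-Loc}. Where you unpack the mechanism---especially your identification of right properness as the non-automatic point, and your explicit verification of the monoidal criterion via stability of $\tau$-local epimorphisms under pullback---the paper simply defers to Barwick's general machinery.

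Two remarks on the differences. First, your sketch of right properness via left exactness is the correct idea (a left Bousfield localisation of a right proper model category is right proper precisely when the derived localisation functor preserves finite homotopy limits), but the passage about computing homotopy pullbacks of $\ctau$-local objects via their fibrant replacements does not quite cash out the model-categorical statement: right properness concerns strict pullbacks of local weak equivalences along local fibrations, and reducing to the fibrant case needs the argument that fibrant replacement commutes with homotopy pullback. Your ``alternatively, cite properness of the local projective model structure'' is in effect what the paper does, via Barwick. Second, your monoidal argument assumes the cartesian structure coming from finite products in $\scC$ (and ``precisely when'' overstates Proposition~\ref{st:H_oo props}), whereas the hypothesis is only that $\scH_\infty$ is symmetric monoidal; the paper's citation of \cite[Thm.~4.58]{Barwick:Enriched_B-Loc} covers the general case, though in practice the cartesian case is the only one used later.
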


\begin{proof}
The first claim follows form~\cite[Thm.~2.14, Thm~4.56]{Barwick:Enriched_B-Loc} and the fact that $\sSet_\rmL$ is a large, proper, tractable model category.
The second claim is an application of~\cite[Thm.~4.58]{Barwick:Enriched_B-Loc}.
\end{proof}

\begin{remark}
If $\scC$ has finite products, then the conditions of~\cite[Prop.~4.52]{Barwick:Enriched_B-Loc} are satisfied, so that in this case $\scH_{\infty,0}$, and hence also $\scH_{\infty,0}^{loc}$, are symmetric monoidal model categories.
\qen
\end{remark}

The $\sSet_\rmL$-enriched hom in both $\scH_{\infty,0}$ and $\scH_{\infty,0}^{loc}$ is given by
\begin{equation}
	\ul{\scH}_{\infty,0}(F,G)
	= \int_{c \in \scC^\opp} (G(c))^{F(c)}
	\qquad \in \sSet_\rmL\,,
\end{equation}
where the argument of the end is given by the internal hom in $\sSet_\rmL$.
In particular, there is a natural isomorphism $\ul{\scH}_{\infty,0}(\scY_c, F) \cong F(c)$ for any $c \in \scC$ and any $F \in \scH_{\infty,0}$.

Since $\scC$ is a small category (hence also large, since $\rmS \in \rmL$) and $\Set_\rmS$ is a large category, $\wC = \Fun(\scC^\opp, \Set_\rmS)$ is a large category.
We define the extra-large category
\begin{equation}
	\wH_{\infty,0} \coloneqq \Fun \big( \wC{}^\opp, \sSet_\rmL \big)\,.
\end{equation}
Since $\sSet_\rmL$ is $\rmL$-tractable, the projective model structure on $\wH_{\infty,0}$ exists and is itself $\rmL$-tractable~\cite[Thm.~2.14]{Barwick:Enriched_B-Loc}.
We will always view $\wH_{\infty,0}$ as endowed with this projective model structure.

By Proposition~\ref{st:subs and pullbacks}, if $(\scC,\tau)$ is a small site, then $(\wC, \wtau)$ is a large site.
Let $(\wtau){}\check{}$ denote the large set of \v{C}ech nerves of coverings in $(\wC,\wtau)$ (this is a large set since $\wC$ is large).
Since $\wH_{\infty,0}$ is left proper and $\rmL$-tractable, and $(\wtau){}\check{}$ is $\rmL$-small, we can form the left Bousfield localisation
\begin{equation}
\label{eq:def wH^loc_oo}
	\wH_{\infty,0}^{loc} \coloneqq L_{(\wtau){}\check{}} \wH_{\infty,0}\,.
\end{equation}
Both $\wH_{\infty,0}$ and $\wH_{\infty,0}^{loc}$ are symmetric monoidal since $\wC$ has finite products (by Propositions~\ref{st:H_oo props} and~\ref{st:H_oo^loc props}, respectively).

The Yoneda embedding $\scY \colon \scC \to \wC$ induces a pullback functor $\scY^* \colon \wH_{\infty,0} \to \scH_{\infty,0}$.
Since both $\scC$ and $\wC$ are large, and since both $\wH_{\infty,0}$ and $\scH_{\infty,0}$ have all large limits and colimits, there is a triple of adjunctions
\begin{equation}
\begin{tikzcd}[column sep=2cm]
	\wH_{\infty,0} \ar[r, "\scY^*" description]
	& \scH_{\infty,0}\,. \ar[l, bend left=30, "\scY_*" description, "\perp"' {yshift=0.15cm,pos=0.51}] \ar[l, bend left=-30, "\scY_!" {description, pos=0.54}, "\perp" {yshift=-0.15cm,pos=0.54}]
\end{tikzcd}
\end{equation}
We can compute the right adjoint as
\begin{equation}
	(\scY_* F)(X) = \int_{c \in \scC^\opp} F(c)^{(\wC)^\opp(X,\scY_c)}
	\cong \int_{c \in \scC^\opp} F(c)^{X(c)}
	\cong \ul{\scH}_{\infty,0}(X,F)\,.
\end{equation}
(Note that here we view $X \in \wC$ as an object in $\scH_{\infty,0}$ via the canonical inclusions $\Set_\rmS \hookrightarrow \Set_\rmL \hookrightarrow \sSet_\rmL$.)

We now ask whether one of the functors $\scY_!$ or $\scY_*$ maps sheaves of $\infty$-groupoids on $(\scC,\tau)$ to sheaves of $\infty$-groupoids on $(\wC, \wtau)$, i.e.~preserves fibrant objects as a functor \smash{$\scH_{\infty,0}^{loc} \to \wH_{\infty,0}^{loc}$}.
Since this indicates that we are looking for a right Quillen functor, we focus on the right adjoint $\scY_*$.
In general, $\scY_*$ will not even preserve fibrant objects as a functor $\scH_{\infty,0} \to \wH_{\infty,0}$.
However, if $\tilde{Q} \colon \scH_{\infty,0} \to \scH_{\infty,0}$ is a functorial cofibrant replacement, then the functor
\begin{equation}
\label{Peq:def S^Q}
	S^{\tilde{Q}}_{\infty,0} \coloneqq \tilde{Q}^* \circ \scY_* \colon \scH_{\infty,0} \longrightarrow \wH_{\infty,0}\,,
	\qquad
	\big( S^{\tilde{Q}}_{\infty,0}(F) \big)(X) \coloneqq \ul{\scH}(\tilde{Q}X,F)
\end{equation}
preserves fibrations and trivial fibrations:
$\scH_{\infty,0}$ is a model $\sSet_\rmL$-category, and so the simplicially enriched hom
\begin{equation}
	\ul{\scH}_{\infty,0}(A,-) \colon \ul{\scH}_{\infty,0} \to \sSet_\rmL
\end{equation}
is a right Quillen functor whenever $A \in \ul{\scH}_{\infty,0}$ is cofibrant.

\begin{remark}
The above holds true for \textit{any} cofibrant replacement functor $\tilde{Q}$ in $\scH_{\infty,0}$.
However, for technical reasons we always use Dugger's functor $Q$ from~\eqref{eq:Def Q} for cofibrant replacement in the projective model structure of simplicial presheaves in the remainder of this article.
We abbreviate $S_{\infty,0} \coloneqq S^Q_{\infty,0}$.
\qen
\end{remark}

Let $F \in \scH_{\infty,0}$ and $X \in \wC$.
Using that $QX$ is a bar construction we compute
\begin{align}
	(S_{\infty,0} F)(X) &= \ul{\scH}_{\infty,0}(QX,F)
	\\
	&= \ul{\scH}_{\infty,0} \big( B^{\scH_{\infty,0}}(*, \scC_{/X}, \scY),\, F \big)
	\\
	&\cong C_{\sSet_\rmL} \big( *, (\scC_{/X})^\opp, F \big)\,,
\end{align}
where $C_{\sSet_\rmL}$ and $B^{\scH_{\infty,0}}$ are the cobar and bar constructions in $\sSet_\rmL$ and in $\scH_{\infty,0}$, respectively.
If $F$ is projectively fibrant, then the last expression is a model for the homotopy limit (see also recalled as Proposition~\ref{st:cobar computes holim} below),
\begin{equation}
	(S_{\infty,0} F)(X) \cong \holim^{\sSet_\rmL} \big( (\scC_{/X})^\opp \xrightarrow{\pr_{\scC^\opp}} \scC^\opp \overset{F}{\longrightarrow} \sSet_\rmL \big)\,.
\end{equation}
Now, using the Yoneda Lemma and~\cite[Ex.~9.2.11]{Riehl:Cat_HoThy} we conclude:

\begin{proposition}
\label{st:S^Q_oo = hoRan_(Y^op)}
 If $F \in \scH_{\infty,0}$ is fibrant, $S_{\infty,0} F$ is the homotopy right Kan extension of $F$ along the (opposite of the) Yoneda embedding of $\scC$, i.e.
\begin{equation}
	S_{\infty,0} F \simeq \hoRan_\scY (F)\,.
\end{equation}
\end{proposition}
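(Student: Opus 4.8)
The plan is to identify $(S^Q_\infty F)(X)$ with the standard pointwise formula for the homotopy right Kan extension, namely the homotopy limit over the appropriate comma category. The excerpt has already done almost all of the work: unwinding the bar-construction description of $QX$ yields the cobar construction $C_{\sSet}(*, (\scC_{/X})^\opp, F)$, and for projectively fibrant $F$ this cobar construction computes the homotopy limit of $F \circ \pr_{\scC^\opp}$ over $(\scC_{/X})^\opp$. So the only remaining point is to recognise this homotopy limit as the value at $X$ of $\hoRan_{\scY^\opp}(F)$.

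First I would recall the pointwise formula for the homotopy right Kan extension along a functor $G \colon \scA \to \scB$ of a diagram $D \colon \scA \to \scM$ valued in a model category (or simplicial category) $\scM$: for $b \in \scB$ one has $(\hoRan_G D)(b) \simeq \holim\big( (b \downarrow G) \to \scA \xrightarrow{D} \scM \big)$, where $b \downarrow G$ is the comma category of objects of $\scA$ under $b$ together with a map $b \to G(a)$. Applying this with $G = \scY^\opp \colon \scC^\opp \to \wC^\opp$, $D = F \colon \scC^\opp \to \sSet$, and $b = X \in \wC^\opp$, the relevant comma category is $(X \downarrow \scY^\opp)$, whose objects are pairs $(c, X \to \scY_c$ in $\wC^\opp)$, i.e.\ pairs $(c, \scY_c \to X$ in $\wC)$. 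By the Yoneda Lemma, a morphism $\scY_c \to X$ in $\wC$ is the same as an element of $X(c)$, and this identification is natural; hence $(X \downarrow \scY^\opp)^\opp \cong \scC_{/X}$, the category of elements of $X$, with the projection to $\scC$ agreeing with $\pr_{\scC^\opp}$ under opposites. Therefore the comma category appearing in the pointwise Kan extension formula is exactly $(\scC_{/X})^\opp$, and the diagram over it is $F \circ \pr_{\scC^\opp}$.

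Combining this with the computation already carried out in the excerpt gives, for fibrant $F$,
\begin{equation}
	(S^Q_\infty F)(X) \cong \holim^\sSet \big( (\scC_{/X})^\opp \xrightarrow{\pr_{\scC^\opp}} \scC^\opp \xrightarrow{F} \sSet \big) \cong (\hoRan_{\scY^\opp} F)(X)\,,
\end{equation}
which is the desired identification objectwise; one cites~\cite[Ex.~9.2.11]{Riehl:Cat_HoThy} for the last step, as indicated. Finally I would note that these identifications are natural in $X$: the cofibrant replacement $Q$ is functorial and the bar-construction description of $QX$ is natural in $X$, so all the isomorphisms and weak equivalences above assemble into a natural equivalence of presheaves $S^Q_\infty F \simeq \hoRan_{\scY^\opp}(F)$ on $\wC$.

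The main obstacle is not any single hard estimate but rather bookkeeping: one must make sure the Yoneda-Lemma identification $\wC(\scY_c, X) \cong X(c)$ is tracked compatibly through all the opposite-category flips (between $\scC$ and $\scC^\opp$, between $\scC_{/X}$ and $(\scC_{/X})^\opp$, and between $\wC$ and $\wC^\opp$), so that the comma category $(X \downarrow \scY^\opp)$ genuinely matches $(\scC_{/X})^\opp$ with the correct projection functor, and that the fibrancy hypothesis on $F$ is exactly what is needed to pass from the strict cobar construction to the homotopy limit. Once those identifications are pinned down, the statement follows formally from the pointwise formula for homotopy right Kan extensions together with the bar-construction computation of $QX$ recorded in the appendix.
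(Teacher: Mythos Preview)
Your proposal is correct and follows essentially the same route as the paper: both identify the comma category $(X \downarrow \scY^\opp)$ with $(\scC_{/X})^\opp$ via the Yoneda Lemma and then invoke the pointwise formula for homotopy right Kan extensions from~\cite[Ex.~9.2.11]{Riehl:Cat_HoThy}. Your write-up simply makes the bookkeeping of the opposite-category identifications and the naturality in $X$ more explicit than the paper does.
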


\begin{remark}
In view of Section~\ref{sec:char of sheaves on wC}, we emphasize that \smash{$S_{\infty,0}$} is a presentation of the $\infty$-categorical right Kan extension of presheaves of spaces on $\scC$ to presheaves of spaces on $\wC$.
\qen
\end{remark}

In order to show that \smash{$S_{\infty,0}$} is a right Quillen functor, it remains to show that it is a right adjoint.
Since $\wC$ is a large category, $\wC$-indexed (co)ends exist in $\sSet_\rmL$, the extra-large category of large simplicial sets.
Therefore, a left adjoint to $S_{\infty,0}$ is given by
\begin{equation}
\label{eq:tildeRe^Q}
	\widetilde{Re}_{\infty,0} \colon \wH \to \scH\,,
	\qquad
	\widetilde{Re}_{\infty,0}(\widehat{G}) \coloneqq \int^{X \in \wC^\opp} \widehat{G}(X) \otimes QX\,.
\end{equation}
We also set
\begin{equation}
	Re_{\infty,0}(\widehat{G}) \coloneqq Q \circ \scY^*(\widehat{G})
\end{equation}

\begin{lemma}
\label{st:Re^Q = Q Y^*}
There is a canonical natural isomorphism
\begin{equation}
	\widetilde{Re}_{\infty,0}
	\cong Re_{\infty,0}\,.
\end{equation}
Thus, $Re_{\infty,0}$ is also a left adjoint to $S_{\infty,0}$.
\end{lemma}

\begin{proof}
Using~\eqref{eq:Def Q},  we have the following natural isomorphisms
\begin{align}
	\big( \widetilde{Re}_{\infty,0}(\widehat{G}) \big)_n
	&= \int^{X \in \wC^\opp} \coprod_{c_0, \ldots, c_n \in \scC} \scY_{c_0} \times \scC(c_0, c_1) \times \cdots \times \scC(c_{n-1}, c_n) \times \wC(\scY_{c_n},X) \times \widehat{G}_n(X)
	\\[0.1cm]
	&\cong \coprod_{c_0, \ldots, c_n \in \scC} \scY_{c_0} \times \scC(c_0, c_1) \times \cdots \times \scC(c_{n-1}, c_n) \times \widehat{G}_n(\scY_{c_n})
	\\*[0.1cm]
	&= \big( (Q \circ \scY^*)(\widehat{G}) \big)_n\,,
\end{align}
where the isomorphism is an application of the Yoneda Lemma for $\wC$.
\end{proof}

\begin{proposition}
\label{st:Re^Q -| S^Q}
The functors $Re_{\infty,0}$ and $S_{\infty,0}$ give rise to a simplicial Quillen adjunction which sits inside the following non-commutative diagram
\begin{equation}
\begin{tikzcd}[column sep=1.75cm, row sep=1.25cm]
	\wC \ar[d, "Q"'] \ar[r, "\widehat{\scY}"] & \wH_{\infty,0} \ar[dl, shift left=-0.1cm, "Re_{\infty,0}"' xshift=0.2cm]
	\\
	\scH_{\infty,0} \ar[ur, shift left=-0.1cm, "S_{\infty,0}"' xshift=-0.2cm] & 
\end{tikzcd}
\end{equation}
Further, there exists a natural isomorphism $\eta \colon Re_{\infty,0} \circ \widehat{\scY} \arisom Q$.
\end{proposition}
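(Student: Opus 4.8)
The plan is to obtain the proposition by combining ingredients that are already in place: the adjunction and its $\sSet$-enrichment come from the coend formula~\eqref{eq:tildeRe^Q} together with Lemma~\ref{st:Re^Q = Q Y^*}, the Quillen property reduces to the cofibrancy of $QX$, and the natural isomorphism $\eta$ is an instance of the Yoneda Lemma. For the adjunction, recall that $\widetilde{Re}{}^Q_\infty$ in~\eqref{eq:tildeRe^Q} is already a left adjoint of $S^Q_\infty$ (using that $\wC$ is $V$-small, so that $\wC$-indexed coends exist in $\sSet$); this adjunction is moreover $\sSet$-enriched, since $\widetilde{Re}{}^Q_\infty$ is built from the simplicial tensoring and $S^Q_\infty F = \ul{\scH}_\infty \big( Q(-), F \big)$ is a $\sSet$-functor in $F$ because $\scH_\infty$ is a $\sSet$-model category. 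Composing with the natural isomorphism $\widetilde{Re}{}^Q_\infty \cong Re^Q_\infty = Q \circ \scY^*$ of Lemma~\ref{st:Re^Q = Q Y^*} then yields a $\sSet$-enriched adjunction $Re^Q_\infty \dashv S^Q_\infty$.

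Next I would check that this adjunction is Quillen by showing that the right adjoint $S^Q_\infty$ preserves fibrations and trivial fibrations. In the projective model structure on $\wH_\infty$ these are detected objectwise over $\wC$, so it is enough to observe that for every $X \in \wC$ the object $QX \in \scH_\infty$ is cofibrant, being a value of the cofibrant replacement functor; hence the $\sSet$-functor $\ul{\scH}_\infty(QX, -) \colon \scH_\infty \to \sSet$ preserves fibrations and trivial fibrations, this being the pushout-product axiom of the $\sSet$-model category $\scH_\infty$ applied to the cofibration $\emptyset \to QX$. Since $(S^Q_\infty F)(X) = \ul{\scH}_\infty(QX, F)$, the functor $S^Q_\infty$ therefore preserves fibrations and trivial fibrations, and $Re^Q_\infty \dashv S^Q_\infty$ is a simplicial Quillen adjunction.

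For the natural isomorphism I would use $Re^Q_\infty = Q \circ \scY^*$: for $X \in \wC$ and $c \in \scC$, equation~\eqref{st:Yoneda embeddings together} gives $(\scY^* \widehat{\scY}_X)(c) = \widehat{\scY}_X(\scY_c) = \wC(\scY_c, X) \cong X(c)$, naturally in $c$ and $X$, so $\scY^* \circ \widehat{\scY}$ is canonically isomorphic to the inclusion $\wC \hookrightarrow \scH_\infty$; applying $Q$ gives $\eta \colon Re^Q_\infty \circ \widehat{\scY} = Q \circ \scY^* \circ \widehat{\scY} \arisom Q$. The diagram genuinely fails to commute in the other direction, since $(S^Q_\infty \circ Q)(X)$ evaluated at $Z \in \wC$ is $\ul{\scH}_\infty(QZ, QX)$, which does not agree with $\widehat{\scY}_X(Z) = \wC(Z, X)$. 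The only step beyond bookkeeping is the verification that $S^Q_\infty$ is right Quillen, and that follows cleanly from the cofibrancy of $QX$; I expect the only thing requiring care is to keep the two roles of $Q$ — the cofibrant replacement functor on $\scH_\infty$ and the induced functor $\wC \to \scH_\infty$ — properly aligned throughout.
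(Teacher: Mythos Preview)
Your proof is correct and follows essentially the same route as the paper. The paper's argument is terser: it notes that $S^Q_\infty$ is right Quillen ``by construction'' (i.e.\ exactly your cofibrancy-of-$QX$ observation) and then invokes \cite[Prop.~2.3]{Dugger:Universal_HoThys} as a black box for the existence of the adjunction and of $\eta$, remarking that $\eta$ is an isomorphism by the Yoneda Lemma; you have simply unpacked that citation, using Lemma~\ref{st:Re^Q = Q Y^*} and~\eqref{st:Yoneda embeddings together} to build $\eta$ directly.
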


\begin{proof}
We have already shown above that $Re_{\infty,0} \dashv S_{\infty,0}$ and that $S_{\infty,0}$ preserves fibration and trivial fibrations.
The rest is then a direct application of~\cite[Prop.~2.3]{Dugger:Universal_HoThys}.
In the present case, $\eta$ as written down there turns out to be an isomorphism as a consequence of the Yoneda Lemma:
recall that $Re_{\infty,0} = Q \circ \scY^*$ and that there is a canonical isomorphism
\begin{equation}
	\scY^*(\wY_X) = \wC(\scY_{(-)}, X) \cong X\,,
\end{equation}
natural in $X \in \wC$.
\end{proof}

\begin{proposition}
The functors $Re_{\infty,0}$ and $S_{\infty,0}$ have the following properties:
\begin{myenumerate}
\item The functor $Re_{\infty,0} \colon \wH_{\infty,0} \to \scH_\infty$ is homotopical.

\item Let $\wQ \colon \wH_{\infty,0} \to \wH_{\infty,0}$ denote the cofibrant replacement functor on $\wH_{\infty,0}$ defined in analogy with~\eqref{eq:Def Q}.
For each fibrant object $F \in \scH_{\infty,0}$, there is a zig-zag of weak equivalences
\begin{equation}
	F \overset{\sim}{\longleftarrow} Q F
	\weq Re_{\infty,0} \circ S_{\infty,0}(F)
	\overset{\sim}{\longleftarrow} Re_{\infty,0} \circ \wQ \circ S_{\infty,0} (F) \,,
\end{equation}
which is natural in $F$.
\end{myenumerate}
\end{proposition}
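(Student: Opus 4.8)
The plan is to reduce both claims to the identification $Re^Q_\infty = Q \circ \scY^*$ of Lemma~\ref{st:Re^Q = Q Y^*}, together with standard properties of cofibrant replacement and of simplicial model categories. For part~(1), I would show that each factor of $Re^Q_\infty = Q \circ \scY^* \colon \wH_\infty \to \scH_\infty$ is homotopical. The restriction functor $\scY^*$ satisfies $(\scY^*\widehat{f})_c = \widehat{f}_{\scY_c}$ for all $c \in \scC$, and since weak equivalences in the projective model structures on both $\wH_\infty$ and $\scH_\infty$ are detected objectwise, $\scY^*$ preserves all weak equivalences. The cofibrant replacement functor $Q$ on $\scH_\infty$ carries a natural transformation $q \colon Q \Rightarrow \id_{\scH_\infty}$ whose components $q_F \colon QF \to F$ are weak equivalences; for a weak equivalence $f \colon F \to F'$, two-out-of-three applied to the naturality square of $q$ (with edges $q_F$, $q_{F'}$, $f$, $Qf$) shows that $Qf$ is a weak equivalence. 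Hence $Re^Q_\infty$ is homotopical. (Alternatively this can be read off from the coend formula~\eqref{eq:tildeRe^Q}, using that $QX$ is cofibrant for every $X \in \wC$.)

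For part~(2), I would assemble the zig-zag from three natural weak equivalences. The left map $QF \to F$ is $q_F$. For the right map, I apply the homotopical functor $Re^Q_\infty$ from part~(1) to the cofibrant-replacement weak equivalence $\wQ(S^Q_\infty F) \to S^Q_\infty F$ in $\wH_\infty$. The middle map $QF \to Re^Q_\infty S^Q_\infty F$ is $Q(\iota_F)$, where $\iota_F \colon F \to \scY^* S^Q_\infty F$ is the natural morphism in $\scH_\infty$ whose component at $c$ is
\[ F(c) \cong \ul{\scH}_\infty(\scY_c, F) \xrightarrow{(q_{\scY_c})^*} \ul{\scH}_\infty(Q\scY_c, F) = (S^Q_\infty F)(\scY_c) = (\scY^* S^Q_\infty F)(c)\,; \]
since $Q \circ \scY^* = Re^Q_\infty$ by definition, $Q(\iota_F)$ has the asserted target, and all three maps are manifestly natural in $F$.

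The crux is that $\iota_F$ is a weak equivalence whenever $F$ is fibrant; applying the homotopical functor $Q$ then makes $Q(\iota_F)$ a weak equivalence as required. Since representable presheaves are projectively cofibrant, $q_{\scY_c} \colon Q\scY_c \to \scY_c$ is a weak equivalence between cofibrant objects of the simplicial model category $\scH_\infty$, so Ken Brown's lemma applied to $\ul{\scH}_\infty(-, F)$ for fibrant $F$ shows that $(q_{\scY_c})^*$ is a weak equivalence of simplicial sets; hence $\iota_F$ is an objectwise weak equivalence. Conceptually this step records that $\scY^* S^Q_\infty F \simeq F$ naturally: by Proposition~\ref{st:S^Q_oo = hoRan_(Y^op)}, $S^Q_\infty F$ presents $\hoRan_{\scY^\opp} F$, and restricting a homotopy right Kan extension along the fully faithful $\scY^\opp$ back to $\scC^\opp$ recovers the original objectwise-fibrant diagram up to weak equivalence (concretely, $(S^Q_\infty F)(\scY_c)$ is a homotopy limit indexed by $(\scC_{/\scY_c})^\opp$, a category with an initial object at which the relevant diagram takes the value $F(c)$). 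The single point to watch is that $Q$ is applied here to $\scY^* S^Q_\infty F$, which is in general neither cofibrant nor fibrant, so one genuinely uses homotopicality of $Q$ and not merely Ken Brown's lemma for cofibrant sources.
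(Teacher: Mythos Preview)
Your proof is correct and follows essentially the same approach as the paper: both reduce to the identification $Re^Q_\infty \cong Q \circ \scY^*$, observe that $\scY^*$ preserves objectwise weak equivalences and that $Q$ is homotopical, and then establish the middle map of the zig-zag by showing that $F \to \scY^* S^Q_\infty F$ is an objectwise weak equivalence via the map $\ul{\scH}_\infty(\scY_c,F) \to \ul{\scH}_\infty(Q\scY_c,F)$ for fibrant $F$ and cofibrant $\scY_c$. Your additional remarks (the explicit two-out-of-three for $Q$, the Ken Brown invocation, and the $\hoRan$ interpretation) are elaborations rather than departures from the paper's argument.
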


\begin{proof}
By Lemma~\ref{st:Re^Q = Q Y^*}, \smash{$Re_{\infty,0} \cong Q \circ \scY^*$}.
The functor $\scY^*$ is homotopical since weak equivalences in both $\scH_{\infty,0}$ and in $\wH_{\infty,0}$ are defined objectwise.
Thus, claim (1) follows since $Q$ is homotopical.
As a consequence, the morphism
\begin{equation}
	Re_{\infty,0} \circ \wQ \circ S_{\infty,0} (F) \longrightarrow Re_{\infty,0} \circ S_{\infty,0}(F)
\end{equation}
is a weak equivalence.
For $F \in \scH_{\infty,0}$, we find that
\begin{align}
	\big( \scY^* \circ S_{\infty,0}(F) \big) (c)
	= S_{\infty,0}(F)(\scY_c)
	= \ul{\scH}_{\infty,0}(Q\scY_c, F)\,,
\end{align}
and since $F$ is fibrant and $\scY_c$ is cofibrant, the functor $\ul{\scH}_{\infty,0}(-,F)$ preserves the weak equivalence $Q\scY_c \to \scY_c$.
Hence, there is a natural weak equivalence $F \weq \scY^* \circ S_{\infty,0}(F)$.
To complete the proof, we apply the homotopical functor $Q$ to this morphism.
\end{proof}

We now investigate whether the Quillen adjunction $Re_{\infty,0} \dashv S_{\infty,0}$ between the projective model structures descends to a Quillen adjunction between the \emph{local} projective model structures.
We recall the following standard result:

\begin{proposition}
\label{st:QAds and BLocs}
\emph{\cite[Prop.~3.1.6, Prop.~3.3.18]{Hirschhorn:MoCats}}
Let $\scM$ and $\scN$ be two large simplicial model categories, and let $F \dashv G$ be a simplicial Quillen adjunction from $\scM$ to $\scN$.
Suppose that $A,B$ are large sets of morphisms in $\scM$ and in $\scN$, respectively, such that the left Bousfield localisations $L_A \scM$ and $L_B \scN$ exist.
If $G$ maps $B$-local objects to $A$-local objects, then $F \dashv G$ descends to a Quillen adjunction between the localised model categories.
\end{proposition}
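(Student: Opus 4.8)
The plan is to invoke the universal property of the enriched left Bousfield localisation, as recorded in~\cite{Barwick:Enriched_B-Loc}, and reduce the descent of the adjunction to a single local-weak-equivalence condition. Recall that $L_{B/\scV}\scN$ comes equipped with a left Quillen $\scV$-functor $\gamma \colon \scN \to L_{B/\scV}\scN$ which is the identity on underlying categories, and likewise for $L_{A/\scV}\scM$; the pairs $(L_{A/\scV}\scM, \gamma)$ and $(L_{B/\scV}\scN,\gamma)$ are characterised by being initial among left Quillen $\scV$-functors sending the respective local weak equivalences to weak equivalences. So what I would do first is observe that, since left and right adjoints are determined by one another, it suffices to show that $F \colon L_{A/\scV}\scM \to L_{B/\scV}\scN$ is still left Quillen with respect to the localised model structures; equivalently, since cofibrations and trivial cofibrations only get enlarged under localisation on the target and the class of cofibrations is unchanged under localisation on the source, it suffices to check that the \emph{right} adjoint $G \colon L_{B/\scV}\scN \to L_{A/\scV}\scM$ preserves fibrations between fibrant objects and preserves all fibrant objects — or, dually and more cleanly, that $F$ sends $A/\scV$-local weak equivalences to $B/\scV$-local weak equivalences.

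The key step is therefore the following: $F$ sends $A/\scV$-local weak equivalences to $B/\scV$-local weak equivalences. This is where the hypothesis that $G$ carries $B/\scV$-local objects to $A/\scV$-local objects enters. Let $f \colon x \to y$ be an $A/\scV$-local weak equivalence in $\scM$, and let $z \in \scN$ be an arbitrary $B/\scV$-local object. Using the $\scV$-enriched adjunction isomorphism $\ul{\scN}^\scV(Fw, z) \cong \ul{\scM}^\scV(w, Gz)$, which is natural in $w$ and compatible with the cofibrant replacement $Q$ on $\scV$ — more precisely, one compares $\ul{\scN}^\scV(Q^\scN(F w), z)$ with $\ul{\scM}^\scV(Q^\scM w, Gz)$ by choosing $Q$ compatibly or by noting both compute the derived mapping object since $z$ and $Gz$ are fibrant — one gets a commuting square identifying $(Q^\scN Ff)^* \colon \ul{\scN}^\scV(Q^\scN Fy, z) \to \ul{\scN}^\scV(Q^\scN Fx, z)$ with $(Q^\scM f)^* \colon \ul{\scM}^\scV(Q^\scM y, Gz) \to \ul{\scM}^\scV(Q^\scM x, Gz)$. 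By hypothesis $Gz$ is $A/\scV$-local, and $f$ is an $A/\scV$-local weak equivalence, so the latter map is a weak equivalence in $\scV$; hence so is the former. Since $z$ was an arbitrary $B/\scV$-local object, $Ff$ is a $B/\scV$-local weak equivalence. It follows that the composite $\gamma \circ F \colon \scM \to L_{B/\scV}\scN$ sends $A/\scV$-local weak equivalences to weak equivalences, so by the universal property of $L_{A/\scV}\scM$ it factors (uniquely up to the relevant sense) through a left Quillen $\scV$-functor $\bar F \colon L_{A/\scV}\scM \to L_{B/\scV}\scN$, which on underlying categories is $F$.

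Finally, its right adjoint is $G$: the adjunction $(F,G)$ on underlying categories persists, and a left Quillen functor between model categories that have the same underlying adjunction descends that adjunction to a Quillen adjunction. Concretely, $\bar F$ preserves cofibrations and trivial cofibrations (its cofibrations and trivial cofibrations are those of $\scM$ and of $L_{A/\scV}\scM$ respectively, and trivial cofibrations of $L_{A/\scV}\scM$ are in particular $A/\scV$-local weak equivalences which are cofibrations, sent by $F$ to cofibrations that are $B/\scV$-local weak equivalences, i.e.\ trivial cofibrations in $L_{B/\scV}\scN$), so $(\bar F, G) = (Re, Sh)$ is a Quillen adjunction between the localised model categories. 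I expect the only genuine subtlety to be the compatibility of the enriched adjunction isomorphism with the cofibrant-replacement functor $Q$ on $\scV$ in the displayed mapping-object square; this is handled by the standard observation that for fibrant target $z$ (resp.\ $Gz$) the maps $\ul{\scN}^\scV(Fw,z)$ and $\ul{\scN}^\scV(Q^\scN Fw, z)$ agree up to weak equivalence, and likewise on the $\scM$-side, so one may compare derived mapping objects directly; everything else is a formal consequence of the universal property in~\cite[Def.~4.42, Def.~4.45]{Barwick:Enriched_B-Loc}.
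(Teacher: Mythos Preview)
Your proposal is correct and follows essentially the same route as the paper: use the enriched adjunction isomorphism to translate ``$F$ sends (cofibrant replacements of) the localising morphisms to $B/\scV$-local weak equivalences'' into the hypothesis ``$G$ sends $B/\scV$-local objects to $A/\scV$-local objects'', and then invoke the universal property of $L_{A/\scV}\scM$ from~\cite{Barwick:Enriched_B-Loc}. The paper's argument is slightly leaner in that it checks the universal property only on cofibrant approximations to the elements of $A$ themselves rather than on all $A/\scV$-local weak equivalences, and it first records that $F : \scM \rightleftarrows L_{B/\scV}\scN : G$ is already Quillen before appealing to the universal property; but these are matters of economy, not substance.
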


\begin{theorem}[{\cite[Cor.~A.3]{DHI:Hypercovers_and_sPShs}}]
\label{st:local epi Thm}
Let $\scC$ be a small category endowed with a coverage $\tau$, and let $\pi \colon Y \to X$ be a $\tau$-local epimorphism in $\wC$ with \v{C}ech nerve $\cC \pi$.
The augmentation map $\pi^{[\bullet]} \colon \cC \pi \to X$ is a weak equivalence in $\scH_{\infty,0}^{loc}$.
\end{theorem}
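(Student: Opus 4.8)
The plan is to reduce the statement to a purely presheaf-theoretic descent fact via the standard identification of the local projective model structure with the localisation at \v{C}ech nerves, and then to prove that fact by a hypercover-type argument. First I would recall that $\scH_\infty^{loc} = L_{\ctau}\scH_\infty$, so a morphism is a weak equivalence in $\scH_\infty^{loc}$ precisely if it is a $\ctau/\sSet$-local weak equivalence; equivalently, it suffices to show that for every $\ctau$-local (i.e.\ sheaf-of-$\infty$-groupoids) object $F$, the induced map
\begin{equation}
	\ul{\scH}_\infty(QX, F) \longrightarrow \ul{\scH}_\infty \big( Q(\cC\pi), F \big) = \holim^\sSet_{n \in \bbDelta} \ul{\scH}_\infty \big( Q(\cC\pi_n), F \big)
\end{equation}
is a weak equivalence in $\sSet$. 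Using Proposition~\ref{st:S^Q_oo = hoRan_(Y^op)} this is the statement that the extended presheaf $S^Q_\infty F = \hoRan_{\scY^\opp}F$ satisfies descent along the \v{C}ech nerve of $\pi$ whenever $F$ satisfies $\tau$-descent on $\scC$.

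The key steps, in order, would be: (i) by writing $X = \colim_{\scY_c \to X} \scY_c$ as a colimit of representables and using that $\cC\pi$ is computed levelwise by pullback (Lemma~\ref{st:subs and pullbacks}), reduce via a Bousfield--Kan / bar-construction argument to the case $X = \scY_c$ representable; here the definition of $\tau$-local epimorphism (Definition~\ref{def:tau-loc epi}) guarantees that $\pi$ restricted over $\scY_c$ admits local sections over some covering $\{c_j \to c\}_{j \in J} \in \tau(c)$. (ii) Given such a covering, the map $\coprod_j \scY_{c_j} \to \scY_c$ is a $\tau$-local epimorphism (Lemma~\ref{st:tau-loc epi properties}(2)) whose \v{C}ech nerve has representable components in each bidegree, and the \v{C}ech nerve of this covering maps to $\cC\pi$ over $\scY_c$ through the chosen local sections. (iii) Compare $\cC\pi \to \scY_c$ with $\cC\CU \to \scY_c$ for $\CU = \{c_j \to c\}$ by a contractibility-of-fibres argument: one shows the bisimplicial object obtained from the two overlapping \v{C}ech nerves has contractible realisations in one direction, so that $\cC\pi$ and $\cC\CU$ are both weakly equivalent to $\scY_c$ in $\scH_\infty^{loc}$; since $F$ is $\ctau$-local, $\ul{\scH}_\infty(Q(-),F)$ turns the \v{C}ech nerve of $\CU$ into an equivalence onto $F(c)$ by the fibrancy characterisation~\eqref{eq:H_oo fib generic}. (iv) Assemble these levelwise equivalences, using that $\holim^\sSet$ over $\bbDelta$ is homotopical and that $Q$ is a left adjoint commuting with the relevant coproducts and pullbacks, to conclude the displayed map is a weak equivalence, hence $\pi^{[\bullet]}\colon \cC\pi \to X$ is a $\ctau/\sSet$-local weak equivalence.

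I expect the main obstacle to be step (iii): controlling the comparison between the \v{C}ech nerve of the abstract local epimorphism $\pi$ and that of an honest covering family when $\pi$ need not itself come from a coproduct of representables, and in particular handling the fact that the local sections $\varphi_j\colon \scY_{c_j}\to Y$ are only defined after passing to a covering and are not canonical. The clean way around this is to invoke the result of Dugger--Hollander--Isaksen directly --- indeed this is exactly \cite[Cor.~A.3]{DHI:Hypercovers_and_sPShs}, cited in the statement --- which establishes that $\tau$-local epimorphisms (there: maps admitting local sections, equivalently the \v{C}ech nerves of which are hypercovers) become weak equivalences in the local projective model structure. So in the written proof I would either cite that corollary as the substantive input and spend the argument translating between their conventions and ours (checking that our $\scH_\infty^{loc}$ agrees with their local model structure, that our $\tau$-local epimorphisms coincide with their notion, and that augmented \v{C}ech nerves of such maps are the relevant hypercovers), or, if a self-contained proof is wanted, carry out steps (i)--(iv) above in detail, with the contractible-fibre comparison in (iii) done via a standard Reedy/Bousfield--Kan cofinality argument on the bar resolution of $X$ by representables.
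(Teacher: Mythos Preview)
The paper does not give its own proof of this theorem: immediately after the statement it defers entirely to \cite[Cor.~A.3]{DHI:Hypercovers_and_sPShs}, noting only that the argument there is technical and ``requires a `wrestling match with the small object argument'\,''. Your first suggestion---to cite DHI and spend the written proof translating conventions (checking that $\scH_\infty^{loc}$ agrees with their \v{C}ech-local model structure and that $\tau$-local epimorphisms match their notion of generalised cover)---is therefore exactly what the paper does.

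Your alternative self-contained sketch is conceptually reasonable, and the opening reformulation (a morphism is a local weak equivalence iff it becomes an equivalence after applying $\ul{\scH}_\infty(Q(-),F)$ for every $\ctau$-local $F$) is correct and is in fact the same reformulation the paper uses downstream in Proposition~\ref{st:S^Q preserves oo-stacks}. However, the route you outline in steps (i)--(iv) diverges from the actual DHI proof, which does \emph{not} proceed via a bisimplicial comparison of \v{C}ech nerves but rather via the small object argument and an analysis of the generating local trivial cofibrations. Your step~(iii) is, as you say, where the real work lies, and the sketch there is not yet a proof: the chosen lifts $\varphi_j \colon \scY_{c_j} \to Y$ give a map $\coprod_j \scY_{c_j} \to Y$ which need \emph{not} itself be a $\tau$-local epimorphism, so the bisimplicial object built from the two \v{C}ech nerves does not obviously have split (or even locally split) augmentations in the direction you need without further refinement---this is precisely where iterated refinements (hypercovers) and a transfinite argument enter in DHI. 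Your step~(i) reduction to representable $X$ also needs more than a Bousfield--Kan remark: commuting the descent comparison with the decomposition $X \cong \colim_{\scC_{/X}} \scY_c$ requires a Fubini-type interchange of a $\bbDelta$-shaped homotopy limit with a $(\scC_{/X})^\opp$-shaped one, which is not entirely formal. If you want a genuinely self-contained proof you should expect to reproduce something close to DHI's Appendix~A rather than the shorter bisimplicial argument.
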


The proof of Theorem~\ref{st:local epi Thm} is rather technical; it requires a ``wrestling match with the small object argument'' (\cite{DHI:Hypercovers_and_sPShs} Subsection~A.12).
We refer the reader to that reference for details.

\begin{remark}
In~\cite{DHI:Hypercovers_and_sPShs}, Theorem~\ref{st:local epi Thm} is proven for the \v{C}ech localisation of the \emph{injective} model structure $\scH^i$ on $\scH$.
However, it also holds true in $\scH_{\infty,0}$ because the injective and projective model structures on $\scH$ (and $\wH$) have the same weak equivalences, and so do the associated local model structures (see also~\cite[Prop.~2.6]{Bunk:R-loc_HoThy} for details).
\qen
\end{remark}

\begin{lemma}
\label{st:Cech nerve and hocolim}
Let $\pi \colon Y \to X$ be a $\tau$-local epimorphism in $\wC$.
Let $(\cC\pi)^\sim \colon \bbDelta^\opp \to \scH_{\infty,0}$ denote the simplicial diagram which sends $[n] \in \bbDelta$ to the simplicially constant presheaf $\cC\pi_n$.
There is a commutative triangle in $\scH_{\infty,0}$:
\begin{equation}
\label{eq:Cech nerve and hocolim}
\begin{tikzcd}[column sep={2.cm,between origins}, row sep={1.5cm,between origins}]
	\hocolim_{n \in \bbDelta^\opp}^{\scH_{\infty,0}} (\cC\pi)^\sim \ar[rr] \ar[dr]
	& & \cC \pi \ar[dl]
	\\
	& X &
\end{tikzcd}
\end{equation}
The top morphism is an objectwise weak equivalence, and the diagonal morphisms are weak equivalences in $\scH_{\infty,0}^{loc}$.
\end{lemma}

\begin{proof}
The top morphism is induced by the Bousfield-Kan, or last-vertex map.
It is an objectwise weak equivalence by Lemma~\ref{st:bar construction properties}(3).
The right-hand side is a local weak equivalence by Theorem~\ref{st:local epi Thm}.
Thus, it remains to show that the triangle commutes.
We check this explicitly:
for each object $c \in \scC$, there are canonical natural isomorphisms
\begin{equation}
	\hocolim_{n \in \bbDelta^\opp}^{\scH_{\infty,0}} (\cC\pi)^\sim
	= B \big( *, \bbDelta^\opp, (\cC\pi)^\sim \big)
	\cong \sfN \big( \bbDelta_{/(\cC\pi(c))} \big)_n\,,
\end{equation}
where $\sfN$ denotes the nerve functor.
For $X \in \sSet_\rmS$, recall the last-vertex map
\begin{equation}
	\sfN \big( \bbDelta_{/X} \big) \longrightarrow X
\end{equation}
of small simplicial sets (see, for instance,~\cite[Lemma~7.3.11, Par.~7.3.14]{Cisinski:Higher_Cats_and_Ho_Alg}).
We describe this explicitly in our case, i.e.~for $X = \cC \pi(c)$.
An $n$-simplex in this simplicial set is explicitly given by a sequence of morphisms
\begin{equation}
\begin{tikzcd}
	\Delta^{k_0} \ar[r, "\alpha_{01}"]
	& \Delta^{k_1} \ar[r, "\alpha_{01}"]
	& \cdots \ar[r, "\alpha_{n-1,n}"]
	& \Delta^{k_n} \ar[r, "\kappa"]
	& \cC\pi(c)
\end{tikzcd}
\end{equation}
of small simplicial sets.
We denote these data by $(\alpha, \kappa)$.
Consider the map $\varphi_\alpha \colon [n] \to [k_n]$ in $\bbDelta$ which acts as
\begin{equation}
	i \longmapsto \alpha_{n-1,n} \circ \cdots \circ \alpha_{i,i+1}(k_i)\,.
\end{equation}
The top morphism in diagram~\eqref{eq:Cech nerve and hocolim} acts by sending the $n$-simplex $(\alpha, \kappa)$ to the $n$-simplex of $\cC\pi(c)$ which is given by the composition
\begin{equation}
\begin{tikzcd}
	\Delta^n \ar[r]
	& \Delta^{k_n} \ar[r, "\kappa"]
	& \cC\pi(c)\,,
\end{tikzcd}
\end{equation}
where the first map is the one constructed above from the data $(\alpha, \kappa)$.
Explicitly, for a given $n$-simplex $\alpha \in \sfN \bbDelta$, the resulting map
\begin{equation}
	\cC\pi_{k_n} \cong \underbrace{Y \times_X \cdots \times_X Y}_{\text{$k_n$ copies of $Y$}}
	\longrightarrow \cC\pi_n \cong \underbrace{Y \times_X \cdots \times_X Y}_{\text{$n$ copies of $Y$}}
\end{equation}
is the projection onto the $Y$-factors in positions $\varphi_\alpha(0), \ldots, \varphi_\alpha(n)$.
This map commutes with the maps to $X$.
\end{proof}

\begin{proposition}
\label{st:S^Q preserves oo-stacks}
The functor $S_{\infty,0} \colon \scH_{\infty,0}^{loc} \to \wH_{\infty,0}^{loc}$ preserves local objects.
\end{proposition}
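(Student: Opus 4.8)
The plan is to verify directly that $S^Q_\infty F$ is a $(\wtau)\check{}$-local object whenever $F \in \scH_\infty^{loc}$ is fibrant, transferring the defining condition across the simplicial Quillen adjunction $Re^Q_\infty \dashv S^Q_\infty$ of Proposition~\ref{st:Re^Q -| S^Q}. Since $S^Q_\infty$ is right Quillen for the projective model structures and $\scH_\infty$ is a model $\sSet$-category, $S^Q_\infty F$ is automatically fibrant in $\wH_\infty$. It then remains to check, for every $\tau$-local epimorphism $\pi\colon Y \to X$ in $\wC$ --- writing $\cC\pi$ for its \v{C}ech nerve regarded as an object of $\wH$, writing $a\colon \cC\pi \to \widehat{\scY}_X$ for the augmentation, and choosing a cofibrant approximation $\wQ(a)\colon \wQ(\cC\pi) \to \wQ(\widehat{\scY}_X)$ in $\wH_\infty$ --- that the induced map
\begin{equation}
	\ul{\wH}_\infty\big(\wQ(\widehat{\scY}_X),\, S^Q_\infty F\big) \longrightarrow \ul{\wH}_\infty\big(\wQ(\cC\pi),\, S^Q_\infty F\big)
\end{equation}
is a weak equivalence of simplicial sets.

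First I would use the enriched adjunction to rewrite this map, up to natural isomorphism, as
\begin{equation}
	\ul{\scH}_\infty\big(Re^Q_\infty \wQ(\widehat{\scY}_X),\, F\big) \longrightarrow \ul{\scH}_\infty\big(Re^Q_\infty \wQ(\cC\pi),\, F\big).
\end{equation}
By Lemma~\ref{st:Re^Q = Q Y^*} one has $Re^Q_\infty \cong Q \circ \scY^*$, so both source and target lie in the image of the cofibrant replacement functor $Q$ on $\scH_\infty$ and are in particular cofibrant; since $F$ is fibrant in the $\sSet$-model category $\scH_\infty^{loc}$ (Proposition~\ref{st:H_oo^loc props}), the functor $\ul{\scH}_\infty(-, F)$ sends weak equivalences between cofibrant objects of $\scH_\infty^{loc}$ to weak equivalences of simplicial sets. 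It therefore suffices to show that $Re^Q_\infty(\wQ(a))$ is a weak equivalence in $\scH_\infty^{loc}$. As $Re^Q_\infty$ is homotopical (established above) and $\wQ(a)$ lies over $a$ via objectwise weak equivalences in $\wH_\infty$, a two-out-of-three argument in $\scH_\infty^{loc}$ reduces this to showing that $Re^Q_\infty(a)$ itself is a weak equivalence in $\scH_\infty^{loc}$.

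Next I would identify $\scY^*(a)$. The functor $\scY^*\colon \wH \to \scH$ is simultaneously a right adjoint (to $\scY_!$) and a left adjoint (to $\scY_*$), hence preserves all limits and colimits; moreover $\scY^* \widehat{\scY}_Z \cong Z$ for $Z \in \wC$ by the Yoneda Lemma (cf.~\eqref{st:Yoneda embeddings together}). Consequently $\scY^*$ carries the \v{C}ech nerve in $\wH$ of $\pi$ to the \v{C}ech nerve in $\scH$ of $\pi$ viewed through the inclusion $\wC \hookrightarrow \scH$, and $\scY^*(a)$ to the corresponding augmentation $\cC\pi \to X$ in $\scH$. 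By Theorem~\ref{st:local epi Thm} this augmentation is a weak equivalence in $\scH_\infty^{loc}$. Finally $Re^Q_\infty(a) \cong Q(\scY^*(a))$, and $Q$ preserves $\scH_\infty^{loc}$-weak equivalences (again by two-out-of-three, using that $QZ \to Z$ is a projective, hence local, weak equivalence for every $Z$); so $Re^Q_\infty(a)$ is a weak equivalence in $\scH_\infty^{loc}$, which closes the argument.

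The genuinely hard ingredient is Theorem~\ref{st:local epi Thm}, which is imported and used as a black box. Within the proof itself the delicate part is the middle step: one must keep the two cofibrant replacements $Q$ on $\scH_\infty$ and $\wQ$ on $\wH_\infty$ apart, apply the Quillen $\sSet$-adjunction at the level of enriched mapping objects, and commute $\scY^*$ correctly past the pullbacks and the simplicial realisation that assemble the \v{C}ech nerve. I would also want to confirm that this \v{C}ech nerve, formed in $\wH$ and in $\scH$, is built from levelwise (indeed Reedy) cofibrant objects, so that its realisation genuinely models a homotopy colimit and $\scY^*$ may be applied degreewise with no further correction.
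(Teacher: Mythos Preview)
Your argument is correct and rests on the same key input as the paper, namely Theorem~\ref{st:local epi Thm}. The organisation differs slightly: the paper unfolds the descent condition for $S^Q_\infty F$ as a homotopy limit over the \v{C}ech nerve and verifies it via a commutative square relating $\ul{\scH}_\infty(QX,F)$, $\ul{\scH}_\infty(Q(\cC\pi),F)$ and $\holim_n (S^Q_\infty F)(\cC\pi_n)$ (invoking Proposition~\ref{st:hocolim and diagonal} for the hocolim/diagonal comparison), whereas you check the $\sSet$-locality condition directly and transfer it through the enriched adjunction $Re^Q_\infty \dashv S^Q_\infty$, reducing to the statement that $Re^Q_\infty$ carries the \v{C}ech augmentation in $\wH$ to a local weak equivalence in $\scH_\infty^{loc}$. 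Your route is marginally more streamlined in that it bypasses the explicit hocolim/diagonal identification; the paper's route makes the descent shape of the locality condition more visible. Your closing worry about Reedy cofibrancy of the \v{C}ech levels is harmless but not actually needed: you only use that $\scY^*$ is exact, so it commutes with the realisation and the iterated fibre products on the nose, with no homotopical correction required.
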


\begin{proof}
Let $\pi \colon Y \to X$ be a $\tau$-local epimorphism in $\wC$, and let \smash{$F \in \scH_{\infty,0}^{loc}$} be fibrant.
We need to show that the canonical morphism
\begin{equation}
	S_{\infty,0}(F)(X) \longrightarrow \underset{n \in \bbDelta^\opp}{\holim}^{\sSet_\rmL}\ \big( (S_{\infty,0} F)(\cC \pi_n) \big)
\end{equation}
is a weak equivalence in $\sSet_\rmL$.
Substituting the definition of \smash{$S_{\infty,0}$}, this is the same as the canonical map
\begin{equation}
	\ul{\scH}_{\infty,0}(QX, F) \longrightarrow \underset{n \in \bbDelta^\opp}{\holim}^{\sSet_\rmL}\ \ul{\scH}_{\infty,0} \big( Q (\cC \pi_n), F \big)\,.
\end{equation}
Observe that there is a canonical isomorphism
\begin{equation}
\begin{tikzcd}
	\underset{n \in \bbDelta^\opp}{\holim}^{\sSet_\rmL}\ \ul{\scH}_{\infty,0} \big( Q (\cC \pi_n), F \big)
	\ar[r, "\cong"]
	& \ul{\scH}_{\infty,0} \Big( \underset{n \in \bbDelta^\opp}{\hocolim}^{\scH_{\infty,0}} Q (\cC \pi_n),\, F \Big)
\end{tikzcd}
\end{equation}
in the homotopy category $\Ho \sSet_\rmL$ (in fact, this can even be modelled as an isomorphism in $\sSet_\rmL$ in light of Lemma~\ref{st:bar construction properties}(1) and Proposition~\ref{st:cobar computes holim} below).
Since the functor $Q$ is a bar construction (see~\eqref{eq:Def Q}), it commutes with the homotopy colimit (which is also a bar construction) by Lemma~\ref{st:bar construction properties}(1).
Then, the morphism
\begin{equation}
\begin{tikzcd}
	\ul{\scH}_{\infty,0}(QX, F) \ar[r]
	& \ul{\scH}_{\infty,0} \Big( \underset{n \in \bbDelta^\opp}{\hocolim}^{\scH_{\infty,0}} Q (\cC \pi_n),\, F \Big)
\end{tikzcd}
\end{equation}
is the image of the left-hand diagonal morphism in the commutative triangle~\eqref{eq:Cech nerve and hocolim} under the functor \smash{$\ul{\scH}_{\infty,0}(Q(-),F)$}.
Observe that the functor
\begin{equation}
	\ul{\scH}_{\infty,0}(Q(-),F) \colon \scH_{\infty,0}^{loc} \longrightarrow \sSet_\rmL
\end{equation}
is homotopical since $F$ is fibrant in \smash{$\scH_{\infty,0}^{loc}$} and \smash{$\scH_{\infty,0}^{loc}$} is enriched over $\sSet_\rmL$.
Combining this with Lemma~\ref{st:Cech nerve and hocolim}, we obtain that the above morphism is a weak equivalence in $\sSet_\rmL$.
\end{proof}

Combining Proposition~\ref{st:QAds and BLocs} and Proposition~\ref{st:S^Q preserves oo-stacks}, we obtain

\begin{theorem}
\label{st:QAd for H_(oo,0)^loc}
The Quillen adjunction $Re_{\infty,0}: \wH_{\infty,0} \rightleftarrows \scH_{\infty,0} :S_{\infty,0}$ induces a Quillen adjunction
\begin{equation}
\begin{tikzcd}
	Re_{\infty,0} : \wH_{\infty,0}^{loc} \ar[r, shift left=0.15cm, "\perp"' yshift=0.04cm] & \scH_{\infty,0}^{loc} : S_{\infty,0}\,. \ar[l, shift left=0.15cm]
\end{tikzcd}
\end{equation}
\end{theorem}

In Section~\ref{sec:char of sheaves on wC} we provide a version of Theorem~\ref{st:QAd for H_(oo,0)^loc} for fully coherent diagrams of spaces in a quasi-categorical language; on the underlying quasi-categories, we will show that $S_{\infty,0}$ is fully faithful and we identify its essential image.

\subsection{Homotopy sheaves with values in simplicial model categories}
\label{sec:HoSheaves vald in spl MoCats}

We now extend the results from Section~\ref{sec:Sh_oo} to homotopy sheaves on a small site $(\scC, \tau)$, valued not just in $\sSet_\rmL$, but in any large, combinatorial or cellular simplicial model category.
We begin by recalling some technology for enriched model categories.

Recall the notion of a $\scV$ be a symmetric monoidal model category.
A \emph{$\scV$-enriched model category}, or \emph{model $\scV$-category} $\scM$ (see, for instance,~\cite[Def.~4.2.18]{Hovey:MoCats}).
We denote the three functors which are part of the enrichment by
\begin{equation}
	(-) \otimes (-) \colon \scV \times \scM \to \scM\,,
	\qquad
	\ul{\scM}^\scV(-,-) \colon \scM^\opp \times \scM \to \scV\,,
	\qquad
	\{-,-\} \colon \scV^\opp \times \scM \to \scM\,.
\end{equation}

We import the following definition (using~\cite[Thms.~7.6.3, 11.5.1]{Riehl:Cat_HoThy}):

\begin{definition}
\label{def:functor (co)tensor}
Let $\scI$ be a small category.
\begin{myenumerate}
\item The \emph{weighted $\scI$-colimit functor} of $\scM$ is the functor
\begin{equation}
\label{eq:colim^scV}
	(-) \otimes_\scI (-) \colon \Fun(\scI^\opp, \scV) \times \Fun(\scI, \scM) \longrightarrow \scM\,,
	\qquad
	(D, F) \longmapsto \int^{i \in \scI} Di \otimes Fi\,.
\end{equation}

\item The \emph{weighted $\scI$-limit functor} of $\scM$ is the functor
\begin{equation}
\label{eq:lim^scV}
	\{-,-\}^\scI \colon \Fun(\scI, \scV)^\opp \times \Fun(\scI, \scM) \longrightarrow \scM\,,
	\qquad
	(D, F) \longmapsto \int_{i \in \scI} \{Di, Fi\}\,.
\end{equation}
\end{myenumerate}
\end{definition}

From now on we consider $\scV = \sSet_\rmL$ with the Kan-Quillen model structure, and thus $\scM$ a large simplicial model category.
The following result is \cite[Thms.~3.2, 3.3]{Gambino:Weighted_limits_in_spl_HoThy} (see also \cite[Thm.~11.5.1]{Riehl:Cat_HoThy}).

\begin{theorem}
\label{st:weighted (co)lim as Quillen functors}
Let $\scM$ be a large simplicial model category and $\scI$ a large category.
Suppose that the projective and injective model structures on $\Fun(\scI, \scM)$ both exist.
\begin{myenumerate}
\item The weighted $\scI$-colimit functor~\eqref{eq:colim^scV} is a left Quillen functor of two variables~\cite[Def.~4.2.1]{Hovey:MoCats} if its source and target are either endowed with the projective and injective model structures, respectively, or with the injective and projective model structures, respectively.

\item The weighted $\scI$-limit functor~\eqref{eq:lim^scV} is a right Quillen functor of two variables if its source and target are either both endowed with the projective model structures, or are both endowed with the injective model structures.
\end{myenumerate}
\end{theorem}

Recall that if $\scM$ is combinatorial (in $\rmL$), then the projective and injective model structures on $\Fun(\scI, \scM)$ exist and are again combinatorial (see, for instance,~\cite[Thms.~2.14, 2.16]{Barwick:Enriched_B-Loc}).
The projective model structure also exists when $\scM$ is cellular, and is itself cellular~\cite[Prop.~12.1.5]{Hirschhorn:MoCats}.

We also define functors
\begin{alignat}{3}
	(-) \otimes_\scI^{ptw} (-) &\colon \Fun(\scI^\opp, \sSet_\rmL) \times \scM
	\longrightarrow \Fun(\scI, \scM)\,,
	& \qquad
	(E \otimes m)(i) &\coloneqq E(i) \otimes m\,,
	\\
	\ul{\scM}^{ptw}_\scI(-,-) &\colon \scM^\opp \times \Fun(\scI, \scM)^\opp
	\longrightarrow \Fun(\scI^\opp, \sSet_\rmL)\,,
	& \qquad
	\ul{\scM}^{ptw}_\scI(m, F) (i) &\coloneqq \ul{\scM} \big( m, F(i) \big)\,.
\end{alignat}
In the original reference~\cite{Gambino:Weighted_limits_in_spl_HoThy} the following observation is the crucial ingredient in the proof of Theorem~\ref{st:weighted (co)lim as Quillen functors}:

\begin{proposition}
\label{st:2VQAd for Fun(scI, scM)}
In the situation of Theorem~\ref{st:weighted (co)lim as Quillen functors}(2), with either the projective or injective model structure on both $\Fun(\scI, \sSet_\rmL)$ and $\Fun(\scI, \scM)$, the functors
\begin{align}
	(-) \otimes_\scI^{ptw} (-) &\colon \Fun(\scI, \sSet_\rmL) \times \scM
	\longrightarrow \Fun(\scI, \scM)\,,
	\\
	\ul{\scM}^{ptw}_\scI(-,-) &\colon \scM^\opp \times \Fun(\scI, \scM)
	\longrightarrow \Fun(\scI, \sSet_\rmL)\,,
	\\
	\{-,-\}^\scI &\colon \Fun(\scI, \sSet_\rmL)^\opp \times \Fun(\scI, \scM)
	\longrightarrow \scM\,,
\end{align}
form a Quillen adjunction of two variables.
\end{proposition}

\begin{proof}
In the projective case the functor
\begin{equation}
	\ul{\scM}^{ptw}_\scI (-,-) \colon \scM^\opp \times \Fun(\scI, \scM)^\opp
	\longrightarrow \Fun(\scI^\opp, \sSet_\rmL)\,,
	\qquad
	\ul{\scM}^{ptw}_\scI (m, F) (i) \coloneqq \ul{\scM} \big( m, F(i) \big)
\end{equation}
applies the $\sSet_\rmL$-enrichment of $\scM$ objectwise.
Since pullbacks in functor categories are computed objectwise, it follows from the properties of this enrichment that $\ul{\scM}^{ptw}_\scI(-,-)$ satisfies the pullback-product axiom (see~\cite[Lemma~4.2.2]{Hovey:MoCats}).
One proves the injective case analogously, using the functor $(-) \otimes^{ptw}_\scI (-)$ in place of $\ul{\scM}^{ptw}_\scI$.
\end{proof}

As before, we let $Q \colon \scH_{\infty,0} \to \scH_{\infty,0}$ denote Dugger's cofibrant replacement functor (see Definition~\ref{def:Def Q}).
Consider the functors
\begin{alignat}{3}
	Re_\scM \colon \Fun(\wC^\opp, \scM) &\longrightarrow \Fun(\scC^\opp, \scM)\,,
	& \qquad
	Re_\scM(\widehat{G})(c) &\coloneqq Q(-)(c) \otimes_{\wC^\opp} \widehat{G}
	= \int^{X \in \wC} QX(c) \otimes \widehat{G}(X)\,,
	\\*
	S_\scM \colon \Fun(\scC^\opp, \scM) &\longrightarrow \Fun(\wC^\opp, \scM)\,,
	& \qquad
	S_\scM(F)(X) &\coloneqq \{QX, F\}^{\scC^\opp}
	= \int_{c \in \scC} \big\{ QX(c), Fc \big\}\,.
\end{alignat}
They form an adjoint pair
\begin{equation}
\label{eq:Re_M -| S_M adjunction}
\begin{tikzcd}
	Re_\scM : \Fun(\wC^\opp, \scM) \ar[r, shift left=0.15cm, "\perp"' yshift=0.05cm]
	& \Fun(\scC^\opp, \scM) : S_\scM\,. \ar[l, shift left=0.15cm]
\end{tikzcd}
\end{equation}

\begin{proposition}
\label{st:Re_M --| S_M is Quillen adjunction}
The adjunction~\eqref{eq:Re_M -| S_M adjunction} is a Quillen adjunction with respect to the projective model structures.
\end{proposition}

\begin{proof}
Let $X \in \wC$, and consider the functor
\begin{equation}
	\ev_X \circ S_\scM \colon \Fun(\scC^\opp, \scM) \longrightarrow \scM\,,
	\qquad
	F \longmapsto \int_{c \in \scC} \big\{ QX(c), Fc \big\}
	= \{QX, F\}^{\scC^\opp}\,.
\end{equation}
Here we have used the notation from~\eqref{eq:lim^scV}.
By Theorem~\ref{st:weighted (co)lim as Quillen functors} and~\cite[Rmk.~4.2.3]{Hovey:MoCats} this functor is right Quillen, for each $X \in \wC$.
Since fibrations and weak equivalences in the projective model structure on $\Fun(\wC^\opp, \scM)$ are defined objectwise, it follows that the functor $S_\scM$ preserves projective fibrations and trivial fibrations.
\end{proof}

We recall some useful technology for computing homotopy limits from~\cite{Riehl:Cat_HoThy} (see, in particular, Chapters 4--6).
Let $\scM$ be a large simplicial model category (enriched, tensored and cotensored over $\sSet_\rmL$).
Let $\scI$ be a large category and consider functors $G \colon \scI \to \sSet_\rmL$ and $F \colon \scI \to \scM$.
The (two-sided) cosimplicial cobar construction associated to these data is the cosimplicial object in $\scM$ whose $l$-th level reads as
\begin{equation}
	C^l(G, \scI, F) = \prod_{i \in (\sfN \scJ)_l} \{G i_0, F i_l \}
	\cong \prod_{i_0, \ldots, i_l \in \scI} \big\{ G i_0 \otimes (\scI(i_0, i_1) \times \cdots \times \scI(i_{l-1}, i_l)),\, F i_l \big\}
	\qquad
	\in \scM\,,
\end{equation}
where $\sfN \scI \in \sSet_\rmL$ denotes the nerve of $\scI$.
The (two-sided) cobar construction $C(G, \scI, F)$ in $\scM$ is the totalisation
\begin{equation}
	C(G, \scI, F)
	= \int_{l \in \bbDelta} \big\{ \Delta^l,\, C^l(G, \scI, F) \big\}\,,
\end{equation}
formed in $\scM$.
If the ambient model category $\scM$ is not clear from context, we will write $C_\scM$ for the cobar construction in $\scM$.

\begin{proposition}
\label{st:cobar computes holim}
\emph{\cite[Cor.~5.1.3]{Riehl:Cat_HoThy}}
The cobar construction is a model for the homotopy limit:
let $\scM$ be a large simplicial model category with fibrant replacement functor $R^\scM$, and let $F \colon \scI \to \scM$ a functor as above.
Then,
\begin{equation}
	\underset{i \in \scI}{\holim}^\scM (Fi)
	\simeq C(*, \scI, R^\scM F)\,.
\end{equation}
In particular, if $Fi \in \scM$ is fibrant for each $i \in \scI$, then
\begin{equation}
	\underset{i \in \scI}{\holim}^\scM (Fi)
	\simeq C(*, \scI, F)\,.
\end{equation}
\end{proposition}

\begin{lemma}
\label{st:cobar and fibrancy}
Let $\scM$ be a large simplicial model category, $\scI$ a large category and $F \colon \scI \to \scM$ a functor.
Suppose that the projective model structure on $\Fun(\scI, \scM)$ exists and that $F$ is projectively fibrant.
Then, the cobar construction $C(*, \scI, F) \in \scM$ is fibrant.
\end{lemma}

\begin{proof}
Recall the notation from Definition~\ref{def:functor (co)tensor}.
By~\cite[Thm.~6.6.1]{Riehl:Cat_HoThy} there is a canonical natural isomorphism
\begin{equation}
	C(*, \scI, D) \cong \int_{i \in \scI} \big\{ N(\scI_{/i}),\, Fi \big\}
	= \{N(\scI_{/(-)}), F\}^\scI\,.
\end{equation}
By Theorem~\ref{st:weighted (co)lim as Quillen functors}, the functor $\{-,-\} \colon \Fun(\scI, \sSet_\rmL)^\opp \times \Fun(\scI,\scM) \to \scM$ is a right Quillen functor in two variables if we endow $\Fun(\scI, \sSet)$ and $\Fun(\scI, \scM)$ with their projective model structures.
The claim then follows from the fact that the functor $N(\scI_{/(-)}) \colon \scI \to \sSet$ is a projectively cofibrant object in $\Fun(\scI, \scM)$~\cite[Cor.~14.8.8]{Hirschhorn:MoCats}.
\end{proof}

\begin{definition}
\label{def:Fun(C^op,M)^loc}
Suppose $\scM$ is left proper, as well as cellular or combinatorial.
Let $\tau$ be a Grothendieck coverage on $\scC$, and $\wtau$ the induced Grothendieck coverage of $\tau$-local epimorphisms on $\wC$.
\begin{myenumerate}
\item Let $S(\scM, \tau)$ be the collection of morphisms in $\Fun(\scC^\opp, \scM)$ of the form
\begin{equation}
	Q (\cC \CU \to \scY_c) \otimes_{\scC^\opp}^{ptw} m\,,
\end{equation}
where $\CU = \{c_i \to c\}_{i \in I}$ is a $\tau$-covering of $c \in \scC$, with associated \v{C}ech nerve $\cC \CU \in \scH_{\infty,0}$, and $m \in \scM$ is a cofibrant object in $\scM$.
Define the left Bousfield localisation
\begin{equation}
	\Fun(\scC^\opp, \scM)^{loc}
	\coloneqq L_{S(\scM, \tau)} \Fun(\scC^\opp, \scM)\,.
\end{equation}

\item Let $S(\scM, \wtau)$ be the collection of morphisms in $\Fun(\wC^\opp, \scM)$ of the form
\begin{equation}
	(\cC \pi \to \wY_X) \otimes_{\scC^\opp}^{ptw} m\,,
\end{equation}
where $\pi \colon Y \to X$ is a $\tau$-local epimorphism in $\wC$, with associated \v{C}ech nerve $\cC \pi \in \wH_{\infty,0}$, and $m \in \scM$ is a cofibrant object in $\scM$.
Define the left Bousfield localisation
\begin{equation}
	\Fun(\wC^\opp, \scM)^{loc}
	\coloneqq L_{S(\scM, \wtau)} \Fun(\wC^\opp, \scM)\,.
\end{equation}
\end{myenumerate}
\end{definition}

These Bousfield localisations exist by~\cite[Thm.~4.4.1]{Hirschhorn:MoCats} and~\cite[Thm.~4.7]{Barwick:Enriched_B-Loc}.
The following is a generalisation of~\cite[Thm.~4.56]{Barwick:Enriched_B-Loc} to the case where the model category $\scM$ is not monoidal:

\begin{proposition}
\label{st:fibrants in Fun(C^op,M)^loc}
In the situation of Definition~\ref{def:Fun(C^op,M)^loc} the following statements hold true:
\begin{myenumerate}
\item An object $F \in \Fun(\scC^\opp, \scM)$ is $S(\scM, \tau)$-local if and only if it is projectively fibrant and, for each $\tau$-covering $\CU = \{c_i \to c\}_{i \in I}$, the induced morphism
\begin{equation}
	F(c) \longrightarrow
	\underset{k \in \bbDelta}{\holim}\, \{ Q (\cC_k \CU), F \}^{\scC^\opp}
\end{equation}
in $\scM$ is a weak equivalence (where $\cC_k \CU$ is viewed as a simplicially constant presheaf on $\scC$).

\item An object $\widehat{G} \in \Fun(\wC^\opp, \scM)$ is $S(\scM, \wtau)$-local if and only if it is projectively fibrant and, for each $\wtau$-covering $\pi \colon Y \to X$ in $\wC$, the induced morphism
\begin{equation}
	\widehat{G}(X) \longrightarrow
	\underset{k \in \bbDelta}{\holim}\, \{ \cC_k \pi, \widehat{G} \}^{\wC^\opp}
\end{equation}
in $\scM$ is a weak equivalence.
\end{myenumerate}
\end{proposition}

\begin{remark}
Suppose that $(\scC, \tau)$ is such that, for each $\tau$-covering $\CU = \{c_i \to c\}_{i \in I}$, each presheaf $C_{i_0 \cdots i_k}$ from~\eqref{eq:def Cech nerve} is again representable.
In that case, the \v{C}ech nerve $\cC \CU$ is projectively cofibrant (it is then objectwise a coproduct of representables), and so we can omit the cofibrant replacement functor $Q$ from Definition~\ref{def:Fun(C^op,M)^loc}(1) and Proposition~\ref{st:fibrants in Fun(C^op,M)^loc}.
Furthermore, the enrichment and the Yoneda Lemma provide a canonical isomorphism
\begin{align}
	\{ \cC_k \CU, F \}^{\scC^\opp}
	= \prod_{i_0, \ldots, i_n \in I} \{ C_{i_0 \cdots i_n}, F \}^{\scC^\opp}
	\cong \prod_{i_0, \ldots, i_n \in I} F(C_{i_0 \cdots i_n})\,,
\end{align}
thus reproducing the more familiar form of the homotopy descent condition.
\qen
\end{remark}

\begin{proof}[Proof of Proposition~\ref{st:fibrants in Fun(C^op,M)^loc}]
We show Claim~(1); the proof of Claim~(2) is analogous.
By definition, an object $F \in \Fun(\scC^\opp, \scM)$ is $S(\scM, \tau)$-local if and only if it is projectively fibrant and, for each cofibrant $m \in \scM$ and each $\tau$-covering $\CU = \{c_i \to c\}_{i \in I}$ in $\scC$, the morphism
\begin{equation}
	\ul{\Fun(\scC^\opp, \scM)} \big( Q (\cC \CU) \otimes_{\scC^\opp}^{ptw} m, F \big)
	\longrightarrow \ul{\Fun(\scC^\opp, \scM)} \big( Q\scY_c \otimes_{\scC^\opp}^{ptw} m, F \big)
\end{equation}
induced by the augmentation $\cC\CU \to \scY_c$ is a weak homotopy equivalence in $\sSet$ (note that the left-hand arguments in both functors above are cofibrant in $\Fun(\scC^\opp, \scM)$, and so the above simplicially enriched hom spaces indeed compute mapping spaces).
By adjointness, this morphism is isomorphic to the morphism
\begin{equation}
	\ul{\scM} \big( m, \{ Q (\cC\CU), F \}^{\scC^\opp} \big)
	\longrightarrow \ul{\scM} \big( m, \{ Q \scY_c, F \}^{\scC^\opp} \big)
	\simeq \ul{\scM} \big( m, F(c) \big)\,.
\end{equation}
Next, we use the isomorphisms in the homotopy category $\Ho \sSet_\rmL$,
\begin{align}
	\ul{\scM} \big( m, \{ Q (\cC\CU), F \}^{\scC^\opp} \big)
	&\cong
	\ul{\scM} \big( m, \{ Q\, \underset{k \in \bbDelta^\opp}{\hocolim}\, \cC_k \CU, F \}^{\scC^\opp} \big)
	\\
	&\cong \ul{\scM} \big( m, \{ \underset{k \in \bbDelta^\opp}{\hocolim}\, Q (\cC_k \CU), F \}^{\scC^\opp} \big)
	\\
	&\cong \ul{\scM} \big( m, \underset{k \in \bbDelta}{\holim}\, \{ Q(\cC_k \CU), F \}^{\scC^\opp} \big)\,.
\end{align}
Here we have used that we can commute $Q$ and the homotopy colimit, since both are bar constructions.
The claim now follows from the fact that a morphism $f \colon a \to b$ in $\scM$ between fibrant objects is a weak equivalence if and only if, for each cofibrant $m \in \scM$, the induced morphism
\begin{equation}
	f^* \colon \ul{\scM}(m, b) \longrightarrow \ul{\scM}(m, a)
\end{equation}
is a weak homotopy equivalence; in other words, we use that the functors $\ul{\scM}(m, -)$, for $m \in \scM$ cofibrant, jointly detect weak equivalences between fibrant objects~\cite[Prop.~9.7.1]{Hirschhorn:MoCats}.
\end{proof}

\begin{definition}
If $(\scD, \tau_\scD)$ is an $\rmL$-small category with a Grothendieck coverage $\tau_\scD$ and $\scM$ a left proper, $\rmL$-small, combinatorial or cellular model category, then we call the fibrant objects of $L_{S(\scM, \tau_\scD)}\Fun(\scD^\opp, \scM)$ the \emph{$\scM$-valued (homotopy) sheaves on $\scD$}.
\end{definition}

\begin{theorem}
\label{st:Re_scM --| S_scM descends to local MoStrs}
Let $\scM$ be a left proper cellular or combinatorial simplicial model category in the universe $\rmL$ and $\scC$ a small category.
Let $\tau$ be a Grothendieck coverage on $\scC$, and $\wtau$ the induced Grothendieck coverage of $\tau$-local epimorphisms on $\wC$.
The Quillen adjunction~\eqref{eq:Re_M -| S_M adjunction} descends to a Quillen adjunction
\begin{equation}
\label{eq:local Re_M -| S_M adjunction}
\begin{tikzcd}
	Re_\scM : \Fun(\wC^\opp, \scM)^{loc} \ar[r, shift left=0.15cm, "\perp"' yshift=0.05cm]
	& \Fun(\scC^\opp, \scM)^{loc} : S_\scM\,. \ar[l, shift left=0.15cm]
\end{tikzcd}
\end{equation}
\end{theorem}

\begin{proof}
By Proposition~\ref{st:QAds and BLocs}, it suffices to show that the right adjoint $S_\scM$ preserves local objects.
To that end, let $F \in \Fun(\scC^\opp, \scM)$ be $S(\scC, \tau)$-local, and let $\pi \colon Y \to X$ be a $\tau$-local epimorphism in $\wC$.
By Proposition~\ref{st:fibrants in Fun(C^op,M)^loc} we thus need to show that the canonical morphism
\begin{equation}
\label{eq:mp for hoDescent of S_M F}
	(S_\scM F)(X) \longrightarrow
	\underset{k \in \bbDelta}{\holim}\, \{ \cC_k \pi, S_\scM F \}^{\wC^\opp}
\end{equation}
is a weak equivalence in $\scM$.
Since $\cC_k \pi$ is representable in $\Fun(\wC^\opp, \sSet_\rmS)$, the Yoneda Lemma for $\wC$ implies that
\begin{equation}
	\{ \cC_k \pi, S_\scM F \}^{\wC^\opp}
	\cong (S_\scM F)(\cC_k \pi)
	= \{Q(\cC_k \pi), F\}^{\scC^\opp}\,.
\end{equation}
The problem is thus equivalent to showing that the canonical morphism
\begin{equation}
	\{QX, F\}^{\scC^\opp}
	\longrightarrow \underset{k \in \bbDelta}{\holim}\, \{Q(\cC_k \pi), F\}^{\scC^\opp}
\end{equation}
is a weak equivalence in $\scM$.
Since the functors $\ul{\scM}(m,-) \colon \scM_{fib} \to \sSet_\rmL$ jointly detect weak equivalences, where $m$ ranges over all cofibrant objects in $\scM$ (and where $\scM_{fib} \subset \scM$ is the full subcategory on the fibrant objects), it suffices to show that, for each cofibrant $m \in \scM$, the morphism
\begin{equation}
	\ul{\scM} \big( m, \{QX, F\}^{\scC^\opp} \big)
	\longrightarrow \ul{\scM} \big( m, \underset{k \in \bbDelta}{\holim}\, \{Q(\cC_k \pi), F\}^{\scC^\opp} \big)
\end{equation}
is a weak homotopy equivalence.
Here we have used that $F$ is projectively fibrant, so that the functor $\{Q(-), F\}^{\scC^\opp}$ takes values in $\scM_{fib}$, and that the homotopy limit of an objectwise fibrant diagram is again a fibrant object.
Using the simplicial enrichment and the two-variable Quillen adjunction from Proposition~\ref{st:2VQAd for Fun(scI, scM)}, the above problem is further equivalent to showing that, for each cofibrant $m \in \scM$, the morphism
\begin{equation}
	\ul{\Fun(\scC^\opp, \sSet_\rmL)} \big( QX, \ul{\scM}_{\scC^\opp}^{ptw}(m, F) \big)
	\longrightarrow \underset{k \in \bbDelta}{\holim}\ \ul{\Fun(\scC^\opp, \sSet_\rmL)} \big( Q(\cC_k \pi), \ul{\scM}_{\scC^\opp}^{ptw}(m, F) \big)
\end{equation}
is a weak homotopy equivalence.
Note that this morphism is the same as the canonical morphism
\begin{equation}
	S_{\infty,0} \big( \ul{\scM}_{\scC^\opp}^{ptw}(m, F) \big) (X)
	\longrightarrow \underset{k \in \bbDelta}{\holim}\ S_{\infty,0} \big( \ul{\scM}_{\scC^\opp}^{ptw}(m, F) \big) (\cC_k \pi)
\end{equation}
in the homotopy descent condition for the simplicial presheaf
\begin{equation}
	S_{\infty,0} \big( \ul{\scM}_{\scC^\opp}^{ptw}(m, F) \big)
	\in \Fun(\wC^\opp, \sSet_\rmL)\,.
\end{equation}
By Theorem~\ref{st:QAd for H_(oo,0)^loc} it now suffices to show that the simplicial presheaf
\begin{equation}
	\ul{\scM}_{\scC^\opp}^{ptw}(m, F)
	\in \Fun(\scC^\opp, \sSet_\rmL)
\end{equation}
is $\tau$-local, for each cofibrant object $m \in \scM$.

To that end, let $\CU = \{c_i \to c\}_{i \in I}$ be a $\tau$-covering in $\scC$.
We have to check that the canonical morphism
\begin{equation}
	\ul{\scM}_{\scC^\opp}^{ptw}(m, F)(c)
	\longrightarrow \underset{k \in \bbDelta}{\holim}\, \ul{\scH}_{\infty,0} \big( Q \cC_k \CU, \ul{\scM}_{\scC^\opp}^{ptw}(m, F) \big)
\end{equation}
is a weak homotopy equivalence.
Again using the two-variable Quillen adjunctions, this is equivalent to checking that
\begin{equation}
	\ul{\scM} \big( m, F(c) \big)
	\longrightarrow \underset{k \in \bbDelta}{\holim}\, \ul{\scM} \big( m, \{ Q \cC_k \CU, F\}^{\scC^\opp} \big)
	\cong \ul{\scM} \big( m, \underset{k \in \bbDelta}{\holim}\, \{ Q \cC_k \CU, F\}^{\scC^\opp} \big)
\end{equation}
is a weak homotopy equivalence, for each cofibrant $m \in \scM$.
Again since the functors $\ul{\scM}(m,-)$ jointly detect weak equivalences between fibrant objects in $\scM$, this is the case if and only if
\begin{equation}
	F(c) \longrightarrow \underset{k \in \bbDelta}{\holim}\, \{ Q \cC_k \CU, F\}^{\scC^\opp}
\end{equation}
is a weak equivalence in $\scM$.
However, that is true by the assumption that $F$ is $S(\scM, \tau)$-local.
\end{proof}

Finally, we record a generalisation of Proposition~\ref{st:S^Q_oo = hoRan_(Y^op)}:

\begin{proposition}
\label{st:S_M is hoRan}
In the setting of Theorem~\ref{st:Re_M --| S_M is Quillen adjunction}, for each projectively fibrant $F \in \Fun(\scC^\opp, \scM)$ and any $X \in \wC$, there is a canonical natural weak equivalence
\begin{equation}
	S_\scM(F)(X) \simeq (\hoRan_\scY F)(X)\,.
\end{equation}
\end{proposition}

\begin{proof}
We have canonical weak equivalences
\begin{align}
	S_\scM(F)(X)
	&= \int_{c \in \scC} \{ QX(c), F(c) \}
	\\
	&= \int_{c \in \scC} \big\{ B(*, \scC_{/X}, \scY)(c), F(c) \}
	\\
	&\cong \int_{c \in \scC} \int_{n \in \bbDelta} \prod_{c_0, \dots, c_n} \big\{ \scC(c, c_0) \otimes \scC(c_0, c_1) \otimes \cdots \otimes \scC(c_{n-1}, c_n) \otimes X(c_n) \otimes \Delta^n,\, F(c) \big\}
	\\
	&\cong \int_{n \in \bbDelta} \prod_{c_0, \dots, c_n} \big\{ \scC(c_0, c_1) \otimes \cdots \otimes \scC(c_{n-1}, c_n) \otimes X(c_n) \otimes \Delta^n,\, F(c_0) \big\}
	\\
	&\cong C_\scM (*, (\scC_{/X})^\opp, F)
	\\*
	&\simeq \underset{\substack{\scY_c \to X \\ \in (\scC_{/X})^\opp}}{\holim} F(c)\,.
\end{align}
In the second-to-last line we have used the enriched cobar construction for the simplicially enriched category $\scM$.
The final weak equivalence follows from Proposition~\ref{st:cobar computes holim}.
\end{proof}

\subsection{Sheaves of $(\infty,n)$-categories}
\label{sec:(oo,n)-sheaves}

The specific instances of Theorem~\ref{st:Re_scM --| S_scM descends to local MoStrs} in which we are interested most arise from choosing the target model category $\scM$ to be a model for $(\infty,n)$-categories.
Various such model categories exist in the literature, including the model structures for $n$-fold complete Segal spaces~\cite{Rezk:HoTheory_of_HoTheories, Barwick:infty-n-Cat_as_closed_MoCat, BSP:Unicity} and $\Theta_n$-spaces~\cite{Rezk:Cartesian_pres_of_oon-Cats, Rezk:Corrigendum_to_Cart_pres} (see also~\cite{BSP:Unicity} for a general characterisation of such models, as well as~\cite{BR:Comparison_for_oonCats_I, BR:Comparison_for_oonCats_II} for a comparison between the aforementioned two models).
Theorem~\ref{st:Re_scM --| S_scM descends to local MoStrs} applies to both homotopy sheaves of $n$-fold complete Segal spaces and $\Theta_n$-spaces.
Since this paper is motivated by applications to functorial field theories, where $n$-fold complete Segal spaces are the prevalent model for $(\infty,n)$-categories, we will focus on the case of $n$-fold complete Segal spaces.

For each $n \in \NN_0$, we consider the category
\begin{equation}
	s^n\sSet_\rmL = \Fun (\bbDelta^{n,\opp}, \sSet_\rmL)
\end{equation}
of $n$-fold simplicial diagrams in large simplicial sets.
For fixed $n \in \NN$, we write $([k_0], \ldots, [k_{n-1}]) \eqqcolon \vec{k} \in \bbDelta^n$.
The category $s^n\sSet_\rmL$ is simplicial, with simplicial mapping space
\begin{equation}
	\ul{s^n\sSet_\rmL}(X,Y) = \int_{\vec{k} \in \bbDelta^n} \ul{\sSet_\rmL}(X_{\vec{k}}, Y_{\vec{k}})\,.
\end{equation}
Further, the category $s^n\sSet_\rmL$ is cartesian closed, with internal hom given by
\begin{equation}
	(Y^X)_{\vec{k}}
	= \big( \ul{s^n\sSet_\rmL}^{s^n\sSet_\rmL} (X, Y) \big)_{\vec{k}}
	= \ul{s^n\sSet_\rmL} (\scY^{\bbDelta^n}_{\vec{k}} \times X, Y)
	\cong \int_{\vec{l} \in \bbDelta^n}\, \ul{\sSet_\rmL} \big( \scY^{\bbDelta^n}_{\vec{k}}(\vec{l}) \times X_{\vec{l}}\,, Y_{\vec{l}} \big)\,,
\end{equation}
where $\scY^{\bbDelta^n} \colon \bbDelta^n \to \sSet_\rmL$ is the Yoneda embedding of $\bbDelta^n$.
For each $n \in \NN$, there is a monoidal adjunction
\begin{equation}
\begin{tikzcd}
	\sfc_\bullet : s^{n-1}\sSet_\rmL \ar[r, shift left=0.15cm, "\perp"' yshift=0.05cm]
	& s^n\sSet_\rmL : \Ev_{[0]}\,, \ar[l, shift left=0.15cm]
\end{tikzcd}
\end{equation}
where $\sfc_\bullet$ is the constant-diagram functor, i.e.~$(\sfc_\bullet X)_{k_0, \ldots, k_{n-1}} = X_{k_1, \ldots, k_{n-1}}$.
Therefore, the category $s^n\sSet_\rmL$ is enriched, tensored, and cotensored over $s^k\sSet$ for any $0 \leq k \leq n$.

Moreover, for each $k,l \in \NN$ there is a functor
\begin{equation}
	(-) \boxtimes (-) \colon s^k\sSet_\rmL \times s^l\sSet_\rmL \longrightarrow s^{k+l}\sSet_\rmL\,,
	\qquad
	(A \boxtimes B)_{i_0, \ldots, i_{k+l-1}}
	= A_{i_0, \ldots, i_{k-1}} \times B_{i_k, \ldots, i_{k+l-1}}\,.
\end{equation}
Iterating this, we have a functor
\begin{equation}
	\prod_{i = 1}^n \sSet_\rmL \longrightarrow s^n\sSet_\rmL\,,
	\qquad
	(A_0, \ldots, A_n) \longmapsto A_0 \boxtimes \cdots \boxtimes A_{n-1}\,,
\end{equation}
which satisfies that $\scY^{\bbDelta^n}_{\vec{k}} = \Delta^{k_0} \boxtimes \cdots \boxtimes \Delta^{k_{n-1}}$ and $\sfc_\bullet X = \Delta^0 \boxtimes X$, for each $X \in s^{n-1}\sSet_\rmL$.

The category $s\sSet_\rmL$ of large bisimplicial sets carries a Reedy model structure, and it is a well-known fact that this coincides with the injective model structure~\cite[Thm.~15.8.7]{Hirschhorn:MoCats}.
It is a less well-known fact that an analogous identity of the injective and Reedy model structures holds true for the categories $s^n\sSet_\rmL = \Fun(\bbDelta^{n, \opp}, \sSet_\rmL)$~\cite[Prop.~3.15]{BR:Reedy_Cats_and_Theta_construction}.
Whenever we refer to $s^n\sSet_\rmL$ as a model category, we will mean the Reedy (or, equivalently, the injective) model structure.
The model category $s^n\sSet_\rmL$ is cellular, left proper, combinatorial and cartesian closed, i.e.~it is a symmetric monoidal model structure in the sense of~\cite[Def.~4.2.6]{Hovey:MoCats} whose monoidal product agrees with the categorical binary product:
this follows readily from the fact that $\sSet_\rmL$ with the Kan-Quillen model structure is cartesian closed~\cite[Prop.~4.2.8]{Hovey:MoCats} and the Reedy model structure on $s^n\sSet_\rmL$ agrees with the injective model structure.

For the reader's convenience, we briefly recall  the presentation of a model category for $n$-fold complete Segal spaces, following~\cite[App.~A]{JFS:Oplax_twQFTs_and_Morita} (see, in particular, the `Lurie-presentation' in loc.~cit.).

For $k \in \NN$, $k \geq 2$, define the \textit{$k$-th spine inclusion}
\begin{equation}
\begin{tikzcd}
	\sigma_k \colon \underbrace{\Delta^1 \underset{\Delta^0}{\sqcup} \cdots \underset{\Delta^0}{\sqcup} \Delta^1}_{\text{$k$-times}}
	\ar[r, hookrightarrow]
	& \Delta^k
\end{tikzcd}
\end{equation}
in $\sSet_\rmL$.
Localising at maps of this type will implement the Segal conditions.

Let $K \in \sSet_\rmL$ denote the pushout
\begin{equation}
\begin{tikzcd}
	\Delta^{\{02\}} \sqcup \Delta^{\{13\}} \ar[r] \ar[d]
	& \Delta^3 \ar[d]
	\\
	\Delta^0 \ar[r]
	& K
\end{tikzcd}
\end{equation}
where the top morphism consists of the inclusions of $\Delta^1$ into $\Delta^3$ as the edge $0 \to 2$ and $1 \to 3$, respectively.
Let $\kappa$ denote the collapse map
\begin{equation}
	\kappa \colon K \to \Delta^0\,.
\end{equation}
Localising at maps of the type $\kappa$ will implement the completeness conditions below.

\begin{definition}
\label{def:CSS_n}
Let $n \in \NN$.
The \emph{model category for (large) $n$-fold complete Segal spaces} is obtained by left Bousfield localisation of $s^n\sSet_\rmL$ at the following morphisms:
\begin{myenumerate}
\item \emph{(Segal conditions)}
For each $k \in \NN$, $i \in \{0, \ldots, n-1\}$ and each $k_0, \ldots, k_{i-2}, k_i, \ldots, k_{n-1} \in \NN_0$, we localise at the morphism
\begin{equation}
	\Delta^{k_0} \boxtimes \cdots \boxtimes \Delta^{k_{i-2}} \boxtimes \sigma_k \boxtimes \Delta^{k_i} \boxtimes \cdots \boxtimes \Delta^{k_{n-1}}\,.
\end{equation}

\item \emph{(Completeness)}
For each $i \in \{0, \ldots, n-1\}$ and each $k_0, \ldots, k_{i-2}, k_i, \ldots, k_{n-1} \in \NN_0$, we localise at the morphism
\begin{equation}
	\Delta^{k_0} \boxtimes \cdots \boxtimes \Delta^{k_{i-2}} \boxtimes \kappa \boxtimes \Delta^{k_i} \boxtimes \cdots \boxtimes \Delta^{k_{n-1}}\,.
\end{equation}

\item \emph{(Essential constancy)}
For each $i \in \{0, \ldots, n-1\}$ and each $k_0, \ldots, k_{i-2}, k_i, \ldots, k_{n-1} \in \NN_0$, we localise at the morphism
\begin{equation}
	\Delta^{k_0} \boxtimes \cdots \boxtimes \Delta^{k_{i-2}} \boxtimes \Delta^0 \boxtimes \Delta^{k_i} \boxtimes \cdots \boxtimes \Delta^{k_{n-1}}
	\longrightarrow 
	\Delta^{k_0} \boxtimes \cdots \boxtimes \Delta^{k_{i-2}} \boxtimes \Delta^0 \boxtimes \Delta^0 \boxtimes \cdots \boxtimes \Delta^0\,.
\end{equation}
\end{myenumerate}
We denote the resulting model category by $\CSS_n$.
Fibrant objects in $\CSS_n$ are called \emph{$n$-fold complete Segal spaces}, or \emph{$(\infty,n)$-categories}.
\end{definition}

\begin{remark}
The model categories $\CSS_n$ exist and are left proper, cellular and simplicial~\cite[Lemma~A.6]{JFS:Oplax_twQFTs_and_Morita}.
In particular, we can apply Theorem~\ref{st:Re_scM --| S_scM descends to local MoStrs} to presheaves with values in $\CSS_n$, for each $n \in \NN$.
\qen
\end{remark}

\begin{lemma}
\label{st:holims in CSS_n}
Let $n \in \NN$, let $\scI$ be a large category, and let $D \colon \scI \to \CSS_n$ be a projectively fibrant diagram, i.e.~$Di$ is an $n$-fold complete Segal space for every $i \in \scJ$.
\begin{myenumerate}
\item The homotopy limit of $D$ can be computed levelwise: for each $k \in \NN_0$, there exist canonical natural isomorphisms
\begin{equation}
	\big( \holim^{\CSS_n}_\scJ(D) \big)_k \cong \holim^{\CSS_{n-1}}_\scJ(D_k)\,,
\end{equation}
where for $i \in \scJ$ the object $D_k i = (Di)_k$ is the $k$-th simplicial level of $Di \in s(s^{n-1}\sSet_\rmL)$.

\item The homotopy limit of $D$ can be computed objectwise when viewing the diagram as a functor $D \colon \scJ \times \bbDelta^n \to \sSet_\rmL$.
That is, for any $\vec{k} \in \bbDelta^n$ there is an isomorphism (natural in $\vec{k} \in \bbDelta^n$)
\begin{equation}
	\big( \holim^{\CSS_n}_\scJ(D) \big)_{\vec{k}}
	\cong \holim^\sSet_\scJ(D_{\vec{k}})\,.
\end{equation}
\end{myenumerate}
\end{lemma}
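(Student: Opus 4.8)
The plan is to adapt the proof of Lemma~\ref{st:holims in CSS}, using that $\CSS_n$ is a model $\sSet$-category (Lemma~\ref{st:c_bullet : CSS_n -> CSS_n+1}) and replacing the single ``levelwise'' step by an induction on $n$. Since $D \colon \scJ \to \CSS_n$ is objectwise fibrant, Lemma~\ref{st:Cobar yields fibrants} together with~\cite[Cor.~5.1.3]{Riehl:Cat_HoThy} shows that the two-sided cobar construction $C(*,\scJ,D) \in \CSS_n$ is fibrant and models $\holim^{\CSS_n}_\scJ(D)$. As in the proof of Lemma~\ref{st:holims in CSS}, this object is assembled from the values $Di$ purely by set-indexed products and cotensors by simplicial sets, together with an end over $\bbDelta$. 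The essential point is that the $\sSet$-enrichment on $\CSS_n = s^n\sSet$ is the objectwise one, $\ul{s^n\sSet}(X,Y) = \int_{\vec{k} \in \bbDelta^n} \ul{\sSet}(X_{\vec{k}}, Y_{\vec{k}})$; consequently the associated cotensors by simplicial sets, like all limits, are computed objectwise in the $(\bbDelta^\opp)^n$-direction. Hence for every $\vec{k} \in \bbDelta^n$ there is a canonical isomorphism $\big( C(*,\scJ,D) \big)_{\vec{k}} \cong C_\sSet(*,\scJ,D_{\vec{k}})$, where $D_{\vec{k}} \colon \scJ \to \sSet$ sends $i$ to $(Di)_{\vec{k}}$.

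To conclude part~(2) it remains to check that $D_{\vec{k}}$ is objectwise fibrant. For this one uses that a fibrant object of $\CSS_n = L_{S_n}\big( (\CSS_{n-1}^{\bbDelta^\opp})_{inj} \big)$ is in particular fibrant in the pre-localised model structure $(\CSS_{n-1}^{\bbDelta^\opp})_{inj}$, hence levelwise fibrant in $\CSS_{n-1}$; iterating this observation $n$ times shows that $(Di)_{\vec{k}}$ is a Kan complex for every $i \in \scJ$. Therefore $C_\sSet(*,\scJ,D_{\vec{k}})$ computes $\holim^\sSet_\scJ(D_{\vec{k}})$ by~\cite[Cor.~5.1.3]{Riehl:Cat_HoThy}, which is the assertion of part~(2).

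Part~(1) is obtained by performing only the outermost evaluation. One has $\big( C(*,\scJ,D) \big)_k = \Ev_{[k]}\big( C(*,\scJ,D) \big)$, and since the evaluation functor $\Ev_{[k]} \colon \CSS_{n-1}^{\bbDelta^\opp} \to \CSS_{n-1}$ preserves limits and simplicial cotensors (again computed objectwise over $\bbDelta^\opp$), this equals $C_{\CSS_{n-1}}(*,\scJ,D_k)$ with $D_k = \Ev_{[k]} \circ D \colon \scJ \to \CSS_{n-1}$. By the same injective-fibrancy argument as above, $D_k$ is objectwise fibrant in $\CSS_{n-1}$, so Lemma~\ref{st:Cobar yields fibrants} and~\cite[Cor.~5.1.3]{Riehl:Cat_HoThy} identify $C_{\CSS_{n-1}}(*,\scJ,D_k)$ with a model for $\holim^{\CSS_{n-1}}_\scJ(D_k)$, giving part~(1). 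Iterating~(1) a total of $n$ times, with base case $n=1$ supplied by Lemma~\ref{st:holims in CSS}, recovers part~(2) and serves as a consistency check.

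The only genuine subtlety is bookkeeping around the Bousfield localisation: one must confirm that fibrant objects of $\CSS_n$ remain injectively, and hence levelwise, fibrant, and that the simplicial cotensors entering the cobar construction are the objectwise ones and are unaffected by passing to $\CSS_n$ --- the latter being automatic, since cotensors are right adjoints and the underlying category $s^n\sSet$ is left unchanged by the localisation. With these points in place, the argument runs exactly parallel to the proof of Lemma~\ref{st:holims in CSS}, now iterated along the tower $\sSet = \CSS_0, \CSS_1, \ldots, \CSS_n$.
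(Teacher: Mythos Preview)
Your proof is correct and follows essentially the same approach as the paper's: model the homotopy limit by the cobar construction, observe that evaluation at a simplicial level commutes with the limits and simplicial cotensors entering the cobar construction, check levelwise fibrancy via the injective model structure, and iterate. The paper's own proof merely says ``parallel to the proof of Lemma~\ref{st:holims in CSS}, using the enrichment of $\CSS_n$ over $\CSS_{n-1}$'' for part~(1) and ``iterate part~(1)'' for part~(2); your version spells out exactly these details.
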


\begin{proof}
We use the explicit model for homotopy limits in simplicially enriched categories from Proposition~\ref{st:cobar computes holim}.
For the sake of brevity, for $l \in \NN_0$, we set
\begin{equation}
	\scY^n = \scY^{\bbDelta^n}_{(-,0,\ldots,0)} \colon \bbDelta \to s^n\sSet_\rmL\,,
	\qquad
	\scY^n_{[k]} = \Delta^k \boxtimes \underbrace{\Delta^0 \boxtimes \cdots \boxtimes \Delta^0}_{\text{$(n{-}1)$-times}}\,.
\end{equation}
Since $D$ is objectwise fibrant, a model for the homotopy limit of $D$ is given by the two-sided cobar construction $C(*,\scI,D) \in \CSS_n$~\cite[Cor.~5.1.3]{Riehl:Cat_HoThy}.

For any $n \in \NN_0$, there are canonical natural isomorphisms of simplicial sets
\begin{align}
	\big( \holim^{\CSS_n}_\scJ(D) \big)_m
	&= \big( C_{s^n\sSet}(*,\scJ,D) \big)_m
	\\*
	&= \ul{\CSS}_n^{\CSS_{n-1}} \big( \scY^n_{[m]}, C_{s^n\sSet}(*,\scJ,D) \big)
	\\
	&\cong \int_{k \in \bbDelta} \ul{s^n\sSet}^{s^{n-1}\sSet} \Big( \scY^n_{[m]}, \big( C^k_{s^n\sSet}(*,\scJ,D) \big)^{\sfc_\bullet^n \Delta^k} \Big)
	\\
	&\cong \int_{k \in \bbDelta} \ul{s^n\sSet}^{s^{n-1}\sSet} \big( \scY^n_{[m]} \otimes \sfc_\bullet^n \Delta^k, C^k_{s^n\sSet}(*,\scJ,D) \big)
	\\
	&\cong \int_{k \in \bbDelta} \ul{s^{n-1}\sSet}^{s^{n-1}\sSet} \big( \sfc_\bullet^{n-1} \Delta^k, \big( C^k_{s^n\sSet}(*,\scJ,D) \big)_{(m,-,\cdots,-)} \big)
	\\
	&\cong \int_{k \in \bbDelta} \ul{s^{n-1}\sSet}^{s^{n-1}\sSet} \Big( \sfc_\bullet^{n-1} \Delta^k, \prod_{i \in (\sfN\scJ)_k} (Di_k)_{(m,-,\cdots,-)} \Big)
	\\
	&\cong \int_{k \in \bbDelta} \Big( \prod_{i \in (\sfN\scJ)_k} (Di_k)_m \Big)^{\sfc_\bullet^{n-1} \Delta^k}
	\\
	&= C_{s^{n-1}\sSet}(*,\scJ,D_m)
	\\*
	&= \holim^{\CSS_{n-1}}_\scJ(D_m)\,.
\end{align}
The last identity relies on~\cite[Cor.~5.1.3]{Riehl:Cat_HoThy} (see also Proposition~\ref{st:cobar computes holim}), together with the following facts:
the functor $D \colon \scI \to \CSS_n$ is objectwise fibrant.
In particular, for each $i \in \scJ$, the object $D(i) \in \Fun(\bbDelta^\opp, s^{n-1}\sSet_\rmL)$ is fibrant in the Reedy model structure (equivalently, the injective model structure), where $s^{n-1}\sSet_\rmL$ carries the Reedy (equivalently, injective) model structure.
Forgetting the localisations in Definition~\ref{def:CSS_n} at morphisms which are non-trivial in the first $\bbDelta$-factor, we further obtain that $(Di)_m$ is fibrant in $\CSS_{n-1}$, for each $i \in \scI$ and $m \in \bbDelta$.
Consequently, the functor $D_m \colon \bbDelta \to \CSS_{n-1}$, $i \mapsto D_m(i) = (Di)_m$ is also fibrant in the \textit{projective} model structure on $\Fun(\bbDelta^\opp, \CSS_{n-1})$.
Thus, the cobar construction in the second-to-last line indeed models its homotopy limit.
Part~(2) follows in analogy with part~(1), replacing $\scY^n$ with $\scY$ (i.e.~replacing $\Delta^k \boxtimes \Delta^0 \boxtimes \cdots \boxtimes \Delta^0$ by $\Delta^{k_0} \boxtimes \cdots \boxtimes \Delta^{k_{n-1}}$) and using that injectively fibrant diagrams are, in particular, projectively fibrant.
\end{proof}

\begin{definition}
\label{def:(oo,n)-(pre)sheaves}
Let $\scC$ be a small category.
\begin{myenumerate}
\item We write
\begin{equation}
	\scH_{\infty,n} \coloneqq \Fun(\scC^\opp, \CSS_n)
\end{equation}
and view this as endowed with the projective model structure.
A fibrant object in $\scH_{\infty,n}$ is called a \emph{presheaf of $(\infty,n)$-categories on $\scC$}.

\item In the notation of Definition~\ref{def:Fun(C^op,M)^loc}, we define the left Bousfield localisation
\begin{equation}
	\scH_{\infty,n}^{loc} \coloneqq L_{S(\CSS_n,\tau)} \scH_{\infty,n}\,.
\end{equation}
A fibrant object in $\scH_{\infty,n}^{loc}$ is called a \emph{sheaf of $(\infty,n)$-categories on $(\scC,\tau)$}.
\end{myenumerate}
\end{definition}

\begin{proposition}
Let $\scC$ be a small category.
Then $\scH_{\infty,n}$ is a left proper, tractable (large) simplicial model category.
Further, if $\scC$ is endowed with a Grothendieck pretopology $\tau$, then $\scH_{\infty,n}^{loc}$ is a $\CSS_n$-model category, and it is symmetric monoidal if $\scH_{\infty,n}$ is so.
\end{proposition}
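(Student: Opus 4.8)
The plan is to carry out the proofs of Propositions~\ref{st:H_(oo,1) props} and~\ref{st:H_(oo,1)^loc props} again, replacing the enriching category $\CSS$ by $\CSS_n$ everywhere. All that is needed from outside is Theorem~\ref{st:def CSS_n}, which supplies exactly the properties of $\CSS_n$ that were used in the case $n=1$: it is a cartesian closed, left proper, tractable model $V$-category. No new idea is involved; the content is in checking that Barwick's machinery applies with the enriching category $\CSS_n$.

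For the first assertion, $\scC$ is $U$-small, hence $V$-small, and $\CSS_n$ is tractable, so the projective model structure on $\scH_{\infty,n}=\Fun(\scC^\opp,\CSS_n)_{proj}$ exists and is tractable by~\cite[Thm.~2.14]{Barwick:Enriched_B-Loc}. The tensoring, cotensoring and $\CSS_n$-enrichment follow from~\cite[Prop.~4.50]{Barwick:Enriched_B-Loc}, using that $\CSS_n$ is cartesian closed. Left properness is checked objectwise: in the projective structure, pushouts and weak equivalences are computed in $\CSS_n$ objectwise, and projective cofibrations are in particular objectwise cofibrations, so left properness of $\scH_{\infty,n}$ reduces to that of $\CSS_n$. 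If moreover $\scC$ has finite products, then the Yoneda embedding $\scC\to\scH_{\infty,n}$ preserves products and~\cite[Prop.~4.52]{Barwick:Enriched_B-Loc} upgrades $\scH_{\infty,n}$ to a symmetric monoidal model category; this is the situation in which the hypothesis of the symmetric-monoidality clause below is met.

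For the second assertion, the localising class $\ctau$ is a $V$-set: it is indexed by the $U$-small covering families of $(\scC,\tau)$, its members are \v{C}ech nerves formed in the $V$-small category $\wC$ and then promoted into $\scH_{\infty,n}$ by iterated applications of the functors $\sfc_\bullet\colon\CSS_{n-1}\to\CSS_n$. Since $\scH_{\infty,n}$ is a left proper, tractable model $\CSS_n$-category and $\CSS_n$ is a tractable symmetric monoidal model $V$-category, the $\CSS_n$-enriched left Bousfield localisation $\scH_{\infty,n}^{loc}=L_{\ctau/\CSS_n}\scH_{\infty,n}$ exists and is again a model $\CSS_n$-category by~\cite[Thm.~4.46]{Barwick:Enriched_B-Loc}, exactly as in the proof of Proposition~\ref{st:H_(oo,1)^loc props}. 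If $\scH_{\infty,n}$ is symmetric monoidal, then~\cite[Prop.~4.58]{Barwick:Enriched_B-Loc} shows that $\scH_{\infty,n}^{loc}$ is symmetric monoidal as well, again as in the case $n=1$.

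The one point deserving attention beyond mechanical transcription is the bookkeeping of universes: one must check that the iterative construction $\CSS_n=L_{S_n}\big((\CSS_{n-1}^{\bbDelta^\opp})_{inj}\big)$ of Theorem~\ref{st:def CSS_n} keeps $\CSS_n$ a $V$-small tractable model category at every step, and that the localising sets $S_n$ and $\ctau$ remain $V$-small, so that the cited existence theorems for enriched projective model structures and enriched left Bousfield localisations apply verbatim. Granting this, the proposition follows precisely as in the cases $n=0$ and $n=1$.
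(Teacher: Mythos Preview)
Your proposal is correct and follows essentially the same route as the paper: invoke Barwick's machinery (\cite[Thm.~2.14, Prop.~4.50]{Barwick:Enriched_B-Loc} for the projective model structure and its $\CSS_n$-enrichment, then \cite[Thm.~4.46/Prop.~4.47, Thm.~4.58]{Barwick:Enriched_B-Loc} for the enriched localisation and the symmetric monoidal upgrade), with left properness checked objectwise. The paper's proof is a terse three-sentence version of what you wrote; your added remarks on universe bookkeeping and the $V$-smallness of $\ctau$ are fine elaborations but introduce no new ideas.
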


\begin{proof}
The statement about $\scH_{\infty,n}$ is a consequence of~\cite[Prop.~4.50]{Barwick:Enriched_B-Loc}.
For $\scH_{\infty,n}^{loc}$, the claim follows from~\cite[Prop.~4.47]{Barwick:Enriched_B-Loc}.
The statement about symmetric monoidal structures is an application of~\cite[Thm.~4.58]{Barwick:Enriched_B-Loc}.
\end{proof}

Analogously to Sections~\ref{sec:Sh_oo}, $\scH_{\infty,n}$ is symmetric monoidal whenever $\scC$ has finite products.

\begin{definition}
\label{def:(oo,n)-(pre)sheaves on hat(C)}
Let $\scC$ be a small category.
As before, we write $\wC = \Fun(\scC^\opp, \Set_\rmS)$.
\begin{myenumerate}
\item We write
\begin{equation}
	\wH_{\infty,n} \coloneqq \Fun(\wC^\opp, \CSS_n)
\end{equation}
for the category $\Fun(\wC^\opp, \CSS_n)$, endowed with the projective model structure.
A fibrant object in $\wH_{\infty,n}$ is called a \emph{presheaf of $(\infty,n)$-categories on $\wC$}.

\item In the notation of Definition~\ref{def:Fun(C^op,M)^loc}, we define the left Bousfield localisation
\begin{equation}
	\wH_{\infty,n}^{loc} \coloneqq L_{S(\CSS_n,\wtau)} \wH_{\infty,n}\,.
\end{equation}
A fibrant object in $\wH_{\infty,n}^{loc}$ is called a \emph{sheaf of $(\infty,n)$-categories on $(\wC,\wtau)$}.
\end{myenumerate}
\end{definition}

\begin{remark}
Lemma~\ref{st:holims in CSS_n} implies that an object $\scF \in \Fun(\scC^\opp, \CSS_n)$ is $S(\CSS_n,\tau)$-local if and only if, for each $[\vec{k}] \in \bbDelta^n$ the object $\scF_{k_0, \ldots, k_{n-1}} \in \Fun(\scC^\opp, \sSet_\rmL)$ is $\tau$-local, and analogously for $(\wC, \wtau)$ in place of $(\scC, \tau)$.
\qen
\end{remark}

In the notation of~\eqref{eq:Re_M -| S_M adjunction}, we further write
\begin{equation}
	S_{\infty,n} \coloneqq S_{\CSS_n} \colon \scH_{\infty,n} \longrightarrow \wH_{\infty,n}\,,
	\qquad
	(S_{\infty,n} \scF)(X) = \{ QX, \scF \}^{\scC^\opp}\,.
\end{equation}
We can compute this functor more explicitly as follows:
for $[\vec{k}] \in \bbDelta^n$, we have canonical isomorphisms
\begin{align}
\label{eq:S_(oo,n) levelwise}
	(S_{\infty,n} \scF)_{\vec{k}}(X)
	&= \big( \{ QX, \scF \}^{\scC^\opp} \big)_{\vec{k}}
	\\
	&\cong s^n\sSet_\rmL \big( \Delta^{k_0} \boxtimes \cdots \boxtimes \Delta^{k_{n-1}}, \{ QX, \scF \}^{\scC^\opp} \big)
	\\
	&\cong \Fun(\scC^\opp, s^n\sSet_\rmL) \big( QX \otimes (\Delta^{k_0} \boxtimes \cdots \boxtimes \Delta^{k_{n-1}}), \scF \big)
	\\
	&\cong \Fun(\scC^\opp, \sSet_\rmL) (QX, \scF_{k_0, \ldots, k_{n-1}})
	\\
	&\cong S_{\infty,0}(\scF_{k_0, \ldots, k_{n-1}})(X)\,.
\end{align}

We have the following direct corollary of Proposition~\ref{st:S_M is hoRan}:

\begin{proposition}
\label{st:S^Q_(oo,n) = hoRan_(Y^op)}
On any fibrant object $\scF \in \scH_{\infty,n}$, the functor $S_{\infty,n}$ agrees with the homotopy right Kan extension of $\scF \colon \scC^\opp \to \CSS_n$ along the Yoneda embedding,
\begin{equation}
	S_{\infty,n}\scF \cong \hoRan_\scY(\scF)\,.
\end{equation}
\end{proposition}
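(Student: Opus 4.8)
The plan is to run the argument from the proof of Proposition~\ref{st:S^Q_oo = hoRan_(Y^op)} essentially verbatim, with $\sSet$ replaced throughout by $\CSS_n$. Two structural facts make this legitimate: $\scH_{\infty,n}$ is a model $\CSS_n$-category, and $\CSS_n$ is itself a simplicial model category (take $k=0$ in Lemma~\ref{st:c_bullet : CSS_n -> CSS_n+1}, since $\CSS_0 = \sSet$), so the two-sided bar and cobar calculus of \cite{Riehl:Cat_HoThy} over $\sSet$ is available in both $\scH_{\infty,n}$ and $\CSS_n$, and \cite[Cor.~5.1.3]{Riehl:Cat_HoThy} applies in $\CSS_n$.

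First I would recall from Appendix~\ref{app:cof rep in H_oo} that $QX = B^{\scH_\infty}(*, \scC_{/X}, \scY)$ in $\scH_\infty$. The iterated functor $\sfc_\bullet \colon \scH_\infty \to \scH_{\infty,n}$ is a left adjoint and is compatible with the $\sSet$-tensorings (one checks $\sfc_\bullet(F \otimes_\sSet K) \cong (\sfc_\bullet F) \otimes_\sSet K$ directly, as $\sfc_\bullet$ merely inserts constant simplicial directions), so it carries bar constructions to bar constructions; hence $\sfc_\bullet QX \cong B^{\scH_{\infty,n}}(*, \scC_{/X}, \scY)$, the bar construction now formed in $\scH_{\infty,n}$, with $\scY \colon \scC \to \scH_{\infty,n}$ the Yoneda embedding followed by the canonical embedding of $\Set$ into $\CSS_n$. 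This can equally be read off the explicit formula~\eqref{eq:Def Q}, in which only coproducts, the representables $\scY_c$, and copowers by simplicial sets appear.

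Next I would compute, for any $X \in \wC$,
\begin{align}
	(S^Q_{\infty,n}\scF)(X)
	&= \ul{\scH}_{\infty,n}^{\CSS_n}(\sfc_\bullet QX, \scF)
	= \ul{\scH}_{\infty,n}^{\CSS_n}\big( B^{\scH_{\infty,n}}(*, \scC_{/X}, \scY),\, \scF \big)
	\\
	&\cong C_{\CSS_n}\big( *, (\scC_{/X})^\opp, \scF \big)\,,
\end{align}
the last isomorphism being the standard duality between bar and cobar: $\ul{\scH}_{\infty,n}^{\CSS_n}(-,\scF)$ carries colimits to limits and $\sSet$-copowers to $\sSet$-powers, and $\ul{\scH}_{\infty,n}^{\CSS_n}(\scY_c,\scF) \cong \scF(c)$. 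Since $\scF$ is fibrant in $\scH_{\infty,n}$ it is objectwise an $(\infty,n)$-category, so the diagram $(\scC_{/X})^\opp \to \scC^\opp \xrightarrow{\scF} \CSS_n$ is objectwise fibrant, and \cite[Cor.~5.1.3]{Riehl:Cat_HoThy} identifies the cobar construction above with its homotopy limit $\holim^{\CSS_n}$. Finally, by the Yoneda Lemma $(\scC_{/X})^\opp$ is the comma category computing the right Kan extension along $\scY^\opp$ at $X$, so \cite[Ex.~9.2.11]{Riehl:Cat_HoThy}, exactly as in the proof of Proposition~\ref{st:S^Q_oo = hoRan_(Y^op)}, identifies this homotopy limit with $(\hoRan_{\scY^\opp}\scF)(X)$. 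Every step is natural in $X$, yielding the asserted natural isomorphism $S^Q_{\infty,n}\scF \cong \hoRan_{\scY^\opp}\scF$.

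There is no serious obstacle; the only point needing a line of care is the first one, namely that $\sfc_\bullet$ intertwines the $\sSet$-tensorings on $\scH_\infty$ and $\scH_{\infty,n}$ and hence sends the bar-construction presentation of $QX$ to that of $\sfc_\bullet QX$. Alternatively one can sidestep the bar/cobar manipulation and argue levelwise: for $\vec{k} \in \bbDelta^n$ one has $\big((S^Q_{\infty,n}\scF)(X)\big)_{\vec{k}} \cong \ul{\scH}_\infty(QX, \scF_{\vec{k}}) = (S^Q_\infty \scF_{\vec{k}})(X)$ and, by Lemma~\ref{st:holims in CSS_n}(2), $\big((\hoRan_{\scY^\opp}\scF)(X)\big)_{\vec{k}} \cong \hoRan_{\scY^\opp}(\scF_{\vec{k}})(X)$, where $\scF_{\vec{k}} \in \scH_\infty$ is fibrant because $\scF$ is objectwise an $(\infty,n)$-category; Proposition~\ref{st:S^Q_oo = hoRan_(Y^op)} then matches the two levelwise, and since both $S^Q_{\infty,n}\scF$ and $\hoRan_{\scY^\opp}\scF$ are objectwise fibrant ($\sfc_\bullet QX$ being cofibrant by Lemma~\ref{st:cofibrancy in H_(oo,n)}, and a homotopy limit of objectwise fibrant objects being fibrant), a levelwise equivalence between them is an equivalence in $\scH_{\infty,n}$.
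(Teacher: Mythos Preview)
Your proposal is correct and takes exactly the approach the paper intends: the paper's own proof reads in its entirety ``Analogous to Proposition~\ref{st:S^Q_oo = hoRan_(Y^op)}'', and you have simply spelled out that analogy in full, replacing $\sSet$ by $\CSS_n$ throughout the bar/cobar computation. Your supplementary levelwise argument via Lemma~\ref{st:holims in CSS_n}(2) is a pleasant alternative but not needed.
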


As a direct application of Proposition~\ref{st:Re_M --| S_M is Quillen adjunction}, we have:

\begin{proposition}
\label{st:Re^Q_(oo,n) -| S^Q_(oo,n) is QAd for proj MoStrs}
The functor $S_{\infty,n}$ is the right adjoint in a Quillen adjunction
\begin{equation}
\begin{tikzcd}[column sep=1.cm]
	Re_{\infty,n} : \wH_{\infty,n} \ar[r, shift left=0.15cm, "\perp"' yshift=0.03cm]
	& \scH_{\infty,n} \ar[l, shift left=0.15cm] : S_{\infty,n}\,.
\end{tikzcd}
\end{equation}
\end{proposition}

Moreover, Theorem~\ref{st:Re_scM --| S_scM descends to local MoStrs} implies the following result:

\begin{corollary}
\label{st:QAd for H_(oo,n)^loc}
Let $(\scC,\tau)$ be a small site.
For any $n \in \NN$, the Quillen adjunction $Re_{\infty,n} \dashv S_{\infty,n}$ descends to a Quillen adjunction
\begin{equation}
\begin{tikzcd}[column sep=1.cm]
	Re_{\infty,n} : \wH_{\infty,n}^{loc} \ar[r, shift left=0.13cm, "\perp"' yshift=0.05cm]
	& \scH_{\infty,n}^{loc} \ar[l, shift left=0.13cm] : S_{\infty,n}\,.
\end{tikzcd}
\end{equation}
\end{corollary}

\begin{remark}
\label{rmk:Shs of Cats}
The descent results in this subsection also imply corresponding results for strict presheaves of 1-categories, i.e.~functors
\begin{equation}
	F \colon \scC^\opp \to \Cat_\rmL\,,
\end{equation}
from $\scC^\opp$ to the 1-category of large categories.
Rezk's classifying diagram functor $N_{rel} \colon \Cat_\rmL \to \CSS_1$ from large categories to large complete Segal spaces is a fully faithful embedding of categories which preserves and detects weak equivalences (weak equivalences in $\Cat_\rmL$ being equivalences of categories) and which preserves exponentials; this is the content of~\cite[Thm.~3.7]{Rezk:HoTheory_of_HoTheories}.
Explicitly, if $\scD^\sim$ denotes the maximal subgroupoid of a category $\scD$, we have, for each $n \in \NN_0$,
\begin{equation}
	(N_{rel} \scD)_n = N \big( (\scD^{[n]})^\sim \big)\,.
\end{equation}

Now let $(\scC, \tau)$ be a site.
One checks that, given a presheaf $F \colon \scC^\opp \to \Cat_\rmL$ and a $\tau$-covering family $\CU = \{c_i \to c\}_{i \in \Lambda}$ such that all $C_{i_0 \ldots i_n}$ are representable (see~\eqref{eq:def Cech nerve} for the notation), there is an equivalence
\begin{equation}
	\ul{\scH}_{\infty,1}(Q \cC \CU, N_{rel} F)
	\simeq \ul{\scH}_{\infty,1}(\cC \CU, N_{rel} F)
	\simeq N_{rel} \Desc(F; \CU)\,,
\end{equation}
where $\Desc(F; \CU)$ is the usual descent category associated to $F$ and the covering family $\CU$ (see, for instance,~\cite[Def.~2.5]{Moerdijk:Intro_to_Stacks} or~\cite[Def.~2.9]{Bunk:Thesis}).
It follows that $F$ is a sheaf of categories precisely if $N_{rel} F$ is fibrant in $\scH_{\infty,1}^{loc}$.
Given a small presheaf $X \in \wC$, one can check that
\begin{equation}
	\ul{\scH}_{\infty,1}(QX, N_{rel} F)
	\cong N_{rel} \Big( \holim^{\Cat_\rmL} \big( (\scC_{/X})^\opp \longrightarrow \scC^\opp \overset{F}{\longrightarrow} \Cat_\rmL \big) \Big)\,,
\end{equation}
in $\CSS_1$, where the homotopy limit is computed in the canonical model structure on $\Cat_\rmL$ (see, for instance,~\cite{Rezk:Canonical_MoStr_for_Cats}).
We define a functor
\begin{align}
	&S_{1,1} \colon \Fun(\scC^\opp, \Cat_\rmL)
	\longrightarrow \Fun(\wC^\opp, \Cat_\rmL)\,,
	\\
	&S_{1,1}(F)(X) \coloneqq \holim^{\Cat_\rmL} \big( (\scC_{/X})^\opp \longrightarrow \scC^\opp \overset{F}{\longrightarrow} \Cat_\rmL \big)\,.
\end{align}
Given a $\tau$-local epimorphism $\pi \colon Y \to X$, we thus have a commutative diagram
\begin{equation}
\begin{tikzcd}[column sep=2cm, row sep=1cm]
	(N_{rel}F)(X) \ar[r, "\pi^*"]
	&\underset{k \in \bbDelta}{\holim}^{\CSS_1} \big( S_{\infty,1}(F)(\cC_k\pi) \big)
	\\
	N_{rel} \big( F(X) \big) \ar[r, "N_{rel}\pi^*"'] \ar[u, equal]
	& N_{rel} \Big( \underset{k \in \bbDelta}{\holim}^{\Cat_\rmL} \big( S_{1,1}(X)(\cC_k\pi) \big) \Big) \ar[u, "\simeq"]
\end{tikzcd}
\end{equation}
Since $N_{rel}$ reflects weak equivalences, it now follows from Corollary~\ref{st:QAd for H_(oo,n)^loc} that $S_{1,1}F$ satisfies $\wtau$-descent whenever $F$ satisfies $\tau$-descent.
\qen
\end{remark}

\section{Applications in geometry and field theory}
\label{sec:applications}

Here we apply the results of Section~\ref{sec:higher sheaves} to diffeological vector bundles, gerbes with connection, 2-vector bundles and smooth field theories on background manifolds.

\subsection{Strictification of diffeological vector bundles and descent along subductions}
\label{sec:Dfg VBuns}

Vector bundles form a sheaf of categories on the site of manifolds and surjective submersions.
One can show this either by providing an explicit descent construction, or by using a strictification of vector bundles which describes them in terms of their transition functions.

Diffeological spaces form a popular generalisation of manifolds; their category $\Dfg$ sits in a sequence of canonical fully faithful inclusion functors
\begin{equation}
\begin{tikzcd}
	\Mfd \ar[r, hookrightarrow]
	& \Dfg \ar[r, hookrightarrow]
	& \Sh(\Cart, \tau_{dgop}) \ar[r, hookrightarrow]
	& \PSh(\Cart)\,.
\end{tikzcd}
\end{equation}
Apart from the first inclusion, these functors each have a left adjoint.
Diffeological spaces have been introduced in~\cite{IZ:Diffeology} and are increasingly used to carry out smooth constructions in geometry and topology (see, for instance,~\cite{Kihara:Smooth_ho_of_oo-dim_Mfds, Minichiello:Dfg_prBuns_and_Pr-oo-Buns}).
Despite this, there is currently no descent theorem for diffeological vector bundles.
That is a large gap in the literature, which we close here:
as the main step, we extend the strictification of vector bundles on manifolds to diffeological vector bundles.
The desired descent property of diffeological vector bundles then follows readily from Corollary~\ref{st:QAd for H_(oo,n)^loc} and Remark~\ref{rmk:Shs of Cats}.

\subsubsection{Background on diffeological spaces}

We start by recalling the category of diffeological spaces, mostly following~\cite{BH:Convenient_Cats_of_Smooth_Spaces}.
As a slight variation to the standard literature, we define diffeological spaces as concrete sheaves on a site which differs from the usual choice, but this is just for technical convenience (cf.~Remark~\ref{rmk:Op Cart ulM}).
Throughout this section, we let $\scC$ be a small category.

\begin{definition}[{\cite[Def.~17]{BH:Convenient_Cats_of_Smooth_Spaces}}]
A site $(\scC,\tau)$ is called \emph{subcanonical} if every representable presheaf on $\scC$ is a sheaf.
\end{definition}

\begin{definition}[{\cite[Def.~18]{BH:Convenient_Cats_of_Smooth_Spaces}}]
\label{def:concrete site}
A site $(\scC,\tau)$ is \emph{concrete} if $\scC$ has a terminal object $* \in \scC$, $(\scC,\tau)$ is subcanonical and satisfies the following two additional conditions:
\begin{myenumerate}

\item The functor $\Ev_* = \scC(*,-) \colon \scC \to \Set_\rmS$ is faithful.

\item Each covering $\{f_i \colon c_i \to c\}_{i \in I} \in \tau(c)$ is \emph{jointly surjective}: we have
$
\bigcup_{i \in I} (f_i)_* \big( \scC(*,c_i) \big) = \scC(*,c)\,.
$
\end{myenumerate}
\end{definition}

\begin{definition}[{\cite[Def.~19, Def.~46]{BH:Convenient_Cats_of_Smooth_Spaces}}]
\label{def:concrete sheaf}
A presheaf $X$ on a concrete site $(\scC,\tau)$ is called \emph{concrete} if for every object $c \in \scC$ the map
\begin{equation}
	\Ev_* \colon X(c) \cong \wC(\scY_c,X) \longrightarrow \Set_\rmS \big( \scY_c(*), X(*) \big)\,,
	\qquad
	\varphi \longmapsto \varphi_{|*}
\end{equation}
is injective.
In this case, we define the subset $\Plot_X(c) \coloneqq \Ev_*(X(c)) \subset \Set_\rmS \big( \scY_c(*), X(*) \big)$ of \emph{plots of $X$ over $c \in \scC$}.
\end{definition}

In other words, a concrete presheaf $X$ has an \textit{underlying set} $X(*)$, and, for each $c \in \scC$, the set $X(c)$ can canonically be described as a subset of the maps of sets $\scY_c(*) \to X(*)$.

\begin{definition}[{\cite[Def.~20]{BH:Convenient_Cats_of_Smooth_Spaces}}]
\label{def:Dfg}
Let $(\scC,\tau)$ be a concrete site.
The \emph{category $\Dfg(\scC,\tau)$ of (small) $(\scC,\tau)$-spaces} is the full subcategory of $\wC = \Fun(\scC^\opp, \Set_\rmS)$ whose objects are the concrete small sheaves on $(\scC, \tau)$.
For $X \in \Dfg(\scC,\tau)$, the elements of $X(c)$ are called \emph{plots of $X$ over $c$}.
We refer to $(\Cart,\tau_{dgop})$-spaces as \emph{diffeological spaces}, and we abbreviate $\Dfg \coloneqq \Dfg(\Cart,\tau_{dgop})$.
\end{definition}

We commit an abuse of notation and denote the underlying set of a diffeological space $X$ also by $X$; it will be clear from context whether we are referring only to the underlying set, or to the underlying set endowed with the structure of a diffeological space.

\begin{example}
The site $(\Cart,\tau_{dgop})$ from Example~\ref{eg:Cart as site} is concrete:
first, note that $\Cart$ has a terminal object $* = \RN^0$.
Further, the functor $\Cart(\RN^0, -) \colon \Cart \to \Set_\rmS$ takes the underlying set of $c \in \Cart$; thus, it is faithful.
Since smooth maps to a target manifold form a sheaf with respect to open coverings, $(\Cart,\tau_{dgop})$ is subcanonical.
Finally, the fact that covering families in $(\Cart,\tau_{dgop})$ are jointly surjective is immediate from the definition of $\tau_{dgop}$.
Analogously, one checks that the site $(\Op,\tau_{op})$ from Example~\ref{eg:Op} is concrete.
\qen
\end{example}

The relation between Sections~\ref{sec:higher sheaves} and Section~\ref{sec:applications} relies on the following observation.

\begin{remark}
\label{rmk:subductions}
The category $\Dfg(\scC,\tau)$, together with the collection of those $\tau$-local epimorphisms whose source and target are $(\scC,\tau)$-spaces, form a site which is contained in the site $(\wC, \wtau)$ as a full subcategory.
The $\tau$-local epimorphisms between $(\scC,\tau)$-spaces are also called \emph{subductions}~\cite{IZ:Diffeology}.
We denote the site of $(\scC,\tau)$-spaces with this coverage by $(\Dfg(\scC,\tau), \tau_{subd})$.
\qen
\end{remark}

\begin{remark}
\label{rmk:Op Cart ulM}
In~\cite{IZ:Diffeology}, diffeological spaces are defined as concrete presheaves on $(\Op,\tau_{op})$, whereas here we define them on $(\Cart, \tau_{dgop})$.
However, the two categories are equivalent because the canonical inclusion of $(\Cart, \tau_{dgop})$ into $(\Op,\tau_{op})$ is an inclusion of a dense subsite.
For any manifold $M$, the presheaf $\ul{M}$ from Example~\ref{eg:Cart as site} is a diffeological space.
\qen
\end{remark}

\begin{proposition}[{\cite[Section~5.3]{BH:Convenient_Cats_of_Smooth_Spaces}}]
\label{st:Psh-Dfg adjunction}
Let $(\scC,\tau)$ be a concrete site.
There exists an adjunction
\begin{equation}
\begin{tikzcd}
	\mathsf{Dfg} : \wC \ar[r, shift left=0.15cm, "\perp"' yshift=0.03cm] & \Dfg(\scC,\tau) : \iota \ar[l, shift left=0.15cm]
\end{tikzcd}
\end{equation}
whose right adjoint is a fully faithful inclusion, i.e.~$\Dfg(\scC,\tau) \subset \wC$ is a reflective localisation.
\end{proposition}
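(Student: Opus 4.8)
The right adjoint $\iota$ is, by Definition~\ref{def:Dfg}, the inclusion of a full subcategory of $\wC$, hence fully faithful automatically; so the substance of the statement is the existence of a left adjoint $\mathsf{Dfg}$, after which $\Dfg(\scC,\tau) \subset \wC$ is a reflective localisation by definition. The plan is to construct $\mathsf{Dfg}$ by factoring the inclusion through sheaves,
\begin{equation}
	\Dfg(\scC,\tau) \overset{\iota_1}{\hookrightarrow} \Sh(\scC,\tau) \overset{\iota_0}{\hookrightarrow} \wC\,,
\end{equation}
and exhibiting a reflector for each of $\iota_0$ and $\iota_1$; the composite of the two reflectors is then the desired $\mathsf{Dfg}$.

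For $\iota_0$ I would invoke the classical sheafification functor $a \colon \wC \to \Sh(\scC,\tau)$, which exists since $(\scC,\tau)$ is a $U$-small site (via the iterated plus-construction, say; alternatively via the special adjoint functor theorem, using that $\wC = \Set_U^{\scC^\opp}$ is locally presentable and that $\Sh(\scC,\tau)$ is closed under limits in $\wC$). For $\iota_1$ I would use a \emph{concretification} functor. Given a sheaf $Y$, define a presheaf $Y^\sharp$ by
\begin{equation}
	Y^\sharp(c) \coloneqq \mathrm{im}\Big( Y(c) \xrightarrow{\ \Ev_*\ } \Set_U\big( \scC(*,c),\, Y(*) \big) \Big)\,,
\end{equation}
the set of underlying set-maps of plots of $Y$ over $c$. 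Naturality of $\Ev_*$ in $c$, together with $\scC(*,*) = \{\id_*\}$ --- so that $Y^\sharp(*) = Y(*)$ --- exhibits $Y^\sharp$ at once as an objectwise-surjective quotient presheaf of $Y$ and as a subpresheaf of $c \mapsto \Set_U(\scC(*,c), Y(*))$. In particular $Y^\sharp$ is tautologically concrete in the sense of Definition~\ref{def:concrete sheaf}, and, because every covering family of $(\scC,\tau)$ is jointly surjective on points (Definition~\ref{def:concrete site}), $Y^\sharp$ is separated for $\tau$. I then set $c_r(Y) \coloneqq a(Y^\sharp)$.

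To see that $c_r$ is left adjoint to $\iota_1$ it suffices, using that $a$ is naturally the identity on sheaves and the adjunction $a \dashv \iota_0$, to verify two points: (i) $c_r(Y)$ is again concrete, i.e.\ lies in $\Dfg(\scC,\tau)$; and (ii) every morphism $f \colon Y \to Z$ in $\Sh(\scC,\tau)$ with $Z$ concrete factors uniquely through the canonical map $Y \to c_r(Y)$. Point (ii) is routine: $f$ induces $Y(*) \to Z(*)$ on global sections, hence for each $c$ a postcomposition map $\Set_U(\scC(*,c),Y(*)) \to \Set_U(\scC(*,c),Z(*))$; by naturality of $\Ev_*$ the composite out of $Y(c)$ agrees with $\Ev_* \circ f_c$ and therefore lands in $Z(c) \subset \Set_U(\scC(*,c),Z(*))$, so it restricts to a presheaf morphism $Y^\sharp \to Z$ through which $f$ factors; passing through $a \dashv \iota_0$ gives the factorization through $c_r(Y)$, and uniqueness follows since $Y \to Y^\sharp$ is an epimorphism of presheaves.

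The genuine difficulty is point (i): sheafifying the concrete separated presheaf $Y^\sharp$ must not destroy concreteness. (One should be aware that $Y^\sharp$ itself need \emph{not} be a $\tau$-sheaf --- a matching family of plots need not lift to a matching family of sections of $Y$ --- so this step cannot be skipped.) This is exactly where all three axioms of a concrete site enter: faithfulness of $\Ev_* \colon \scC \to \Set_U$ and subcanonicity guarantee that the terminal presheaf $\scY_*$ is a sheaf and that a concrete object is faithfully recorded by its plots viewed as set-maps into its set of global sections, while joint surjectivity of covers ensures that such set-maps glue exactly as honest functions do. Concretely, an element of $a(Y^\sharp)(c)$ is represented by a matching family, over some covering $\{c_i \to c\}$, of set-maps $\scC(*,c_i) \to Y(*)$; joint surjectivity assembles these into a single set-map $\scC(*,c) \to a(Y^\sharp)(*)$ that determines the element, which yields the required injectivity $a(Y^\sharp)(c) \hookrightarrow \Set_U(\scC(*,c), a(Y^\sharp)(*))$. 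This is the content of Baez--Hoffnung's argument in~\cite[Section~5.3]{BH:Convenient_Cats_of_Smooth_Spaces}, which I would follow here. Granting (i), the composite $\mathsf{Dfg} \coloneqq c_r \circ a \colon \wC \to \Dfg(\scC,\tau)$ is left adjoint to $\iota$, completing the proof.
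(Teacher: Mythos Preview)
The paper does not give its own proof of this proposition; it simply cites \cite[Section~5.3]{BH:Convenient_Cats_of_Smooth_Spaces}. Your proposal correctly reconstructs that argument --- factoring through sheafification and then concretification, and identifying the only nontrivial step as showing that $a(Y^\sharp)$ remains concrete --- so it is in line with the source the paper defers to.

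One small sharpening you might make in step~(i): rather than arguing directly with matching families, observe that the presheaf $\ul{Y(*)} \colon c \mapsto \Set_U(\scC(*,c),Y(*))$ is already a $\tau$-sheaf (by joint surjectivity of covers), and since sheafification is left exact it preserves the monomorphism $Y^\sharp \hookrightarrow \ul{Y(*)}$; together with $a(Y^\sharp)(*) \cong Y(*)$ (which follows because the composite $Y(*)=Y^\sharp(*)\to a(Y^\sharp)(*)\hookrightarrow \ul{Y(*)}(*)\cong Y(*)$ is the identity), this immediately gives concreteness of $a(Y^\sharp)$. This packages your gluing argument more cleanly and avoids tracking refinements by hand.
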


\subsubsection{Strictification and descent of diffeological vector bundles}

We now specialise to the case $(\scC,\tau) = (\Cart, \tau_{dgop})$ and to $\Dfg = \Dfg(\Cart, \tau_{dgop})$.
We will denote the underlying set $\scY_c(*)$ of a cartesian space $c \in \Cart$ again by $c$.

\begin{definition}
\label{def:Dfg VBun}
Let $X \in \Dfg$ be a diffeological space.
\begin{myenumerate}
\item A \emph{(complex rank-$k$) vector bundle on $X$} is a pair $(E,\pi)$ of a diffeological space $E$ and a morphism $\pi \colon E \to X$ in $\Dfg$ with the structure of a $\CN$-vector space on each fibre $E_{|x} \coloneqq \pi^{-1}(\{x\})$, for $x \in X$, satisfying the following condition:
for each plot $\varphi \colon c \to X$ there exists an isomorphism of diffeological spaces
\begin{equation}
	\Phi \colon c \times \CN^k \longrightarrow c \underset{X}{\times} E
\end{equation}
such that $\pr_c \circ \Phi = \pr_c$ and such that for every $y \in c$ the restriction $\Phi_{|y} \colon \CN^k \to E_{|\varphi(y)}$ is linear.

\item Let $(E,\pi)$ and $(E',\pi')$ be vector bundles on $X$.
A \emph{morphism $(E,\pi) \to (E', \pi')$} is a morphism $\Psi \in \Dfg(E,E')$ with $\pi' \circ \Psi = \pi$ and such that $\Psi_{|x} \colon E_{|x} \to E'_{|x}$ is a linear map for every $x \in X$.
\end{myenumerate}
This defines a category $\VBun_\Dfg(X)$ of diffeological vector bundles on $X$.
\end{definition}

\begin{example}
If $M$ is a smooth manifold, which we view as a diffeological space, then the category $\VBun_\Dfg(X)$ is canonically equivalent to the ordinary category of vector bundles on $M$.
\qen
\end{example}

Any morphism $f \in \Dfg(X',X)$ gives rise to a pullback functor
\begin{equation}
	f^* \colon \VBun_\Dfg(X) \longrightarrow \VBun_\Dfg(X')\,,
	\qquad
	(E,\pi) \longmapsto \big( X' \underset{X}{\times} E, \pi' \big)\,,
\end{equation}
where $\pi'$ is the pullback of $\pi$ along $f$.

\begin{definition}
We let $\VBun_\Dfg \colon \Dfg^\opp \to \Cat_\rmL$ denote the pseudo-functor resulting from the above assignments (where $\Cat_\rmL$ is the 2-category of large categories).
\end{definition}

The goal of this subsection is to prove the following theorem:

\begin{theorem}
\label{st:Dfg VBuns descend along subductions}
The pseudo-functor $\VBun_\Dfg \colon \Dfg^\opp \longrightarrow \Cat_\rmL$ satisfies descent along subductions of diffeological spaces (see Remark~\ref{rmk:subductions}).
\end{theorem}

\begin{remark}
We use the following strategy for the proof of Theorem~\ref{st:Dfg VBuns descend along subductions}:
\begin{myenumerate}
\item Replace the pseudo-functor $\VBun_\Dfg$ by a strictification, i.e.~find a (strict) functor
\begin{equation}
	\VBun_{str} \colon \Dfg^\opp \to \Cat_\rmL
\end{equation}
and an equivalence of pseudo-functors between $\VBun_{str}$ and $\VBun_\Dfg$.

\item Recall that \smash{$(\Dfg, \tau_{subd}) \subset (\wCart, \wtau)$} is a subsite (see Remark~\ref{rmk:subductions}).
Show that $\VBun_{str}$ is in fact a restriction along the inclusion $\iota \colon \Dfg \hookrightarrow \wCart$ of a functor \smash{$\VBun_{str} \colon \wCart{}^\opp \to \Cat_\rmL$}.

\item Show that the functor \smash{$\VBun_{str} \colon \wCart{}^\opp \to \Cat_\rmL$} satisfies descent with respect to $\wtau_{dgop}$.
\end{myenumerate}
Theorem~\ref{st:Dfg VBuns descend along subductions} then readily follows from the equivalences in points~(2) and~(3).

We will achieve the above goals by means of Remark~\ref{rmk:Shs of Cats}:
we give a strict functor
\begin{equation}
	\VBun_\triv \colon \Cart^\opp \to \Cat_\rmL
\end{equation}
satisfying the following two properties:
\begin{myitemize}
\item The functor $\VBun_\triv \colon \Cart^\opp \to \Cat_\rmL$ satisfies $\tau_{dgop}$-descent.

\item Consider the functor $\VBun_{str} \coloneqq S_{1,1}\VBun_\triv \in \Fun(\wCart{}^\opp, \Cat_\rmL)$.
Its restriction to $\Dfg \subset \wCart$ is a strictification of $\VBun_\Dfg$ (thus yielding point~(1) above).
\end{myitemize}
Remark~\ref{rmk:Shs of Cats} then ensures that points~(2) and~(3) above are satisfied.
\qen
\end{remark}

We now carry out the above strategy:

\begin{definition}
We define a strict presheaf $\VBun_\triv$ of categories on $\Cart$ as follows:
for $c \in \Cart$, the category $\VBun_\triv(c)$ has objects $(c,n)$, where $n \in \NN_0$, and a morphism $(c,n) \to (c,m)$ is a smooth function $\psi \colon c \to \Mat( m{\times}n, \CN)$ from $c$ to the vector space of complex $m$-by-$n$ matrices; to make clear that $\psi$ is defined over $c$ we also write $(c,\psi)$ instead of just $\psi$.
A smooth map $f \colon c' \to c$ acts via
\begin{equation}
	f^* \colon \VBun_\triv(c) \longrightarrow \VBun_\triv(c')\,,
	\qquad
	(c,n) \mapsto (c',n)\,,
	\quad
	(c,\psi) \mapsto (c', \psi \circ f)\,.
\end{equation}
Using the functor $S_{1,1} \colon \Fun(\scC^\opp, \Cat_\rmL) \longrightarrow \Fun(\wC^\opp, \Cat_\rmL)$ from Remark~\ref{rmk:Shs of Cats} we also define the strict functor
\begin{equation}
	\VBun_{str} \coloneqq (S_{1,1} \VBun_\triv) \
	\in \Fun(\Cart^\opp, \Cat_\rmL)\,.
\end{equation}
\end{definition}

Let $\scH_{\infty,1}$ denote the model category for presheaves of $\infty$-categories on the site $(\Cart, \tau_{dgop})$ (as in Definition~\ref{def:(oo,n)-(pre)sheaves}).
As in Remark~\ref{rmk:Shs of Cats}, let $N_{rel} \colon \Cat_\rmL \to s\sSet_\rmL$ denote Rezk's classifying diagram functor~\cite[Sec.~3]{Rezk:HoTheory_of_HoTheories}.

\begin{proposition}
\label{st:VBun_triv and VBun are local}
Both $N_{rel} \circ \VBun_\triv$ and $N_{rel} \circ \VBun_{str}$ are fibrant objects in $\scH_{\infty,1}^{loc}$ and in $\wH_{\infty,1}^{loc}$, respectively.
Equivalently (by Remark~\ref{rmk:Shs of Cats}), $\VBun_\triv$ satisfies $\tau_{dgop}$-descent, and $\VBun_{str}$ satisfies $\wtau_{dgop}$-descent.
\end{proposition}

\begin{proof}
We only need to show that \smash{$\VBun_\triv \in \scH_{\infty,1}^{loc}$} is fibrant.
That $\VBun_{str}$ is fibrant in \smash{$\wH_{\infty,1}^{loc}$} will then follow from Corollary~\ref{st:QAd for H_(oo,n)^loc}.
First, $\VBun_\triv$ is fibrant in $\scH_{\infty,1}$:
for each $c \in \Cart$, the bisimplicial set $\VBun_\triv(c) = N_{rel}(\VBun_\triv(c))$ is the classification diagram of an ordinary category and hence a complete Segal space by~\cite[Prop.~6.1]{Rezk:HoTheory_of_HoTheories}.
In order to show that $\VBun_\triv$ is also fibrant in \smash{$\scH_{\infty,1}^{loc}$}, it suffices to show that the presheaf of categories $\VBun_\triv$ satisfies descent with respect to $\tau_{dgop}$, which is standard:
for instance, one can see this by using that vector bundles glue along isomorphisms over open coverings of manifolds.
Thus, given descent data for $\VBun_\triv$ with respect to a differentiably good open covering of a cartesian space $c$, we obtain a vector bundle on $c$.
This vector bundle is, in general, non-trivial, but since $c \cong \RN^n$ for some $n \in \NN_0$, it is isomorphic to a trivial vector bundle.
This provides the essential surjectivity of the descent functor for vector bundles.
Its full faithfulness follows since morphisms of trivial vector bundles are the same as smooth matrix-valued functions, which form sheaves of sets with respect to open coverings of manifolds.
\end{proof}

For the proof of Theorem~\ref{st:Dfg VBuns descend along subductions} we are thus left to show that $\VBun_{str} \in \Fun(\wCart{}^\opp, \Cat_\rmL)$ restricts to a strictification of the pseudo-functor $\VBun_\Dfg$ on the full subcategory \smash{$\Dfg \subset \wCart$}.
To that end, we first identify a more concrete description of the functor $\iota^* \VBun_{str} = \iota^* S_{1,1} (\VBun_\triv)$, where \smash{$\iota \colon \Dfg \hookrightarrow \wCart$} denotes the canonical inclusion:
it is the homotopy right Kan extension of $\VBun_\triv$ along the inclusion $\Cart \hookrightarrow \Dfg$:

\begin{lemma}
\label{st:VBun_str as hoRan}
We can compute the values of $\iota^* \VBun_{str} \colon \Dfg^\opp \to \Cat_\rmL$ as
\begin{equation}
\label{eq:iota^* VBun_str via holims}
	(\iota^* \VBun_{str})(X) =
	\begin{tikzcd}[column sep=1.5cm]
		\holim^{\Cat_\rmL} \big( (\Cart_{/X})^\opp \ar[r, "\pr"]
		& \Cart^\opp \ar[r, "\VBun_\triv"]
		& \Cat_\rmL \big)\,,
	\end{tikzcd}
\end{equation}
\end{lemma}

\begin{proof}
This follows directly from Remark~\ref{rmk:Shs of Cats}.
\end{proof}

\begin{remark}
In this proof we first show that each diffeological vector bundle is determined by its \textit{strictified} transition data (i.e.~matrix-valued functions), and then use that the latter satisfy descent, using Remark~\ref{rmk:Shs of Cats}.
Alternatively, we could have shown first an analogue of Lemma~\ref{st:VBun_str as hoRan} for $\VBun_\Dfg$ and then compared $\iota^* \VBun_\Dfg$ and $\VBun_{str}$ on $\Cart \subset \Dfg$ only.
However, the proof of the first step is not significantly less complicated than the proof of Theorem~\ref{st:VBun_Cat = VBun_Dfg} below.
Additionally, from the route chosen here it becomes evident how one can assign a total space to any vector bundle on any diffeological space from its transition data, as well as that $N_{rel}\VBun_{str}$ is a classifying object in $\scH_{\infty,1}$ for vector bundles.
\qen
\end{remark}

We obtain the following strictification theorem for diffeological vector bundles:

\begin{theorem}
\label{st:VBun_Cat = VBun_Dfg}
There is an objectwise equivalence of (non-strict) presheaves of categories on $\Dfg$
\begin{equation}
\label{eq:CA: VBun_Cat --> VBun_Dfg}
	\CA \colon \iota^* \VBun_{str} \weq \VBun_\Dfg\,.
\end{equation}
\end{theorem}

\begin{proof}
The proof Theorem~\ref{st:VBun_Cat = VBun_Dfg} consists of several individual steps, which we present in detail in Appendix~\ref{app:Proof of VBun_Dfg Thm}:
first, we fix an arbitrary diffeological space $X$ and restrict our attention to the evaluation of~\eqref{eq:CA: VBun_Cat --> VBun_Dfg} at $X$, i.e.~to the functor
\begin{equation}
	\CA_{|X} \colon \iota^* \VBun_{str}(X) \weq \VBun_\Dfg(X)\,.
\end{equation}
Using Lemma~\ref{st:VBun_str as hoRan} we define the functor $\CA_{|X}$ on objects and (Definition~\ref{def:CA on objects}) and show in Proposition~\ref{st:CA produces Dfg VBuns} that it indeed produces diffeological vector bundles on $X$.
Next, we define $\CA_{|X}$ on morphisms (Definition~\ref{def:CA on morphisms}) and show that, for each $X \in \Dfg$, it is fully faithful (Proposition~\ref{st:CA is fully faithful}) and essentially surjective~\ref{st:CA is essentially surjective}.
Only then do we let $X \in \Dfg$ vary and complete the proof by showing that the functors $\CA_{|X}$ we constructed for each $X \in \Dfg$ assemble into a morphism of pseudo-functors $\Cart^\opp \to \Cat_\rmL$.
Since we have already shown that this morphism of pseudo-functors is objectwise an equivalence of categories, we obtain the desired equivalence result.
\end{proof}

This completes the proof of Theorem~\ref{st:Dfg VBuns descend along subductions}.

\subsection{Descent for the $(\infty,2)$-sheaf of gerbes with connection}
\label{sec:Descent for gerbes}

An important example of higher geometric structures consists of (higher) gerbes with connection.
The simplicial presheaf of $n$-gerbes with connection on $\Cart$ is obtained by applying the Dold-Kan correspondence to the chain complex
\begin{equation}
\begin{tikzcd}
	\rmU(1) \ar[r, "\dd \log"]
	& \Omega^1(-;\iu \RN) \ar[r, "\dd"]
	& \cdots \ar[r, "\dd"]
	& \Omega^{n+1}(-; \iu\RN)\,,
\end{tikzcd}
\end{equation}
with the sheaf $\rmU(1)$ of smooth $\rmU(1)$-valued functions in degree zero.
The resulting simplicial presheaf is denoted $\rmB^{n+1}_\nabla\rmU(1)$.
It satisfies descent with respect to $\tau_{dgop}$, and thus \smash{$S_{\infty,0}(\rmB_\nabla^{n+1}\rmU(1)) \in \wH_{\infty,0}$} is a fibrant object.
It classifies $n$-gerbes on objects of \smash{$\wC$}.

However, this only captures invertible morphisms of $n$-gerbes with connection.
At least for $n=1$ there also exists a theory of interesting non-invertible morphisms, due to Waldorf~\cite{Waldorf--More_morphisms}.
There, Waldorf showed that the morphisms between two fixed gerbes on a manifold satisfy descent along surjective submersions.
This was later improved upon by Nikolaus-Schweigert in~\cite{NS--Equivariance_in_higher_geometry}%
; they proved that the presheaf of 2-categories which assigns to a manifold its gerbes and their 1- and 2-morphisms satisfies descent along surjective submersions of manifolds.
Descent properties for gerbes and their non-invertible morphisms on more general spaces are so far unknown.

However, given our results in Section~\ref{sec:higher sheaves}, we can establish such results:
consider the following strict presheaf $\Grb_\triv^\nabla$ of strict 2-categories on $\Cart$:
for $c \in \Cart$, an object of $\Grb_\triv^\nabla(c)$ is a 2-form $\rho \in \Omega^2(c;\fru(1))$.
A 1-morphism $\rho_0 \to \rho_1$ consists of a 1-form $A \in \Omega^1(c;\fru(k))$, for some $k \in \NN_0$.
Composition of 1-morphisms is given by $A' \circ A = A' \otimes \One + \One \otimes A$, with unit matrices $\One$ of the respective dimensions of $A$ and $A'$.
Given a pair $A \in \Omega^2(c;\fru(k))$, $A' \in \Omega^2(c;\fru(k'))$ of 1-morphisms $\rho_0 \to \rho_1$, a 2-morphism $A \to A'$ consists of a smooth, matrix-valued function $\psi \colon c \to \Mat(k {\times} k';\CN)$.
Vertical composition of 2-morphisms is given by pointwise matrix multiplication.
Note that a 1-morphism $A \in \Omega^1(c;\fru(k))$ is invertible precisely if $k=1$ and a 2-morphism is invertible precisely if it takes values in $\GL(k;\CN)$.
Then, $\Grb_\triv^\nabla$ satisfies descent with respect to $\tau_{dgop}$~\cite{Waldorf--More_morphisms, NS--Equivariance_in_higher_geometry}.

\begin{remark}
$\Grb_\triv^\nabla(c)$ as defined above is the most relaxed definition of a 2-category of gerbes with connection on $c$.
One can restrict attention to parallel 2-morphisms, i.e.~satisfying $\dd \psi + (A-A')\psi = 0$, and independently to 1-morphisms satisfying either $\tr(\dd A + \frac{1}{2} [A,A]) = \rho_1 - \rho_0$ (as in~\cite{Waldorf--More_morphisms}) or the fake curvature condition $\dd A + \frac{1}{2} [A,A] = \rho_1 - \rho_0$.
These choices produce various 2-categories of gerbes with connection on $c$ with different properties; our arguments below apply to each of these cases.
\qen
\end{remark}

Applying the $(\infty,2)$-categorical nerve from~\cite[Thm.~B]{Moser:Double-oo-Cat_nerve} objectwise at each $c \in \Cart$ produces a fibrant presheaf $N_{2,rel} \Grb_\triv^\nabla \in \scH_{\infty,2}$.
It follows from the descent property of $\Grb_\triv^\nabla$ that $N_{2,rel} \Grb_\triv^\nabla$ is even fibrant in $\scH_{\infty,2}^{loc}$, i.e.~satisfies descent with respect to $\tau_{dgop}$.
By Corollary~\ref{st:QAd for H_(oo,n)^loc}, the homotopy right Kan extension \smash{$S_{\infty,2}(N_{2,rel} \Grb_\triv^\nabla) \in \wH_{\infty,2}^{loc}$} is again fibrant and classifies gerbes with connection and their not necessarily invertible 1- and 2-morphisms on generic objects in $\wC$.
We thus obtain:

\begin{theorem}
\label{st:Grbs descend}
Gerbes with connection and their generic (not necessarily invertible or parallel) 2-morphisms form a sheaf of $(\infty,2)$-categories on the site $(\wC, \wtau_{dgop})$.
\end{theorem}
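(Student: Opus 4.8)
The plan is to realise the claimed sheaf as $S^Q_{\infty,2}(N\Grb_\triv^\nabla)$ and to deduce its fibrancy in $\wH_{\infty,2}^{loc}$ from Theorem~\ref{st:QAd for H_(oo,n)^loc} with $n=2$. Since $S^Q_{\infty,2}\colon\scH_{\infty,2}^{loc}\to\wH_{\infty,2}^{loc}$ is a right Quillen functor it preserves fibrant objects, so everything reduces to producing a fibrant object of $\scH_{\infty,2}^{loc}$ that presents gerbes with connection on $\Cart$ and to identifying the value of $S^Q_{\infty,2}$ on it. I would do this in three steps: (i) build the presheaf $N\Grb_\triv^\nabla$ and check it is fibrant in $\scH_{\infty,2}$; (ii) promote this to fibrancy in $\scH_{\infty,2}^{loc}$ using the strict $2$-categorical descent of $\Grb_\triv^\nabla$ along $\tau_{dgop}$; (iii) apply Theorem~\ref{st:QAd for H_(oo,n)^loc} and unwind $S^Q_{\infty,2}$.

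For step (i) I would check, for each $c\in\Cart$, that the bisimplicial set $N\Grb_\triv^\nabla(c)$ is a complete $2$-uple Segal space: by construction it is the $2$-categorical classifying diagram of the strict $2$-category $\Grb_\triv^\nabla(c)$, and one verifies the Segal, Reedy-fibrancy and completeness conditions built into $\CSS_2$ by Theorem~\ref{st:def CSS_n} exactly as for ordinary classifying diagrams (cf.\ the proof of Proposition~\ref{st:VBun_triv and VBun are local} and the $2$-categorical analogue of~\cite[Prop.~6.1]{Rezk:HoTheory_of_HoTheories}), the higher bisimplicial levels being forced by the stacking construction and the low levels being Duskin nerves of $2$-groupoids, hence Kan complexes. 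Since pulling back along smooth maps in $\Cart$ is strictly functorial, this yields a fibrant object $N\Grb_\triv^\nabla\in\scH_{\infty,2}$.

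Step (ii) is the crux. Given $c\in\Cart$ and a differentiably good open covering $\CU=\{c_i\to c\}_{i\in I}$, I must show that the natural map $N\Grb_\triv^\nabla(c)\to\holim_\bbDelta^{\CSS_2}\big(N\Grb_\triv^\nabla(\cC\CU_\bullet)\big)$ is a weak equivalence in $\CSS_2$. By Lemma~\ref{st:holims in CSS_n} this homotopy limit can be computed on bisimplicial levels, and on each level it is the classifying diagram of the strict descent $2$-category of $\Grb_\triv^\nabla$ along $\CU$, i.e.\ of the $2$-category of matrix-valued cocycle and gluing data described above. Waldorf~\cite{Waldorf--More_morphisms} and Nikolaus--Schweigert~\cite{NS--Equivariance_in_higher_geometry} show that $\Grb_\triv^\nabla$ satisfies strict descent along such coverings; since the $2$-categorical classifying-diagram functor sends a descent $2$-category to the homotopy limit of the classifying diagrams and reflects equivalences of $2$-categories (the $2$-dimensional analogue of the reflection property~\cite[Thm.~3.7]{Rezk:HoTheory_of_HoTheories} used in the proof of Corollary~\ref{st:Dfg VBuns descend along subductions}), the displayed map is an equivalence, and $N\Grb_\triv^\nabla$ is fibrant in $\scH_{\infty,2}^{loc}$. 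I expect the main obstacle to be exactly here: making the $2$-categorical classifying diagram precise on the arrow and lax-square $2$-groupoids via Duskin nerves, showing it intertwines the strict $2$-limit defining the descent $2$-category with the $\CSS_2$-homotopy limit over $\bbDelta$ furnished by Lemma~\ref{st:holims in CSS_n}, and establishing its reflection-of-equivalences property; this is routine for the $1$-categorical $N_{rel}$ but the higher bookkeeping is genuine work.

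For step (iii), Theorem~\ref{st:QAd for H_(oo,n)^loc} with $n=2$ gives a Quillen adjunction $Re^Q_{\infty,2}\colon\wH_{\infty,2}^{loc}\rightleftarrows\scH_{\infty,2}^{loc}\colon S^Q_{\infty,2}$, so $S^Q_{\infty,2}(N\Grb_\triv^\nabla)$ is fibrant in $\wH_{\infty,2}^{loc}$, i.e.\ a sheaf of $(\infty,2)$-categories on $(\wC,\wtau_{dgop})$. Its value on $X\in\wC$ is the $(\infty,2)$-category of gerbes with connection and their (possibly non-invertible, non-parallel) $1$- and $2$-morphisms on $X$: unwinding $S^Q_{\infty,2}$ as in Proposition~\ref{st:VBun is nerve of VBun_Cat}, a point consists of a coherent family of trivial gerbes indexed by the plots of $X$ together with the matrix-valued transition data gluing them, which is precisely such a gerbe on $X$. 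Finally, each of the variants mentioned in the remark above (restricting to parallel $2$-morphisms, to the trace condition, or to the fake-flatness condition) cuts $\Grb_\triv^\nabla$ down to a sub-presheaf of $2$-categories that still satisfies $\tau_{dgop}$-descent, so the identical argument applies verbatim.
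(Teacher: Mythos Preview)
Your proposal is correct and follows essentially the same approach as the paper: construct $N\Grb_\triv^\nabla$, argue it is fibrant in $\scH_{\infty,2}^{loc}$ via the strict $2$-categorical descent of $\Grb_\triv^\nabla$ established in~\cite{Waldorf--More_morphisms, NS--Equivariance_in_higher_geometry}, and then apply Theorem~\ref{st:QAd for H_(oo,n)^loc} with $n=2$ to conclude that $S^Q_{\infty,2}(N\Grb_\triv^\nabla)$ is fibrant in $\wH_{\infty,2}^{loc}$. The paper's argument is considerably terser---it simply asserts the fibrancy in $\scH_{\infty,2}$ and the passage from $2$-categorical descent to fibrancy in $\scH_{\infty,2}^{loc}$, leaving the bookkeeping you flag in step~(ii) implicit---but the logical skeleton is identical.
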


In particular, restricting to objects $X \in \wC$ of the form $X = \ul{M}$ for some manifold $M$, we directly obtain the result of~\cite{NS--Equivariance_in_higher_geometry} that gerbes with connection form a sheaf of 2-categories on the site of manifolds and surjective submersions.

\begin{remark}
To conclude this subsection, we remark that since $N_{2,rel} \Grb_\triv^\nabla(c)$ is a $(2,2)$-category, i.e.~it is truncated, for each $c \in \Cart$, and since truncatedness is stable under limits, $S_{\infty,2}(N_{2,rel} \Grb_\triv^\nabla)$ should be a sheaf of $(2,2)$-categories as well (see, for instance,~\cite[Sec.~11]{Rezk:Cartesian_pres_of_oon-Cats} for truncations of $(\infty,n)$-categories).
\qen
\end{remark}

\subsection{Descent and coherence for smooth functorial field theories}
\label{sec:Descent and coherence}

We now apply our findings to smooth functorial field theories (FFTs), a family-version of topological quantum field theories (TQFTs).

\subsubsection{Background on presentations of topological quantum field theories}

We briefly recall some well-known background on TQFTs, highlighting only those technical details which are relevant to this paper.
TQFTs were introduced in~\cite{Atiyah:TQFTs, Segal1987}; for some modern introductions, we refer the reader to~\cite{Kock:2D-TFTs, Safronov:TQFT_Notes}.
The formulation of $d$-dimensional TQFTs rests on the $d$-dimensional cobordism category $\sfBord_d$.
Its objects are $(d{-}1)$-dimensional closed, oriented manifolds.
Its morphisms are diffeomorphism classes of $d$-dimensional oriented cobordisms between these.
Composition is given by gluing bordisms along the mutual boundary.
The category $\sfBord_d$ is symmetric monoidal under disjoint union of manifolds.
If $\sfT$ is a symmetric monoidal category, a $d$-dimensional $\sfT$-valued TQFT is a symmetric monoidal functor $\sfZ \colon \sfBord_d \to \sfT$.

If $Y$ is an object in $\sfBord_d$, then any orientation-preserving diffeomorphism $f \colon Y \to Y$ induces a morphism $C_f \colon Y \to Y$ in $\sfBord_d$.
Isotopic diffeomorphisms induce the same morphism.
Via the TQFT functor $\sfZ$, the object $\sfZ(Y) \in \sfT$ thus carries an action of the mapping class group of $Y$.
Let $c\Mfd^{or}_{d-1}$ denote the groupoid of closed oriented $(d{-}1)$-manifolds and their orientation-preserving diffeomorphisms.
We let $c\Mfd^{or}_{d-1}/{\sim}$ denote the groupoid with the same objects, but with morphisms given by isotopy classes of diffeomorphisms.
Observe that there are functors $c\Mfd^{or}_{d-1} \longrightarrow c\Mfd^{or}_{d-1}/{\sim} \longrightarrow \sfBord_d$.
If $Y_0 \in c\Mfd^{or}_{d-1}$ and $\MCG(Y_0)$ is its mapping class group, let $Y_0 \dslash \MCG(Y_0)$ denote the associated action groupoid.
The inclusion $Y_0 \dslash \MCG(Y_0) \hookrightarrow c\Mfd^{or}_{d-1} /{\sim}$ is an equivalence onto a connected component, and hence the inclusion
\begin{equation}
\begin{tikzcd}
	\displaystyle{\coprod_{[Y_0] \in \pi_0 (c\Mfd^{or}_{d-1})}} Y_0 \dslash \MCG(Y_0) \ \ar[r, hookrightarrow]
	& c\Mfd^{or}_{d-1}
\end{tikzcd}
\end{equation}
is an equivalence.
Consequently, we can recover the values of any TQFT $\sfZ$ on all of $c\Mfd^{or}_{d-1}$ from its restrictions to $Y_0 \dslash \MCG(Y_0)$, where $Y_0$ ranges over all diffeomorphism classes of closed, oriented $(d{-}1)$-manifolds.
Further, considering a generating set of bordisms between the chosen representatives $Y_0$, we can then recover the full TQFT up to natural isomorphism from its values on the $Y_0$, the respective mapping class group actions and the generating bordisms.

\subsubsection{Diffeomorphism actions in smooth functorial field theories}

The picture changes when we pass to smooth FFTs.
These differ from TQFTs in several ways: most importantly, we would like to decorate the manifolds underlying objects and morphisms in $\sfBord_d$ with additional geometric structure, and we would like to keep track of how the value of a field theory varies when we vary this additional geometric structure.
In the most common and relevant case the additional structure consists of a smooth map to a fixed background manifold $M$.
We restrict our attention to this case here.
Smooth FFTs were introduced by Stolz and Teichner in~\cite{ST:SuSy_FTs_and_generalised_coho}.
For background, we refer the reader there; the specific formalism employed in this paper was developed in~\cite{BW:Smooth_OCFFTs}.

\begin{remark}
One can set up smooth FFTs with geometric structures in a fully general way:
the additional data is modelled by a section of a (higher) sheaf on a site of families of $d$-manifolds and embeddings; we refer the reader to~\cite{LS:1d_smooth_FFTs, GP:Smooth_Geo_Bord} for the full framework.
\qen
\end{remark}

In this setup, the bordism category $\sfBord_d$ is replaced by a presheaf of symmetric monoidal categories $\Bord^M_d \colon \Cart^\opp \to \Cat^\otimes$; this assigns to $c \in \Cart$ the symmetric monoidal category $\Bord^M_d(c)$, whose objects are essentially pairs of a closed, oriented $d$-manifold $Y$ and a smooth map $f \colon c \times Y \to M$.
Its morphisms $(Y_0,f_0) \to (Y_1,f_1)$ can be described as bordisms $\Sigma \colon Y_0 \to Y_1$, together with a smooth map $\sigma \colon c \times \Sigma \to M$ which restricts to $f_0$ and $f_1$ on the respective incoming and outgoing boundaries of the bordism (for the full details of this model for $\Bord_d^M$, see~\cite{BW:Smooth_OCFFTs}).
Finally, for any smooth map $\varphi \colon c \times \Sigma \to c \times \Sigma'$ which is a fibrewise diffeomorphism of bordisms $Y_0 \to Y_1$ over $c$, one identifies $(\Sigma,\sigma)$ and $(\Sigma', \sigma \circ \varphi)$ as morphisms in $\Bord_d^M(c)$.
This defines a strict functor $\Cart^\opp \to \Cat_\rmL$.
We further enhance this to a strict functor taking values in the 2-category $\Cat_\rmL^\otimes$ of symmetric monoidal categories:
the symmetric monoidal structure on $\Bord_d(c)$ takes disjoint unions of underlying manifolds on objects as well as morphisms; that is, on objects we set $(Y, f) \otimes (Y', f') = (Y \sqcup Y', f \sqcup f')$, and on morphisms $[\Sigma, \sigma] \otimes [\Sigma' , \sigma'] = [\Sigma \sqcup \Sigma', \sigma \sqcup \sigma']$, where $[\Sigma, \sigma]$ denotes an equivalence class under the relation of fibrewise diffeomorphism described above.
We thereby obtain a strict functor $\Bord_d \colon \Cart^\opp \to \Cat_\rmL^\otimes$.
Let $\scT \colon \Cart^\opp \to \Cat_\rmL^\otimes$ be a presheaf of symmetric monoidal categories on $\Cart$.
A smooth $d$-dimensional FFT on $M$ with values in $\scT$ is a morphism of such presheaves $\scZ \colon \Bord^M_d \to \scT$.

In this setting, diffeomorphisms $\varphi \colon Y \to Y'$ of $d$-manifolds still induce morphisms in $\Bord_d^M(*)$, but because of the additional maps to the background space $M$, isotopic diffeomorphisms will, in general, no longer give rise to the same morphisms.
Instead, the value of a smooth FFT on an object $(Y,f) \in \Bord_d^M(*)$ comes endowed with a \emph{smooth} action of the \emph{full} group $\Diff_+(Y)$ of orientation-preserving diffeomorphisms of $Y$.
This action will not, in general, factor through the mapping class group of $Y$.

\begin{example}
Consider a 2-dimensional smooth FFT on $M$ with values in the higher sheaf of vector bundles, $\scZ \colon \Bord_1^M \to \VBun$.
Fix $Y = \bbS^1$.
The collection of all objects $(\bbS^1,f)$ in $\Bord_d^M$ forms the smooth free loop space $LM$ of $M$, which carries a canonical smooth $\Diff_+(\bbS^1)$-action.
The FFT $\scZ$ contains the data of a smooth, $\Diff_+(\bbS^1)$-equivariant vector bundle on $LM$.
An explicit construction of an FFT of this type is presented in~\cite{BW:Smooth_OCFFTs}.
\qen
\end{example}

In general, FFTs can take values in higher categories; therefore, we study fully coherent equivariant structures (or homotopy fixed points) on sections of higher sheaves under diffeomorphism actions.

If we aim to reconstruct the values of a smooth FFT on any closed $(d{-}1)$-manifold $Y$ with smooth map $f \colon Y \to M$, we need to work with the smooth $\Diff_+(Y)$-actions rather than mapping class group actions.
This motivates the following consideration:
we restrict our attention to the diffeomorphism class $[Y]$ of a closed, oriented $(d{-}1)$-manifold $Y$.
Let $\Mfd^\ori$ denote the groupoid of oriented manifolds and orientation-preserving diffeomorphisms, and let $\scM_Y$ be the connected component of $Y$ in $\Mfd^\ori$.
For $X,X' \in \Mfd^\ori$, we let $\rmD(X,X')$ denote the presheaf on $\Cart$ consisting of diffeomorphisms from $X$ to $X'$ (this is even a diffeological space).
Concretely, an element of $D(X,X')(c)$ is a smooth map $\varphi^\dashv \colon c \times X \to X'$ which is a diffeomorphism at each $x \in c$ and such that the map of pointwise inverses is also smooth.
We equivalently write this as a morphism $\varphi \colon \scY_c \to \rmD(X,X')$.
This establishes both $\Mfd^\ori$ and $\scM_Y$ as categories enriched in \smash{$\widehat{\Cart}$}; the enriched mapping objects are \smash{$\ul{\scM}^{\widehat{\Cart}}_Y(Y_0,Y_1) = \rmD(Y_0,Y_1)$}.
We also use the shorthand notation $\rmD(Y) \coloneqq \rmD(Y,Y)$.

\begin{remark}
One could even consider $\Mfd$ and $\Mfd^\ori$ as enriched in $\Dfg$; the results in this section still go through because they only rely on the (pre)sheaf aspects of the diffeological spaces involved.
\qen
\end{remark}

Consider \smash{$\widehat{\Cart}$}-enriched functors $P \colon \scM_Y^\opp \longrightarrow \widehat{\Cart}$ and $G \colon \scM_Y \longrightarrow \widehat{\Cart}$, as well as the functors $G^n$, with $G^n(X) \coloneqq (G(X))^n$.
Setting $G^0(X) \coloneqq *$ gives rise to an augmented simplicial object
\begin{equation}
	G^{\bullet+1} \in \big( \Cat_\rmL(\scM_Y, \widehat{\Cart}) \big)^{\bbDelta_+^\opp}\,,
\end{equation}
where $\bbDelta_+$ is the simplex category $\bbDelta$ with an initial object $[-1]$ adjoined to it.

\begin{example}
In the context of smooth field theories on a manifold $M$, the relevant choice of $P$ is $M^{(-)}$, which assigns to $Y' \in \scM_Y$ the mapping presheaf $M^{Y'}$.
Given two diffeomorphic manifolds $Y_0 \cong Y_1$ in $\scM_Y$, there is a canonical morphism of presheaves
\begin{equation}
	\scM_Y(Y_0,Y_1) = \rmD(Y_0,Y_1)
	\longrightarrow \widehat{\Cart}\big( M^{Y_1}, M^{Y_0} \big)
\end{equation}
which sends a map $\varphi^\dashv \colon c \times Y_0 \to Y_1$ to the composition
\begin{equation}
	M^{Y_1}(c) \ni f
	\longmapsto \Big(
	\begin{tikzcd}
		c \times Y_0 \ar[r, "\Delta"]
		& c \times c \times Y_0 \ar[r, "1 \times \varphi^\dashv"]
		& c \times Y_1 \ar[r, "f"]
		& M
	\end{tikzcd}
	\Big) \in M^{Y_0}(c)\,.
\end{equation}
This makes $M^{(-)}$ into an enriched functor.
\qen
\end{example}

This setup allows us to form, for each $n \in \NN_0$, the enriched two-sided simplicial bar construction~\cite[Section~9.1]{Riehl:Cat_HoThy}, which produces a simplicial object \smash{$B_\bullet \big( G^n, \scM_Y^\opp, P \big) \ \in \widehat{\Cart}{}^{(\bbDelta^\opp)}$}.
Explicitly,
\begin{align}
\label{eq:enriched spl bar construction}
	B_k \big( G^n, \scM_Y^\opp, P \big)
	&= \coprod_{Y_0, \ldots, Y_k \in \scM_Y} P(Y_0) \times \rmD(Y_1, Y_0) \times \cdots \times \rmD(Y_k, Y_{k-1}) \times G^n(Y_k)\,.
\end{align}

We now consider the case of $G = \rmD(Y,-) \colon \scM_Y \to \Dfg$.
Define morphisms $\Phi_n$ as the composition
\begin{equation}
\begin{tikzcd}[row sep=1cm, column sep=1.75cm]
	P(Y_0) \times \rmD(Y,Y_0)^{n+1} \ar[r] \ar[d, dashed, "\Phi_n"'] & P(Y_0) \times \rmD(Y,Y_0) \times \rmD(Y,Y_0)^n \ar[d, "1 \times \Delta \times 1"]
	\\
	P(Y) \times \rmD(Y,Y)^n & P(Y_0) \times \rmD(Y,Y_0)^2 \times \rmD(Y,Y_0)^n \ar[l, "\Ev  \times \delta^n"]
\end{tikzcd}
\end{equation}
where $\Delta$ denotes the diagonal morphism, and where $\Ev \colon P(Y_0) \times \rmD(Y,Y_0) \to P(Y)$ is defined via the tensor adjunction.
The morphism $\delta$ acts as
\begin{equation}
	(f, f_1, \ldots, f_n) \longmapsto (f^{-1}f_1, f_1^{-1} f_2, \ldots, f_{n-1}^{-1}f_n)\,.
\end{equation}
Let $\sfB\rmD(Y) \subset \scM_Y$ denote the full \smash{$\wCart$}-enriched subcategory on the object $Y$.
We call
\begin{equation}
	\big( P(Y) \dslash \rmD(Y) \big)_\bullet
	\coloneqq B_\bullet \big( *, \sfB\rmD(Y)^\opp, P \big) \in \widehat{\Cart}{}^{(\bbDelta^\opp)}
\end{equation}
the \emph{action groupoid} of the $\rmD(Y)$-action via $\Ev$ on \smash{$P(Y) \in \widehat{\Cart}$}.
We further set
\begin{equation}
	P \dslash \scM_Y \coloneqq B_\bullet \big( *, \scM_Y^\opp, P \big) \in \widehat{\Cart}{}^{(\bbDelta^\opp)}\,.
\end{equation}

\begin{lemma}
For every $n \in \NN_0$, the morphism
\begin{equation}
	\Phi_n \colon B_0 \big( \rmD(Y,-)^{n+1}, \scM_Y^\opp, P \big)
	\longrightarrow \big( P(Y) \dslash \rmD(Y) \big)_n
\end{equation}
is an augmentation of the simplicial object $B_\bullet \big( \rmD(Y,-)^{n+1}, \scM_Y^\opp, P \big)$ in \smash{$\widehat{\Cart}$}.
\end{lemma}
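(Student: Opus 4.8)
The plan is to verify directly the single identity that defines an augmentation. Recall that an augmentation of a simplicial object $X_\bullet$ toward an object $A$ is a morphism $\varepsilon \colon X_0 \to A$ with $\varepsilon \circ d_0 = \varepsilon \circ d_1$, where $d_0, d_1 \colon X_1 \to X_0$ are the two face maps; this is the only new relation forced by adjoining $[-1]$ to $\bbDelta$, so checking it suffices. Since $\wCart$ is a presheaf category the two-sided bar construction $B_\bullet\big(\rmD(Y,-)^{n+1}, \scM_Y^\opp, P\big)$ is formed objectwise, so I would fix $c \in \Cart$ and test the identity on $c$-points. A $c$-point of $B_1\big(\rmD(Y,-)^{n+1}, \scM_Y^\opp, P\big)$ is a tuple $(p, g, \psi_0, \dots, \psi_n)$ with $Y_0, Y_1 \in \scM_Y$, $p \in P(Y_0)(c)$, $g$ a fibrewise diffeomorphism from $Y_1$ to $Y_0$ over $c$ (an element of $\rmD(Y_1,Y_0)(c) = \scM_Y^\opp(Y_0,Y_1)(c)$), and each $\psi_i$ a fibrewise diffeomorphism from $Y$ to $Y_1$ over $c$.

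First I would unwind the two outer faces. The face $d_0$ contracts the two leftmost factors $P(Y_0) \times \scM_Y^\opp(Y_0,Y_1)$ via the functoriality of $P$, so $d_0(p, g, \psi_0, \dots, \psi_n) = \big(P(g)(p);\, \psi_0, \dots, \psi_n\big)$, living in the summand of $B_0$ indexed by $Y_1$; the face $d_1$ contracts the two rightmost factors $\scM_Y^\opp(Y_0,Y_1) \times \rmD(Y,Y_1)^{n+1}$ via postcomposition with $g$, so $d_1(p, g, \psi_0, \dots, \psi_n) = \big(p;\, g\psi_0, \dots, g\psi_n\big)$, living in the summand indexed by $Y_0$. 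Then I would apply $\Phi_n$, whose recipe on the summand indexed by an object $Y'$ is: split off and diagonally duplicate the first diffeomorphism factor, feed the $P(Y')$-entry and one copy into $\Ev$, and apply $\delta$ to the remaining $(n+1)$-tuple of diffeomorphisms. This gives $\Phi_n(d_0(\dots)) = \big(\Ev(P(g)(p),\psi_0);\, \psi_0^{-1}\psi_1, \dots, \psi_{n-1}^{-1}\psi_n\big)$ and $\Phi_n(d_1(\dots)) = \big(\Ev(p, g\psi_0);\, (g\psi_0)^{-1}(g\psi_1), \dots, (g\psi_{n-1})^{-1}(g\psi_n)\big)$, both in $P(Y)(c) \times \rmD(Y)(c)^n$.

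Now I would compare the two componentwise. The $\rmD(Y)^n$-entries agree because $\delta$ only records the consecutive ratios $f_{i-1}^{-1} f_i$, so the common left prefactor $g$ telescopes away: $(g\psi_{i-1})^{-1}(g\psi_i) = \psi_{i-1}^{-1}\psi_i$. The $P(Y)$-entries agree because $\Ev$ is by construction the evaluation map of the $\wCart$-enriched functor $P$, hence compatible with composition of diffeomorphisms; as $P$ is contravariant on $\scM_Y$ this yields $\Ev(P(g)(p),\psi_0) = P(g\psi_0)(p) = \Ev(p, g\psi_0)$. Therefore $\Phi_n \circ d_0 = \Phi_n \circ d_1$, which is the augmentation identity.

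The argument is essentially bookkeeping; the step that needs genuine care is the handling of variance. One must pin down that $P$ is contravariant on $\scM_Y$ so that the enriched evaluation $\Ev$ intertwines the $\scM_Y$-action with composition in exactly the order used above, and one should note the mild subtlety that $d_0$ and $d_1$ land in \emph{different} summands of $B_0$ (indexed by $Y_1$ and $Y_0$) which $\Phi_n$ nonetheless coalesces into the single object $P(Y) \times \rmD(Y)^n$; it is precisely this coalescence that allows an augmentation to exist at all.
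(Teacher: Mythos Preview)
Your proof is correct and follows essentially the same approach as the paper, which dispatches the lemma in one line: ``This follows directly from the compatibility of $\Ev$ with compositions.'' Your argument is simply the explicit unpacking of that sentence---the telescoping of the $\delta$-ratios and the identity $\Ev(P(g)(p),\psi_0) = \Ev(p, g\psi_0)$ are precisely the compatibility the paper is invoking.
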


\begin{proof}
This follows directly from the compatibility of $\Ev$ with compositions.
\end{proof}

We further define morphisms $\Psi_k \colon B_k (\rmD(Y,-), \scM_Y^\opp, P) \longrightarrow (P \dslash \scM_Y)_k$, which arise from the augmentation $\rmD(Y,-)^\bullet \to \rmD(Y,-)^0 = *$\,.

\begin{proposition}
\label{st:Psi_k and Phi_n}
Let $k \in \NN_0$.
\begin{myenumerate}
\item The morphism
\begin{equation}
	\Psi_k \colon B_k \big( \rmD(Y,-), \scM_Y^\opp, P \big)
	\longrightarrow B_k (*, \scM_Y^\opp, P) = (P \dslash \scM_Y)_k
\end{equation}
is an augmentation of the simplicial object $B_k \big( \rmD(Y,-)^{\bullet+1}, \scM_Y^\opp, P \big)$ in \smash{$\widehat{\Cart}$}.

\item The morphism $\Psi_k$ is a $\wtau_{dgop}$-local epimorphism, and the simplicial object $B_k (\rmD(Y,-)^{\bullet+1}, \scM_Y^\opp, P)$ is isomorphic to the \v{C}ech nerve of $\Psi_k$.
\end{myenumerate}
\end{proposition}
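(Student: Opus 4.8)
The plan is to deduce everything from one structural observation about the enriched bar construction. Abbreviating $W_{\vec Y}\coloneqq P(Y_0)\times\rmD(Y_1,Y_0)\times\cdots\times\rmD(Y_k,Y_{k-1})$ for a tuple $\vec Y=(Y_0,\ldots,Y_k)$ of objects of $\scM_Y$, formula~\eqref{eq:enriched spl bar construction} reads
\begin{equation}
	B_k\big(G,\scM_Y^\opp,P\big)=\coprod_{\vec Y} W_{\vec Y}\times G(Y_k)\,,
\end{equation}
naturally in $G\in\Cat(\scM_Y,\wCart)$; in particular $B_k(-,\scM_Y^\opp,P)\colon\Cat(\scM_Y,\wCart)\to\wCart$ preserves coproducts, sends the terminal functor to $B_k(*,\scM_Y^\opp,P)=\coprod_{\vec Y}W_{\vec Y}$, and inherits all the limit-preservation of the evaluation functor $G\mapsto G(Y_k)$. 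For part~(1), I would note that the powers $\rmD(Y,-)^{\bullet+1}$, with face maps deleting a factor and degeneracies repeating one, form exactly the \v{C}ech nerve in $\Cat(\scM_Y,\wCart)$ of the terminal map $\rmD(Y,-)\to *$, hence an augmented simplicial object with augmentation $\rmD(Y,-)\to *$. Applying the functor $B_k(-,\scM_Y^\opp,P)$ levelwise turns this into an augmented simplicial object in $\wCart$ whose augmentation map is $B_k(\rmD(Y,-)\to *,\scM_Y^\opp,P)$, and unwinding the definitions this is precisely $\Psi_k$; that is part~(1).

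For the \v{C}ech-nerve assertion in~(2) I would show that $B_k(-,\scM_Y^\opp,P)$ carries the \v{C}ech nerve of $\rmD(Y,-)\to *$ to the \v{C}ech nerve of $\Psi_k$. Since $\wCart$ is a presheaf topos, its coproducts are disjoint and universal; because $\Psi_k=\coprod_{\vec Y}\big(\pr\colon W_{\vec Y}\times\rmD(Y,Y_k)\to W_{\vec Y}\big)$ respects the coproduct decompositions of source and target, the iterated fibre products defining $\cC\Psi_k$ are computed componentwise, and for each $\vec Y$ the $(n{+}1)$-fold fibre product of $W_{\vec Y}\times\rmD(Y,Y_k)$ over $W_{\vec Y}$ is $W_{\vec Y}\times\rmD(Y,Y_k)^{n+1}$. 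Hence
\begin{equation}
	(\cC\Psi_k)_n\cong\coprod_{\vec Y}W_{\vec Y}\times\rmD(Y,Y_k)^{n+1}=B_k\big(\rmD(Y,-)^{n+1},\scM_Y^\opp,P\big)\,,
\end{equation}
and under this identification the simplicial and augmentation maps on both sides are ``delete / repeat a factor'', i.e.\ they agree with $B_k(-,\scM_Y^\opp,P)$ applied to the structure maps of the \v{C}ech nerve of $\rmD(Y,-)\to *$. This yields the claimed isomorphism of augmented simplicial objects.

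It remains to see that $\Psi_k$ is a $\wtau_{dgop}$-local epimorphism. For each tuple $\vec Y$, the objects $Y$ and $Y_k$ lie in the same connected component $\scM_Y$ of $\Mfd^\ori$ and are therefore diffeomorphic, so $\rmD(Y,Y_k)(c)\neq\emptyset$ for every $c\in\Cart$ (pull a fixed diffeomorphism back along $\pr\colon c\times Y\to Y$). By Lemma~\ref{st:tau-loc epi properties}(4) each projection $W_{\vec Y}\times\rmD(Y,Y_k)\to W_{\vec Y}$ is then a $\wtau_{dgop}$-local epimorphism, and since the condition of Definition~\ref{def:tau-loc epi} is tested only on maps out of representables---each of which factors through a single coproduct summand of $B_k(*,\scM_Y^\opp,P)$ by the Yoneda lemma---the lifting property for $\Psi_k$ reduces to that for the individual summands; alternatively one invokes stability of local epimorphisms under colimits, Lemma~\ref{st:tau-loc epi properties}(3), applied to the discrete diagram indexed by the tuples $\vec Y$.

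The only genuinely delicate point is the bookkeeping in part~(2): one has to make sure that $B_k(-,\scM_Y^\opp,P)$ really does commute with the fibre products occurring in a \v{C}ech nerve (this rests on universality and disjointness of coproducts in $\wCart$ together with the fact that a product with a fixed object is a right adjoint in the cartesian closed category $\wCart$) and that the face, degeneracy and augmentation maps are transported correctly, matching those produced by part~(1). The potential size of $\scM_Y$ is harmless here, since one may replace it by a small skeleton, or---as just used---test the relevant conditions only on maps out of representables, each of which sees a single summand of the coproduct.
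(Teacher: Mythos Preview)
Your proposal is correct and follows essentially the same approach as the paper's proof, which is extremely terse: it simply notes that $\Psi_k$ is a coproduct of projections onto a factor in a product and invokes Lemma~\ref{st:tau-loc epi properties}. Your version unpacks this in considerably more detail---making explicit the functoriality of $B_k(-,\scM_Y^\opp,P)$, the nonemptiness of $\rmD(Y,Y_k)(c)$, the use of disjoint/universal coproducts in $\wCart$ to identify the \v{C}ech nerve componentwise, and the appeal to parts~(3) and~(4) of Lemma~\ref{st:tau-loc epi properties}---but the underlying argument is the same.
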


\begin{proof}
Part (1) is immediate from the construction of $\Psi_k$.
For part (2), we only need to observe that $\Psi_k$ is a coproduct of projections onto a factor in a product and apply Lemma~\ref{st:tau-loc epi properties}.
\end{proof}

For $k,l \in \NN_0$, we introduce the short-hand notation
\begin{equation}
	\Coh_{k,l}(Y,P) \coloneqq B_k \big( \rmD(Y,-)^{l+1}, \scM_Y^\opp, P \big)
	\quad \in \wCart\,.
\end{equation}

\begin{corollary}
We obtain a bisimplicial object in \smash{$\wCart$} which is augmented in each direction,
\begin{equation}
\label{eq:bi-augmented bar construction}
\begin{tikzcd}[row sep=1cm]
	\Coh(Y,P) \ar[r, "\Phi_\bullet"] \ar[d, "\Psi_\bullet"]
	& \big( P(Y) \dslash \rmD(Y) \big)_\bullet
	\\
	(P \dslash \scM_Y)_\bullet
\end{tikzcd}
\end{equation}
and where the vertical simplicial objects are the \v{C}ech nerves of the subductions $\Psi_k$.
\end{corollary}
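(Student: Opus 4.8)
The plan is to obtain the corollary by assembling the Lemma preceding Proposition~\ref{st:Psi_k and Phi_n} together with the two parts of Proposition~\ref{st:Psi_k and Phi_n}, after reconciling the two simplicial directions carried by $\Coh(Y,P)$.

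First I would exhibit $\Coh_{\bullet,\bullet}(Y,P)$ as a genuine bisimplicial object in $\wCart$. The two-sided enriched bar construction $B_\bullet(-,\scM_Y^\opp,P)$ is functorial in its first argument, regarded as a $\wCart$-enriched functor $\scM_Y \to \wCart$; applying it levelwise to the simplicial object $\rmD(Y,-)^{\bullet+1}$ in $\Fun(\scM_Y,\wCart)$ — whose face and degeneracy maps are those making $\rmD(Y,-)^{\bullet+1}$ the augmented simplicial functor over $\rmD(Y,-)^0 = *$ — produces $\Coh_{\bullet,\bullet}(Y,P)$. The bar ($k$-indexed) face and degeneracy operators are then automatically compatible with the remaining ($l$-indexed) ones, since the latter are morphisms of functors $\scM_Y \to \wCart$ while the bar operators are natural in the functor argument; this gives the underlying bisimplicial object, each of whose levels $\Coh_{k,l}(Y,P)$ is the object of $\wCart$ written out in~\eqref{eq:enriched spl bar construction}.

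Next I would install the two augmentations. For the vertical one, Proposition~\ref{st:Psi_k and Phi_n}(1) shows, for each fixed $k$, that $\Psi_k$ is an augmentation of the $l$-indexed simplicial object $\Coh_{k,\bullet}(Y,P)$ over $B_k(*,\scM_Y^\opp,P)$; moreover $\Psi_\bullet$ is simply $B_\bullet(-,\scM_Y^\opp,P)$ applied to the natural transformation $\rmD(Y,-)^{\bullet+1} \to \rmD(Y,-)^0$, hence it is automatically a morphism of $k$-indexed simplicial objects, so that $\Psi_\bullet \colon \Coh(Y,P) \to B_\bullet(*,\scM_Y^\opp,P)$ augments the bisimplicial object in the vertical direction. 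For the horizontal one, the Lemma preceding Proposition~\ref{st:Psi_k and Phi_n} shows, for each fixed $l=n$, that $\Phi_n$ is an augmentation of the $k$-indexed simplicial object $\Coh_{\bullet,n}(Y,P)$ over $(P(Y)\dslash\rmD(Y))_n$. What remains to be verified is that the family $\{\Phi_n\}_{n\in\NN_0}$ is a morphism of $l$-indexed simplicial objects, i.e.\ that it intertwines the structure maps of $\rmD(Y,-)^{\bullet+1}$ on the source with the simplicial structure of the action groupoid $(P(Y)\dslash\rmD(Y))_\bullet = B_\bullet(*,\sfB\rmD(Y)^\opp,P)$ on the target. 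I would check this directly from the definition of $\Phi_n$ as the composite $(\Ev\times\delta^n)\circ(1\times\Delta\times1)$, using the compatibility of $\Ev$ with composition of diffeomorphisms and the fact that $\delta$ intertwines the resolution maps of $\rmD(Y,-)^{\bullet+1}$ with the faces and degeneracies of $\sfB\rmD(Y)$.

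Finally, for each fixed $k$, Proposition~\ref{st:Psi_k and Phi_n}(2) gives that $\Psi_k$ is a $\wtau_{dgop}$-local epimorphism and that $\Coh_{k,\bullet}(Y,P) = B_k(\rmD(Y,-)^{\bullet+1},\scM_Y^\opp,P)$ together with $\Psi_k$ is precisely the \v{C}ech nerve of $\Psi_k$; this is the last assertion, and it also confirms that diagram~\eqref{eq:bi-augmented bar construction} has the claimed shape. Of these steps, the only one that is not a formal consequence of the functoriality of bar constructions is the $l$-direction compatibility of $\Phi_\bullet$ in the third paragraph, and I expect that to be the main obstacle — although it remains a routine index-chasing verification, since it is the point at which the somewhat ad hoc definition of the maps $\Phi_n$ must be matched with the canonical simplicial structure of $B_\bullet(*,\sfB\rmD(Y)^\opp,P)$.
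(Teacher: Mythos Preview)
Your proposal is correct and follows the same route as the paper, which in fact gives no explicit proof and treats the corollary as immediate from the preceding Lemma and Proposition~\ref{st:Psi_k and Phi_n}. Your write-up simply unpacks what the paper leaves implicit: the bisimplicial structure from functoriality of the bar construction in its first argument, the two augmentations from the cited results, and the \v{C}ech nerve identification from Proposition~\ref{st:Psi_k and Phi_n}(2); the one compatibility you flag---that $\Phi_\bullet$ respects the $l$-direction simplicial structure---is indeed a routine check that the paper does not isolate.
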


Explicitly, diagram~\eqref{eq:bi-augmented bar construction} reads as
\begin{equation}
\begin{tikzcd}[row sep=1cm]
	\stackrel{\vdots}{\cdots \displaystyle\coprod_{Y_0,Y_1 \in \scM_Y} P(Y_0) \times \rmD(Y_1,Y_0) \times \rmD(Y,Y_1)^2}
	\ar[r, shift left=0.2cm, "d^h_i"] \ar[r, shift left=-0.2cm]
	\ar[d, shift left=-0.2cm, "d^v_i"'] \ar[d, shift left=0.2cm]
	& \stackrel{\vdots}{\displaystyle\coprod_{Y_0 \in \scM_Y} P(Y_0) \times \rmD(Y,Y_0)^2}
	\ar[r, "\Phi_1"]
	\ar[d, shift left=-0.2cm, "d^v_i"'] \ar[d, shift left=0.2cm]
	\ar[l]
	& \stackrel{\vdots}{P(Y) \times \rmD(Y)}
	\ar[d, shift left=-0.2cm, "d_i"'] \ar[d, shift left=0.2cm]
	\\
	\cdots \displaystyle\coprod_{Y_0,Y_1 \in \scM_Y} P(Y_0) \times \rmD(Y_1,Y_0) \times \rmD(Y,Y_1)
	\ar[r, shift left=0.2cm, "d^h_i"] \ar[r, shift left=-0.2cm]
	\ar[d, "\Psi_1"]
	\ar[u]
	& \displaystyle\coprod_{Y_0 \in \scM_Y} P(Y_0) \times \rmD(Y,Y_0)
	\ar[r, "\Phi_0"]
	\ar[d, "\Psi_0"]
	\ar[l]
	\ar[u]
	& P(Y)
	\ar[u]
	\\
	\cdots \displaystyle\coprod_{Y_0,Y_1 \in \scM_Y} P(Y_0) \times \rmD(Y_1,Y_0)
	\ar[r, shift left=0.2cm, "d_i"] \ar[r, shift left=-0.2cm]
	& \displaystyle\coprod_{Y_0 \in \scM_Y} P(Y_0)
	\ar[l]
	&
\end{tikzcd}
\end{equation}

\begin{proposition}
\label{st:Psi, Phi yield weqs}
The morphisms $\Phi_\bullet$ and $\Psi_\bullet$ have the following properties:
\begin{myenumerate}
\item Each $\Phi_n$ induces a weak equivalence in $\scH_{\infty,0}$:
\begin{equation}
	\Phi_n \colon \Coh_{\bullet,n}(Y,P)
	\weq \sfc_\bullet \big( P(Y) \dslash \rmD(Y) \big)_n\,.
\end{equation}

\item Each $\Psi_k$ induces a weak equivalence in $\scH_{\infty,0}^{loc}$:
\begin{equation}
	\Psi_k \colon \Coh_{k,\bullet}(Y,P)
	\weq \sfc_\bullet (P \dslash \scM_Y)_k\,.
\end{equation}
\end{myenumerate}
\end{proposition}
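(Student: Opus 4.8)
Part~(2) follows by combining results already at hand. Fix $k\in\NN_0$. By Proposition~\ref{st:Psi_k and Phi_n}(2) the morphism $\Psi_k\colon \Coh_{k,0}(Y,P)=B_k\big(\rmD(Y,-),\scM_Y^\opp,P\big)\to B_k\big(*,\scM_Y^\opp,P\big)$ is a $\wtau_{dgop}$-local epimorphism, and the simplicial object $l\mapsto\Coh_{k,l}(Y,P)=B_k\big(\rmD(Y,-)^{l+1},\scM_Y^\opp,P\big)$ is isomorphic, compatibly with its augmentation, to the \v{C}ech nerve $\cC\Psi_k$ of $\Psi_k$. Applying Theorem~\ref{st:local epi Thm} to the site $(\Cart,\tau_{dgop})$, the augmentation $\cC\Psi_k\to B_k(*,\scM_Y^\opp,P)$ is a weak equivalence in $\scH_\infty^{loc}$; transporting along the isomorphism of Proposition~\ref{st:Psi_k and Phi_n}(2) shows that $\Psi_k\colon \Coh_{k,\bullet}(Y,P)\to\sfc_\bullet B_k(*,\scM_Y^\opp,P)$ is a weak equivalence in $\scH_\infty^{loc}$, which is claim~(2).

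For part~(1) the plan is to show that the augmented simplicial object $\Phi_\bullet\colon\Coh_{\bullet,n}(Y,P)\to(P(Y)\dslash\rmD(Y))_n$ is \emph{split}, i.e.\ admits an extra degeneracy, hence is a simplicial homotopy equivalence; such maps are preserved by every functor, in particular upon passing to $\scH_\infty$ (taking the diagonal of the associated bisimplicial presheaf, equivalently $\hocolim_{\bbDelta^\opp}$, cf.\ Proposition~\ref{st:hocolim and diagonal}), where they induce objectwise homotopy equivalences of simplicial sets and thus weak equivalences. To produce the extra degeneracy I first record a natural isomorphism of $\wCart$-enriched functors $\scM_Y\to\wCart$,
\[
	\rmD(Y,-)^{n+1}\;\cong\;\scM_Y(Y,-)\times\rmD(Y)^n\,,\qquad (g,f_1,\dots,f_n)\longmapsto\big(g,\,g^{-1}f_1,\dots,g^{-1}f_n\big)\,,
\]
where the second factor denotes the constant functor with value $\rmD(Y)^n$; naturality is exactly where the groupoid structure of $\scM_Y$ and its action by post-composition enter. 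A constant tensor factor passes through the two-sided bar construction, so this yields $\Coh_{\bullet,n}(Y,P)=B_\bullet\big(P,\scM_Y,\rmD(Y,-)^{n+1}\big)\cong B_\bullet\big(P,\scM_Y,\scM_Y(Y,-)\big)\times\rmD(Y)^n$. The bar resolution $B_\bullet\big(P,\scM_Y,\scM_Y(Y,-)\big)$ carries the standard extra degeneracy inserting $\id_Y$ and is a simplicial homotopy equivalence onto its augmentation $\int^{Z}\scM_Y(Y,Z)\times P(Z)\cong P(Y)$ (co-Yoneda); tensoring with the constant object $\rmD(Y)^n$ preserves this, giving a simplicial homotopy equivalence onto $P(Y)\times\rmD(Y)^n=(P(Y)\dslash\rmD(Y))_n$. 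It then remains to identify this canonical augmentation with $\Phi_n$: unwinding the definition of $\Phi_n$ (split off the first copy of $\rmD(Y,-)$, double it via $1\times\Delta\times1$, then apply $\Ev\times\delta^n$), one finds that $\Phi_n$ agrees with the above composite up to the reindexing automorphism of the target $\rmD(Y)^n\to\rmD(Y)^n$, $(a_1,\dots,a_n)\mapsto(a_1,a_1a_2,\dots,a_1\cdots a_n)$, which converts the successive quotients produced by $\delta^n$ into the left translations produced by the displayed isomorphism; composing $\Phi_\bullet$ with this isomorphism of constant simplicial objects does not affect the conclusion.

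The main obstacle is the bookkeeping in part~(1): checking that the displayed transformation is genuinely natural in $\scM_Y$, and, more delicately, that the explicit composite $(1\times\Delta\times1)$ followed by $\Ev\times\delta^n$ defining $\Phi_n$ really does reproduce, after the above reindexing, the canonical augmentation of the bar resolution, so that the extra degeneracy descends to $\Phi_\bullet$ exactly as written. Once this identification is in place both parts are short; part~(2) is a direct appeal to Proposition~\ref{st:Psi_k and Phi_n} and Theorem~\ref{st:local epi Thm}.
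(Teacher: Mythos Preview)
Your proof is correct and follows essentially the same approach as the paper. Part~(2) is identical: the paper also just invokes Proposition~\ref{st:Psi_k and Phi_n} and Theorem~\ref{st:local epi Thm}. For Part~(1) both arguments establish that the augmented simplicial object admits extra degeneracies; the paper writes down the splitting maps $s_{-1|k}$ explicitly by inserting the identity $1_Y$ at the last slot, while you obtain the same splitting more conceptually by first trivialising $\rmD(Y,-)^{n+1}\cong\scM_Y(Y,-)\times\rmD(Y)^n$ and then using the standard extra degeneracy of the bar resolution $B_\bullet(\scM_Y(Y,-),\scM_Y^\opp,P)\to P(Y)$. Your reindexing automorphism $(a_1,\dots,a_n)\mapsto(a_1,a_1a_2,\dots,a_1\cdots a_n)$ correctly accounts for the fact that $\delta^n$ produces successive quotients rather than quotients by the first coordinate, so the identification with $\Phi_n$ goes through. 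The only cost of your route is the extra bookkeeping you flag; the paper avoids it by writing the extra degeneracy directly on $\Coh_{\bullet,n}(Y,P)$ without first changing coordinates.
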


\begin{proof}
Ad~(1):
We claim that the augmented simplicial object
\begin{equation}
\begin{tikzcd}
	\Coh_{\bullet,n}(Y,P) = B_\bullet \big( \rmD(Y,-)^{n+1}, \scM_Y^\opp, P \big) \ar[r, "\Phi_n"]
	& P(Y) \times \rmD(Y)^n
\end{tikzcd}
\end{equation}
in $\scH_{\infty,0}$ admits extra degeneracies.
To see this, we define morphisms in \smash{$\Cart$}
\begin{align}
	s_{-1|-1} \colon P(Y) \times \rmD(Y)^n
	&\longrightarrow
	\displaystyle\coprod_{Y_0 \in \scM_Y} P(Y_0) \times \rmD(Y,Y_0)^{n+1}\,,
	\\
	(\gamma, f_1, \ldots, f_n) &\longmapsto (\gamma, 1_Y, f_1, \ldots, f_n)\,,
\end{align}
where the image lies in the summand labelled by $Y \in \scM_Y$.
For $k \in \NN_0$, we set
\begin{align}
	s_{-1|k} \colon &\displaystyle\coprod_{Y_0, \ldots, Y_k \in \scM_Y} P(Y_0) \times \rmD(Y_1,Y_0) \times \ldots \times \rmD(Y_k,Y_{k-1}) \times \rmD(Y,Y_k)^{n+1}
	\\*
	&\longrightarrow
	\displaystyle\coprod_{Y_0, \ldots, Y_{k+1} \in \scM_Y} P(Y_0) \times \rmD(Y_1,Y_0) \times \ldots \times \rmD(Y_{k+1},Y_k) \times \rmD(Y,Y_{k+1})^{n+1}\,,
	\\
	&(\gamma, g_0, \ldots, g_{k-1}, f_0, \ldots, f_n)
	\longmapsto
	(\gamma, g_0, \ldots, g_{k-1}, 1_Y, f_0, \ldots, f_n)\,,
\end{align}
where the image lies in the component of the coproduct labelled by $(Y_0, \ldots, Y_k, Y)$.
These morphisms yield the desired extra degeneracies.

Ad~(2):
This follows from Proposition~\ref{st:Psi_k and Phi_n} together with Theorem~\ref{st:local epi Thm}.
\end{proof}

\begin{proposition}
\label{st:Q diag}
With the above notation, the following statements hold true:
\begin{myenumerate}
\item We have a weak equivalence in $\scH_{\infty,0}$:
\begin{equation}
	Q \diag(\Phi) \colon Q \circ \diag \big( \Coh(Y,P)\big)
	\weq Q \big( P(Y) \dslash \rmD(Y) \big)\,.
\end{equation}

\item We have a weak equivalence in $\scH_{\infty,0}^{loc}$:
\begin{equation}
	Q \diag(\Psi) \colon Q \circ \diag \big( \Coh(Y,P)\big)
	\weq Q (P \dslash \scM_Y)\,.
\end{equation}
\end{myenumerate}
\end{proposition}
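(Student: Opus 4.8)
The plan is to deduce both claims from Proposition~\ref{st:Psi, Phi yield weqs} by trading the diagonal of the bisimplicial object $\Coh(Y,P)$ for a homotopy colimit over $\bbDelta^\opp$. Throughout we regard $\Coh(Y,P)$ as a bisimplicial object in $\scH_\infty$ (via levelwise application of $\sfc_\bullet$) and we use freely that a simplicial object in $\wC$ is the same datum as an object of $\scH_\infty$. Recall that $q\colon Q\weq 1$ is a weak equivalence in $\scH_\infty$ and also in $\scH_\infty^{loc}$, so $Q$ is homotopical for both model structures. By Proposition~\ref{st:hocolim and diagonal} together with the standard fact that the diagonal of a bisimplicial object computes the homotopy colimit over $\bbDelta^\opp$ of the associated simplicial object of $\scH_\infty$ --- in either of the two simplicial directions --- there are weak equivalences in $\scH_\infty$, natural in $\Coh(Y,P)$,
\begin{equation}
	\hocolim^{\scH_\infty}_{[l]\in\bbDelta^\opp}\Coh_{\bullet,l}(Y,P)
	\ \weq\ Q\,\diag\Coh(Y,P)
	\ \qew\ \hocolim^{\scH_\infty}_{[k]\in\bbDelta^\opp}\Coh_{k,\bullet}(Y,P)\,,
\end{equation}
and, as the special cases of Proposition~\ref{st:hocolim and diagonal} for the simplicial objects $\big(P(Y)\dslash\rmD(Y)\big)_\bullet$ and $B_\bullet(*,\scM_Y^\opp,P)$ of $\wC$, also $\hocolim^{\scH_\infty}_{[l]}\sfc_\bullet\big(P(Y)\dslash\rmD(Y)\big)_l \weq Q\big(P(Y)\dslash\rmD(Y)\big)$ and $\hocolim^{\scH_\infty}_{[k]}\sfc_\bullet B_k(*,\scM_Y^\opp,P) \weq Q\big(B(*,\scM_Y^\opp,P)\big)$.

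For part~(1), view $\Phi$ as a morphism of $\bbDelta^\opp$-diagrams in $\scH_\infty$, from $[l]\mapsto\Coh_{\bullet,l}(Y,P)$ to $[l]\mapsto\sfc_\bullet\big(P(Y)\dslash\rmD(Y)\big)_l$. By Proposition~\ref{st:Psi, Phi yield weqs}(1) each component $\Phi_l$ is a weak equivalence in $\scH_\infty$, so $\Phi$ is an objectwise weak equivalence of diagrams; since the homotopy colimit over $\bbDelta^\opp$ is homotopical, $\hocolim^{\scH_\infty}_{[l]}\Phi$ is a weak equivalence in $\scH_\infty$. Inserting this into the commuting square that expresses the naturality of the comparisons of the first paragraph and applying two-out-of-three shows that $Q\,\diag(\Phi)$ is a weak equivalence in $\scH_\infty$.

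Part~(2) is the same argument in the other simplicial direction and over the local model structure. View $\Psi$ as a morphism of $\bbDelta^\opp$-diagrams in $\scH_\infty$, from $[k]\mapsto\Coh_{k,\bullet}(Y,P)$ to $[k]\mapsto\sfc_\bullet B_k(*,\scM_Y^\opp,P)$; by Proposition~\ref{st:Psi, Phi yield weqs}(2) this is an objectwise \emph{local} weak equivalence. Because $\scH_\infty^{loc}$ is a left Bousfield localisation of $\scH_\infty$ it has the same cofibrations, hence the same trivial fibrations, so $(\scH_\infty)^{\bbDelta^\opp}_{proj}$ and $(\scH_\infty^{loc})^{\bbDelta^\opp}_{proj}$ share cofibrations and $\colim$ is left Quillen for both; thus $\hocolim^{\scH_\infty}_{[k]}$ is also a total left derived functor of $\colim$ for the local model structure, hence preserves objectwise local weak equivalences, so $\hocolim^{\scH_\infty}_{[k]}\Psi$ is a local weak equivalence. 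The comparisons of the first paragraph are honest weak equivalences in $\scH_\infty$, hence local weak equivalences, so the same square-plus-two-out-of-three argument inside $\scH_\infty^{loc}$ gives that $Q\,\diag(\Psi)$ is a weak equivalence in $\scH_\infty^{loc}$.

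The only genuinely delicate point is the bookkeeping in the first paragraph: one must match $\diag\Coh(Y,P)$ with the homotopy colimit taken in the \emph{correct} simplicial direction --- the $l$-direction for $\Phi$, the $k$-direction for $\Psi$ --- and must know these identifications are natural in the bisimplicial object, so that $\diag(\Phi)$ and $\diag(\Psi)$ really are identified with $\hocolim_{[l]}\Phi$ and $\hocolim_{[k]}\Psi$. Part~(2) carries the extra, routine point that $\hocolim$ stays homotopical after localisation because a left Bousfield localisation does not change the cofibrations; the rest is formal.
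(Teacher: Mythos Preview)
Your argument is correct and, for part~(2), essentially identical to the paper's: both form the square
\[
\begin{tikzcd}[column sep=2.25cm, row sep=1cm]
	\underset{k \in \bbDelta^\opp}{\hocolim}^{\scH_\infty} Q \big( \Coh(Y,P)_k \big) \ar[r, "\hocolim\, Q \Psi_k"] \ar[d]
	& \underset{k \in \bbDelta^\opp}{\hocolim}^{\scH_\infty} Q \big( B(*,\scM_Y^\opp, P)_k \big) \ar[d]
	\\
	Q \circ \diag \big( \Coh(Y,P) \big) \ar[r, "Q \circ \diag (\Psi)"']
	& Q \big( B(*,\scM_Y^\opp, P) \big)
\end{tikzcd}
\]
with verticals coming from Proposition~\ref{st:hocolim and diagonal} and the top a local weak equivalence by Proposition~\ref{st:Psi, Phi yield weqs}(2). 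You add the explicit justification that $\hocolim$ remains homotopical in $\scH_\infty^{loc}$ because the localisation does not change the cofibrations; the paper leaves this implicit.

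For part~(1) you run the same hocolim-and-square argument in the $l$-direction. This is fine, but the paper takes a shorter route: since each $\Phi_l$ is a weak equivalence in $\scH_\infty$ (Proposition~\ref{st:Psi, Phi yield weqs}(1)), $\Phi$ is a levelwise weak equivalence of simplicial objects in $\scH_\infty$, and $\diag \cong |{-}| \colon s\sSet \to \sSet$ is homotopical (left Quillen with every object cofibrant), so $\diag(\Phi)$ is already a weak equivalence in $\scH_\infty$; applying the homotopical functor $Q$ finishes. Your uniform treatment has the virtue that (1) and (2) look formally identical; the paper's asymmetry reflects that only in (2) are the levelwise equivalences merely \emph{local}, forcing the detour through $\hocolim$, whereas in (1) the direct homotopicality of $\diag$ suffices. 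One small notational point: Proposition~\ref{st:hocolim and diagonal} is stated with an explicit $Q$ inside the $\hocolim$, so your first-paragraph comparisons should strictly read $\hocolim_{[l]} Q\Coh_{\bullet,l}(Y,P) \weq Q\diag\Coh(Y,P)$, etc.; this does not affect the argument.
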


\begin{proof}
By Proposition~\ref{st:Psi, Phi yield weqs}, the morphism $\Phi \colon \Coh(Y,P) \to \sfc_\bullet(P(Y) \dslash \rmD(Y))$ is a levelwise weak equivalence of simplicial objects in $\scH_{\infty,0}$.
Since the functor $\diag \cong |{-}| \colon s\sSet_\rmL \to \sSet$ is homotopical (it is left Quillen and all objects in $s\sSet $ are cofibrant), $\diag (\Phi)$ is a weak equivalence in $\scH_{\infty,0}$.
This implies~(1), since $Q$ is homotopical.

For claim~(2), we observe that we have a commutative diagram
\begin{equation}
\begin{tikzcd}[column sep=2.5cm, row sep=1.25cm]
	\underset{k \in \bbDelta^\opp}{\hocolim}^{\scH_{\infty,0}} Q \big( \Coh(Y,P)_k \big) \ar[r, "\hocolim\, Q \Psi_k", "\sim"'] \ar[d, "\sim"]
	& \underset{k \in \bbDelta^\opp}{\hocolim}^{\scH_{\infty,0}} Q \big( (P \dslash \scM_Y)_k \big) \ar[d, "\sim"]
	\\
	Q \circ \diag \big( \Coh(Y,P) \big) \ar[r, "Q \circ \diag (\Psi)"']
	& Q (P \dslash \scM_Y)
\end{tikzcd}
\end{equation}
The top morphism is a local weak equivalence by Proposition~\ref{st:Psi, Phi yield weqs}, and the vertical morphisms are projective weak equivalences by Proposition~\ref{st:bar construction properties}.
It follows that the bottom morphism is a local weak equivalence.
\end{proof}

\begin{definition}
\label{def:equivar and coherent sections}
Let $n \in \NN_0$, and let $\scF \in \scH_{\infty,n}$ be a presheaf of $(\infty,n)$-categories on $\Cart$.
\begin{myenumerate}
\item We define $(\infty,n)$-categories
\begin{align}
	\scF(P)^{\rmD(Y)} &\coloneqq C_{\CSS_n} \big( *, \bbDelta, (S_{\infty,n} \scF) \big( P(Y) \dslash \rmD(Y) \big) \big)\,,
	\\[0.1cm]
	\scF(P)^{\rmD(Y)}_{red} &\coloneqq \big\{ Q (P(Y) \dslash \rmD(Y)),\, \scF \big\}^{\Cart^\opp}\,,
\end{align}
where we make use of the simplicial enrichment of $\CSS_n$.
We call $\scF(P)^{\rmD(Y)}$ the $(\infty,n)$-category of \emph{equivariant sections of $\scF$ over \smash{$P(Y) \in \wCart$}}.

\item We define $(\infty,n)$-categories
\begin{align}
	\scF(P)^{coh} &\coloneqq C_{\CSS_n} \big( *, \bbDelta, (S_{\infty,n} \scF) (P \dslash \scM_Y) \big)
	\\[0.1cm]
	\scF(P)^{coh}_{red} &\coloneqq \big\{ Q (P \dslash \scM_Y),\, \scF \big\}^{\Cart^\opp}\,,
\end{align}
and we call $\scF(P)^{coh}$ the $(\infty,n)$-category of \emph{coherent sections of $\scF$ over $P$}.
\end{myenumerate}
\end{definition}

Note that by Proposition~\ref{st:cobar computes holim} $\scF(P)^{\rmD(Y)}$ and $\scF(P)^{coh}$ are models for the homotopy limits
\begin{align}
	\scF(P)^{\rmD(Y)} &= \underset{k \in \bbDelta}{\holim}^{\CSS_n}\ (S_{\infty,n} \scF) \big( (P(Y) \dslash \rmD(Y))_k \big)\,,
	\\
	\scF(P)^{coh} &= \underset{k \in \bbDelta}{\holim}^{\CSS_n}\ (S_{\infty,n} \scF) \big( (P \dslash \scM_Y)_k \big)\,.
\end{align}
By Lemma~\ref{st:cobar and fibrancy} these are indeed both $(\infty,n)$-categories, i.e.~fibrant objects in $\CSS_n$.
The first homotopy limit describes the $(\infty,n)$-category of homotopy fixed points of the smooth action of $\rmD(Y)$ on sections of $\scF$ over $P(Y)$.
In particular, the homotopy fixed point data depends smoothly on the diffeomorphisms in $\rmD(Y)$.
The second homotopy limit can be described as follows:
for each manifold $Y_0 \in \scM_Y$, we can form the $(\infty,n)$-category of sections of $\scF$ over $P(Y_0)$; it is given by $S_{\infty,n}(\scF)(P(Y_0))$.
Given another manifold $Y_1 \in \scM_Y$, there is a small presheaf (even a diffeological space) $\rmD(Y_0,Y_1)$ of diffeomorphisms from $Y_0$ to $Y_1$.
We can pull back sections of $\scF$ over $P(Y_1)$ along any such diffeomorphism, and the resulting section over $P(Y_0)$ should depend smoothly on the diffeomorphism.
The homotopy limit $\scF^{coh}$ can be understood as the $(\infty,n)$-category of families of sections of $\scF(Y_0)$ over $P(Y_0)$, for all $Y_0 \in \scM_Y$; these sections further are homotopy coherent with respect to all diffeomorphisms $Y_0 \to Y_1$, and their homotopy coherence data depends smoothly on these diffeomorphisms.

After this preparation, we give two different proofs (in Theorems~\ref{st:ext via choice of diffeos} and~\ref{st:diagonal coherence zig-zag}) that the $(\infty,n)$-categories of equivariant sections and coherent sections are equivalent.
The first proof relies on the fact that the inclusion $\sfB\rmD(Y) \hookrightarrow \scM_Y$ of groupoid objects in \smash{$\widehat{\Cart}$} is an equivalence, whereas the second proof relies on the descent results from Section~\ref{sec:higher sheaves}.
Both of these proofs have different advantages: the first is more slick and less involved.
However, the choice of an inverse equivalence to the inclusion $\sfB\rmD(Y) \hookrightarrow \scM_Y$ is by no means canonical, thus making the equivalences obtained in this proof less helpful in practice.
In the second proof, the inverse equivalence is obtained as the inverse of a pullback along a $\wtau_{dgop}$-local epimorphism.
That is, it is given precisely by descent.
In many important cases, descent functors are available, so that the second perspective is more helpful for practical purposes.

\begin{theorem}
\label{st:ext via choice of diffeos}
Let $\scF \in \scH_{\infty,n}$ \emph{projectively} fibrant.
There are (weakly) inverse weak equivalences
\begin{equation}
\begin{tikzcd}[column sep=1.25cm]
	\scF(P)^{\rmD(Y)}_{red} \ar[r, shift left=-0.1cm, "\sim" yshift=-0.05cm]
	& \scF(P)^{coh}_{red}\,. \ar[l, shift left=-0.1cm]
\end{tikzcd}
\end{equation}
\end{theorem}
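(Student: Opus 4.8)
The plan is to deduce the equivalence from the fact — recalled just before the theorem — that the inclusion $\iota \colon \sfB\rmD(Y) \hookrightarrow \scM_Y$ of $\wCart$-enriched groupoid objects is an equivalence, by transporting it first through the two-sided bar construction and then through $\ul{\scH}_{\infty,n}^{\CSS_n}(-, \scF)$.

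First I would observe that, by definition, $\big( P(Y) \dslash \rmD(Y) \big)_\bullet = B_\bullet \big( *, \sfB\rmD(Y)^\opp, \iota^*P \big)$, so that both simplicial objects in Definition~\ref{def:equivar and coherent sections} are two-sided bar constructions of $P$, one formed over $\scM_Y$ and the other over its full subcategory $\sfB\rmD(Y)$. Since every object of $\scM_Y$ is diffeomorphic to $Y$, I may choose for each $Y_0 \in \scM_Y$ a diffeomorphism $\beta_{Y_0} \colon Y_0 \to Y$, normalised so that $\beta_Y = 1_Y$; conjugation by the $\beta_{Y_0}$ assembles into a $\wCart$-enriched functor $\rho \colon \scM_Y \to \sfB\rmD(Y)$ with $\rho\iota = 1_{\sfB\rmD(Y)}$, together with a $\wCart$-natural isomorphism $\alpha \colon \iota\rho \cong 1_{\scM_Y}$. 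This choice of $\rho$ is the single non-canonical ingredient, and it is exactly why the resulting equivalences are not canonical (as advertised in the discussion preceding the theorem).

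Next I would invoke the standard functoriality and homotopy-invariance of the two-sided bar construction (cf.\ \cite[Ch.~4, 9]{Riehl:Cat_HoThy}), which hold verbatim over the cartesian base $\wCart$: the functors $\iota$ and $\rho$ induce morphisms of simplicial objects in $\wCart$
\begin{equation*}
	\iota_* \colon \big( P(Y) \dslash \rmD(Y) \big)_\bullet \longrightarrow B_\bullet \big( *, \scM_Y^\opp, P \big)
	\qquad \text{and} \qquad
	\rho_* \colon B_\bullet \big( *, \scM_Y^\opp, P \big) \longrightarrow \big( P(Y) \dslash \rmD(Y) \big)_\bullet
\end{equation*}
with $\rho_* \iota_* = 1$, and the natural isomorphism $\alpha$ induces a simplicial homotopy $\iota_* \rho_* \simeq 1$. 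Passing to diagonals, $\diag(\iota_*)$ and $\diag(\rho_*)$ are mutually simplicially homotopy inverse morphisms of simplicial presheaves, hence weak equivalences in $\scH_\infty$; applying the homotopical functor $Q$ (whose values are cofibrant) and then the left Quillen, homotopical functor $\sfc_\bullet$ (Lemma~\ref{st:cofibrancy in H_(oo,n)}), I obtain mutually homotopy inverse weak equivalences between the cofibrant objects $\sfc_\bullet Q\big( P(Y) \dslash \rmD(Y) \big)$ and $\sfc_\bullet Q\, B_\bullet \big( *, \scM_Y^\opp, P \big)$ of $\scH_{\infty,n}$.

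Finally, since $\scF$ is projectively fibrant in the $\CSS_n$-model category $\scH_{\infty,n}$, the functor $\ul{\scH}_{\infty,n}^{\CSS_n}(-, \scF) \colon \scH_{\infty,n}^\opp \to \CSS_n$ sends weak equivalences between cofibrant objects to weak equivalences in $\CSS_n$; applying it to the pair above yields mutually inverse weak equivalences between $\scF(P)^{\rmD(Y)}_{red}$ and $\scF(P)^{coh}_{red}$, which is the assertion. I expect the only genuine subtlety to lie in the middle step — making precise that a $\wCart$-natural isomorphism of enriched functors induces an honest simplicial homotopy of the associated two-sided bar constructions, compatibly with taking diagonals of simplicial presheaves. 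This goes through exactly as in the unenriched case, since $\wCart$ is cartesian closed and all homotopies in sight are given by the usual explicit (extra-degeneracy-type) formulas. Note that this argument only uses the equivalence $\iota$, which is precisely why mere \emph{projective} fibrancy of $\scF$ is enough here — in contrast with the alternative argument through the $\wtau_{dgop}$-local epimorphisms $\Psi_k$ (Theorem~\ref{st:diagonal coherence zig-zag}), which would require $\scF$ to be a sheaf.
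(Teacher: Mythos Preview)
Your proposal is correct and takes essentially the same approach as the paper: both choose diffeomorphisms $Y_0 \to Y$ to produce an inverse to the inclusion, obtain a simplicial homotopy from the resulting natural isomorphism, apply $Q$, and then map into $\scF$. The only cosmetic difference is that the paper packages the middle step by viewing the two bar constructions as nerves of presheaves of groupoids on $\Cart$ (so that a natural isomorphism of groupoids yields a simplicial homotopy on nerves), whereas you invoke the functoriality and homotopy-invariance of the two-sided bar construction directly; your ``passing to diagonals'' remark is superfluous here since $B_\bullet(*, \scM_Y^\opp, P)$ is already a simplicial object in $\wCart$ and hence directly an object of $\scH_\infty$, but this does not affect the argument.
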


\begin{proof}
We view the truncations to simplicial levels zero and one of $P(Y) \dslash \rmD(Y)$ and of $P \dslash \scM_Y = B(*,\scM_Y^\opp, P)$ as strict presheaves of groupoids on $\Cart$.
Let us denote these by $Gr(P(Y) \dslash \rmD(Y))$ and $Gr(B(*,\scM_Y^\opp, P))$, respectively.
Observe that
\begin{equation}
	P(Y) \dslash \rmD(Y) = N \circ Gr \big( P(Y) \dslash \rmD(Y) \big)
	\qandq
	P \dslash \scM_Y = N \circ Gr (P \dslash \scM_Y)\,,
\end{equation}
and that there is a canonical inclusion
\begin{equation}
	\iota \colon Gr(P(Y) \dslash \rmD(Y)) \hookrightarrow Gr(P \dslash \scM_Y)\,.
\end{equation}
Any choice of a family of diffeomorphisms $\{ f_{Y_0} \colon Y_0 \to Y \}_{Y_0 \in \scM_Y}$ determines a morphism
\begin{equation}
	p \colon Gr(P \dslash \scM_Y) \hookrightarrow Gr \big( P(Y) \dslash \rmD(Y) \big)\,,
\end{equation}
for which there exist 2-isomorphisms (i.e.~natural isomorphisms, coherent over $\Cart$) $p \circ \iota = 1$ and $\eta \colon \iota \circ p \to 1$ of presheaves of groupoids over $\Cart$.
In particular, $N \eta$ determines a simplicial homotopy
\begin{equation}
	N \eta \colon (P \dslash \scM_Y) \otimes \Delta^1 \longrightarrow P \dslash \scM_Y
\end{equation}
between the identity and $N(p \circ \iota)$ in $\scH_{\infty,0}$.
This, in turn, shows that both $\iota$ and $p$ are homotopy equivalences, weakly inverse to each other.
Consequently, we obtain a pair of projective weak equivalences
\begin{equation}
\begin{tikzcd}[column sep=1.5cm]
	Q \big( P(Y) \dslash \rmD(Y) \big) \ar[r, shift left=0.1cm, "Q \iota", "\sim"' yshift=0.025cm]
	& Q (P \dslash \scM_Y)\,. \ar[l, shift left=0.1cm, "Q p"]
\end{tikzcd}
\end{equation}
The weak equivalences in the claim are $(Q\iota)^*$ and $(Qp)^*$.
\end{proof}

\begin{theorem}
\label{st:diagonal coherence zig-zag}
For any $n \in \NN_0$ and for any fibrant $\scF \in \scH_{\infty,n}^{loc}$ there is a canonical zig-zag of weak equivalences between fibrant objects in $\CSS_n$,
\begin{equation}
\label{eq:diagonal coherence zig-zag}
\begin{tikzcd}[column sep={2.5cm,between origins}, row sep=1.cm]
	& \scF(P)^{\rmD(Y)}_{red} \ar[dl, "\sim"'] \ar[dr, "\sim" near start, "{(Q \diag (\Phi))^*}"' description]
	& 
	& \scF(P)^{coh}_{red} \ar[dl, "\sim"' near start, "{(Q \diag (\Psi))^*}" description] \ar[dr, "\sim"]
	&
	\\
	\scF(P)^{\rmD(Y)}
	& & \ul{\scH}_{\infty,n}^{\CSS_n} \big( Q \circ \diag \big( \Coh(Y,P) \big),\, \scF \big)
	& & \scF(P)^{coh}
\end{tikzcd}
\end{equation}
\end{theorem}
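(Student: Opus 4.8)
The plan is to assemble the zig-zag~\eqref{eq:diagonal coherence zig-zag} from its five objects by checking that each is fibrant in $\CSS_n$ and that each of the four indicated maps is a weak equivalence there. Fibrancy is quick: the three objects $\scF(P)^{\rmD(Y)}_{red}$, $\scF(P)^{coh}_{red}$ and the central object are $\CSS_n$-enriched mapping objects of the form $\ul{\scH}_{\infty,n}^{\CSS_n}(\sfc_\bullet Q(-), \scF)$; since $\sfc_\bullet Q(-)$ is cofibrant in $\scH_{\infty,n}$ by Lemma~\ref{st:cofibrancy in H_(oo,n)} and $\scF$ is fibrant, these are fibrant in $\CSS_n$ by the pullback-hom axiom of the $\CSS_n$-model category $\scH_{\infty,n}$. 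The objects $\scF(P)^{\rmD(Y)}$ and $\scF(P)^{coh}$ are, by Definition~\ref{def:equivar and coherent sections}, cobar constructions in $\CSS_n$ of the cosimplicial diagrams $k \mapsto (S^Q_{\infty,n}\scF)\big( (P(Y)\dslash\rmD(Y))_k \big)$ and $k \mapsto (S^Q_{\infty,n}\scF)\big( B_k(*,\scM_Y^\opp,P) \big)$, which are objectwise fibrant since $S^Q_{\infty,n}$ is right Quillen; hence they are fibrant by Lemma~\ref{st:Cobar yields fibrants}.

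For the two inner maps $(Q\diag\Phi)^*$ and $(Q\diag\Psi)^*$, I would transport the weak equivalences of Proposition~\ref{st:Q diag} through $\sfc_\bullet$ and the enriched hom into $\scF$. The presheaves $Q\diag\Coh(Y,P)$, $Q(P(Y)\dslash\rmD(Y))$ and $Q(B(*,\scM_Y^\opp,P))$ are cofibrant in $\scH_\infty$, hence also in $\scH_\infty^{loc}$ (left Bousfield localisation leaves the cofibrations unchanged), and they remain cofibrant after applying $\sfc_\bullet$. Since $\sfc_\bullet$ is homotopical as a functor $\scH_\infty \to \scH_{\infty,n}$ and left Quillen as a functor $\scH_\infty^{loc} \to \scH_{\infty,n}^{loc}$, Proposition~\ref{st:Q diag}(1) makes $\sfc_\bullet Q\diag\Phi$ a weak equivalence in $\scH_{\infty,n}$, and Proposition~\ref{st:Q diag}(2) together with Ken Brown's lemma makes $\sfc_\bullet Q\diag\Psi$ a weak equivalence in $\scH_{\infty,n}^{loc}$; in both cases between cofibrant objects. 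Applying $\ul{\scH}_{\infty,n}^{\CSS_n}(-, \scF)$ --- which, by the pullback-hom axiom and Ken Brown, carries weak equivalences between cofibrant objects to weak equivalences in $\CSS_n$, both for $\scF$ fibrant in $\scH_{\infty,n}$ and, crucially, for $\scF$ fibrant in $\scH_{\infty,n}^{loc}$ --- then shows that $(Q\diag\Phi)^*$ and $(Q\diag\Psi)^*$ are weak equivalences in $\CSS_n$.

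For the two outer maps I would identify the ``full'' objects with enriched homs out of realisations. Unwinding the definition of $S^Q_{\infty,n}$ and using that $\ul{\scH}_{\infty,n}^{\CSS_n}(-, \scF)$ converts the bar (co)end into a cobar (co)end --- just as in the proof of Proposition~\ref{st:S^Q preserves oo-stacks} --- one obtains a natural isomorphism identifying $\scF(P)^{\rmD(Y)}$ with $\ul{\scH}_{\infty,n}^{\CSS_n}\big( \hocolim^{\scH_{\infty,n}}_{k \in \bbDelta^\opp} \sfc_\bullet Q( (P(Y)\dslash\rmD(Y))_k ), \scF \big)$, and likewise for $\scF(P)^{coh}$ with $B_\bullet(*,\scM_Y^\opp,P)$ replacing $P(Y)\dslash\rmD(Y)$. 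By Proposition~\ref{st:hocolim and diagonal} and homotopicity of $\sfc_\bullet$, there is a canonical weak equivalence of cofibrant objects from this realisation to $\sfc_\bullet Q(P(Y)\dslash\rmD(Y))$; applying $\ul{\scH}_{\infty,n}^{\CSS_n}(-, \scF)$ produces the weak equivalence $\scF(P)^{\rmD(Y)}_{red} \weq \scF(P)^{\rmD(Y)}$, and the analogous argument gives $\scF(P)^{coh}_{red} \weq \scF(P)^{coh}$. Since each of the four maps is induced by a morphism of presheaves natural in $(\scF, P, Y)$ --- the augmentations $\Phi$, $\Psi$ of the two-sided bar construction and the comparison maps of Proposition~\ref{st:hocolim and diagonal} --- the resulting zig-zag is canonical. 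The main obstacle is not conceptual but bookkeeping: one must ensure the \emph{local} weak equivalence of Proposition~\ref{st:Q diag}(2) survives the left Quillen functor $\sfc_\bullet \colon \scH_\infty^{loc} \to \scH_{\infty,n}^{loc}$ and is then detected by mapping into the \emph{locally} fibrant $\scF$, which is exactly where fibrancy of $\scF$ in $\scH_{\infty,n}^{loc}$, and not merely in $\scH_{\infty,n}$, is used.
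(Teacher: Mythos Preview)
Your proposal is correct and follows essentially the same route as the paper's proof: the outer equivalences come from the comparison $\hocolim_{\bbDelta^\opp} Q(G_k) \weq Q\diag(G)$ of Proposition~\ref{st:hocolim and diagonal} (after translating the cobar-defining $\scF(P)^{\rmD(Y)}$ and $\scF(P)^{coh}$ into enriched homs out of a bar construction), and the inner equivalences are obtained by applying $\ul{\scH}_{\infty,n}^{\CSS_n}(-,\scF)$ to the weak equivalences of Proposition~\ref{st:Q diag}. Your treatment is in fact slightly more explicit than the paper's on two points the paper leaves implicit: the verification that all five objects are fibrant in $\CSS_n$, and the observation that it is the \emph{local} fibrancy of $\scF$ (not merely projective fibrancy) which is needed to turn the local weak equivalence $Q\diag\Psi$ into an equivalence on mapping objects.
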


\begin{proof}
The second and third weak equivalences are direct consequences of Proposition~\ref{st:Q diag}.
The first and last weak equivalences follow by a direct computation:
we have canonical isomorphisms
\begin{align}
	\scF(P)^{coh} &= C_{\CSS_n} \big( *, \bbDelta, (S_{\infty,n} \scF) (P \dslash \scM_Y)_\bullet \big)
	\\
	&= C_{\CSS_n} \big( *, \bbDelta, \{ Q(P \dslash \scM_Y)_\bullet, \scF \}^{\Cart^\opp} \big)
	\\
	&\cong \big\{ B(*, \bbDelta^\opp, Q(P \dslash \scM_Y)_\bullet),\, \scF \big\}^{\Cart^\opp}\,.
\end{align}
Using the Bousfield-Kan map in the first argument of the functor tensor product now induces a canonical weak equivalence to this object from the object
\begin{equation}
	\{ Q(P \dslash \scM_Y), \scF \}^{\Cart^\opp}
	= \scF(P)^{coh}_{red}\,.
\end{equation}
This completes the proof.
\end{proof}

\begin{remark}
That $(Q \diag(\Psi))^*$ is a weak equivalence relies on the descent results in Section~\ref{sec:higher sheaves}; these entered in the proof of Proposition~\ref{st:Psi, Phi yield weqs} and hence also Proposition~\ref{st:Q diag}.
\end{remark}

\begin{remark}
In the case where $\scF$ is the sheaf of diffeological vector bundles from Definition~\ref{def:Dfg VBun} the insights from Theorem~\ref{st:diagonal coherence zig-zag} were used in~\cite{BW:Smooth_OCFFTs} in order to obtain a coherent vector bundle on $P$, where $Y = \bbS^1$ and $P = M^{(-)}$ for some manifold $M$, from the equivariant structure of the transgression line bundle of a bundle gerbe with connection.
Moreover, this procedure was also applied to bundles over certain spaces of paths in $M$, and the coherent vector bundles thus obtained were subsequently assembled into a smooth open-closed FFT on $M$.
In this sense, that smooth FFT was built from its values on generating objects by means of the equivalence of equivariant and coherent sections of a sheaf of higher categories as explored here.
There is also great interest in extended field theories, which are formulated in the language of $n$-fold complete Segal spaces~\cite{CS:Bord_n, Lurie:Classification_of_TQFTs}.
It is with this in mind that we prove Theorems~\ref{st:ext via choice of diffeos} and~\ref{st:diagonal coherence zig-zag} in the setting of $(\infty,n)$-categories.
\qen
\end{remark}

\subsection{Remarks on descent for sheaves of 2-vector bundles}
\label{sec:2VBdls}

In this section we point out potential applications of Corollary~\ref{st:QAd for H_(oo,n)^loc} to the theory of categorified vector bundles.
The gerbes of Section~\ref{sec:Descent for gerbes} can be considered as a model for rank-one 2-vector bundles.
The theory for 2-vector bundles of higher rank, and more generally for higher categorifications of vector bundles, is currently only little understood (see below for references).
Thus, this section does not contain formal results, but outlines how the results of Section~\ref{sec:(oo,n)-sheaves} will be applicable to the theory of higher vector bundles.
This will be similar to the ideas in Sections~\ref{sec:Dfg VBuns} and~\ref{sec:Descent for gerbes}.
We start by briefly mentioning several models for categorified vector bundles:

In~\cite[Sec.~4.3]{NS--Equivariance_in_higher_geometry} the authors consider 2-categorical presheaves of Kapranov-Voevodsky 2-vector bundles on the category of manifolds.
They apply the Grothendieck plus-construction to sheafify these higher presheaves and obtain a 2-categorical sheaf which assigns to each manifold a 2-category of 2-vector bundles, modelled on Kapranov-Voevodsky 2-vector spaces (as defined in~\cite{KV:2-Cats_and_tetrahedra_equations}).
A similar 2-category was considered in~\cite{BDR:2VBuns_and_ell_coho}.

In~\cite{KLW:2VBuns} the authors defined a presheaf of 2-categories on the site of manifolds with surjective submersions as coverings which assigns to each manifold a 2-category%
\footnote{We use the terms 2-category and bicategory interchangeably, both referring to the non-strict version.}
built from bimodule bundles over bundles of super-algebras. 
The authors then sheafify this presheaf of 2-categories and propose the resulting sheaf of 2-categories as a model for 2-vector bundles on the site of manifolds and surjective submersions.

Extending this, one can consider presheaves of $(\infty,n)$-categories on a small category $\scC$ which assign to each object higher Morita categories of $\mathbb{E}_n$-bimodules; for $\scC = \Cart$ or $\scC = \Mfd$, these could, for instance, be $A$-$B$-bimodules for the $\mathbb{E}_\infty$-algebras $A = \CN$ and $B$ the algebra of smooth functions on the underlying manifold (or derived versions thereof).
Sheafifying these higher presheaves on $\Cart$ with respect to differentiably good open coverings would give viable candidates for generalisations of higher vector bundles.

In each of these cases, one is presented with a sheaf of $(\infty,n)$-categories in the sense of Section~\ref{sec:(oo,n)-sheaves}.
Then, Corollary~\ref{st:QAd for H_(oo,n)^loc} applies, and we obtain that each such sheaf of $(\infty,n)$-categories of higher vector bundles on $(\Cart, \tau_{dgop})$ extends to a sheaf of $(\infty,n)$-categories on \smash{$(\wCart, \wtau)$}.
The resulting extended higher sheaves are good candidates for a description of categorified vector bundles on generalised smooth spaces and $\wtau_{dgop}$-coverings, such as manifolds and surjective submersions, or diffeological spaces and subductions.

\section{The image of right Kan extension on sheaves of spaces}
\label{sec:char of sheaves on wC}

We now classify those presheaves of $\infty$-groupoids on $\wC$ which are extensions of sheaves of $\infty$-groupoids on $(\scC,\tau)$.
We achieve this by passing to $\infty$-categorical language using quasi-categories (the model for $(\infty,1)$-categories used in~\cite{Lurie:HTT, Cisinski:Higher_Cats_and_Ho_Alg}), rather than working with model-categorical presentations, and prove an enhancement of Theorem~\ref{st:QAd for H_(oo,0)^loc} to an equivalence of quasi-categories of homotopy-coherent sheaves of $\infty$-groupoids.

\subsection{On a quasi-categorical site}

Let $\sfC$ be a quasi-category, and let $(\sfC, \tau)$ be a small quasi-categorical Grothendieck site (as defined in~\cite[Def.~6.2.2.1]{Lurie:HTT}, where they are called `$\infty$-sites') with underlying quasi-category $\sfC$.
We let $\wsfC = \ul{\sSet_\rmL}{}(\sfC^\opp, \sfS_\rmS)$ be the quasi-category of presheaves of small spaces on $\sfC$.
Let $\sfPSh(\sfC) = \ul{\sSet_\XL}(\sfC^\opp, \sfS_\rmL)$ denote the quasi-category of presheaves of large spaces on $\sfC$, and let $\sfSh(\sfC,\tau) \subset \sfPSh(\sfC)$ denote the quasi-category of sheaves on $\sfC$ with respect to $\tau$.
This induces the standard reflective localisation
\begin{equation}
\begin{tikzcd}
	\sfPSh(\sfC) \ar[r, shift left=0.15cm, "\perp"' yshift=0.04cm, "\sfL"] & \sfSh(\sfC,\tau)\,, \ar[l, hookrightarrow, shift left=0.15cm]
\end{tikzcd}
\end{equation}
where $\rmL$ is the sheafification functor.

\begin{remark}
In this section all quasi-categories denoted $\sfPSh$ and $\sfSh$ are categories of functors taking values in $\sfS_\rmL$, i.e.~the quasi-category of large spaces, unless otherwise indicated.
\qen
\end{remark}

In this set-up, a morphism $f \colon Y \to X$ in $\wsfC$ is a \emph{$\tau$-local epimorphism} whenever its sheafification $\sfL f \colon \sfL Y \to \sfL X$ is an effective epimorphism in $\sfSh(\sfC,\tau)$~\cite[Lemma~6.2.2.8]{Lurie:HTT}.
Since effective epimorphisms are stable under pullback and $\sfL$ is left exact, $\tau$-local epimorphisms induce a coverage---and hence a Grothendieck topology---on the homotopy category $\rmh \wsfC$.
Since a Grothendieck topology on $\rmh \wsfC$ is the same as a Grothendieck topology on $\wsfC$~\cite[Rmk.~6.2.2.3]{Lurie:HTT}, we obtain a new quasi-categorical Grothendieck site $(\wsfC, \wtau)$, whose topology is generated by the $\tau$-local epimorphisms in $\wsfC$.
The corresponding quasi-category $\sfSh(\wsfC,\wtau)$ of sheaves on $\wsfC$ is the localisation of $\sfPSh(\wsfC)$ at the morphisms
\begin{equation}
	|\cC \wY_f| \longrightarrow \wY_X\,,
\end{equation}
where $f \colon Y \to X$ is any $\tau$-local epimorphism, $\wY$ is the Yoneda embedding of $\wsfC$, and $\cC \wY_f$ is the \v{C}ech nerve of $\wY_f \colon \wY_Y \to \wY_X$.
We thus obtain another reflective localisation
\begin{equation}
\begin{tikzcd}
	\sfPSh(\wsfC) \ar[r, shift left=0.15cm, "\perp"' yshift=0.04cm, "\widehat{\sfL}"] & \sfSh(\wsfC,\wtau) \ar[l, hookrightarrow, shift left=0.15cm]
\end{tikzcd}
\end{equation}
whose left adjoint $\widehat{\sfL}$ is left exact.
Note that since the Yoneda embedding preserves limits and $\wsfC$ is complete, there is a canonical equivalence $\wY_{\cC f} \simeq \cC \wY_f$.
This directly leads to:

\begin{lemma}
\label{st:char of wtau-oo-sheaves}
A presheaf $\CG \in \sfPSh(\wsfC)$ is $\wtau$-local, i.e.~it lies in the full sub-quasi-category $\sfSh(\wsfC,\wtau) \subset \sfPSh(\wsfC)$, if and only if \smash{$\CG(X) \longrightarrow \underset{\bbDelta}{\lim}\ \CG(\cC f)$} is an equivalence, for every $\tau$-local epimorphism $f \colon Y \to X$.
\end{lemma}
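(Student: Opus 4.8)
The plan is to unwind the definition of a local object and then apply the Yoneda lemma twice. By construction, $\Sh(\wC_\infty,\wtau) \subset \scP(\wC_\infty)$ is the full subcategory of objects local with respect to the set $W$ of morphisms $|\cC\wY_f| \longrightarrow \wY_X$, where $f \colon Y \to X$ ranges over the $\tau$-local epimorphisms in $\wC_\infty$, where $\cC\wY_f$ is the \v{C}ech nerve of $\wY_f \colon \wY_Y \to \wY_X$, and where $|\cC\wY_f| \coloneqq \colim_{\bbDelta^\opp} \cC\wY_f$ is its geometric realisation. Hence $\CG \in \scP(\wC_\infty)$ lies in $\Sh(\wC_\infty,\wtau)$ if and only if, for every $\tau$-local epimorphism $f$, the restriction map
\[
	\Map_{\scP(\wC_\infty)}(\wY_X, \CG) \longrightarrow \Map_{\scP(\wC_\infty)}\big( |\cC\wY_f|, \CG \big)
\]
is an equivalence of spaces. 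The first step is thus purely a matter of spelling out what ``$\wtau$-local'' means.

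Next I would rewrite both sides via the Yoneda lemma. The left-hand side is simply $\CG(X)$. For the right-hand side, since mapping out of a colimit is the limit of the mapping spaces, we have
\[
	\Map_{\scP(\wC_\infty)}\big( |\cC\wY_f|, \CG \big)
	\simeq \lim_{[n] \in \bbDelta}\, \Map_{\scP(\wC_\infty)}\big( (\cC\wY_f)_n, \CG \big)\,.
\]
Now I invoke the canonical equivalence $\wY_{\cC f} \simeq \cC\wY_f$ recorded just before the statement, which holds because $\wY$ preserves limits and $\wC_\infty$ is complete; levelwise it identifies $(\cC\wY_f)_n$ with $\wY_{(\cC f)_n}$, where $\cC f$ is the \v{C}ech nerve of $f$ formed in $\wC_\infty$ (so $(\cC f)_n \simeq Y \times_X \cdots \times_X Y$). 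Applying the Yoneda lemma once more, $\Map_{\scP(\wC_\infty)}\big( (\cC\wY_f)_n, \CG \big) \simeq \CG\big( (\cC f)_n \big)$, and therefore
\[
	\Map_{\scP(\wC_\infty)}\big( |\cC\wY_f|, \CG \big)
	\simeq \lim_{[n] \in \bbDelta}\, \CG\big( (\cC f)_n \big)
	= \underset{\bbDelta}{\lim}\, \CG(\cC f)\,.
\]

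Finally I would check that, under these identifications, the restriction map above is exactly the map $\CG(X) \to \lim_\bbDelta \CG(\cC f)$ induced by the augmentation $\cC f \to X$. This is because the defining morphism $|\cC\wY_f| \to \wY_X$ of $W$ is the one induced by the augmentation of $\cC\wY_f$ over $\wY_X$, and under $\wY_{\cC f} \simeq \cC\wY_f$ this augmentation is identified with $\wY$ applied to the augmentation $\cC f \to X$; naturality of the Yoneda equivalence then transports the induced map on mapping spaces to the stated one. Combining the three displays shows that $\CG$ is $\wtau$-local if and only if $\CG(X) \to \lim_\bbDelta \CG(\cC f)$ is an equivalence for every $\tau$-local epimorphism $f \colon Y \to X$, which is the assertion. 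The only slightly delicate point is this last identification of maps, but it is formal once one knows that $\wY$ preserves the limits involved; everything else is a double application of the Yoneda lemma together with the universal property of the colimit $|\cC\wY_f|$.
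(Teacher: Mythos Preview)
Your proof is correct and is precisely the argument the paper intends: the paper does not spell out a proof at all but simply writes ``This directly leads to:'' immediately after recording the equivalence $\wY_{\cC f} \simeq \cC\wY_f$, leaving the Yoneda and colimit manipulations you carried out to the reader.
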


Let $\sfPSh_!(\wsfC) = \ul{\Hom}_!(\wsfC^\opp, \sfS_\rmL)$ denote the full sub-quasi-category of $\sfPSh(\wsfC)$ on those functors which preserve small limits (equivalently the small-\emph{colimit}-preserving functors $\wsfC \to \sfS_\rmL^\opp$).
We also define a quasi-category $\sfSh_!(\wsfC,\wtau)$ as the pullback of quasi-categories
\begin{equation}
\begin{tikzcd}
	\sfSh_!(\wsfC,\wtau) \ar[r] \ar[d] & \sfSh(\wsfC,\wtau) \ar[d]
	\\
	\sfPSh_!(\wsfC) \ar[r] & \sfPSh(\wsfC)
\end{tikzcd}
\end{equation}
That is, $\sfSh_!(\wsfC,\wtau) \subset \sfPSh(\wsfC)$ is the full sub-quasi-category on those $\wtau$-sheaves whose underlying functor $\wsfC^\opp \to \sfS_\rmL$ turns small colimits in $\wsfC$ into limits in $\sfS_\rmL$.
The functor $\scY \colon \sfC \to \wsfC$ induces a functor $\sfC^\opp \to \wsfC^\opp$, and by a slight abuse of notation we again denote this functor by $\scY$.
As before, we leave the inclusion $\sfS_\rmS \subset \sfS_\rmL$ of the quasi-category of small spaces into that of large spaces implicit.
Observe that the left and right Kan extensions $\scY_!, \scY_*$ yield functors $\sfPSh(\wsfC) \to \sfPSh(\wsfC)$.

\begin{lemma}
\label{st:mps out of h_! and h_*}
Let $\sfC$ be a small quasi-category.
\begin{myenumerate}
\item For any $A \in \wsfC \subset \sfPSh(\sfC)$ and $\CG \in \sfPSh(\wsfC)$ there is a natural equivalence
\begin{equation}
	\sfPSh(\wsfC)(\scY_*A, \CG) \simeq \CG(A)\,.
\end{equation}
Equivalently, the restriction $\scY_{*|\wsfC}$ agrees with $\wY$ as functors $\wsfC \to \sfPSh(\wsfC)$.

\item For any $A \in \wsfC \subset \sfPSh(\sfC)$ and $\CG \in \sfPSh_!(\wsfC)$ there is a natural equivalence
\begin{equation}
	\sfPSh(\wsfC)(\scY_!A, \CG) \simeq \CG(A)\,.
\end{equation}

\end{myenumerate}
\end{lemma}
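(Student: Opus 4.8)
The plan is to prove both parts together, reducing each to a statement about presheaves on $\scC$ via the defining (co)limit formulas for the Kan extensions along $\scY^\opp \colon \scC^\opp \to \wC_\infty^\opp$. First I would unwind $\scY_*$. By the pointwise formula for the right Kan extension, for $A \in \scP(\scC)$ we have $(\scY_*A)(X) \simeq \lim_{(\scY_c \to X) \in (\scC \downarrow X)} A(c)$, where the limit is over the comma $\infty$-category whose objects are maps $\scY_c \to X$ in $\wC_\infty$; using the Yoneda lemma this comma category is the category of elements of $X$, i.e. $X(c)$ running over $c \in \scC$. When $A$ itself lies in the (full) subcategory $\wC_\infty \subset \scP(\scC)$ of small presheaves, this limit computes exactly $\wC_\infty(X, A)$ by the co-Yoneda / density formula $X \simeq \colim_{\scY_c \to X} \scY_c$: indeed $\wC_\infty(X,A) \simeq \wC_\infty(\colim \scY_c, A) \simeq \lim \wC_\infty(\scY_c,A) \simeq \lim A(c) = (\scY_*A)(X)$. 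This gives the identification $\scY_{*|\wC_\infty} \simeq \wY$ claimed in part~(1), and then part~(1) is just the Yoneda lemma for $\wC_\infty$: $\scP(\wC_\infty)(\wY_A, \CG) \simeq \CG(A)$.

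For part~(2) I would instead use the left adjoint. The functor $\scY^* \colon \scP(\wC_\infty) \to \scP(\scC)$ (restriction along $\scY^\opp$) has left adjoint $\scY_!$ given by left Kan extension, and by general nonsense $\scY_! \dashv \scY^*$ on presheaf $\infty$-categories. Hence for any $A \in \wC_\infty \subset \scP(\scC)$ and any $\CG \in \scP(\wC_\infty)$ we get $\scP(\wC_\infty)(\scY_! A, \CG) \simeq \scP(\scC)(A, \scY^*\CG)$. Now I would rewrite $\scY^*\CG$ via Yoneda on $\scC$: $\scP(\scC)(A, \scY^*\CG) \simeq \lim_{(\scY_c \to A)} (\scY^*\CG)(c) \simeq \lim_{(\scY_c \to A)} \CG(\scY_c)$, the limit over the category of elements of $A$, which is equivalently $\lim_{(\scY_c \to A)} \CG(\wY^{\scC}_c)$ — but note $\scY_! \scY_c \simeq \wY_{\scY_c}$ (left Kan extension of a representable along $\scY^\opp$ is representable), so $\CG(\scY_c) \simeq \scP(\wC_\infty)(\wY_{\scY_c}, \CG)$. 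Writing $A \simeq \colim_{\scY_c \to A}\scY_c$ in $\wC_\infty$ and using that $\CG \in \scP_!(\wC_\infty)$ sends this colimit (taken in $\wC_\infty$) to the corresponding limit in $\scS$, we obtain $\CG(A) \simeq \lim_{(\scY_c \to A)} \CG(\scY_c) \simeq \scP(\scC)(A, \scY^*\CG) \simeq \scP(\wC_\infty)(\scY_! A, \CG)$, as desired.

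The one genuinely delicate point — the place I expect to spend the most care — is keeping straight the three different ``Yoneda'' embeddings and the variance: $\scY \colon \scC \to \wC_\infty$ versus the induced $\scY^\opp$-restriction, the density presentation $A \simeq \colim \scY_c$ which lives in $\wC_\infty$ (and is the colimit the hypothesis $\CG \in \scP_!$ refers to), and the Yoneda embedding $\wY$ of $\wC_\infty$ into $\scP(\wC_\infty)$; one must check that the comma category $(\scC \downarrow A)$ appearing as the indexing diagram is the same whether one views it through the density formula for $A$ in $\wC_\infty$ or through restriction to $\scC$, and that the equivalence $\scY_!\scY_c \simeq \wY_{\scY_c}$ is compatible with these identifications. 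All of this is standard (cf.\ \cite[\S5.1]{Lurie:HTT}), but it needs to be assembled carefully; once it is, both equivalences are immediate and manifestly natural in $A$ and in $\CG$.
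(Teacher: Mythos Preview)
Your proposal is correct and follows essentially the same route as the paper: for part~(1) you identify $(\scY_*A)(X)$ with $\wC_\infty(X,A)$ via the density formula $X \simeq \colim_{\scC_{/X}} \scY_c$ and then invoke Yoneda for $\wC_\infty$, and for part~(2) you run the chain $\scP(\wC_\infty)(\scY_!A,\CG) \simeq \scP(\scC)(A,\scY^*\CG) \simeq \lim_{\scC_{/A}} \CG(\scY_c) \simeq \CG(\colim_{\scC_{/A}} \scY_c) \simeq \CG(A)$, using the colimit-preservation hypothesis on $\CG$ at the penultimate step --- exactly as the paper does. Your aside about $\scY_!\scY_c \simeq \wY_{\scY_c}$ is not needed for the argument, and the paper simply omits it.
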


\begin{proof}
Using the equivalence $X \simeq \colim_{c \in \sfC_{/X}} \scY_c$, for $X \in \wsfC$, we obtain
\begin{align}
	\wY_A(X) \simeq \underset{c \in (\sfC_{/X})^\opp}{\lim}\ \wsfC(\scY_c, A)
	\simeq (\scY_*A)(X)\,,
\end{align}
proving the first equivalence.
To see the second equivalence, we use that
\begin{align}
	\sfPSh(\wsfC)(\scY_!A, \CG)
	&\simeq \sfPSh(\sfC)(A, \scY^*\CG)
	\simeq \underset{c \in \sfC_{/A}}{\lim}\ \sfPSh(\sfC)(\scY_c, \scY^*\CG)
	\simeq \underset{c \in \sfC_{/A}}{\lim}\ \CG(\scY_c)
	\\
	&\simeq \CG \big( \underset{c \in \sfC_{/A}}{\colim}\ \scY_c \big)
	\simeq \CG(A)\,.
\end{align}
In the second-to-last step we have used that $\CG$ turns colimits in $\wsfC$ into limits in $\sfS_\rmL$.
\end{proof}

\begin{lemma}
\label{st:colim and h}
Let $D \colon \scJ \to \wsfC$ be any small diagram.
There is a canonical equivalence
\begin{equation}
	\scY^*(\colim \wY_D) \simeq \scY^*(\wY_{(\colim D)})\,.
\end{equation}
\end{lemma}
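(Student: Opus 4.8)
The plan is to reduce both sides of the asserted equivalence to $\colim D$, the colimit computed in the cocomplete $\infty$-category $\wC_\infty$. Two standard facts make this work: first, $\scY^* \colon \scP(\wC_\infty) \to \scP(\scC)$ is precomposition with $\scY^\opp$, hence preserves all small colimits (and all small limits), since these are computed pointwise in $\scS$; second, since $\scY \colon \scC \to \wC_\infty$ is fully faithful, the counit $\scY^*\scY_* \Rightarrow \id_{\scP(\scC)}$ is an equivalence, by the usual behaviour of the right Kan extension along a fully faithful functor.

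Concretely, I would first invoke Lemma~\ref{st:mps out of h_! and h_*}(1), which identifies the restriction $\scY_{*|\wC_\infty}$ with the Yoneda embedding $\wY$ of $\wC_\infty$; thus $\wY_{D_j} \simeq \scY_*(D_j)$ for each $j \in \scJ$, and $\wY_{(\colim D)} \simeq \scY_*(\colim D)$ since $\colim D$ again lies in $\wC_\infty$. Applying $\scY^*$ and using colimit-preservation on the left-hand side gives
\[
	\scY^*\big(\colim \wY_D\big) \simeq \colim_{j \in \scJ} \scY^*\scY_*(D_j) \simeq \colim_{j \in \scJ} D_j,
\]
while on the right-hand side $\scY^*\big(\wY_{(\colim D)}\big) \simeq \scY^*\scY_*(\colim D) \simeq \colim D$. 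Since colimits in $\wC_\infty$ and in $\scP(\scC)$ are both computed pointwise and $\scS_U \hookrightarrow \scS$ preserves small colimits, $\colim_{j} D_j$ formed in $\scP(\scC)$ agrees with $\colim D$ formed in $\wC_\infty$; both sides are therefore canonically equivalent. To see that this is the expected equivalence I would check that the chain above is compatible with the evident comparison map $\colim \wY_D \to \wY_{(\colim D)}$ obtained by applying $\wY$ to the colimit cocone, so that $\scY^*$ of this comparison map is exactly the canonical equivalence $\colim_j D_j \simeq \colim D$.

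I do not expect a serious obstacle here; the only points requiring some care are the universe bookkeeping — verifying that $\colim D$ stays inside $\wC_\infty$ and that the inclusion $\wC_\infty \hookrightarrow \scP(\scC)$ preserves the relevant $U$-small colimits — and the final compatibility check that identifies the abstract equivalence of objects with the canonical comparison map.
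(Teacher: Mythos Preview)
Your argument is correct. Both you and the paper reduce each side to $\colim D$, but the mechanisms differ: the paper computes pointwise at each $c \in \scC$, using only the Yoneda Lemma (twice) and that colimits in presheaf categories are objectwise, obtaining $\scY^*(\wY_{(\colim D)})(c) \simeq \wC_\infty(\scY_c,\colim D) \simeq \colim(Dc)$ and similarly for the other side. You instead package this via the identification $\wY \simeq \scY_{*|\wC_\infty}$ from Lemma~\ref{st:mps out of h_! and h_*}(1) together with the counit equivalence $\scY^*\scY_* \simeq \id$ coming from full faithfulness of $\scY$. Your route is slightly more structural and makes the dependence on Lemma~\ref{st:mps out of h_! and h_*}(1) explicit; the paper's route is a touch more elementary, needing only the Yoneda Lemma and no appeal to properties of the adjunction $\scY^* \dashv \scY_*$. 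The universe remark you flag (that $\wC_\infty \hookrightarrow \scP(\scC)$ preserves $U$-small colimits) is implicitly used in both arguments and is unproblematic since $\scS_U \hookrightarrow \scS$ preserves $U$-small colimits.
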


\begin{proof}
This follows by a repeated application of the Yoneda Lemma and the fact that colimits in presheaf categories are computed objectwise.
In particular, we have
\begin{align}
	\scY^*(\wY_{(\colim D)})(c)
	&\simeq (\wY_{(\colim D)})(\scY_c)
	\simeq \wsfC(\scY_c, \colim D)
	\simeq \colim (Dc)\,,
	\qquad \text{and}
	\\
	\scY^*(\colim \wY_D)(c)
	&\simeq (\colim \wY_D)(\scY_c)
	\simeq \colim \wsfC(\scY_c,D)
	\simeq \colim (Dc)\,,
\end{align}
which yields the desired equivalence.
\end{proof}

We then have the following quasi-categorical enhancement of Theorem~\ref{st:QAd for H_(oo,0)^loc}.
This fully unveils the importance of the topology induced on \smash{$\wsfC$} by the $\tau$-local epimorphisms.

\begin{theorem}
\label{st:image of h_*}
Let $\sfC$ be a small quasi-category, and let $\scY_* \colon \sfPSh(\sfC) \to \sfPSh(\wsfC)$ denote the right Kan extension along $\scY \colon \sfC^\opp \to \wsfC^\opp$.
\begin{myenumerate}
\item The adjunction $\scY^* \dashv \scY_*$ induces an equivalence
\begin{equation}
\begin{tikzcd}
	\sfPSh_!(\wsfC) \ar[r, shift left=0.1cm, "\scY^*"] & \sfPSh(\sfC)\,. \ar[l, shift left=0.1cm, "\scY_*"]
\end{tikzcd}
\end{equation}

\item More generally, let $\tau$ be a Grothendieck topology on $\sfC$.
The adjunction $\scY^* \dashv \scY_*$ restricts to an equivalence
\begin{equation}
\begin{tikzcd}
	\sfSh_!(\wsfC,\wtau) \ar[r, shift left=0.1cm, "\scY^*"] & \sfSh(\sfC,\tau)\,. \ar[l, shift left=0.1cm, "\scY_*"]
\end{tikzcd}
\end{equation}
\end{myenumerate}
\end{theorem}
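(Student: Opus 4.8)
The plan is to first establish part (1) and then bootstrap part (2) from it. For part (1), the key input is the universal property of presheaf categories: since $\wC_\infty = \scP(\scC)$ is the free cocompletion of $\scC$, restriction along $\scY \colon \scC \to \wC_\infty$ gives an equivalence between colimit-preserving functors $\wC_\infty \to \scE$ and arbitrary functors $\scC \to \scE$, for any cocomplete $\infty$-category $\scE$. Applying this with $\scE = \scS^\opp$ (or equivalently looking at limit-preserving functors into $\scS$) identifies $\scP_!(\wC_\infty) = \ul{\Hom}_!(\wC_\infty^\opp, \scS)$ with $\ul{\Hom}(\scC^\opp, \scS) = \scP(\scC)$. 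Concretely, I would show that $\scY^*$ restricted to $\scP_!(\wC_\infty)$ is essentially surjective and fully faithful: essential surjectivity because every presheaf $F \in \scP(\scC)$ has a (left exact, since $\scE^\opp$-valued colimit-preserving means $\scS$-valued limit-preserving) extension, which is exactly $\scY_* F$ by Lemma~\ref{st:mps out of h_! and h_*}(1) together with the computation $(\scY_* F)(X) \simeq \lim_{c \in (\scC_{/X})^\opp} F(c)$; full faithfulness because Lemma~\ref{st:mps out of h_! and h_*}(2) identifies mapping spaces out of $\scY_! A$ into a colimit-preserving $\CG$ with $\CG(A)$, and one checks $\scY^*$ and $\scY_*$ are mutually inverse on the relevant subcategories using that $\scY_*$ of a presheaf always lands in $\scP_!(\wC_\infty)$ (Lemma~\ref{st:mps out of h_! and h_*}(1) again shows $\scY_* F$ restricted to $\wC_\infty$ is corepresented, hence limit-preserving as a functor $\wC_\infty^\opp \to \scS$).

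For part (2), I would argue that the equivalence of part (1) carries the full subcategory $\Sh_!(\wC_\infty, \wtau)$ onto $\Sh(\scC, \tau)$, i.e.\ that under $\scY^* \dashv \scY_*$ the sheaf conditions on the two sides correspond. The direction $\scY_*(\Sh(\scC,\tau)) \subseteq \Sh_!(\wC_\infty,\wtau)$: given a $\tau$-sheaf $F$ on $\scC$, we already know $\scY_* F \in \scP_!(\wC_\infty)$, so by Lemma~\ref{st:char of wtau-oo-sheaves} it remains to check that $(\scY_* F)(X) \to \lim_\bbDelta (\scY_* F)(\cC f)$ is an equivalence for every $\tau$-local epimorphism $f \colon Y \to X$ in $\wC_\infty$. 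Using Lemma~\ref{st:mps out of h_! and h_*}(1) this reads $\wY_{\scY_* F}$ evaluated along an augmented \v{C}ech object, and since $\wY$ preserves limits and $\wY_{\cC f} \simeq \cC \wY_f$, it suffices to know that $\scY_* F$, viewed as a sheaf-theoretic object, inverts \v{C}ech nerves of $\tau$-local epimorphisms — this is precisely the $\infty$-categorical shadow of Theorem~\ref{st:local epi Thm}/Proposition~\ref{st:S^Q preserves oo-stacks}, which say $S^Q_\infty$ (a model for $\scY_*$) preserves local objects. Conversely, $\scY^*(\Sh_!(\wC_\infty,\wtau)) \subseteq \Sh(\scC,\tau)$: given $\CG \in \Sh_!(\wC_\infty,\wtau)$ and a covering $\{c_i \to c\}$ in $(\scC,\tau)$, the induced map $\coprod_i \scY_{c_i} \to \scY_c$ is a $\tau$-local epimorphism in $\wC_\infty$ (this is the $\infty$-analogue of Lemma~\ref{st:tau-loc epi properties}(2), following from $\rmL$ being left exact and effective epimorphisms being detected after sheafification), so $\CG$ inverts its \v{C}ech nerve; evaluating and using $\CG(\scY_c) \simeq (\scY^*\CG)(c)$ gives that $\scY^*\CG$ satisfies $\tau$-descent.

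Once both inclusions are verified, part (2) follows formally: restrict the equivalence $\scY^* \colon \scP_!(\wC_\infty) \rightleftarrows \scP(\scC) \colon \scY_*$ of part (1) to the full subcategories $\Sh_!(\wC_\infty,\wtau)$ and $\Sh(\scC,\tau)$, which it identifies by the preceding two paragraphs.

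\textbf{Main obstacle.} I expect the crux to be the direction $\scY_*(\Sh(\scC,\tau)) \subseteq \Sh_!(\wC_\infty,\wtau)$ — specifically, checking that $\scY_* F$ inverts the \v{C}ech nerve of an \emph{arbitrary} $\tau$-local epimorphism $f \colon Y \to X$ in $\wC_\infty$, not merely of coverings coming from $\scC$. Model-categorically this is exactly the hard Theorem~\ref{st:local epi Thm} of \cite{DHI:Hypercovers_and_sPShs} combined with Proposition~\ref{st:S^Q preserves oo-stacks}; $\infty$-categorically one wants the analogous statement, for which the cleanest route is probably to reduce a general $\tau$-local epimorphism to coverings by writing $X \simeq \colim_{c \in \scC_{/X}} \scY_c$, pulling $f$ back along each $\scY_c \to X$ to get (after refinement) an honest $\tau$-covering of $c$, and then using stability of the relevant limits under the colimit $\colim_{c}$ together with descent for $F$ levelwise — essentially transcribing the bar-construction argument of Proposition~\ref{st:S^Q preserves oo-stacks} into $\infty$-categorical language, or simply invoking that result via the straightening/unstraightening comparison between $\scH_\infty^{loc}$ and $\Sh(\scC,\tau)$.
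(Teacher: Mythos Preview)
Your overall strategy is correct and matches the paper's: part~(1) via the universal property of free cocompletion, part~(2) by checking that $\scY_*$ and $\scY^*$ each preserve the relevant sheaf conditions. The converse direction $\scY^*(\Sh_!(\wC_\infty,\wtau)) \subseteq \Sh(\scC,\tau)$ is handled essentially as you describe.

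The one substantive divergence is in the direction you flag as the main obstacle, $\scY_*(\Sh(\scC,\tau)) \subseteq \Sh_!(\wC_\infty,\wtau)$. You propose to import the model-categorical Theorem~\ref{st:local epi Thm} (the DHI ``wrestling match'') or to reprove it $\infty$-categorically by writing $X$ as a colimit of representables and reducing to coverings. The paper instead gives a direct and much shorter argument that avoids this entirely: by definition a $\tau$-local epimorphism $f \colon Y \to X$ is one for which $\rmL f$ is an effective epimorphism in the $\infty$-topos $\Sh(\scC,\tau)$; since $\rmL$ is a left exact left adjoint it commutes with \v{C}ech nerves and their realisations, so $\rmL(|\cC f| \to X)$ is an equivalence. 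Now if $F$ is a $\tau$-sheaf, mapping into $F$ factors through $\rmL$, i.e.\ $\scP(\scC)(|\cC f|,F) \simeq \scP(\scC)(\rmL|\cC f|,F) \simeq \scP(\scC)(\rmL X,F) \simeq \scP(\scC)(X,F)$, and after the identifications of Lemma~\ref{st:mps out of h_! and h_*}(1) and Lemma~\ref{st:colim and h} this is exactly the $\wtau$-sheaf condition for $\scY_*F$. So the hard input you anticipated is not needed here: the $\infty$-categorical definition of $\tau$-local epimorphism is already phrased in terms of $\rmL$, which makes the verification immediate once $F$ is $\rmL$-local.
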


\begin{proof}
Claim~(1) is an explicit incidence of a well-known theorem:
there is a canonical equivalence $\ul{\Hom}(\sfC, \sfD) \simeq \ul{\Hom}_!(\wsfC,\sfD)$ between the quasi-categories of functors from $\sfC$ to any $\rmS$-cocomplete quasi-category $\sfD$ and (small-)colimit-preserving functors $\wsfC \to \sfD$.
For $\sfD = \sfS_\rmL^\opp$, this equivalence is established by $\scY_*$.
In particular, $\scY_*$ is fully faithful even as a functor $\sfPSh(\sfC) \to \sfPSh(\wsfC)$.
The adjunction $\scY^* : \sfPSh(\wsfC) \rightleftarrows \sfPSh(\sfC) : \scY_*$ restricts to an adjunction $\scY^* : \sfPSh_!(\wsfC) \rightleftarrows \sfPSh(\sfC) : \scY_*$, whose right adjoint $\scY_*$ is an equivalence.
Thus, the left adjoint $\scY^*$ is an equivalence as well~\cite[Prop.~6.1.6]{Cisinski:Higher_Cats_and_Ho_Alg}.

We now prove~(2):
first, we show that $\scY_*$ maps $\sfSh(\sfC,\tau)$ to $\sfSh(\wsfC,\wtau)$, i.e.~that it maps $\tau$-sheaves on $\sfC$ to $\wtau$-sheaves on $\wsfC$.
Let $F \in \sfSh(\sfC,\tau)$, and let $f \colon Y \to X$ be a $\tau$-local epimorphism in $\wsfC$.
We need to prove that the morphism
\begin{equation}
	\scY_* F(X) \simeq \sfPSh(\wsfC) (\wY_X, \scY_*F) \longrightarrow \sfPSh(\wsfC) \big( |\cC \wY_f|, \scY_*F \big)
	\simeq \sfPSh(\wsfC) \big( |\wY_{\cC f}|, \scY_*F \big)
\end{equation}
is an equivalence.
By Lemma~\ref{st:mps out of h_! and h_*}(1), Lemma~\ref{st:colim and h}, and because $\scY_*$ is fully faithful, we have equivalences
\begin{align}
	\sfPSh(\wsfC) \big( |\wY_{\cC f}|, \scY_*F \big)
	&\simeq \sfPSh(\sfC) \big( \scY^*|\wY_{\cC f}|, F \big)
	\simeq \sfPSh(\sfC) \big( \scY^*\wY_{|\cC f|}, F \big)
	\\
	&\simeq \sfPSh(\sfC) \big( \scY^* \scY_* |\cC f|, F \big)
	\simeq \sfPSh(\wsfC) \big( \scY_*|\cC f|, \scY_*F \big)
	\\
	&\simeq \sfPSh(\sfC) \big( |\cC f|, F \big)\,.
\end{align}
Using that $F$ is a $\tau$-sheaf and that $\sfL$ is a left exact left adjoint, we then compute
\begin{align}
	\sfPSh(\sfC) \big( |\cC f|, F \big)
	&\simeq \sfPSh(\sfC) \big( \sfL |\cC f|, F \big)
	\simeq \sfPSh(\sfC) \big( \sfL X, F \big)
	\simeq \sfPSh(\sfC) (X,F)
	\simeq (\scY_*F)(X)\,.
\end{align}
Thus, $\scY_*F$ is a $\wtau$-sheaf whenever $F$ is a $\tau$-sheaf.
In particular, $\scY_*$ restricts to a fully faithful functor
\begin{equation}
	\scY_* \colon \sfSh(\sfC,\tau) \longrightarrow \sfSh_!(\wsfC,\wtau)\,.
\end{equation}

Finally, we claim that the functor $\scY^* \colon \sfPSh_!(\wsfC) \to \sfPSh(\sfC)$ restricts to a functor $\scY^* \colon \sfSh_!(\wsfC,\wtau) \to \sfSh(\sfC,\tau)$; by part (1), this restriction is automatically fully faithful.
To that end, let $\CG \in \sfSh_!(\wsfC, \wtau)$.
We have to show that $\scY^*\CG \in \sfPSh(\sfC)$ is $\tau$-local, i.e.~that the presheaf $\scY^*\CG$ is a $\tau$-sheaf.
That is the case precisely if
\begin{equation}
	\scY^*\CG(c) \simeq \CG(\scY_c)
	\simeq \sfPSh(\sfC) (\scY_c, \scY^*\CG)
	\longrightarrow \sfPSh(\sfC) \big( |\cC g|, \scY^*\CG)
\end{equation}
is an equivalence for every $\tau$-covering $g \colon Y \to \scY_c$.
(Note that if $g$ is a $\tau$-covering, then $\sfL (|\cC g| \to \scY_c)$ is an equivalence, i.e.~$g$ is a $\tau$-local epimorphism.)
On the right-hand side, we have canonical equivalences
\begin{align}
	\sfPSh(\sfC) \big( |\cC g|, \scY^*\CG)
	\simeq \sfPSh(\wsfC) \big( \scY_!|\cC g|, \CG)
	\simeq \CG \big( |\cC g| \big)
	\simeq \underset{\bbDelta}{\lim}\ \CG(\cC g)
	\simeq \CG(\scY_c)\,.
\end{align}
The first equivalence is simply the adjunction $\scY_! \dashv \scY^*$, the second equivalence is Lemma~\ref{st:mps out of h_! and h_*}(2), which applies since $\CG$ turns colimits into limits.
This property also provides the third equivalence.
The final equivalence arises from Lemma~\ref{st:char of wtau-oo-sheaves}, as $\CG$ is a $\wtau$-sheaf.

We have thus shown that $\scY^*$ and $\scY_*$ restrict to functors between $\sfSh(\sfC,\tau)$ and $\sfSh_!(\wsfC,\wtau)$.
Since $\sfSh_!(\wsfC,\wtau) \subset \sfPSh_!(\wsfC)$ and $\sfSh(\sfC,\tau) \subset \sfPSh(\sfC)$ are full subcategories, the natural equivalences $\scY^* \scY_* \to 1$ and $1 \to \scY_* \scY^*$ from part~(1) establish that these restrictions of $\scY^*$ and $\scY_*$ furnish an equivalence as claimed.
\end{proof}

\begin{corollary}
If $\tau_0$ is the trivial topology on $\sfC$ (whose coverings are generated solely by the identity morphisms), then $\sfSh_!(\wsfC,\wtau_0) \simeq \sfSh(\sfC, \tau_0) \simeq \sfPSh(\sfC) \simeq \sfPSh_!(\wsfC)$.
\end{corollary}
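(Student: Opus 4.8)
The plan is to assemble the corollary from three separate equivalences, two of which are immediate consequences of Theorem~\ref{st:image of h_*} and the third of which is a triviality about the trivial topology.

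First I would note that $\Sh(\scC,\tau_0) = \scP(\scC)$. For $\tau_0$ the only covering families are those generated by the identities $\{1_c\}$, and the \v{C}ech nerve of $1_{\scY_c}$ is the constant simplicial object on $\scY_c$, whose colimit is $\scY_c$ again since $[0]$ is initial in $\bbDelta^\opp$. Hence the morphisms at which $\Sh(\scC,\tau_0) \subset \scP(\scC)$ is localised are already equivalences, so the two $\infty$-categories coincide and the sheafification functor $\rmL$ is the identity. Now Theorem~\ref{st:image of h_*}(2), applied with $\tau = \tau_0$, gives the equivalence $\Sh_!(\wC_\infty,\wtau_0) \simeq \Sh(\scC,\tau_0)$, and Theorem~\ref{st:image of h_*}(1) gives $\scP(\scC) \simeq \scP_!(\wC_\infty)$ via the adjunction $\scY^* \dashv \scY_*$; together with $\Sh(\scC,\tau_0) = \scP(\scC)$ this yields the whole chain.

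It is also worth recording directly that $\Sh_!(\wC_\infty,\wtau_0) = \scP_!(\wC_\infty)$ as full $\infty$-subcategories of $\scP(\wC_\infty)$, which both confirms the above and pins down $\wtau_0$: since $\rmL = \id$, a morphism $f \colon Y \to X$ in $\wC_\infty$ is a $\tau_0$-local epimorphism precisely when it is an effective epimorphism in $\wC_\infty$, i.e.\ when $|\cC f| \to X$ is an equivalence. So any $\CG \in \scP_!(\wC_\infty)$ --- which by definition sends colimits in $\wC_\infty$ to limits in $\scS$ --- satisfies $\CG(X) \simeq \CG(|\cC f|) \simeq \underset{\bbDelta}{\lim}\, \CG(\cC f)$ for every such $f$, and is therefore a $\wtau_0$-sheaf by Lemma~\ref{st:char of wtau-oo-sheaves}. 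Thus $\scP_!(\wC_\infty) \subseteq \Sh(\wC_\infty,\wtau_0)$, and since $\Sh_!(\wC_\infty,\wtau_0)$ is by construction the intersection of $\scP_!(\wC_\infty)$ and $\Sh(\wC_\infty,\wtau_0)$ inside $\scP(\wC_\infty)$, equality follows. I do not expect a real obstacle anywhere: the one point that needs a moment's care is the first, namely checking that the trivial topology localises at nothing and hence that $\wtau_0$-local epimorphisms are exactly the effective epimorphisms, so that Lemma~\ref{st:char of wtau-oo-sheaves} and Theorem~\ref{st:image of h_*} apply verbatim.
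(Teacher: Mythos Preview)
Your proposal is correct and takes essentially the same approach as the paper: the paper's one-line proof cites Theorem~\ref{st:image of h_*}(1) together with a ``pullbacks of equivalences'' remark, and your argument unpacks this by invoking both parts of Theorem~\ref{st:image of h_*} and the triviality $\Sh(\scC,\tau_0)=\scP(\scC)$. Your second paragraph, identifying $\wtau_0$-local epimorphisms with effective epimorphisms and deducing $\scP_!(\wC_\infty)\subset\Sh(\wC_\infty,\wtau_0)$ directly, is exactly the content of the remark the paper places immediately after the corollary.
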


\begin{proof}
This follows from Theorem~\ref{st:image of h_*}(1) since pullbacks of equivalences in a quasi-category are again equivalences.
\end{proof}

\begin{remark}
In particular, if $\CG \in \sfPSh(\wsfC)$ is colimit-preserving, then it satisfies $\wtau_0$-descent.
One can also see this directly:
the $\wtau_0$-local epimorphisms are exactly the effective epimorphisms in $\wsfC$, i.e.~those morphisms $p \colon Y \to X$ whose induced morphism $|\cC p| \to X$ is an equivalence in $\wsfC$.
Thus, for $\CG \in \sfPSh_!(\sfC)$, we have that $\CG(X) \to \CG(|\cC p|) \simeq \lim_\bbDelta \CG(\cC p)$ is an equivalence, and hence that $\CG$ satisfies $\wtau_0$-descent.
\qen
\end{remark}

We summarise various characterisations of the sheaves in the image of $\scY_*$:
Let \smash{$\ul{\Hom}_{! W_\tau}(\wsfC^\opp, \sfS_\rmL) \subset \sfPSh_!(\wsfC)$} denote the full sub-quasi-category on those limit-preserving functors \smash{$\wsfC^\opp \to \sfS_\rmL$} (equivalently \emph{colimit}-preserving functors \smash{$\wsfC \to \sfS_\rmL^\opp$}) which send all morphisms in $W_\tau$ to equivalences.

\begin{theorem}
There are equivalences of quasi-categories
\begin{equation}
\begin{tikzcd}
	\sfSh_!(\wsfC,\wtau) \ar[r, shift left=0.075cm, "\scY^*"]
	& \sfSh(\sfC,\tau) \ar[l, shift left=0.075cm, "\scY_*"] \ar[r, "\wY"]
	& \sfPSh_! \big( \sfSh_\rmS(\sfC,\tau) \big) \ar[r, "\sfL^*"]
	& \ul{\Hom}_{! W_\tau}(\wsfC^\opp, \sfS_\rmL)\,.
\end{tikzcd}
\end{equation}
\end{theorem}

\begin{proof}
The first equivalence is Theorem~\ref{st:image of h_*}.
The second equivalence follows from~\cite[Cor.~4.4.5.16]{Lurie:HTT} and~\cite[Cor.~5.1.6.11]{Lurie:HTT} (note that $\wY \colon \sfSh(\sfC, \tau) \to \sfPSh_!(\sfSh(\sfC,\tau))$, with the codomain restricted to colimit-preserving presheaves, is colimit-preserving)%
\footnote{We thank the anonymous referee for pointing out this argument, which streamlined the proof in an earlier version.}.
The third is an application of~\cite[Thm.~7.7.9]{Cisinski:Higher_Cats_and_Ho_Alg} (with the class of cofibrations given by all morphisms in $\wsfC$ and the weak equivalences given by $W_\tau$).
\end{proof}

\subsection{On a 1-site}

For many geometric applications it is more convenient to work not with $\infty$-presheaves on a quasi-category, but only with $\Set_\rmL$-valued presheaves on an ordinary category.
For instance, many geometric problems are set on manifolds or diffeological spaces (such as in Sections~\ref{sec:Dfg VBuns} and~\ref{sec:Descent and coherence}), and it is often more efficient (though less general) to treat these as particular presheaves of sets.
Thus, in this section we prove a version of Theorem~\ref{st:char of wtau-oo-sheaves} where $\scC$ is an ordinary (1-)category and we replace $\wsfC$ by the ordinary category $\wC = \Fun(\scC^\opp, \Set_\rmS)$ of $\Set_\rmS$-valued presheaves on $\scC$.
The quasi-category of $\sfS_\rmS$-valued presheaves on $\scC$ is $\widehat{\sfN\scC}_\infty = \ul{\sSet_\rmL}(\sfN\scC, \sfS_\rmS)$.

We let $\wC = \Fun(\scC^\opp, \Set_\rmS)$ denote the category of ordinary, small presheaves on $\scC$.
Let $\sfN\scC$ denote the quasi-category given by the nerve of $\scC$.
The Yoneda embedding of $\sfN\scC$ factorises as
\begin{equation}
\label{eq:Y = i Y^0 for Cats}
\begin{tikzcd}
	\scY \colon \sfN\scC \ar[r, "\sfN\scY^0"] & \sfN\wC \ar[r, hookrightarrow, "\iota"] & \widehat{\sfN\scC}_\infty\,,
\end{tikzcd}
\end{equation}
where $\scY^0$ is the 1-categorical Yoneda embedding of $\scC$ and \smash{$\widehat{\sfN\scC}_\infty = \ul{\sSet_\rmL}(\sfN\scC, \sfS_\rmS)$} is the quasi-category of small presheaves of spaces on $\sfN\scC$.
Note that $\wC \hookrightarrow \widehat{\sfN\scC}_\infty$ is a reflective localisation with localisation functor $\pi_0$.
As before, we write $\sfPSh(\sfN\scC) = \ul{\sSet_\XL}(\scC^\opp, \sfS_\rmL)$.

\begin{lemma}
\label{st:CarStef Lemma}
Let $F \in \sfPSh(\sfN\scC)$.
There is a natural equivalence $\scY^0_* F \simeq \iota^* \scY_* F$ of objects in $\sfPSh(\sfN\wC)$.
\end{lemma}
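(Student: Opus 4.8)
The plan is to unwind both right Kan extensions using the pointwise formula and compare the resulting limits. For $X \in \wC_0 \subset \wC_\infty$, we have by definition $(\scY_* F)(X) \simeq \lim_{c \in (\scC_{/X})^\opp} F(c)$, where $\scC_{/X}$ is the comma $\infty$-category of maps $\scY_c \to X$ in $\wC_\infty$; similarly $(\scY^0_* F)(X) \simeq \lim_{c \in (\scC^0_{/X})^\opp} F(c)$, where $\scC^0_{/X}$ is the ordinary comma category of maps $\scY^0_c \to X$ in $\wC_0$. Since $\iota \colon \wC_0 \hookrightarrow \wC_\infty$ is fully faithful and $\scY = \iota \circ \scY^0$, for $X \in \wC_0$ the mapping space $\wC_\infty(\scY_c, X) \simeq \wC_\infty(\iota \scY^0_c, \iota X) \simeq \wC_0(\scY^0_c, X)$ is discrete and agrees with $X(c)$ by the Yoneda lemma. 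Thus the comma $\infty$-category $\scC_{/X}$ coincides with the ordinary comma category $\scC^0_{/X}$ — not merely up to equivalence of underlying spaces but as $\infty$-categories, because all of its mapping spaces are the (discrete) mapping sets of $\scC^0_{/X}$. This identification is compatible with the projections to $\scC$, so the two diagrams $F \circ \pr_{\scC^\opp}$ agree, and hence their limits agree. Finally one notes that $(\iota^* \scY_* F)(X) = (\scY_* F)(\iota X) = (\scY_* F)(X)$, yielding the claimed equivalence $\scY^0_* F \simeq \iota^* \scY_* F$, and naturality in $F$ is immediate since every step is natural.

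A cleaner way to organize this — and the one I would actually write — is to invoke the universal property used in the proof of Theorem~\ref{st:image of h_*}(1): the functor $\scY_* \colon \scP(\scC) \to \scP(\wC_\infty)$ is the unique colimit-preserving extension along $\scY$ (equivalently, $\scY_*$ is right adjoint to $\scY^*$ and fully faithful). One first checks that $\iota^* \scY_* F$ lies in the image of $\scY^0_*$, i.e.\ that it is a right Kan extension along $\scY^0$; concretely, one shows $\iota^* \scY_* F$ is a ``$\Set$-valued-presheaf-theoretic'' right Kan extension by verifying that for each $X \in \wC_0$, writing $X \simeq \colim_{c \in \scC^0_{/X}} \scY^0_c$ as a colimit in $\wC_0$, one has $(\iota^* \scY_* F)(X) \simeq \lim_{\scC^0_{/X}} F(c)$. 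This follows because $\iota$ preserves the relevant colimit: $\iota$ is a left adjoint (to $\pi_0$), so $\iota(\colim_{\scC^0_{/X}} \scY^0_c) \simeq \colim_{\scC^0_{/X}} \scY_c$, and the latter colimit in $\wC_\infty$ recovers $X$ since $\iota$ is fully faithful and $\scY^*$ detects it via Lemma~\ref{st:colim and h}. Then $(\iota^*\scY_* F)(X) \simeq \scP(\wC_\infty)(\wY_{\iota X}, \scY_* F) \simeq \scP(\scC)(\scY^*\wY_{\iota X}, F) \simeq \lim_{\scC^0_{/X}} F(c)$, using Lemma~\ref{st:mps out of h_! and h_*}(1) and that $\scY^* \wY_{\iota X}(c) \simeq \wC_\infty(\scY_c, \iota X) \simeq X(c)$.

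The main obstacle is bookkeeping around sizes and the two different comma categories: one must be careful that the comma $\infty$-category $\scC_{/X}$ (formed in $\wC_\infty$) genuinely reduces to the ordinary comma category $\scC^0_{/X}$ when $X$ is a set-valued presheaf, which hinges on $\iota$ being fully faithful so that no higher morphisms appear. Once that is pinned down, everything else is a routine chain of Yoneda-lemma manipulations and adjunction identities of exactly the kind already used in Lemmas~\ref{st:mps out of h_! and h_*} and~\ref{st:colim and h}; in particular no hard homotopy-theoretic input is needed beyond fully-faithfulness of the Yoneda embeddings and of $\iota$, and the fact that $\iota \dashv \pi_0$ makes $\iota$ colimit-preserving enough to transport the canonical colimit presentation of $X$.
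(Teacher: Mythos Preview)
Your first approach---comparing the comma $\infty$-category $\scC_{/X}$ (formed in $\wC_\infty$) with the ordinary comma category $\scC^0_{/X}$ via the full faithfulness of $\iota$---is correct and gives a self-contained argument. The paper does not spell this out; it simply cites \cite[Lemma~3.13]{CS:Univ_Prop_Der_Mfds}, which packages exactly this kind of comparison. So your first route is a legitimate, more explicit replacement for that citation.

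However, your ``cleaner'' second approach contains an error. You write that ``$\iota$ is a left adjoint (to $\pi_0$)'', but this is backwards: the paper records that $\wC_0 \hookrightarrow \wC_\infty$ is a \emph{reflective} localisation with reflector $\pi_0$, i.e.\ $\pi_0 \dashv \iota$, so $\iota$ is the \emph{right} adjoint and does not preserve colimits in general. The specific colimit $\iota(\colim_{\scC^0_{/X}} \scY^0_c) \simeq \colim_{\scC^0_{/X}} \scY_c$ that you need does hold, but not for the reason you give; it holds precisely because the comma categories $\scC^0_{/X}$ and $\scC_{/\iota X}$ coincide (your first argument again), together with the canonical colimit presentation of $\iota X$ in $\wC_\infty$. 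So the ``cleaner'' version, as written, is circular: the step you need to justify colimit preservation is exactly the content of the first argument. I would recommend writing up the first version and dropping the second.
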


\begin{proof}
This is a direct application of~\cite[Lemma~3.13]{CS:Univ_Prop_Der_Mfds} (noting that $\sfN\wC$ is large and the target $\sfS_\rmL$ is $\rmL$-cocomplete).
\end{proof}

Applying the functor $\iota_*$, we obtain a natural equivalence $\iota_* \scY^0_* F \simeq \iota_* \iota^* \scY_* F$.

\begin{lemma}
\label{st:i_*i^* on P_!(C)}
For $\CG \in \sfPSh_!(\widehat{\sfN\scC}_\infty)$, there is an equivalence $\iota_* \iota^*\CG \simeq \CG$.
\end{lemma}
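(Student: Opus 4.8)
The plan is to reduce everything to two facts already available: the equivalence of Theorem~\ref{st:image of h_*}(1) and the identity of right Kan extensions induced by the factorisation~\eqref{eq:Y = i Y^0 for Cats}. First I would use Theorem~\ref{st:image of h_*}(1): since the adjunction $\scY^* \dashv \scY_*$ restricts to an equivalence $\scP_!(\wC_\infty) \simeq \scP(\scC)$, the unit is a natural equivalence on $\scP_!(\wC_\infty)$. Hence, setting $F \coloneqq \scY^*\CG \in \scP(\scC)$ for $\CG \in \scP_!(\wC_\infty)$, we get a natural equivalence $\CG \simeq \scY_* F$, and it suffices to produce a natural equivalence $\iota_* \iota^* \scY_* F \simeq \scY_* F$ for every $F \in \scP(\scC)$.

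Next I would exploit the factorisation $\scY = \iota \circ \scY^0$ of~\eqref{eq:Y = i Y^0 for Cats}. Unwinding definitions, restriction along $\scY^\opp = \iota^\opp \circ (\scY^0)^\opp$ is the composite of restrictions, i.e.\ $\scY^* \simeq (\scY^0)^* \circ \iota^*$; passing to right adjoints, and using uniqueness of right adjoints together with $(\scY^0)^* \dashv \scY^0_*$, $\iota^* \dashv \iota_*$, this yields $\scY_* \simeq \iota_* \circ \scY^0_*$. On the other hand, Lemma~\ref{st:CarStef Lemma} gives $\iota^* \scY_* F \simeq \scY^0_* F$, hence $\iota_* \iota^* \scY_* F \simeq \iota_* \scY^0_* F$ upon applying $\iota_*$ (exactly as remarked right after that lemma). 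Concatenating, one obtains the chain
\[
	\iota_* \iota^* \CG
	\;\simeq\; \iota_* \iota^* \scY_* F
	\;\simeq\; \iota_* \scY^0_* F
	\;\simeq\; \scY_* F
	\;\simeq\; \CG \,,
\]
natural in $\CG$, which is the claim.

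I do not expect a genuine obstacle here: all steps are formal, and the only thing to be careful about is bookkeeping — keeping the directions of the three adjunctions $\scY^* \dashv \scY_*$, $\iota^* \dashv \iota_*$, $(\scY^0)^* \dashv \scY^0_*$ straight, and checking that the relevant Kan extensions exist. The latter is precisely the size situation of Lemma~\ref{st:CarStef Lemma} ($\wC_0$ is $V$-small and $\scS$ is $V$-(co)complete), so no new hypotheses are needed. A more hands-on alternative would be to compute $(\iota_* \iota^* \CG)(X)$ directly as the limit of $\CG$ over the slice $(\wC_0)_{X/}$ and feed in the colimit presentation $X \simeq \colim_{c \in \scC_{/X}} \scY_c$ together with limit-preservation of $\CG$; but that route forces a cofinality argument over a large indexing category, so the Kan-extension argument above is the cleaner one and is the one I would write up.
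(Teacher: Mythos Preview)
Your proof is correct and follows essentially the same argument as the paper: both obtain $\CG \simeq \scY_* F$ from Theorem~\ref{st:image of h_*}(1), apply Lemma~\ref{st:CarStef Lemma} to get $\iota^*\scY_*F \simeq \scY^0_*F$, and then use the factorisation $\scY = \iota \circ \scY^0$ (equivalently $\scY_* \simeq \iota_* \scY^0_*$) to close the chain of equivalences. Your additional remarks on adjunction bookkeeping and the alternative cofinality approach are fine but not needed.
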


\begin{proof}
By Theorem~\ref{st:image of h_*}, there exists $F \in \sfPSh(\sfN\scC)$ and an equivalence $\scY_*F \simeq \CG$.
We have equivalences
\begin{align}
	\iota_* \iota^* \CG
	&\simeq \iota_* \iota^* \scY_* F
	\simeq \iota_* \scY^0_* F
	\simeq (\iota \circ \scY^0)_* F
	\simeq \scY_* F
	\simeq \CG\,.
\end{align}
The second equivalence is Lemma~\ref{st:CarStef Lemma} and the second-to-last equivalence is the factorisation~\eqref{eq:Y = i Y^0 for Cats}.
\end{proof}

In particular, we have that $\iota_* \iota^* \scY_*F \simeq \scY_*F$, for each $F \in \Fun(\sfN\scC^\opp, \sfS_\rmL)$, by Theorem~\ref{st:image of h_*}.
Combining this with Lemma~\ref{st:CarStef Lemma}, we thus obtain that the canonical morphism
\begin{equation}
	\iota_* \scY^0_*F \to \scY_*F
\end{equation}
is an equivalence, for each $F \in \sfPSh(\sfN \scC)$.

\begin{theorem}
\label{st:image of h^0_*}
Let $\sfX$ denote the full sub-quasi-category of $\sfPSh(\sfN\wC)$ on those presheaves $F$ where $\iota_* F \in \sfPSh_!(\widehat{\sfN\scC}_\infty)$.
The functor $\scY^0_* \colon \sfPSh(\sfN\scC) \to \sfX$ is an equivalence with inverse $(\scY^0)^*$.
\end{theorem}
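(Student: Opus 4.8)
The plan is to exhibit $\scY^0_*$ as a composite of equivalences, using the factorisation $\scY = \iota \circ \scY^0$ from~\eqref{eq:Y = i Y^0 for Cats} together with Lemmas~\ref{st:CarStef Lemma} and~\ref{st:i_*i^* on P_!(C)} and Theorem~\ref{st:image of h_*}(1).

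First I would analyse the adjunction $\iota^* \dashv \iota_*$ on $\scP(\wC_\infty) \rightleftarrows \scP(\wC_0)$. Since $\iota \colon \wC_0 \hookrightarrow \wC_\infty$ is fully faithful, its right Kan extension $\iota_*$ is fully faithful; equivalently, the counit $\iota^* \iota_* \to \id_{\scP(\wC_0)}$ is an equivalence. On the other hand, Lemma~\ref{st:i_*i^* on P_!(C)} says that the unit $\id \to \iota_* \iota^*$ becomes an equivalence when restricted to the full subcategory $\scP_!(\wC_\infty) \subset \scP(\wC_\infty)$. Combining these: $\iota^*$ carries $\scP_!(\wC_\infty)$ into $\scX$ (for $\CG \in \scP_!(\wC_\infty)$ one has $\iota_* \iota^* \CG \simeq \CG \in \scP_!(\wC_\infty)$), while $\iota_*$ carries $\scX$ into $\scP_!(\wC_\infty)$ by the definition of $\scX$; since moreover $\iota^* \iota_* \simeq \id$ on all of $\scP(\wC_0)$ and $\iota_* \iota^* \simeq \id$ on $\scP_!(\wC_\infty)$, the functors $\iota^*$ and $\iota_*$ restrict to mutually inverse equivalences between $\scP_!(\wC_\infty)$ and $\scX$.

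Next I would invoke Lemma~\ref{st:CarStef Lemma}, which provides a natural equivalence $\scY^0_* F \simeq \iota^* \scY_* F$ for $F \in \scP(\scC)$; applying $\iota_*$ and using $\iota_* \iota^* \simeq \id$ on $\scP_!(\wC_\infty)$ gives $\iota_* \scY^0_* F \simeq \scY_* F \in \scP_!(\wC_\infty)$, so $\scY^0_*$ indeed takes values in $\scX$. Since $\scY_* \colon \scP(\scC) \to \scP_!(\wC_\infty)$ is an equivalence with inverse $\scY^*$ by Theorem~\ref{st:image of h_*}(1), and $\iota^* \colon \scP_!(\wC_\infty) \to \scX$ is an equivalence by the previous step, the composite $\scY^0_* \simeq \iota^* \circ \scY_*$ is an equivalence $\scP(\scC) \to \scX$. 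To identify its inverse with $(\scY^0)^*$, observe that $\scY = \iota \circ \scY^0$ gives $\scY^* \simeq (\scY^0)^* \circ \iota^*$, so that $(\scY^0)^* \circ \scY^0_* \simeq (\scY^0)^* \circ \iota^* \circ \scY_* \simeq \scY^* \circ \scY_* \simeq \id_{\scP(\scC)}$, the last equivalence being the counit of $\scY^* \dashv \scY_*$, which is an equivalence because $\scY_*$ is fully faithful. Since $\scY^0_*$ is already known to be an equivalence, this forces $(\scY^0)^*|_{\scX}$ to be its inverse.

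I expect the only substantive step to be the first: verifying that $\iota^*$ and $\iota_*$ restrict to mutually inverse equivalences between $\scP_!(\wC_\infty)$ and $\scX$. This is precisely where the content of Lemma~\ref{st:i_*i^* on P_!(C)} --- and, through it, Theorem~\ref{st:image of h_*}(1) --- enters; the rest is a formal consequence of that statement, the fully faithfulness of $\iota$, and the factorisation $\scY = \iota \circ \scY^0$.
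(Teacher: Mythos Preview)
Your proposal is correct and uses the same ingredients as the paper's proof (Lemma~\ref{st:CarStef Lemma}, Lemma~\ref{st:i_*i^* on P_!(C)}, Theorem~\ref{st:image of h_*}(1), and full faithfulness of $\iota$). The only difference is organisational: the paper argues directly that $\scY^0_*$ is fully faithful (since $\scY^0$ is) and then checks essential surjectivity onto $\scX$, whereas you first establish the equivalence $\iota^* : \scP_!(\wC_\infty) \simeq \scX : \iota_*$ (which the paper records separately as Corollary~\ref{st:sheaves on 1-site}) and then factor $\scY^0_* \simeq \iota^* \circ \scY_*$ as a composite of equivalences; both routes unwind to the same computation.
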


\begin{proof}
Combining the observation preceding Theorem~\ref{st:image of h^0_*} with Theorem~\ref{st:image of h_*}(1) shows that $\scY^0_*$ takes values in $\sfX$.
Since $\scY^0$ is fully faithful, so is $\scY^0_*$; it remains to check that $\scY^0_*$ is essentially surjective.
To that end, let $G \in \sfX$.
By Theorem~\ref{st:image of h_*}(1) and the definition of $\sfX$ there exists an $F \in \sfPSh(\sfN\scC)$ and an equivalence $\iota_*G = \scY_*F$.
Lemma~\ref{st:CarStef Lemma} implies that
\begin{equation}
	G \simeq \iota^* \iota_* G
	\simeq \iota^* \scY_* F
	\simeq \scY^0_* F\,.
\end{equation}
The first equivalence is (the inverse of) the counit of the adjunction $\iota^* \dashv \iota_*$; this is an equivalence since $\iota$---and therefore $\iota_*$---is fully faithful.
\end{proof}

\begin{corollary}
\label{st:sheaves on 1-site}
The functor $\iota_* \colon \sfX \to \sfPSh_!(\widehat{\sfN\scC}_\infty)$ is an equivalence with inverse $\iota^*$.
That is, presheaves in $\sfPSh_!(\widehat{\sfN\scC}_\infty)$ are fully determined by their restriction to $\sfN\wC$.
\end{corollary}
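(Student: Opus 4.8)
The plan is to deduce Corollary~\ref{st:sheaves on 1-site} formally from Theorem~\ref{st:image of h^0_*} and Lemma~\ref{st:i_*i^* on P_!(C)} (the latter being where the real work, ultimately Theorem~\ref{st:image of h_*}, is done). Concretely, I would show that the restriction $\iota_* \colon \scX \to \scP_!(\wC_\infty)$ is fully faithful and essentially surjective, hence an equivalence, and that the adjunction $\iota^* \dashv \iota_*$ then exhibits $\iota^*$ as its inverse.

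First I would check the containments that make the restricted adjunction well-defined. That $\iota_*$ sends $\scX$ into $\scP_!(\wC_\infty)$ is precisely the definition of $\scX$ in Theorem~\ref{st:image of h^0_*}. Conversely, for $\CG \in \scP_!(\wC_\infty)$, Lemma~\ref{st:i_*i^* on P_!(C)} provides an equivalence $\iota_* \iota^* \CG \simeq \CG$, so $\iota_*(\iota^*\CG) \in \scP_!(\wC_\infty)$ and hence $\iota^*\CG \in \scX$; this same equivalence shows that $\iota_* \colon \scX \to \scP_!(\wC_\infty)$ is essentially surjective. It is also fully faithful: the inclusion $\iota \colon \wC_0 \hookrightarrow \wC_\infty$ is fully faithful, so the right Kan extension $\iota_* \colon \scP(\wC_0) \to \scP(\wC_\infty)$ along $\iota^\opp$ is fully faithful, and full faithfulness passes to the full subcategory $\scX$. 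A fully faithful and essentially surjective functor is an equivalence, so $\iota_* \colon \scX \to \scP_!(\wC_\infty)$ is one. Its inverse is $\iota^*$: the right adjoint of an equivalence is an inverse equivalence, and one also sees this directly, since the counit $\iota^*\iota_* \to \id$ is an equivalence on all of $\scP(\wC_0)$ (again by full faithfulness of $\iota_*$) while the unit $\id \to \iota_*\iota^*$ is an equivalence on $\scP_!(\wC_\infty)$ by Lemma~\ref{st:i_*i^* on P_!(C)}.

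I expect no substantive obstacle here: the argument is purely formal once the two subcategory-containment checks above are in place, all analytic content having been absorbed into Lemma~\ref{st:i_*i^* on P_!(C)} and, upstream, into Theorem~\ref{st:image of h_*}. The concluding sentence of the corollary --- that a presheaf in $\scP_!(\wC_\infty)$ is determined by its restriction to $\wC_0$ --- is simply the reading of the equivalence $\iota_*\iota^* \simeq \id$ on $\scP_!(\wC_\infty)$: the value of a colimit-preserving presheaf on all of $\wC_\infty$ is recovered from its restriction $\iota^*$ along $\wC_0 \hookrightarrow \wC_\infty$ by applying $\iota_*$.
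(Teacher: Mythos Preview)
Your proposal is correct and matches the paper's approach: the paper gives no formal proof at all, simply remarking that the corollary ``merely reflects the fact that $\wC_\infty$ is generated under $U$-small colimits by the image of the inclusion $\wC_0 \hookrightarrow \wC$.'' Your argument spells out precisely the formal deduction from Theorem~\ref{st:image of h^0_*} and Lemma~\ref{st:i_*i^* on P_!(C)} that the paper leaves implicit, and all steps (the containment checks, full faithfulness of $\iota_*$ from that of $\iota$, essential surjectivity via Lemma~\ref{st:i_*i^* on P_!(C)}, and identification of $\iota^*$ as inverse via the unit/counit) are sound.
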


This merely reflects the fact that $\widehat{\sfN\scC}_\infty$ is generated under small colimits by the image of the inclusion $\sfN\wC \hookrightarrow \widehat{\sfN\scC}_\infty$.

\begin{theorem}
\label{st:image of Y^0_*}
Let $\sfX_\tau \subset \sfPSh(\sfN\wC)$ be the full sub-quasi-category on those presheaves $X$ where $\iota_*X \in \sfSh_!(\widehat{\sfN\scC}_\infty,\wtau)$.
There is a canonical equivalence $\sfSh(\scC,\tau) \simeq \sfX_\tau$.
\end{theorem}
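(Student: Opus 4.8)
The plan is to deduce the statement directly from the equivalences already established: Theorem~\ref{st:image of h^0_*} (which identifies $\scP(\scC)$ with the full subcategory $\scX \subset \scP(\wC_0)$ via $\scY^0_*$), Theorem~\ref{st:image of h_*}(2) (which identifies $\Sh(\scC,\tau)$ with $\Sh_!(\wC_\infty,\wtau)$ via $\scY_*$), and the compatibility Lemmas~\ref{st:CarStef Lemma} and~\ref{st:i_*i^* on P_!(C)}. The guiding observation is that $\scX_\tau$ is cut out inside $\scX$ by the very same local condition that cuts $\Sh_!(\wC_\infty,\wtau)$ out of $\scP_!(\wC_\infty)$, transported along $\iota_*$.

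First I would note that $\scX_\tau$ is a full subcategory of $\scX$, since $\Sh_!(\wC_\infty,\wtau) \subset \scP_!(\wC_\infty)$ and both $\scX$ and $\scX_\tau$ are defined by requiring $\iota_*$ of an object to lie in the respective subcategory. Because $\scY^0_* \colon \scP(\scC) \to \scX$ is an equivalence, it restricts to an equivalence from any full subcategory of $\scP(\scC)$ onto its essential image in $\scX$. Hence the theorem reduces to the single claim that the essential image of $\Sh(\scC,\tau)$ under $\scY^0_*$ is exactly $\scX_\tau$; the inverse equivalence is then induced by $(\scY^0)^*$ as in Theorem~\ref{st:image of h^0_*}.

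The core computation is that $\iota_* \circ \scY^0_* \simeq \scY_*$ as functors $\scP(\scC) \to \scP_!(\wC_\infty)$: for $F \in \scP(\scC)$, Lemma~\ref{st:CarStef Lemma} gives $\scY^0_* F \simeq \iota^* \scY_* F$, and since $\scY_* F \in \scP_!(\wC_\infty)$, Lemma~\ref{st:i_*i^* on P_!(C)} yields $\iota_* \scY^0_* F \simeq \iota_* \iota^* \scY_* F \simeq \scY_* F$, naturally in $F$. With this in hand, one inclusion of essential images is immediate: if $F \in \Sh(\scC,\tau)$, then $\iota_* \scY^0_* F \simeq \scY_* F \in \Sh_!(\wC_\infty,\wtau)$ by Theorem~\ref{st:image of h_*}(2), so $\scY^0_* F \in \scX_\tau$. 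Conversely, given $G \in \scX_\tau \subset \scX$, essential surjectivity of $\scY^0_*$ onto $\scX$ furnishes $F \in \scP(\scC)$ with $\scY^0_* F \simeq G$; then $\scY_* F \simeq \iota_* \scY^0_* F \simeq \iota_* G \in \Sh_!(\wC_\infty,\wtau)$, and since $\scY_*$ is fully faithful into $\scP(\wC_\infty)$ while $\scY^*$ carries $\Sh_!(\wC_\infty,\wtau)$ into $\Sh(\scC,\tau)$ (again Theorem~\ref{st:image of h_*}(2)), we conclude $F \simeq \scY^* \scY_* F \in \Sh(\scC,\tau)$.

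I do not expect a genuine obstacle here: essentially all the content sits in the results already proved. The only point requiring a little care is keeping the two ``local'' subcategories straight and checking that the condition defining $\scX_\tau$ is both preserved and reflected by $\scY^0_*$, which is exactly what the identification $\iota_* \circ \scY^0_* \simeq \scY_*$ delivers. A minor secondary subtlety, already dealt with in the proof of the preceding theorem, is the bookkeeping between $\scS_U$- and $\scS$-valued presheaves when invoking that $\scY^*$ sends $\wtau$-sheaves to $\tau$-sheaves; this can simply be cited rather than redone.
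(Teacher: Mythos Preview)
Your proof is correct and uses the same ingredients as the paper: Theorem~\ref{st:image of h^0_*}, Theorem~\ref{st:image of h_*}(2), and the compatibility $\iota_* \circ \scY^0_* \simeq \scY_*$. The paper packages the argument slightly differently, expressing both $\scX_\tau$ and $\Sh(\scC,\tau)$ as pullbacks of the same cospan $\Sh_!(\wC_\infty,\wtau) \to \scP(\wC_\infty) \leftarrow \scP(\wC_0)$ and invoking the pasting law, whereas you verify directly that $\scY^0_*$ carries $\Sh(\scC,\tau)$ onto $\scX_\tau$; these are two presentations of the same content.
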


\begin{proof}
Consider the diagram
\begin{equation}
\begin{tikzcd}
	\sfX_\tau \ar[r] \ar[d] & \sfX \ar[r] \ar[d, "\iota_*"] & \sfPSh(\sfN\wC) \ar[d, "\iota_*"]
	\\
	\sfSh_!(\widehat{\sfN\scC}_\infty,\wtau) \ar[r] & \sfPSh_!(\widehat{\sfN\scC}_\infty) \ar[r] & \sfPSh(\widehat{\sfN\scC}_\infty)
\end{tikzcd}
\end{equation}
By definition of $\sfX$ and $\sfX_\tau$ and the pasting law for pullbacks, both squares in this diagram are pullback squares.
We also consider the diagram
\begin{equation}
\begin{tikzcd}
	\sfSh(\scC,\tau) \ar[r] \ar[d] & \sfPSh(\scC) \ar[r] \ar[d, "\scY_*"] & \sfPSh(\sfN\wC) \ar[d, "\iota_*"]
	\\
	\sfSh_!(\widehat{\sfN\scC}_\infty,\wtau) \ar[r] & \sfPSh_!(\widehat{\sfN\scC}_\infty) \ar[r] & \sfPSh(\widehat{\sfN\scC}_\infty)
\end{tikzcd}
\end{equation}
It suffices to show that the outer square in this diagram is a pullback.
Theorem~\ref{st:image of h^0_*} states that the right-hand square in this diagram is a pullback square of quasi-categories.
Additionally, Theorem~\ref{st:image of h_*} implies that the left and centre vertical morphisms are equivalences of quasi-categories, so that the left-hand square is a pullback as well.
The claim now follows by the pasting law for pullbacks.
\end{proof}

This provides a very general perspective on the statement that open coverings and surjective submersions define the same quasi-categories of sheaves on the category $\Mfd$ of manifolds:

\begin{corollary}
Consider the case $\scC = \Mfd$.
Let $\tau_{dgop}$ be the topology on $\Mfd$ induced by differentiably good open coverings, $\tau_{op}$ the topology induced by open coverings and $\tau_{ssub, \sqcup}$ the topology induced by surjective submersions of manifolds and open coverings by \emph{disjoint} open subsets (i.e.~by connected components).
The canonical functors
\begin{equation}
\begin{tikzcd}
	\sfSh (\sfN \Mfd, \tau_{ssub, \sqcup}) \ar[r, hookrightarrow]
	& \sfSh(\sfN \Mfd, \tau_{op}) \ar[r, hookrightarrow]
	& \sfSh(\sfN \Mfd, \tau_{dgop})
\end{tikzcd}
\end{equation}
are equivalences.
\end{corollary}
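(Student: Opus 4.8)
The plan is to deduce the statement from Theorem~\ref{st:image of Y^0_*}, which identifies $\Sh(\Mfd,\tau)$ with the full subcategory $\scX_\tau\subset\scP(\wC_0)$ cut out by the condition $\iota_*X\in\Sh_!(\wC_\infty,\wtau)$, all three such equivalences being realised by restricting the single fully faithful functor $\scY^0_*$ of Theorem~\ref{st:image of h^0_*} and being compatible with the inclusions of subcategories. Since $\scX_\tau$ is built from $\Sh_!(\wC_\infty,\wtau)$, and $\wtau$ is by definition the topology on $\wC_\infty$ generated by the $\tau$-local epimorphisms, the category $\scX_\tau$ depends on the topology $\tau$ on $\Mfd$ only through the class of $\tau$-local epimorphisms in $\wC_\infty$. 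Thus the corollary reduces to a single assertion: the three pretopologies $\tau_{dgop}$, $\tau_{op}$, $\tau_{ssub}$ on $\Mfd$ determine the \emph{same} class of $\tau$-local epimorphisms in $\wC_\infty$ (equivalently, they generate the same Grothendieck topology on $\Mfd$). Granting this, $\scX_{\tau_{ssub}}=\scX_{\tau_{op}}=\scX_{\tau_{dgop}}$ as full subcategories of $\scP(\wC_0)$; the two functors in the statement are then carried by $\scY^0_*$ to the identity inclusions between these subcategories, and since $\scY^0_*$ is fully faithful (hence reflects equivalences) the functors in the statement are equivalences.

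To prove the reduction I would argue in two directions. One direction is purely formal: a differentiably good open covering is an open covering, and an open covering $\{c_i\hookrightarrow c\}$ gives rise to the surjective submersion $\coprod_i c_i\to c$; so every $\tau_{dgop}$-covering is a $\tau_{op}$-covering and every $\tau_{op}$-covering refines a $\tau_{ssub}$-covering, whence by Definition~\ref{def:tau-loc epi} every $\tau_{dgop}$-local epimorphism is a $\tau_{op}$-local epimorphism and every $\tau_{op}$-local epimorphism is a $\tau_{ssub}$-local epimorphism. The other direction is the geometric content: given a $\tau_{ssub}$-local epimorphism $\pi\colon Y\to X$ and a map $\varphi\colon\scY_c\to X$, choose a surjective submersion $g\colon c'\to c$ and a lift $\varphi'\colon\scY_{c'}\to Y$ with $\pi\circ\varphi'=\varphi\circ\scY_g$; the submersion theorem supplies an open cover $\{c_i\hookrightarrow c\}_{i\in I}$ together with smooth sections $s_i$ of $g$, and after passing to a differentiably good refinement (every open cover of a manifold admits one, by the argument of~\cite[Cor.~A.1]{FSS:Cech_diff_char_classes_via_L_infty}) we may take $\{c_i\hookrightarrow c\}$ itself differentiably good; then $\varphi_i\coloneqq\varphi'\circ\scY_{s_i}$ satisfies $\pi\circ\varphi_i=\varphi\circ\scY_{c_i\hookrightarrow c}$ because $g\circ s_i$ is the inclusion, exhibiting $\pi$ as a $\tau_{dgop}$-local epimorphism. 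Chaining the two chains of implications closes the circle.

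The main obstacle, such as it is, is exactly this last point and is entirely geometric rather than categorical: one must know that surjective submersions of manifolds admit local sections, so that $\tau_{ssub}$-coverings are, locally on the base, dominated by open (indeed differentiably good) coverings; everything else is bookkeeping. The remaining routine verifications are that each of $\tau_{dgop}$, $\tau_{op}$, $\tau_{ssub}$ is a coverage in the sense of Definition~\ref{def:coverage and site} (pullback-stability of open embeddings and of submersions, and persistence of surjectivity), and that the inclusion functors in the statement match the identity inclusions $\scX_{\tau_{ssub}}\hookrightarrow\scX_{\tau_{op}}\hookrightarrow\scX_{\tau_{dgop}}$ under $\scY^0_*$. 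A secondary technical point is the size of $\Mfd$: one should first replace it by an equivalent $U$-small category (for instance the full subcategory of submanifolds of $\RN^\infty$), or invoke the nested universes, so that Theorem~\ref{st:image of Y^0_*} applies verbatim.
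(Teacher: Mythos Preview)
Your proposal is correct and follows essentially the same approach as the paper: the paper's proof simply invokes Theorem~\ref{st:image of Y^0_*} together with the observation that $\wtau_{dgop} = \wtau_{op} = \wtau_{ssub}$, and you have spelled out precisely this observation via the local-section argument for submersions and the existence of differentiably good refinements. Your additional remarks on compatibility with the inclusion functors and on the size of $\Mfd$ are sensible elaborations, but the core strategy is identical.
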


\begin{proof}
It is clear that second inclusion exists.
For the first inclusion, we need to check that, for each $F \in \sfSh(\sfN \Mfd, \tau_{ssub, \sqcup})$, we also have that $F \in \sfSh(\sfN\Mfd, \tau_{op})$.
Note that for $F \colon \sfN\Mfd^\opp \to \sfS_\rmL$ to satisfy descent with respect to coverings by disjoint open subsets means precisely that $F$ sends coproducts in $\Mfd$ to products in $\sfS_\rmL$.
Let $\CU = \{U_i \hookrightarrow M\}_{i \in I}$ be an open covering of a manifold $M$.
Consider the surjective submersion
\begin{equation}
	\cC_\Mfd \CU_0 = \coprod_{i \in I} U_i \longrightarrow M\,.
\end{equation}
Its \v{C}ech nerve, taken in $\Mfd$, reads as
\begin{equation}
	\cC_\Mfd \CU_k = \coprod_{i_0, \ldots, i_k \in I} U_{i_0 \ldots i_k}\,.
\end{equation}
Note that the canonical map $\scY^0_M \sqcup \scY^0_N \longrightarrow \scY^0_{M \sqcup N}$ is not an isomorphism in \smash{$\widehat{\Mfd}$}, since manifolds are allowed to be non-connected.
However, for $F \colon \Mfd^\opp \to \sfS_\rmL$ sending coproducts in $\Mfd$ to products, the canonical morphism
\begin{equation}
	\sfPSh(\Mfd) \Big( \coprod_{i_0, \ldots, i_k \in I} \scY_{U_{i_0 \ldots i_k}}, F \Big)
	\longrightarrow \sfPSh(\Mfd)(\scY_{\cC_\Mfd \CU_k}, F)
\end{equation}
is an equivalence, for each $k \in \NN_0$.
The left-hand side are the terms in the descent condition for $F$ with respect to the open covering $\CU$, whereas the right-hand side are the terms in the descent condition for $F$ with respect to the surjective submersion $\coprod_{i \in I} U_i \to M$.
Since we assumed $F$ to satisfy descent with respect to surjective submersions, the latter descent condition is satisfied.
By the above observation, it thus follows that $F \in \sfSh(\sfN\Mfd, \tau_{op})$.

We are left to check that the inclusions in the claim are in fact essentially surjective.
For the first inclusion, we claim that $\wtau_{ssub, \sqcup} \subset \wtau_{op}$.
Indeed, let $\pi \colon Y \to X$ be a $\tau_{ssub, \sqcup}$-local epimorphism in \smash{$\widehat{\Mfd}$}.
Therefore, given any morphism $f \colon \scY^0_M \to X$ from a representable presheaf, there exists either an open cover by disjoint subsets of $M$ and lifts of $f$ to $Y$ over the patches, or a surjective submersion $p \colon N \to M$ together with a commutative square
\begin{equation}
\begin{tikzcd}
	\scY^0_N \ar[r, "\hat{f}"] \ar[d, "p"']
	& Y \ar[d, "\pi"]
	\\
	\scY^0_M \ar[r, "f"']
	& X
\end{tikzcd}
\end{equation}
in \smash{$\widehat{\Mfd}$}.
In the first case, there is nothing to prove, since we have already lifted $f$ through an open covering of $M$.
In the second case, let $\CU = \{U_i \to M\}_{i \in I}$ be an open covering of $M$ such that $p$ admits a section $s_i \colon U_i \to N$ over each patch $U_i$.
This gives, for each $i \in I$, a commutative diagram
\begin{equation}
\begin{tikzcd}
	\scY^0_{U_i} \ar[r, "\scY^0_{s_i}"] \ar[d]
	& \scY^0_N \ar[r, "\hat{f}"] \ar[d, "p"']
	& Y \ar[d, "\pi"]
	\\
	\scY^0_M \ar[r, equal]
	& \scY^0_M \ar[r, "f"']
	& X
\end{tikzcd}
\end{equation}
Thus, we have that $\pi \in \wtau_{op}$.
An analogous argument shows that $\wtau_{op} \subset \wtau_{dgop}$.
We thus have canonical inclusions
\begin{equation}
\begin{tikzcd}
	\sfSh_!(\widehat{\sfN\Mfd}_\infty, \wtau_{dgop}) \ar[r, hookrightarrow]
	& \sfSh_!(\widehat{\sfN\Mfd}_\infty, \wtau_{op}) \ar[r, hookrightarrow]
	& \sfSh_!(\widehat{\sfN\Mfd}_\infty, \wtau_{ssub, \sqcup})\,.
\end{tikzcd}
\end{equation}
Recalling the definition of $\sfX_\tau \subset \sfPSh(\sfN\scC)$ as the preimage under $\iota_*$ of $\sfSh_!(\widehat{\sfN\scC}_\infty, \wtau) \subset \sfPSh(\widehat{\sfN\scC}_\infty)$, we obtain inclusions
\begin{equation}
\begin{tikzcd}
	\sfX_{\tau_{dgop}} \ar[r, hookrightarrow]
	& \sfX_{\tau_{op}} \ar[r, hookrightarrow]
	& \sfX_{\tau_{ssub, \sqcup}}\,.
\end{tikzcd}
\end{equation}
By Theorem~\ref{st:image of Y^0_*} this is equivalently a chain of inclusions
\begin{equation}
\begin{tikzcd}
	\sfSh(\sfN\Mfd, \tau_{dgop}) \ar[r, hookrightarrow]
	& \sfSh(\sfN\Mfd, \tau_{op}) \ar[r, hookrightarrow]
	& \sfSh(\sfN\Mfd, \tau_{ssub, \sqcup})\,.
\end{tikzcd}
\end{equation}
This completes the proof.
\end{proof}

\begin{appendix}

\section{Proof of Theorem~\ref{st:VBun_Cat = VBun_Dfg}}
\label{app:Proof of VBun_Dfg Thm}

We will make use of the following description of colimits in $\Dfg$:

\begin{proposition}
\label{st:colims in Dfg via ULSets}
Let $(\scC,\tau)$ be a \emph{closed} site (cf.~Definition~\ref{def:closed site}), and let $D \colon \scJ \to \Dfg(\scC,\tau)$ be a small diagram.
The  colimit of $D$ can be described as follows:
its underlying set reads as
\begin{equation}
	\Ev_* \big( \colim^{\Dfg(\scC,\tau)}_\scJ D \big) = \colim^{\Set_\rmS}_\scJ (\Ev_* \circ D)\,.
\end{equation}
A plot of \smash{$\colim^{\Dfg(\scC,\tau)}_\scJ D$} is a map of sets $\varphi \colon \scY_c(*) \to \colim^{\Set_\rmS}_\scJ (\Ev_* \circ D)$ such that there exists a covering $\{f_i \colon c_i \to c\}_{i \in I}$ in $(\scC, \tau)$, a map $i \mapsto j_i$ from $I$ to the objects of $\scJ$, and a family of maps $\{\varphi_i \colon \scY_{c_i}(*) \to D(j_i)(*)\}_{i \in I}$ such that the diagram
\begin{equation}
\label{eq:colim^Dfg diagram}
\begin{tikzcd}[column sep=1.25cm, row sep=1cm]
	\scY_{c_i}(*) \ar[r, "\varphi_i"] \ar[d, "{\scY_{f_i}(*)}"'] & D(j_i)(*) \ar[d]
	\\
	\scY_c(*) \ar[r] & \colim^{\Set_\rmS}_\scJ (\Ev_* \circ D)
\end{tikzcd}
\end{equation}
in $\Set_\rmS$ commutes for every $i \in I$, and such that $\varphi_i$ is a plot of $D(j_i)$ for every $i \in I$.
\end{proposition}

\begin{proof}
Let $Z(*) \coloneqq \colim^{\Set_\rmS}_\scJ (\Ev_* \circ D)$.
For $c \in \scC$, let $Z(c)$ denote the set of maps $\varphi \colon \scY_c(*) \to Z(*)$ with the above lifting property.
We first show that $c \mapsto Z(c)$ defines a presheaf on $\scC$.
Thus, we consider a morphism $f \in \scC(c',c)$; it induces a map $f_{|*} \colon \scY_{c'}(*) \to \scY_c(*)$.
The morphism $Z(f) \colon Z(c) \to Z(c')$ is given by $\varphi \mapsto \varphi \circ f_{|*}$.
Hence, we need to show that for any $\varphi \in Z(c)$ and $f \in \scC(c',c)$, the composition $\varphi \circ f_{|*}$ again has the lifting property.
This follows readily from the factorisation property of covering families; see Definition~\ref{def:coverage and site}(1).

The presheaf $Z$ is concrete since we have constructed the value $Z(c)$ as a subset of $\Set_\rmS(\scY_c(*),Z(*))$ and since constant maps to $Z(*)$ trivially have the local lifting property.

Next, we show that $Z$ is a sheaf.
To that end, suppose that $\{c_i \to c\}_{i \in I}$ is a covering family for $c$ and that we are given morphisms $\{\varphi_i \colon \scY_{c_i} \to Z\}_{i \in I}$ such that the diagram
\begin{equation}
\begin{tikzcd}
	\scY_{c_i} \underset{\scY_c}{\times} \scY_{c_j} \ar[d] \ar[r] & \scY_{c_i} \ar[d, "\varphi_i"]
	\\
	\scY_{c_j} \ar[r, "\varphi_j"'] & Z
\end{tikzcd}
\end{equation}
commutes for every $i,j \in I$.
Since evaluation at any object of $\scC$ is a limit-preserving functor $\wC \to \Set_\rmS$, these data induce a family of maps $\{\scY_{c_i}(*) \to \scY_c(*)\}_{i \in I}$ and maps $\{\varphi_{i|*} \colon \scY_{c_i}(*) \to Z(*)\}_{i \in I}$ such that the diagram
\begin{equation}
\begin{tikzcd}
	\scY_{c_i}(*) \underset{\scY_c(*)}{\times} \scY_{c_j}(*) \ar[d] \ar[r] & \scY_{c_i}(*) \ar[d, "\varphi_{i|*}"]
	\\
	\scY_{c_j}(*) \ar[r, "\varphi_{j|*}"'] & Z(*)
\end{tikzcd}
\end{equation}
in $\Set_\rmS$ commutes for every $i,j \in I$.
Since $(\scC,\tau)$ is a concrete site, the family $\{\scY_{c_i}(*) \to \scY_c(*)\}_{i \in I}$ is jointly surjective, so that these data determine a unique map $\varphi \colon \scY_c(*) \to Z(*)$ such that all diagrams
\begin{equation}
\begin{tikzcd}
	\scY_{c_i}(*) \ar[r] \ar[dr, "\varphi_{i|*}"'] & \scY_c(*) \ar[d, "\varphi"]
	\\
	& Z(*)
\end{tikzcd}
\end{equation}
commute.
We claim that $\varphi \in Z(c)$.
First, for each $i \in I$, let $\{c_{i,k} \to c_i\}_{k \in K_i}$ be a covering family for $c_i$ such that there exist lifts $\varphi_{i,k} \colon \scY_{c_i} \to D(j_{i,k})$ of the morphisms $\varphi_i \colon \scY_{c_i} \to Z$ as in~\eqref{eq:colim^Dfg diagram}.
By the assumption that $(\scC,\tau)$ is closed we can choose a covering family $\{c_l \to c\}_{l \in L}$ of $c$ such that each morphism $c_l \to c$ factors through some morphism $c_{i,k} \to c_i \to c$.
The compositions $c_l \to c_{i,k} \to D(j_{i,k})$ then provide the desired lifts of $\varphi$.

Our next step is to make $Z$ into the vertex of a cocone under the diagram $D$.
Consider the map $\iota_j \colon D(j)(*) \to Z(*)$ induced by the definition of $Z(*)$ as the colimit in $\Set_\rmS$ of the diagram $\Ev_* \circ D$.
This map induces a morphism in $\Dfg(\scC,\tau)$:
explicitly, given any plot $\scY_c(*) \to D(j)(*)$, the composition $\scY_c(*) \to Z(*)$ trivially admits a lifting along a covering family of $c$ to maps to $D(j)(*)$.
Thus, composition by $\iota_j$ sends plots of $D(j)$ to plots of $Z$.

Finally, we need to show that $Z$ is a colimit of the diagram $D \colon \scJ \to \Dfg(\scC,\tau)$.
To see this, let $\{\psi_j \colon D(j) \to A\}_{j \in \scJ}$ be a cocone under $D$ in the category $\Dfg(\scC,\tau)$.
Evaluating at $* \in \scC$, we obtain a cocone $\psi_{j|*} \colon D(j)(*) \to A(*)$ under the diagram $\Ev_* \circ D$ in $\Set_\rmS$.
By construction, the set $Z(*)$ presents a colimit of that diagram; hence these data induce a unique map of sets $\psi \colon Z(*) \to A(*)$.
Recall that for any pair of objects $X,Y \in \Dfg(\scC,\tau)$ the map $\Ev_* \colon \Dfg(\scC,\tau)(X,Y) \to \Set_\rmS(X(*),Y(*))$, $\phi \mapsto \phi_{|*}$ is injective.
It follows that if the map $\psi$ gives rise to a morphism of $(\scC,\tau)$-spaces, then that morphism is the unique morphism in $\Dfg(\scC,\tau)$ inducing a morphism of cocones under $D$.

Therefore, we are left to show that $\psi \colon Z(*) \to A(*)$ gives rise to a morphism in $\Dfg(\scC,\tau)$.
That is equivalent to showing that composition by $\psi$ sends plots of $Z$ to plots of $A$.
Thus, let $\varphi \colon \scY_c \to Z$ be an arbitrary morphism.
As before, by definition of $Z$, we find a covering family $\{f_i \colon c_i \to c\}_{i \in I}$ and lifts $\varphi_i \colon \scY_{c_i} \to D(j_i)$ of $\varphi$ along the morphisms $f_i$ as in~\eqref{eq:colim^Dfg diagram}.
We claim that $\{\psi_{j_i} \circ \varphi_i\}_{i \in I}$ is a compatible family of morphisms $\scY_{c_i} \to A$ in $\wC$.
For $i,k \in I$, consider the diagram
\begin{equation}
\label{eq:colim^Dfg construction}
\begin{tikzcd}[column sep=1.5cm, row sep=1cm]
	& \scY_{c_i} \ar[d, "f_i"] \ar[r, "\varphi_i"] & D(j_i) \ar[dr, bend left=20, "\psi_{j_i}"] \ar[d, "\iota_{j_i}"']
	\\
	\scY_{c_i} \underset{\scY_c}{\times} \scY_{c_k} \ar[ur] \ar[dr] & \scY_c \ar[r, "\varphi"] & Z \ar[r, dashed, "\psi"] & A
	\\
	& \scY_{c_k} \ar[u, "f_k"'] \ar[r, "\varphi_k"'] & D(j_k) \ar[ur, bend left=-20, "\psi_{j_k}"'] \ar[u, "\iota_{j_k}"]
\end{tikzcd}
\end{equation}
in $\wC$.
The left-hand triangle commutes by definition of the pullback.
The two central squares commute by definition of $\varphi_i$.
We do not yet know whether $\psi$ is actually a morphism in $\wC$.
However, evaluating the whole diagram at $* \in \scC$, we obtain a diagram in $\Set_\rmS$ in which the two right-hand triangles also commute, since $\psi$ is a morphism of cocones under the diagram $\Ev_* \circ D$ in $\Set_\rmS$.
Thus, we infer that the outer hexagon in~\eqref{eq:colim^Dfg construction} commutes as maps on the underlying sets.
Recalling that the map which sends morphisms in $\Dfg(\scC,\tau)$ to maps of underlying sets is injective, it thus follows that the outer hexagon is commutative already as a diagram in $\Dfg(\scC,\tau)$ (i.e.~before evaluating at the terminal object).

Consequently, $\{\psi_{j_i} \circ \varphi_i\}_{i \in I}$ is indeed a compatible family of morphisms $\scY_{c_i} \to A$.
Thus, as $A$ is a sheaf, it defines a unique morphism $\varrho \colon \scY_c \to A$ in $\Dfg(\scC,\tau)$.
It follows from the construction of $\varrho$ that $\Ev_* \varrho = \varrho_{|*} = \psi \circ \varphi_{|*}$; hence, composition with $\psi$ sends plots of $Z$ to plots of $A$, so that $\psi$ gives rise to a morphism in $\Dfg(\scC,\tau)$.
\end{proof}

We start by describing the functor $\CA$ on objects:
recall the computation of the values of $\iota^*\VBun_{str}$ in Lemma~\ref{st:VBun_str as hoRan}.
Consider the category $\Cart_{/X}$, whose objects are plots $\varphi \in X(c)$ for any $c \in \Cart$ and whose morphisms $(\varphi \in X(c')) \to (\varphi' \in X(c))$ are smooth maps $f \colon c \to c'$ such that $f^*\varphi' = \varphi$.
Given an object $(n,h) \in \iota^* \VBun_{str}(X)$ we define a functor $D_{(n,h)} \colon \Cart_{/X} \to \Dfg$, which acts as
\begin{align}
	(\varphi \colon c \to X) &\longmapsto c \times \CN^n\,,
	\\
	(f \colon \varphi \to \varphi') &\longmapsto \big( (f,h_f) \colon c \times \CN^n \longrightarrow c' \times \CN^n \big)\,.
\end{align}
We then set
\begin{equation}
	E \coloneqq \underset{\Cart_{/X}}{\colim}^\Dfg\, D_{(n,h)}
\end{equation}
and denote the canonical morphism $D_{(n,h)}(\varphi) \to E$ by $\iota_\varphi$.
The object $X \in \Dfg$ is a cocone under $D_{(n,h)}$, which is established by the morphism of diagrams
\begin{equation}
	\widehat{\pi} \colon D_{(n,h)} \to X\,,
	\qquad
	\widehat{\pi}_{|\varphi} \colon D_{(n,h)}(\varphi) = c \times \CN^{n} \overset{\pr}{\longrightarrow} c \overset{\varphi}{\longrightarrow} X\,.
\end{equation}
We let $\pi \colon E \to X$ denote the unique morphism induced on the colimit.

\begin{definition}
\label{def:CA on objects}
In the above situation, we define the diffeological space over $X$,
\begin{equation}
	\CA(n,h) \coloneqq (E,\pi)\,.
\end{equation}
\end{definition}

Lemma~\ref{st:colims in Dfg via ULSets} implies that at the level of the underlying sets we have
\begin{align}
\label{eq:colim D_E ULSet}
	\Ev_* \Big( \underset{\Cart_{/X}}{\colim}^\Dfg\, D_{(n,h)} \Big)
	\cong \underset{\Cart_{/X}}{\colim}^\Dfg\, (\Ev_* \circ D_{(n,h)})
	&= \Big( \coprod_{\varphi \in X(c)} c \times \CN^{n} \Big) \Big/ {\sim}\,,
\end{align}
where $\sim$ is the equivalence relation generated by setting $(\varphi,y,v) \sim (\varphi',y',v')$ if there exists a morphism $f \colon \varphi \to \varphi'$ in $\Cart_{/X}$ such that $h_f (\varphi,y,v) = (\varphi',y',v')$.
Here, we use the convention $(\varphi, y, v) \in X(c) \times c \times \CN^{n}$.
The morphism $\pi$ sends an equivalence class $[\varphi,y,v]$ to $\varphi(y)$.
Since the morphisms $h$ act via linear isomorphisms of vector spaces, the fibre of $\pi$ carries a canonical $\CN$-vector space structure.

\begin{proposition}
\label{st:CA produces Dfg VBuns}
For each object $(n,h) \in \iota^* \VBun_{str}(X)$, the colimit $\CA(n,h) = (E,\pi)$ is a diffeological vector bundle on $X$,
\end{proposition}

\begin{proof}
To see this, let $\psi \in X(d)$ be a plot of $X$ over $d \in \Cart$.
Consider the morphism of diffeological spaces
\begin{equation}
\label{eq:triv of A(n,h) over c}
\begin{tikzcd}
	\Phi_\psi \colon d \times \CN^n \ar[r, "1"] & D_{(n,h)}(\psi) \ar[r, "\pr_d \times \iota_\psi"] & d \times_X E\,.
\end{tikzcd}
\end{equation}
Note that $\Phi_\psi$ is linear on the fibres.
We need to show that it is an isomorphism.
For $x \in X(*)$ and $c \in \Cart$, let $\sfc_x \colon \pt \to X(*)$ denote the constant plot of $X$ with value $x$.
Every element $(y, [\varphi, z,v]) \in (d \times_X E)(*)$ has a unique representative of the form
\begin{equation}
	\big( y, [\varphi, z,v] \big) = \big( y, [\psi, y, w] \big)\,.
\end{equation}
This is established by the pairs $(z,h_z)$ and $(y,h_y)$ (from the data of $(n,h)$) associated to the morphism $z \colon \sfc_{\psi(y)} \to \varphi$ and $y \colon \sfc_{\psi(y)} \to \psi$ in $\Cart_{/X}$, i.e.~to the commutative diagram
\begin{equation}
\begin{tikzcd}[column sep=1.25cm, row sep=0.5cm]
	& d \ar[dr, "\psi"] &
	\\
	\pt \ar[ur, hookrightarrow, "y"] \ar[dr, hookrightarrow, "z"'] \ar[rr, "\sfc_{\psi(y)} = \sfc_{\varphi(z)}" description] & & X
	\\
	& c \ar[ur, "\varphi"']
\end{tikzcd}
\end{equation}
in $\Dfg$.
The representative is well-defined by the composition properties of the pairs $(f,h)$ in the data of $E$.
We now define a map of sets
\begin{equation}
	\Psi_\psi \colon (d \times_X E)(*) \longrightarrow d \times \CN^n\,,
	\qquad
	\big( y, [\psi, y, w] \big) \longmapsto (y,w)\,.
\end{equation}
We readily see that the map $\Psi_\psi$ thus defined is an inverse for $\Phi_\psi$ as maps of sets.
It remains to prove that $\Psi_\psi$ is a morphism of diffeological spaces; that is, we need to show that for any plot $\varrho \colon c \to d \times_X E$, the composition $\Psi_\psi \circ \varrho \colon c \to c \times \CN^n$ is a plot.
A plot $\varrho \colon c \to d \times_X E$ is equivalently a pair of a smooth map $\varrho_d \colon c \to d$ and a plot $\varrho_E \colon c \to E$ such that $\psi \circ \varrho_d = \pi \circ \varrho_E$.
By Proposition~\ref{st:colims in Dfg via ULSets} there exists a covering $\{f_i \colon c_i \hookrightarrow c\}_{i \in I}$ of $c$ together with morphisms $\{\varrho_i \colon c_i \to D_{(n,h)}(\psi_i)\}_{i \in I}$ in $\Dfg$ such that $\varrho_E \circ f_i = \iota_{\psi_i} \circ \varrho_i$ for all $i \in I$.
Note that $\psi_i \colon e_i \to X$ are plots of $X$.
Using that $D_{(n,h)}(\psi_i) = e_i \times \CN^n$, we can further decompose $\varrho_i$ into pairs of smooth maps $\varrho_{i,e_i} \colon c_i \to e_i$ and $\varrho_{i, \CN^n} \colon c_i \to \CN^n$.
By construction, we thus obtain a commutative diagram
\begin{equation}
\begin{tikzcd}[column sep=1.5cm, row sep=1cm]
	c_i \ar[r, "\varrho_i"] \ar[d, "f_i"'] & d \times_X D_{(n,h)}(\psi_i) \ar[d, "1 \times \iota_{\psi_i}"]
	\\
	c \ar[r, "\varrho"'] & d \times_X E
\end{tikzcd}
\end{equation}
in $\Dfg$ for every $i \in I$.
Hence,
\begin{equation}
\begin{tikzcd}[column sep=1.25cm, row sep=0.6cm]
	& d \ar[dr, "\psi"] &
	\\
	c_i \ar[ur, "\varrho_d \circ f_i"] \ar[dr, "\varrho_{i,e_i}"'] \ar[r, "f_i" description] & c \ar[u, "\varrho_d"'] & X
	\\
	& e_i \ar[ur, "\psi_i"'] &
\end{tikzcd}
\end{equation}
commutes as well, for each $i \in I$.
Using the notation $h_f$ for morphisms as above, we compute the action of the composition $\Psi_\psi \circ \varrho \circ f_i = \Psi_\psi \circ (1 \times \iota_{\psi_i}) \circ \varrho_i \colon c_i \to d \times \CN^n$ on $y_i \in c_i$ as
\begin{align}
	y_i \longmapsto &\Psi_\psi \big( \varrho_d \circ f_i(y_i), [\psi_i,\, \varrho_{i,e_i}(y_i),\, \varrho_{i, \CN^n}(y_i)] \big)
	\\
	&= \Psi_\psi \big( \varrho_i (y_i), [\psi_i \circ \varrho_{i,e_i},\, y_i,\, h_{\varrho_{i,e_i}}^{-1} \circ \varrho_{i, \CN^n}(y_i)] \big)
	\\
	&= \Psi_\psi \big( \varrho_i (y_i), [\psi \circ \varrho_d \circ f_i,\, y_i,\, h_{\varrho_{i,e_i}}^{-1} \circ \varrho_{i, \CN^n}(y_i)] \big)
	\\
	&= \Psi_\psi \big( \varrho_i (y_i), [\psi,\, \varrho_d \circ f_i(y_i),\, h_{\varrho_d \circ f_i} \circ h_{\varrho_{i,e_i}}^{-1} \circ \varrho_{i, \CN^n}(y_i)] \big)
	\\
	&= \big( \varrho_i (y_i),\, h_{\varrho_d \circ f_i} \circ h_{\varrho_{i,e_i}}^{-1} \circ \varrho_{i, \CN^n}(y_i) \big)\,.
\end{align}
Both components of this map are smooth.
Thus, each composition $(\Psi_\psi \circ \varrho) \circ f_i$ is smooth.
Since, by construction, these maps agree on intersections $c_{ij}$, it follows that $\Psi_\psi \circ \varrho \colon c \to d \times \CN^n$ is smooth.
Therefore, $\Psi_\psi$ is a morphism of diffeological spaces, and $(E,\pi)$ is a diffeological vector bundle on $X$.
\end{proof}

We now define $\CA$ on morphisms:
let $g \colon (n_0, h_0) \to (n_1,h_1)$ be a morphism in $\iota^* \VBun_{str}(X)$.
It gives rise to a morphism of diagrams $D_g \colon D_{(n_0,h_0)} \to D_{(n_1,h_1)}$, and hence induces a morphism
\begin{equation}
	D_g \colon \underset{\Cart_{/X}}{\colim}^\Dfg D_{(n_0, h_0)}
	\longrightarrow \underset{\Cart_{/X}}{\colim}^\Dfg D_{(n_1, h_1)}
\end{equation}
of diffeological spaces.
By construction, this map is even a map of diffeological spaces over $X$.
Since for any plot $\varphi \colon c \to X$ the map $g_\varphi$ induces a morphism of vector bundles $c \times \CN^{n_0} \to c \times \CN^{n_1}$, the map $D_g$ is linear on each fibre.
Therefore, $D_g$ is indeed a morphism of diffeological vector bundles on $X$.

\begin{definition}
\label{def:CA on morphisms}
In the above situation, we set
\begin{equation}
	\CA(g) \coloneqq D_g \colon \CA(n_0,h_0) \to \CA(n_1,h_1)\,.
\end{equation}
\end{definition}

This completes the construction of the functor $\CA$.

\begin{proposition}
\label{st:CA is fully faithful}
For each $X \in \wCart$, the functor $\CA_{|X} \colon \iota^* \VBun_{str}(X) \longrightarrow \VBun_\Dfg(X)$ is fully faithful.
\end{proposition}

\begin{proof}
For $(n_0,h_0),\, (n_1,h_1) \in \iota^* \VBun_{str}(X)$, we write $E_i \coloneqq \colim_{\Cart_{/X}}^\Dfg D_{(n_i,h_i)}$, where $i = 0,1$.
Recall from~\eqref{eq:triv of A(n,h) over c} that for each plot $\varphi \colon c \to X$ we have constructed \emph{canonical} trivialisations
\begin{equation}
	\Phi^{E_i}_\varphi \colon c \times \CN^{n_i(\varphi)} \arisom c \times_X E_i\,.
\end{equation}
Thus, given a morphism $\xi \colon E_0 \to E_1$ in $\VBun_\Dfg(X)$ and a plot $\varphi \colon c \to X$, we obtain a morphism
\begin{equation}
\begin{tikzcd}[column sep=1.5cm, row sep=1cm]
	c \times \CN^{n_0(\varphi)} \ar[r, "\Phi^{E_0}_\varphi"] \ar[d, dashed, "\widetilde{\xi}_\varphi"']
	& c \times_X E_0 \ar[d, "1 \times_X \xi"]
	\\
	c \times \CN^{n_1(\varphi)}
	& c \times_X E_1 \ar[l, "{(\Phi^{E_1}_\varphi)^{-1}}"]
\end{tikzcd}
\end{equation}
which is linear on fibres; hence, $\widetilde{\xi}_\varphi$ corresponds to a smooth map $\widetilde{\xi}_\varphi \colon c \to \Mat(n_1 {\times} n_0,\CN)$.
Observe that for any morphism $f \colon \varphi_0 \to \varphi_1$ in $\Cart_{/X}$ the diagram
\begin{equation}
\begin{tikzcd}
	c_0 \times \CN^{n_i(\varphi_0)} \ar[r, "\Phi^{E_i}_{\varphi_0}"] \ar[d, "f \times 1"']
	& c_0 \times_X E_i \ar[d, "f \times_X 1"]
	\\
	c_1 \times \CN^{n_i(\varphi_1)} \ar[r, "\Phi^{E_i}_{\varphi_1}"']
	& c_1 \times_X E_i
\end{tikzcd}
\end{equation}
commutes (in this case, $n_0(\varphi_1) = n_0(\varphi_0)$).
This implies that $\widetilde{\xi}$ is a morphism $\widetilde{\xi} \colon (n_0,h_0) \to (n_1,h_1)$ in $\iota^* \VBun_{str}(X)$.

The map
\begin{equation}
\widetilde{(-)} \colon \VBun_\Dfg(X)(E_0,E_0) \longrightarrow \iota^* \VBun_{str}(X)\big( (n_0, h_0), (n_1,h_1) \big)\,,
\qquad
\xi \longmapsto \widetilde{\xi}
\end{equation}
is injective:
consider all constant plots $x \colon \pt \to X$, for $x \in X(*)$.
The set $\{\widetilde{\xi}_x\}_{x \in X(*)}$ uniquely determines the value of $\xi$ at every point $x \in X(*)$; hence it fully determines the morphism $\xi$.

Furthermore, if $g \colon (n_0, h_0) \to (n_1,h_1)$ is a morphism in $\iota^* \VBun_{str}(X)$, then, again by construction of $\Phi_\varphi$ from the cocone data, we have that
\begin{equation}
	\big( \widetilde{(-)} \circ \CA(g) \big)_\varphi = g_\varphi
\end{equation}
for all plots $\varphi \colon c \to X$.
Hence, the map $g \mapsto \CA(g)$ is a right inverse for $\widetilde{(-)}$.
It follows that $\widetilde{(-)}$ is a bijection, and hence that $\CA$ is fully faithful.
\end{proof}

\begin{proposition}
\label{st:CA is essentially surjective}
For each $X \in \wCart$, the functor $\CA_{|X} \colon \colon \iota^* \VBun_{str}(X) \longrightarrow \VBun_\Dfg(X)$ is essentially surjective.
\end{proposition}

\begin{proof}
Let $\pi_F \colon F \to X$ be a diffeological vector bundle on $X$.
For each plot $\varphi \colon c \to X$ choose a trivialisation
\begin{equation}
\label{eq:triv choice for F}
	\Xi_\varphi \colon c \times \CN^{n} \arisom c \times_X F\,.
\end{equation}
Given a morphism $f \colon \varphi_0 \to \varphi_1$ in $\Cart_{/X}$, the universal property and the pasting law for pullbacks determine a canonical isomorphism $c_0 \times_X F \cong c_0 \times_{c_1} (c_1 \times_X F)$.
This allows us to define an isomorphism
\begin{equation}
\begin{tikzcd}[column sep=1.25cm, row sep=1cm]
	c_0 \times \CN^{n(\varphi_0)} \ar[r, "\Xi_{\varphi_0}"] \ar[d, dashed, "h_f"']
	& c_0 \times_X F \ar[r, "\cong"]
	& c_0 \times_{c_1} (c_1 \times_X F) \ar[d, "f^*\Xi_{\varphi_1}^{-1}"]
	\\
	c_0 \times \CN^{n(\varphi_1)}
	&
	& c_0 \times_{c_1} (c_1 \times \CN^{n(\varphi_1)}) \ar[ll, "\cong"]
\end{tikzcd}
\end{equation}
The map $h_f$ is linear on fibres and hence determines a unique smooth map $h_f \colon c_0 \to \GL(n(\varphi_0),\CN)$.
Since the morphisms labelled `$\cong$' in the above diagram are chosen as canonical isomorphisms between different representatives for the same limits, the collection of morphisms $h_f$ assembles into an object $(n,h) \in \iota^* \VBun_{str}(X)$.

We claim that there is an isomorphism $\CA(n,h) \arisom F$ in $\VBun_\Dfg(X)$.
By a convenient abuse of notation, we denote this isomorphism by $\Xi$.
We set
\begin{equation}
	\Xi [\varphi,y,v] \coloneqq \pr_F \circ \Xi_\varphi(y,v)\,,
\end{equation}
where $\Xi_\varphi$ was chosen in~\eqref{eq:triv choice for F}, and where $\pr_F \colon c \times_X F \to F$ is the projection to $F$.
This defines a map $\Xi \colon \CA(n,h) \to F$, which is linear on fibres.
We need to show that it is smooth.
Consider a plot $\varrho \colon c \to \colim^\Dfg D_{(n,h)} = \CA(n,h)$.
By Proposition~\ref{st:colims in Dfg via ULSets}, there exist a covering $\{f_i \colon c_i \hookrightarrow c\}_{i \in I}$ of $c$ and lifts $\varrho_i \colon c_i \to D_{(n,h)}(\psi_i)$ for some plots $\psi_i \colon d_i \to X$.
Consider the diagram
\begin{equation}
\begin{tikzcd}[column sep=1.25cm, row sep=1cm]
	c_i \ar[r, "\varrho_i"] \ar[d, "f_i"'] & D_{(n,h)}(\psi_i) \ar[d, "\iota_{\psi_i}"'] \ar[rd, bend left=20, "\pr_F \circ \Xi_{\psi_i}"] &
	\\
	c \ar[r, "\varrho"'] & \colim^\Dfg D_{(n,h)} \ar[r, "\Xi"'] & F
\end{tikzcd}
\end{equation}
The left-hand square commutes by definition of $\varrho_i$, and the right-hand triangle commutes by construction of $\Xi$ and of $(n,h)$.
Thus, the map $\Xi \circ \varrho$ is locally given by plots $\pr_F \circ \Xi_{\psi_i} \circ \varrho_i$.
By the sheaf property of diffeological spaces, $\Xi \circ \varrho$ is a plot of $F$ itself.

Finally, we need to prove that $\Xi$ is an isomorphism.
As a map, it has an inverse, which is given by
\begin{equation}
	\Xi' \colon F \longrightarrow \colim^\Dfg D_{(n,h)}\,,
	\qquad
	\zeta \longmapsto \big[ \psi, y, \Xi_\psi^{-1}(y,\zeta) \big]\,,
\end{equation}
where $\psi \colon d \to X$ is some plot and $y \in d$ is any point such that $\psi(y) = \pi_F(\zeta)$.
It remains to show that $\Xi'$ is a morphism of diffeological spaces.
To that end, let $\widehat{\zeta} \colon d \to F$ be a plot.
This induces a plot $\psi \coloneqq \pi_F \circ \widehat{\zeta} \colon d \to X$.
Now, given any $y \in d$, we can write
\begin{equation}
	\Xi' \circ \widehat{\zeta} (y) = \big[ \psi, y, \Xi_\psi^{-1} (y,\widehat{\zeta}(y)) \big]
	= \iota_\psi \circ \Xi_\psi^{-1} (y,\widehat{\zeta}(y))
	= \iota_\psi \circ \Xi_\psi^{-1} \circ (1_d \times \widehat{\zeta})(y)\,.
\end{equation}
Thus, the composition $\Xi' \circ \widehat{\zeta}$ factors through a plot of $D_{(n,h)}(\psi)$, so that $\Xi'$ is a plot by Proposition~\ref{st:colims in Dfg via ULSets}.
\end{proof}

\begin{lemma}
$\CA$ is compatible with pullbacks of vector bundles along morphisms $F \colon X \to Y$ in $\Dfg$.
That is, $\CA \colon \iota^* \VBun_{str} \longrightarrow \VBun_\Dfg$ is a morphism of presheaves of categories on $\Dfg$.
\end{lemma}

\begin{proof}
Let $(n,h) \in \iota^* \VBun_{str}(Y)$.
As a diffeological space, we have
\begin{equation}
	F^* \big( \CA(n,h) \big)
	= F^* \big( \underset{\Cart_{/Y}}{\colim}^\Dfg D_{(n,h)} \big)
	= X \times_Y \big( \underset{\Cart_{/Y}}{\colim}^\Dfg D_{(n,h)} \big)
	= X \times_Y \CA(n,h)\,.
\end{equation}
We have to compare this to
\begin{equation}
	\CA \big( F^*(n,h) \big)
	= \underset{\Cart_{/X}}{\colim}^\Dfg \big( D_{F^*(n,h)} \big)\,.
\end{equation}
To that end, we consider the map
\begin{equation}
	\Xi_F \colon \CA \big( F^*(n,h) \big) \longrightarrow F^* \big( \CA(n,h) \big)\,,
	\qquad
	[\varphi,z,v] \longmapsto \big( \varphi(z), [ F \circ \varphi, z,v] \big)\,,
\end{equation}
where $\varphi \colon c \to X$ is a plot, $z \in c$ is some point, and where $v \in \CN^{n}$ is a vector.
We also define a map
\begin{equation}
	\Xi'_F \colon F^* \big( \CA(n,h) \big) \longrightarrow \CA \big( F^*(n,h) \big)\,,
	\qquad
	\big( x, [F(x), \pt, w] \big) \longmapsto [x, \pt, w]\,,
\end{equation}
where $x \in X(*)$ is any point in the underlying set of $X$, $w \in \CN^{n(F(x))}$ is a vector, and where we denote a constant plot $\pt \to X$ by its value in $X(*)$.
We readily see that $\Xi_F$ and $\Xi'_F$ are mutually inverse maps, fibrewise linear, compatible with morphisms in $\iota^* \VBun_{str}$, and that the diagram
\begin{equation}
\begin{tikzcd}[column sep=1.5cm, row sep=1cm]
	G^*F^* \big( \CA(n,h) \big) \ar[r, "\cong"] \ar[d, "G^*\Xi_F"']
	& (FG)^* \big( \CA(n,h) \big) \ar[d, "\Xi_{FG}"]
	\\
	G^* \big( \CA_{F^*(n,h)} \big) \ar[r, "\Xi_G"']
	& \CA_{G^*F^*(n,h)} = \CA_{(FG)^*(n,h)}
\end{tikzcd}
\end{equation}
commutes for every morphism $G \in \Dfg(W,X)$.
It thus remains to show that both $\Xi_F$ and $\Xi'_F$ are morphisms of diffeological spaces.

We start with $\Xi_F$:
let $\varrho \colon c \to \CA(F^*(n,h))$ be a plot.
Let $\{f_i \colon c_i \hookrightarrow c\}_{i \in I}$, $\psi_i \colon d_i \to X$, and $\varrho_i \colon c_i \to D_{F^*(n,h)}(\psi_i)$ be lifting data for $\varrho$ as before.
Then, we have a commutative diagram
\begin{equation}
\begin{tikzcd}[column sep=2.5cm, row sep=1.25cm]
	c_i \ar[r, "\varrho_i"] \ar[d, "f_i"']
	& D_{(n,h)}(\psi_i) \ar[d, "\iota_{\psi_i}"'] \ar[r, "{(\psi_i \circ \pr_{d_i}) \times_Y (1_{d_i \times \CN^n})}"]
	& X \times_Y \big( D_{(n,h)}(F \circ \psi_i) \big) \ar[d, "{1_X \times_Y (\iota_{F \circ \psi_i})}"]
	\\
	c \ar[r, "\varrho"'] & \CA_{F^*(n,h)} \ar[r, "\Xi_F"'] & X \times_Y \CA(n,h)
\end{tikzcd}
\end{equation}
which shows that $\Xi_F \circ \varrho$ is smooth (the $\CA(n,h)$-valued component factors through $D_{(n,h)}$ locally).

For $\Xi'_F$, consider a plot $\varrho \colon c \to X \times_Y \CA(n,h)$.
It decomposes into a plot $\varrho_X \colon c \to X$ and a plot $\varrho_\CA \colon c \to \CA_{(n,h)}$ such that $\pi \circ \varrho_\CA = F \circ \varrho_X$, where $\pi \colon \CA_{(n,h)} \to Y$ is the vector bundle projection.
As before, for the plot $\varrho_\CA$ there exists a covering $\{f_i \colon c_i \hookrightarrow c\}_{i \in I}$, plots $\psi \colon d_i \to Y$, and lifts $\varrho_{\CA,i} \colon c_i \to D_{(n,h)}(\psi_i)$ such that $\iota_{\psi_i} \circ \varrho_{\CA,i} = \varrho_\CA \circ f_i$ for each $i \in I$.
Using the morphisms $h_{\varrho_{\CA,i}}$, it is in fact always possible to choose $d_i = c_i$ and $\varrho_{\CA,i} \colon c_i \to c_i \times \CN^{n(\psi_i)}$ to be a section, i.e.~to satisfy $\pr_{c_i} \circ \varrho_{i,\CA} = 1_{c_i}$.
Observe that then $\psi_i = F \circ \varrho_X \circ f_i$ factors through $F$.
We obtain a commutative diagram
\begin{equation}
\begin{tikzcd}[column sep=2.5cm, row sep=1.25cm]
	c_i \ar[r, "{(\varrho_X \circ f_i) \times \varrho_{\CA,i}}"] \ar[d, "f_i"']
	& X \times_Y \big( D_{(n,h)}(\psi_i) \big) \ar[d, "1 \times \iota_{\psi_i}"'] \ar[r, "\pr_{c_i \times \CN^n}"]
	& D_{(n,h)}(\varrho_X \circ f_i) \ar[d, "\iota_{(\varrho_X \circ f_i)}"]
	\\
	c \ar[r, "\varrho_X \times \varrho_\CA"']
	& X \times_Y \CA(n,h) \ar[r, "\Xi'_F"']
	& \CA \big( F^* (n,h) \big)
\end{tikzcd}
\end{equation}
This shows that $\Xi'_F \circ \varrho$ is a plot of $\CA(F^*(n,h))$ by Proposition~\ref{st:colims in Dfg via ULSets}, which completes the proof.
\end{proof}

This also completes the proof of Theorem~\ref{st:VBun_Cat = VBun_Dfg}.
\qed

\end{appendix}

\begin{small}
\bibliographystyle{alphaurl}
\addcontentsline{toc}{section}{References}
\bibliography{Higher_Sheaves_Bib}

\vspace{0.2cm}

\paragraph*{Availability of Data and Materials}

Data sharing not applicable to this article as no datasets were generated or analyzed during the current study.


\end{small}

\end{document}